\newtheorem{thm}{Theorem}[section]
\newcommand{\bt}{\begin{thm}}
\newcommand{\et}{\end{thm}}
\newtheorem{ex}[thm]{Example}
\newtheorem{cor}[thm]{Corollary}   
\newcommand{\bc}{\begin{cor}}
\newcommand{\ec}{\end{cor}}
\newtheorem{lem}[thm]{Lemma}   
\newcommand{\bl}{\begin{lem}}
\newcommand{\el}{\end{lem}}
\newtheorem{prop}[thm]{Proposition}
\newcommand{\bp}{\begin{prop}}
\newcommand{\ep}{\end{prop}}
\newtheorem{defn}[thm]{Definition}
\newcommand{\bd}{\begin{defn}}    
\newcommand{\ed}{\end{defn}}
\newtheorem{rmrk}[thm]{Remark}   
\newtheorem{conjecture}[thm]{Conjecture}   
\newcommand{\weaklyto}{\rightharpoonup}
\newtheorem{example}[thm]{Example}
\newcommand{\lemref}[1]{Lemma~\ref{#1}}
\newcommand{\GHto}{\stackrel { \textrm{GH}}{\longrightarrow} }
\newcommand{\Fto}{\stackrel {\mathcal{F}}{\longrightarrow} }
\newcommand{\sgn}{\operatorname{sgn}}
\newcommand{\be}{\begin{equation}}
 \newcommand{\ee}{\end{equation}}
\newcommand{\injrad}{\textrm{injrad}}
\newcommand{\N}{\mathbb{N}}
\newcommand{\R}{\mathbb{R}}
\newcommand{\One}{{\bf \rm{1}}}
\newcommand{\Z}{\mathbb{Z}}
\newcommand{\diam}{\operatorname{diam}}
\newcommand{\Fm}{{\mathcal F}}
\newcommand{\hm}{{\mathcal H}}
\newcommand{\set}{\rm{set}}
\newcommand{\disjointunion}{\sqcup}
\newcommand{\Lip}{\operatorname{Lip}}
\newcommand{\Tan}{\operatorname{Tan}}
\newcommand{\mass}{{\mathbf M}}
\newcommand{\CS}[1]{{\textcolor{black}{#1}}}
\newcommand{\curr}{{\mathbf M}}         
\newcommand{\intrectcurr}{{\mathcal I}} 
\newcommand{\intcurr}{{\mathbf I}}      
\newcommand{\dil}{\textrm{dil}}
\newcommand{\vol}{\operatorname{Vol}}
\newcommand{\nmass}{\mathbf N}
\newcommand{\flatnorm}{\mathcal F}
\newcommand{\fillvol}{{\operatorname{Fillvol}}}
\newcommand{\rstr}{\:\mbox{\rule{0.1ex}{1.2ex}\rule{1.1ex}{0.1ex}}\:}
\newcommand{\bdry}{\partial}
\newcommand{\spt}{\operatorname{spt}}
\begin{document}
\title[The Intrinsic Flat Distance]{The Intrinsic Flat Distance\\ 
between Riemannian Manifolds and\\
other Integral Current Spaces}

\author{C. Sormani}
\thanks{To appear in the Journal of Differential Geometry}
\thanks{C. Sormani partially supported by a PSC CUNY Research Grant.}
\address{CUNY Graduate Center
and Lehman College}
\email{sormanic@member.ams.org}

\author{S. Wenger}
\thanks{S. Wenger partially supported by  {\em \bf NSF DMS \#0956374}.}
\address{University of Illinois at Chicago}
\email{wenger@math.uic.edu}

\date{}

\keywords{}



\begin{abstract}  
Inspired by the Gromov-Hausdorff distance, we define a new notion
called the intrinsic flat distance  between oriented $m$ dimensional 
Riemannian manifolds with boundary
by isometrically embedding the manifolds into a common metric space,
measuring the flat distance between them and taking an infimum over
all isometric embeddings and all common metric spaces.  This is
made rigorous by applying Ambrosio-Kirchheim's extension of Federer-Fleming's
notion of integral currents to arbitrary metric spaces.
 
We prove the intrinsic flat distance between two 
compact oriented Riemannian manifolds is zero iff 
they have an orientation preserving isometry between them. Using the theory of 
Ambrosio-Kirchheim, we study converging sequences of manifolds and their 
limits, which are in a class of metric spaces that we call integral current spaces. 
We describe the properties of such spaces including the fact that they are 
countably $\mathcal{H}^m$ rectifiable spaces and present numerous examples.
\end{abstract}

\maketitle

\newpage

\noindent
{\Large{\bf{Contents:}}}

\vspace{.3cm}

\noindent
{\bf Section 1: Introduction}

\indent
Section 1.1: A Brief History

\indent
Section 1.2: An Overview

\indent
Section 1.3: Recommended Reading

\indent
Section 1.4:  Acknowledgements

\noindent
{\bf Section 2: Defining Current Spaces }

\indent
Subsection 2.1: Weighted Oriented Countably $\mathcal{H}^m$ Rectifiable Metric Spaces

\indent
Subsection 2.2: Reviewing Ambroiso-Kirchheim's Currents on Metric Spaces

\indent
Subsection 2.3: Parametrized Integer Rectifiable Currents

\indent
Subsection 2.4: Current Structures on Metric Spaces

\indent
Subsection 2.5: Integral Current Spaces

\noindent
{\bf Section 3: The Intrinsic Flat Distance between Integral Current Spaces}

\indent
Subsection 3.1: The Triangle Inequality

\indent
Subsection 3.2: A Brief Review of Existing Compactness Theorems

\indent
Subsection 3.3: The Infimum is Attained  

\indent
Subsection 3.4: Current Preserving Isometries

\noindent
{\bf Section 4: Sequences of Integral Current Spaces}

\indent
Subsection 4.1: Embedding into a Common Metric Space

\indent
Subsection 4.2: Properties of Intrinsic Flat Convergence

\indent
Subsection 4.3: Cancellation under Intrinsic Flat Convergence

\indent
Subsection 4.4: Ricci and Scalar Curvature

\indent
Subsection 4.5: Wenger's Compactness Theorem

\noindent
{\bf Section 5: Lipschitz Maps and Convergence}

\indent
Subsection 5.1: Lipschitz Maps

\indent
Subsection 5.2:  Lipschitz and Smooth Convergence

\noindent
{\bf Section 6: Examples Appendix}

\indent
Subsection 6.1: Isometric Embeddings

\indent
Subsection 6.2: Disappearing Tips and Ilmanen's Example

\indent
Subsection 6.3: Limits with Point Singularities

\indent
Subsection 6.4: Limits Need Not be Precompact

\indent
Subsection 6.5: Pipe-filling and Disconnected Limits

\indent
Subsection 6.6: Collapse in the Limit

\indent
Subsection 6.7: Cancellation in the Limit

\indent
Subsection 6.8: Doubling in the Limit

\indent
Subsection 6.9: Taxi Cab Limit Space

\indent
Subsection 6.10: Limit with a Higher Dimensional Completion

\indent
Subsection 6.11: Gabriel's Horn and the Cauchy Sequence with No Limit

\newpage
\section{Introduction}

\vspace{.3cm}
 
 \subsection{A Brief History}\label{subsect-history}
 
 In 1981, Gromov introduced the Gromov-Hausdorff distance between Riemannian
 manifolds as an intrinsic version of the Hausdorff distance.  Recall that the Hausdorff distance measures distances between subsets in a common metric space \cite{Gromov-metric}.
 To measure the distance between Riemannian manifolds, Gromov isometrically 
 embeds the pair of manifolds into a common metric
 space, $Z$, then measures the Hausdorff distance between them in $Z$, and 
 then takes the
 infimum over all isometric embeddings into all common metric spaces, $Z$.  Two
 compact Riemannian manifolds have $d_{GH}(M_1,M_2)=0$ if and only if they are
 isometric.    This notion of distance enables Riemannian geometers to study
 sequences of Riemannian manifolds which are not diffeomorphic to their limits
 and have no uniform lower bounds on their injectivity radii.
 The limits of converging sequences of
 compact Riemannian manifolds with a uniform upper bound on diameter need not
 be Riemannian manifolds at all.  However they are 
 compact  geodesic metric spaces.
 
 Gromov's compactness theorem states that a sequence of compact metric
 spaces, $X_j$, has a  Gromov-Hausdorff converging subsequence to a compact metric
 space, $X$, if and only if there is a uniform upper bound on diameter and a uniform upper
 bound on the function, $N(r)$, equal to the number of disjoint balls of radius $r$
 contained in the metric space.   He observes that manifolds with nonnegative
 Ricci curvature, for example, have a uniform upper bound on $N(r)$
 and thus have converging subsequences \cite{Gromov-metric}.
 Such sequences need not have uniform lower bounds on their injectivity radii
(c.f. \cite{Perelman-example}) and their limit spaces can have locally infinite topological type
\cite{Menguy-inf-top-type}.    Nevertheless Cheeger-Colding proved these limit
spaces have many intriguing properties which has lead to a wealth of further
research.  One particularly relevant result states
that when the sequence also has a uniform lower bound on volume, 
then the limit spaces are countably $\mathcal{H}^m$ rectifiable of the 
same dimension as the sequence \cite{ChCo-PartIII}.    
However, Gromov-Hausdorff convergence does not apply well to
sequences with positive scalar curvature.
 
In 2004, Ilmanen described the following example of a sequence of 
three dimensional spheres with positive scalar curvature which has
no Gromov-Hausdorff converging subsequence.  He felt the sequence should
converge in some weak sense to a standard sphere [Figure~\ref{fig-hairy-sphere}].

\begin{figure}[h] 
   \centering
   \includegraphics[width=4.3in]{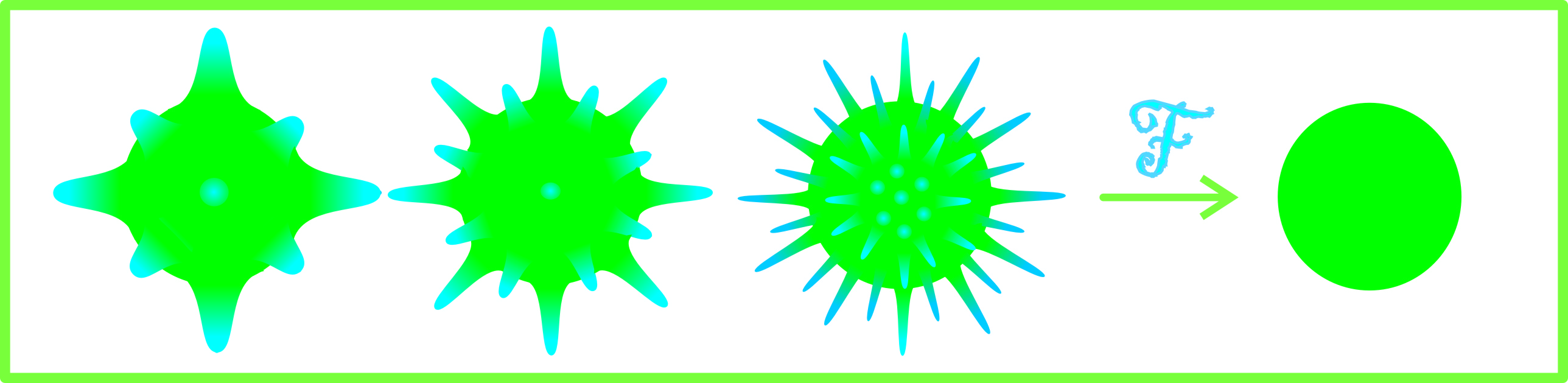} 
   \caption{Ilmanen's sequence of increasingly hairy spheres}
   \label{fig-hairy-sphere}
\end{figure}

Viewing the Riemannian manifolds in Figure~\ref{fig-hairy-sphere}
as submanifolds of Euclidean space,
they are seen to converge in Federer-Fleming's flat sense
as integral currents to the standard sphere.    One of the beautiful properties of limits
under Federer-Fleming's flat convergence is that they are countably $\mathcal{H}^m$
rectifiable with the same dimension as the sequence.   In light of Cheeger-Colding's
work, it seems natural, therefore, to look for an intrinsic flat convergence whose
limit spaces would be countably $\mathcal{H}^m$ rectifiable metric spaces.
The intrinsic flat distance introduced in this paper leads to exactly this kind of convergence.
The sequence of $3$ dimensional manifolds depicted in
Figure~\ref{fig-hairy-sphere} does in fact converge to the 
sphere in this intrinsic flat sense [Example~\ref{ex-hairy-sphere}].   

Ambrosio-Kirchheim's 2000 paper \cite{AK} developing the theory of currents on
arbitrary metric spaces is an essential ingredient for this paper.  Without it we
could not define the intrinsic flat distance, we could not define an integral current
space and we could not explore the properties of converging sequences.   
Other important
background to this paper is prior work of the second author, particularly
\cite{Wenger-flat}, and a coauthored piece \cite{SorWen1}.
Riemannian geometers may not have read these papers (which are aimed
at geometric measure theorists); so we review key
results as they are needed within.  

\subsection{An Overview} \label{subsect-intro}
 
 In this paper, we view a compact oriented Riemannian manifold with boundary, $M^m$,
 as a metric space, $(X,d)$, with an integral current, $T \in \intcurr_m(M)$,
 defined by integration over $M$: $T(\omega) :=\int_M \omega$.  We write
 $M=(X,d,T)$ and refer to $T$ as the integral current structure.
 Using this structure we can define an intrinsic flat distance
between such manifolds and study the intrinsic flat limits of sequences of 
such spaces.  As an immediate consequence of
the theory of Ambrosio-Kirchheim,
the limits of converging sequences of such spaces are countably $\hm^m$
rectifiable metric spaces, $(X,d)$, endowed with a current structure, 
$T \in \intcurr_m(Z)$, 
which represents an orientation and a multiplicity on $X$.  

In Section 2 we describe these spaces in more detail referring to them as
{\em $m$ dimensional integral current spaces} [Defn~\ref{defn-current-space}]
[Defn~\ref{defn-integral-current-space}].
The class of such spaces is denoted ${\mathcal M}^m$ and includes the
zero current space, denoted $\bf{0}=(0,0,0)$. 
Given an integral current space $(X,d,T)$, we define its boundary using the boundary,
$\partial T$, of the integral current structure
[Defn~\ref{defn-integral-current-space}].  We also define the mass
of the space using the mass, $\mass(T)$, of the current structure
[Defn~\ref{defn-space-mass}].  When
$(X,d,T)$ is an oriented Riemannian manifold, the boundary is
just the usual boundary and the mass is just the volume.

Recall that the flat distance between $m$ dimensional integral currents 
$S,T\in\intcurr_m\left(Z\right)$ is given by 
\begin{equation} \label{eqn-Federer-Flat}
d^Z_{F}\left(S,T\right):= 
\inf\{\mass\left(U\right)+\mass\left(V\right):
S-T=U+\bdry V \}
\end{equation}
where $U\in\intcurr_m\left(Z\right)$ and $V\in\intcurr_{m+1}\left(Z\right)$.
This notion of a flat distance was first introduced by Whitney
in \cite{Whitney} and later adapted to rectifiable currents by Federer-Fleming \cite{FF}.
The flat distance between integral currents on an arbitrary metric space was
introduced by the second author in \cite{Wenger-flat}.  

Our definition of the intrinsic flat distance between elements 
of ${\mathcal M}^m$ is modeled after Gromov's intrinsic Hausdorff distance \cite{Gromov-metric}:

\begin{defn} \label{def-flat1}
 For $M_1=\left(X_1,d_1,T_1\right)$ and $M_2=\left(X_2,d_2,T_2\right)\in\mathcal M^m$ let the 
intrinsic flat distance  be defined:
 \begin{equation}\label{equation:def-abstract-flat-distance}
  d_{\Fm}\left(M_1,M_2\right):=
 \inf d_F^Z
\left(\varphi_{1\#} T_1, \varphi_{2\#} T_2 \right),
 \end{equation}
where the infimum is taken over all complete metric spaces 
$\left(Z,d\right)$ and isometric embeddings 
$\varphi_1 : \left(\bar{X}_1,d_1\right)\to \left(Z,d\right)$ and $\varphi_2: \left(\bar{X}_2,d_2\right)\to \left(Z,d\right)$
and the flat norm $d_F^Z$ is taken in $Z$.
Here $\bar{X}_i$ denotes the metric completion of $X_i$ and $d_i$ is the extension
of $d_i$ on $\bar{X}_i$, while $\phi_\# T$ denotes the push forward of $T$.
\end{defn}

All notions from Ambrosio-Kirchheim's work needed to understand this definition are reviewed in detail in Section 2.  As in Gromov, an isometric embedding is a map 
$\phi: A \to B$ which preserves distances not just the Riemannian metric tensors:
\be \label{def-isom-embed}
d_B\left(\phi\left(x\right),\phi\left(y\right)\right)=d_A\left(x,y\right) \qquad \forall x,y \in A.
\ee
For example a map $f: S^1 \to D^2$ mapping the circle to the boundary of a flat
disk is not an isometric embedding while the map $\varphi:S^1 \to S^2$
mapping the circle to a great circle in the sphere is an isometric embedding.
If the infimum in (\ref{equation:def-abstract-flat-distance}) were taken over maps
preserving the Riemannian metric tensors rather than isometric embeddings in
the sense of Gromov, then the value would not be positive.  

It is fairly easy to estimate the
intrinsic flat distances between 
compact oriented Riemannian manifolds using standard
methods from Riemannian geometry.  If $M^m_1$ and $M^m_2$ 
are $m$ dimensional Riemannian manifolds which isometrically
embed into an $m+1$ dimensional Riemannian manifold, $V$, such that
the boundary, $\partial V= \varphi(M_1) \disjointunion \varphi(M_2) \disjointunion U$,
then by (\ref{eqn-Federer-Flat}) we have
$$
d_{\mathcal F}(M_1, M_2) \le \vol_{m}(U) +\vol_{m+1}(V).
$$
This technique and others are applied in the Appendix to
explicitly compute the intrinsic flat limits of converging sequences of 
manifolds depicted here.

It should be noted that $d_{\mathcal{F}}(M,\bf{0})$  is related
to Gromov's filling volume of a manifold \cite{Gromov-filling}
via \cite{Wenger-flat} and \cite{SorWen1}.
DePauw and Hardt have recently defined a flat norm
a la Gromov for chains in a metric space. When the chain is an isometrically
embedded Riemannian manifold, $M$, then their "flat norm" of $M$
seems to take on the same value as $d_{\mathcal{F}}(M,\bf{0})$
\cite{DH-chains}. \footnote{See \cite{DH-chains} page 20 and page 26.}

In Section 3 we explore the properties of our intrinsic flat distance, $d_\mathcal{F}$.
It is always finite 
and, in particular, 
satisfies $d_{\Fm}\left(M_1, M_2\right) \le \vol\left(M_1\right)+\vol\left(M_2\right)$
when $M_i$ are compact oriented Riemannian manifolds [Remark~\ref{finite}].
We prove $d_{\Fm}$ is a distance on 
$\mathcal{M}^m_0$, the space of precompact
integral current spaces
[Theorem~\ref{zero-mani} and Theorem~\ref{triangle}].  
In particular, for compact oriented Riemannian manifolds,
$M$ and $N$,  $d_{\Fm}\left(M,N\right)=0$ iff there is an orientation
preserving isometry from $M$ to $N$.

Applying the Compactness Theorem of Ambrosio-Kirchheim, we see that when a
sequence of Riemannian manifolds, $M_j$, has volume uniformly bounded above
and converges in the Gromov-Hausdorff sense
to a compact metric space, $Y$, then a subsequence of the $M_j$ converges to an integral
current space, $X$, where $X \subset Y$ [Theorem~\ref{GH-to-flat}].    
Example~\ref{example-one-hair} depicted in Figure~\ref{figure-one-hair},
demonstrates that the intrinsic flat and Gromov-Hausdorff limits
need not always agree: the Gromov-Hausdorff  
limit is a sphere with an interval attached while the intrinsic flat limit is just the sphere.

\begin{figure}[h] 
   \centering
   \includegraphics[width=4.7in]{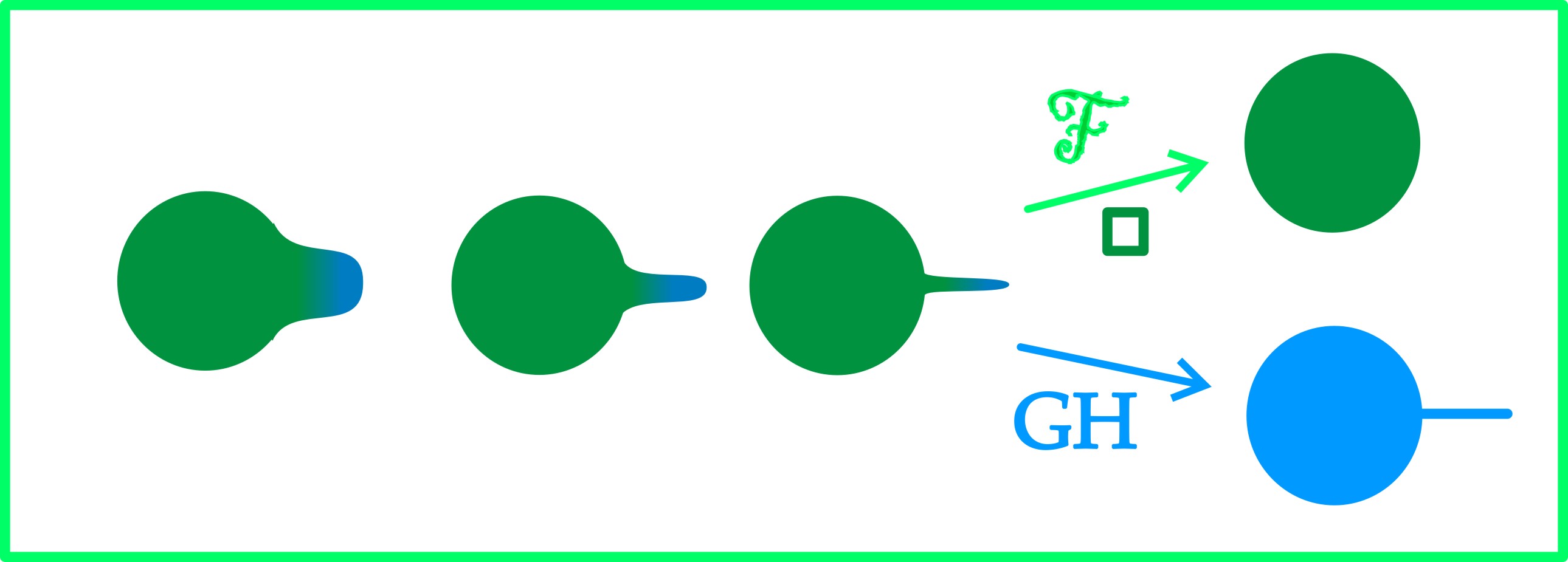} 
   \caption{A sphere with a disappearing hair [Ex~\ref{example-one-hair}].}
   \label{figure-one-hair}
\end{figure}

Gromov-Hausdorff limits of Riemannian manifolds are geodesic spaces.  Recall that
a geodesic space is a metric space such that
\be
d(x,y)=\inf\{ L(c): \,\, c \textrm{ is a curve s.t. }c(0)=x, c(1)=y\}
\ee
and the infimum is attained by a curve called a geodesic segment.
In Example~\ref{example-not-length} 
depicted in Figure~\ref{figure-not-length},
we show that the intrinsic flat limit of Riemannian manifolds need not 
be a geodesic space.  
In fact the intrinsic flat limit is not even path connected.

\begin{figure}[h] 
   \centering
   \includegraphics[width=4.7in]{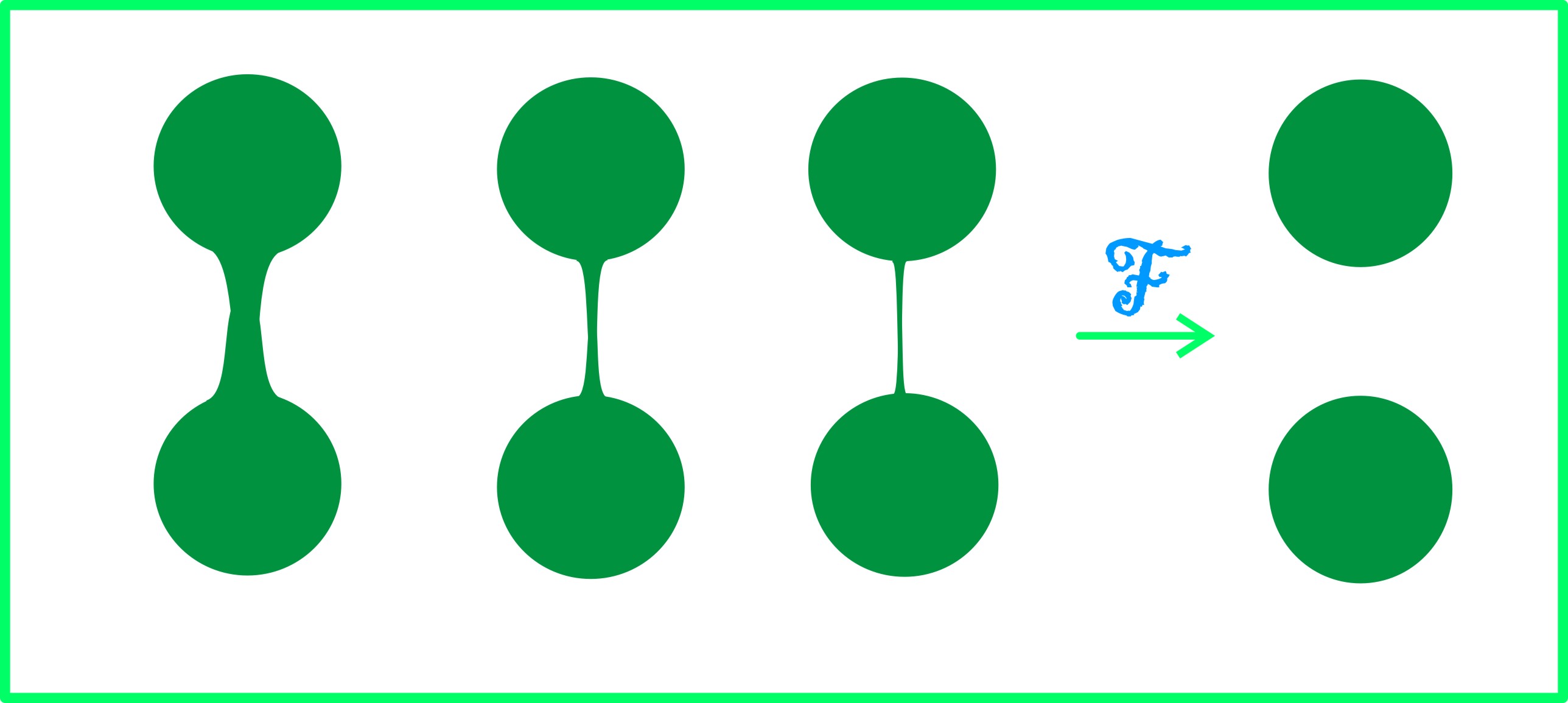} 
   \caption{The intrinsic flat limit is a disjoint pair of spheres [Ex~\ref{example-not-length}].}
   \label{figure-not-length}
\end{figure}


While the limit spaces are not geodesic spaces, they are countably $\mathcal{H}^m$
rectifiable metric spaces of the same dimension.  
These spaces, introduced and studied by Kirchheim in \cite{Kirchheim}, are
covered almost everywhere by the bi-Lipschtiz charts of Borel sets in $\R^m$.  
Gromov-Hausdorff limits do not in general have rectifiability properties.

An interesting example
of such a space is depicted in Figure~\ref{figure-gym2} [Example~\ref{example-gym2}].   
The intrinsic flat limit depicted here is the disjoint collection of spheres while the
Gromov-Hausdorff limit has line segments between them.

\begin{figure}[h] 
   \centering
   \includegraphics[width=4.7in]{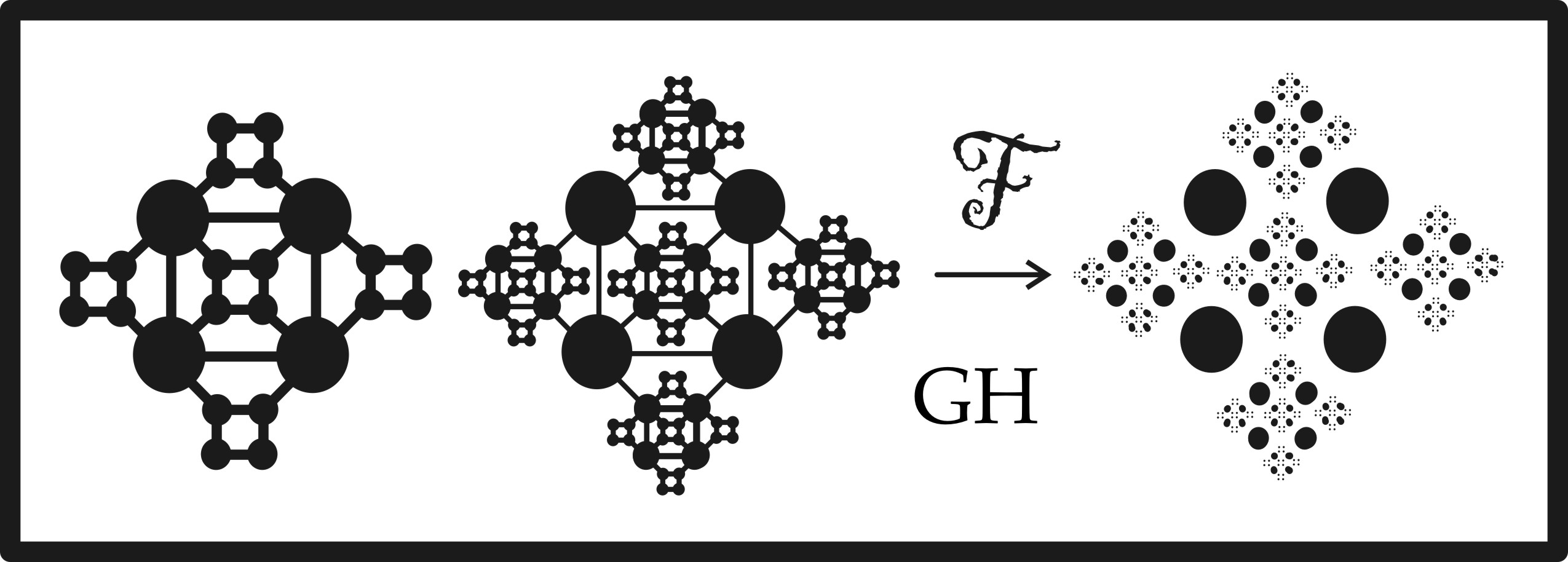} 
   \caption{The limit is a countably ${\mathcal H}^m$ rectifiable space 
   [Ex~\ref{example-gym2}].}
   \label{figure-gym2}
\end{figure}

If a sequence of Riemannian manifolds, $M^m_j$,
 has volume converging to 0 or has a Gromov-Hausdorff
limit whose dimension is less than $m$, then the intrinsic flat limit is the zero space
[Remark~\ref{vol-to-zero} and Corollary~\ref{lower-dim}].  
See Figure~\ref{figure-tori-GH} [Example~\ref{example-tori-GH}].  Such
sequences are referred to as collapsing sequences.

\begin{figure}[h] 
   \centering
   \includegraphics[width=4.7in]{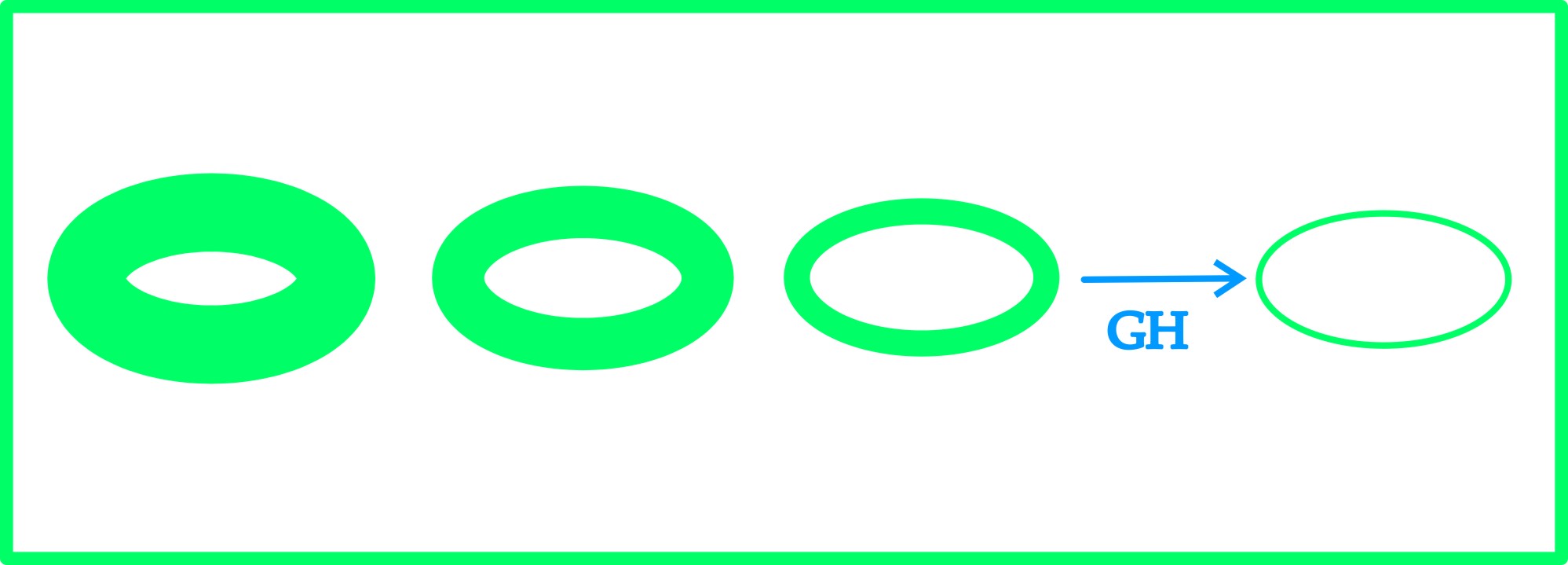} 
   \caption{The Gromov-Hausdorff limit is lower dimensional and the intrinsic flat limit is the zero space [Example~\ref{example-tori-GH}].}
   \label{figure-tori-GH}
\end{figure}

Sequences may also converge to the zero integral current space due to an effect called
cancellation.  With significantly growing local topology,
a sequence of $M^m_j$ which Gromov-Hausdorff converges to a Riemannian manifold, 
$X$, of the same dimension might cancel with itself so that $Y=0$.
In \cite{SorWen1}, the authors gave an example of two
standard three dimensional spheres joined together by increasingly dense tunnels,
providing a sequence of compact manifolds of positive scalar curvature
which converges in the Gromov-Hausdorff sense to a standard sphere.
However the sequence could be isometrically embedded into a common space
$\varphi_j: M_j\to Z$ such that $\varphi_{j\#}M_j$ converges in the flat sense to
$0$ due to cancellation.  Thus $M_j \Fto \bf{0}$.
In Figure~\ref{figure-cancels} we depict a two dimensional example.  Here
two sheets are joined together by many tunnels so that they isometrically
embed into the boundary of a Riemannian manifold of arbitrarily
small volume.

\begin{figure}[h] 
   \centering
   \includegraphics[width=4.7in]{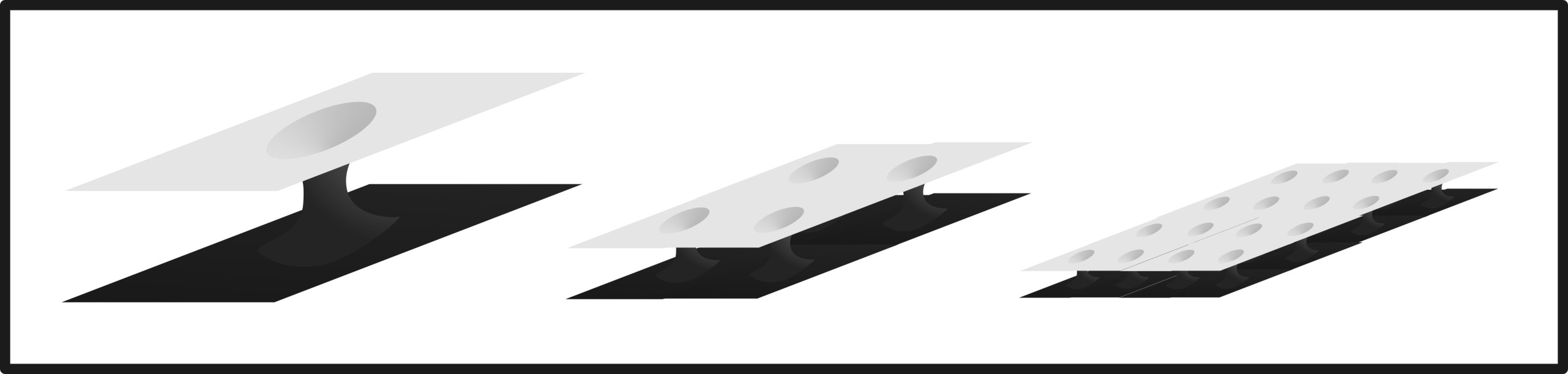} 
   \caption{A sequence converging in the intrinsic flat sense to the zero space
   due to cancellation [Example~\ref{example-cancels}].}
   \label{figure-cancels}
\end{figure}

It is also possible for a sequence of Riemannian manifolds with increasing
local topology to overlaps with itself so that the limit $Y=2X$ 
[Example~\ref{example-doubles}].   If one provides a twist in the middle of 
each tunnel in Figure~\ref{figure-cancels} so as to flip the orientation
of one of the two sheets, then the sequence of manifolds doesn't cancel in
the limit but instead doubles.  We say the limit space has weight or multiplicity
$2$.  In general, intrinsic flat limit spaces have a weight function, which is an
integer valued Borel measurable function, just like integral currents 
[Defn~\ref{def-weight}].

In Section 4 we examine the properties of intrinsic flat convergence.  
We first have a section proving that converging and Cauchy sequences
embed into a common metric space.  This allows us to then immediately
extend properties of weakly converging sequences of integral currents
to integral current spaces.  In particular the mass is lower semicontinuous
as in Ambrosio-Kirchheim \cite{AK} and the the filling volume
is continuous as in \cite{Wenger-flat}.

When $M_j^m$ have nonnegative Ricci curvature, the intrinsic flat limits and
Gromov-Hausdorff limits agree  \cite{SorWen1}.  In this sense one may think of 
intrinsic flat convergence as a means of extending to a larger class of manifolds the
rectifiability properties already proven by Cheeger-Colding to hold on Gromov-Hausdorff
limits of noncollapsing sequences of such manifolds \cite{ChCo-PartI}.  

When $M^m_j$
have a common lower bound on injectivity radius or a uniform linear local
contractibility radius, then work of Croke applying Berger's volume estimates
and work of Greene-Petersen applying Gromov's filling volume inequality imply that a subsequence
of the $M^m_j$ converge in the Gromov-Hausdorff sense 
\cite{Croke-inj}\cite{Greene-Petersen}.  In \cite{SorWen1}, the
authors proved cancellation does not occur in that setting either, so that the
Gromov-Hausdorff limit $X$ agrees with the flat limit $Y$ and is countable $\hm^m$
rectifiable.  

The second author has 
proven a compactness theorem:
{\em Any sequence of oriented Riemannian manifolds with
boundary, $M_j^m$, with a uniform upper bound on $\diam\left(M_j^m\right)$, 
$\vol_m\left(M_j^m\right)$ and
$\vol_{m-1}\left(\partial M_j^m\right)$ always has a subsequence which converges in the intrinsic flat
sense to an integral current space} \cite{Wenger-compactness}.
In fact Wenger's compactness theorem holds for integral current spaces.
We do not apply this theorem in this paper except
for a few immediate corollaries given in Subsection 4.5 and occasional footnotes.

Unlike the limits in Gromov's compactness theorem, the sequences in Wenger's 
compactness theorem need not
converge to a compact limit space.  In Figure~\ref{fig-many-tips}
we see that the limit space need not be precompact  even when the
sequence of manifolds has a uniform upper bound
on volume and diameter.   

\begin{figure}[h] 
   \centering
   \includegraphics[width=4.7in]{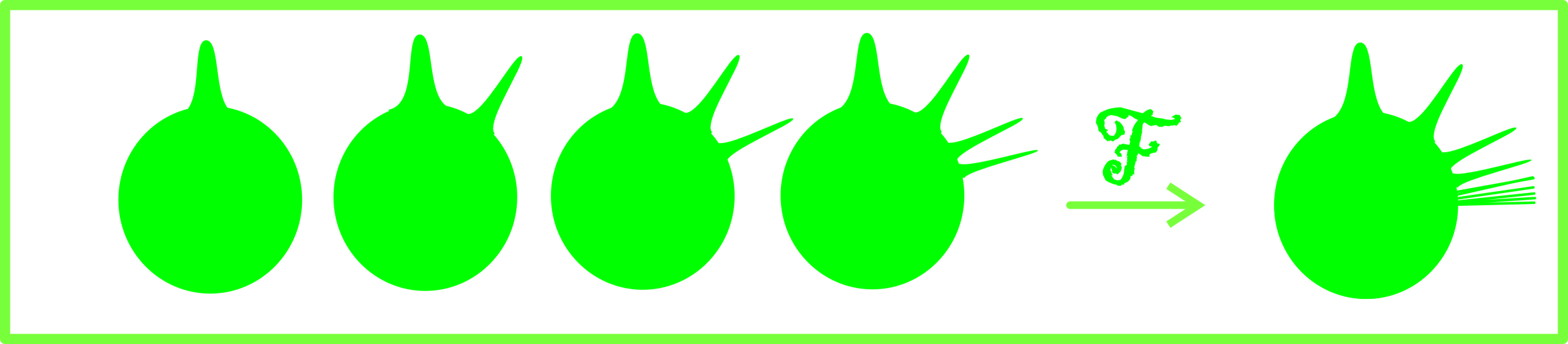} 
   \caption{Spheres with increasingly thin extra bumps converging to a bounded noncompact limit [Ex~\ref{ex-many-tips}].}
   \label{fig-many-tips}
\end{figure}

In Section 5, we describe the relationship between the intrinsic flat convergence
of Riemannian manifolds and other forms of convergence including $C^\infty$
convergence, $C^{k,\alpha}$ convergence, and
Gromov's Lipschitz convergence. 

In the Appendix by the first author, 
we include many examples of sequences explicitly proving they
converge to their limits.  Although the examples are referred to throughout 
the textbook, they are deferred to the final section so that proofs of convergence
may apply any or all lemmas proven in the paper.

While we do not have room in this introduction to refer to all the results presented here,
we refer the reader to the contents at the beginning of the paper and we introduce each
section with a more detailed description of what is contained within it.  Some sections
mention explicit open problems and conjectures.

\subsection{Recommended Reading}
For Riemannian geometry recommended background is a standard
one semester graduate course.  For metric
geometry background, the beginning of Burago-Burago-Ivanov \cite{BBI}
is recommended or Gromov's classic \cite{Gromov-metric}.  For geometric measure
theory a basic guide to Federer is provided in Morgan's textbook \cite{Morgan}.
One may also consult Lin-Yang \cite{Lin-Yang-text}.
We try to cover what is needed from Ambrosio-Kirchheim's seminal
paper \cite{AK}, but we recommend that paper as well.  

\subsection{Acknowledgements}

The first author would like to thank Columbia for its
hospitality in Spring-Summer 2004 and  Ilmanen for many
interesting conversations at that time regarding the necessity of
a weak convergence of Riemannian manifolds and what properties
such a convergence ought to have.  She would also like to
thank Courant Institute for its hospitality in Spring 2007 and 
Summer 2008 enabling the two authors first to develop the
notion of the intrinsic flat distance between Riemannian manifolds
and later to develop the notion of an integral current space in
general extending their prior results to this setting.
The second author would like to thank Courant Institute for
providing such an excellent research environment.  The first
author would also like to thank Paul Yang, Blaine Lawson, Steve Ferry
and Carolyn Gordon for their comments on the 2008 version of the
paper, as well as the participants in the CUNY 2009 Differential
Geometry Workshop\footnote{Marcus Khuri, Michael Munn, Ovidiu Munteanu,
Natasa Sesum, Mu-Tao Wang, William Wylie} for suggestions 
leading to many of the examples added as an appendix that summer.


\section{{\bf Defining Current Spaces}} \label{Sect-Def-Current-Space}


In this section we introduce current spaces $\left(X,d,T\right)$.  
Everything in this section is a reformulation of Ambrosio-Kirchheim's theory
of currents on metric spaces, so that we may clearly define the new notions an
integer rectifiable current space [Defn~\ref{defn-current-space}] and an integral
current space [Defn~\ref{defn-integral-current-space}].
Experts in the theory of Ambrosio-Kirchheim
may wish to skip to these definitions.  In Section~\ref{sect-flat-distance} we will discuss the
intrinsic flat distance between such spaces.  This section is aimed at Riemannian Geometers
who have not yet read Ambrosio-Kirchheim's work \cite{AK}.

In Subsection~\ref{Subsect-Def-Current-Space-1}, we provide a description of these spaces as weighted oriented countably
$\mathcal{H}^m$-rectifiable metric spaces.  Our
spaces need not be complete but must be "completely settled" as defined in Definition~\ref{def-settled}.
In Subsections~\ref{Subsect-Def-Current-Space-2} and~\ref{subsect-param}, we review Ambrosio-Kirchheim's integer rectifiable currents
on complete metric spaces, emphasizing a parametric perspective and proving a
couple lemmas regarding
this parametrization.
In Subsection~\ref{Subsect-Def-Current-Space-3}, we introduce the notion of an integer rectifiable current structure on a metric space [Definition~\ref{defn-current-space}]
and prove in Proposition~\ref{prop-current-spaces} that metric spaces with such current structures are exactly the completely settled
weighted oriented rectifiable metric spaces defined in the first subsection.
In Subsection~\ref{Subsect-Def-Current-Space-4},  we introduce the notion of the boundary of a current space
and define integral current spaces [Definition~\ref{defn-integral-current-space}].


\subsection{Weighted Oriented Countably $\mathcal{H}^m$ Rectifiable Metric Spaces} \label{Subsect-Def-Current-Space-1}

We begin with the following standard definition (\cite{Federer} c.f. \cite{AK}):

\begin{defn}  \label{def-rectspace}
A metric space $X$ is called
countably $\mathcal{H}^m$ rectifiable  iff there exists countably many Lipschitz maps $\varphi_i$ from Borel measurable
subsets $A_i \subset \R^m$ to $X$  such that the Hausdorff measure
\be
\mathcal{H}^m\left( X\setminus \bigcup_{i=1}^\infty \varphi_i\left(A_i\right) \right)=0.
\ee
\end{defn}

\begin{rmrk} \label{defn-metric-differential}
Note that Kirchheim \cite{Kirchheim} defined a metric differential for Lipschitz maps $\varphi:A\subset \R^k \to Z$ where $Z$ is a metric space.  When $A$ is open, 
\be \label{eqn-metric-differential}
md\varphi_y\left(v\right) :=\lim_{r\to 0} \frac {d\left(\varphi\left(y+rv\right),\varphi\left(y\right)\right)}{r},
\ee
if the limit exists.
In fact Kirchheim has proven that for almost every $y \in A$,
$md\varphi_y\left(v\right)$ is defined for all $v \in \R^m$ and $md\varphi_y$ is a seminorm. 
On a Riemannian manifold $Z$ with a smooth map $f$,  $md f_y\left(v\right)=|df_y\left(v\right)|$.  See also Korevaar-Schoen \cite{Korevaar-Schoen}.
\end{rmrk}



In \cite{Kirchheim}, Kirchheim proved
this collection of charts can be chosen so that the maps $\varphi_i$ are bi-Lipschitz.
So we may extend the Riemannian notion of an atlas to this setting:

\begin{defn} \label{defn-atlas-of-X}
A bi-Lipschitz collection of charts, $\{\varphi_i\}$, is called an {\bf atlas} of $X$.
\end{defn}

\begin{rmrk} \label{rmrk-tan-space-is-norm-space}  {\em \bf }
Note that when $\varphi: A \subset \R^m \to X$ is bi-Lipschitz, then
$md\varphi_y$ is a norm on $\R^m$.   In fact there is a notion of
an approximate tangent space at almost every $y\in X$ which is 
a normed space of dimension $m$ whose norm is defined by the metric
differential of a well chosen bi-Lipscitz chart.   (c.f. \cite{Kirchheim})
\end{rmrk}



Recall that by Rademacher's Theorem
we know that given a Lipschitz function $f: \R^m \to \R^m$, $\nabla f$ is defined $\mathcal{H}^m$ almost everywhere.
In particular given two bi-Lipschitz charts, $\varphi_i, \varphi_j$, $\det [\nabla\left(\varphi_i^{-1} \circ {\varphi}_j\right)]$ is
defined almost everywhere.  So we can extend the Riemannian definitions
of an atlas and an oriented atlas to countably $\mathcal{H}^m$ rectifiable spaces:

\begin{defn} \label{def-orient-atlas}
An  atlas on a countably $\mathcal{H}^m$ rectifiable space
 $X$ is called an {\bf oriented atlas} if the orientations agree on all overlapping charts:
\be \label{eqn-def-oriented}
\det \left[\nabla\left(\varphi_i^{-1} \circ {\varphi}_j\right)\right] >0
\ee
 almost everywhere
on $A_j \cap \varphi^{-1}_j \left( \varphi_i\left(A_i\right) \right)$.
\end{defn}

\begin{defn} \label{def-orientation}
An {\bf orientation} on a countably $\mathcal{H}^m$ rectifiable space
 $X$ is an equivalence class of atlases where two atlases, $\{\varphi_i\}, \{\bar{\varphi}_j\}$ are considered to be equivalent if
their union is an oriented atlas.
\end{defn}

\begin{rmrk}
Given an orientation $[\{\varphi_i\}]$, we can choose a representative atlas such that the charts are pairwise disjoint,
$\varphi_i(A_i)\cap\varphi_j(A_j)=\emptyset$,
and the domains $A_i$ are precompact.  We call such an oriented atlas 
a preferred oriented atlas.
\end{rmrk}

\begin{rmrk}  \label{Lip-mani-charts}
Orientable Riemannian manifolds and, more generally, connected orientable
Lipschitz manifolds have only two
standard orientations  because they are connected metric spaces and
their charts overlap.  
Countably $\mathcal{H}^m$ rectifiable
spaces may have uncountably many orientations as each disjoint 
chart may be flipped on its own.  
Recall that a Lipschitz manifold is a metric space, $X$, such that for all $x \in X$ there
is an open set $U$ about $x$ with a bi-Lipschitz homeomorphism to the open unit ball in Euclidean
space.  
A Lipschitz
manifold is said to be orientable when the bi-Lipschitz maps can be chosen so that 
(\ref{eqn-def-oriented}) holds for all pairs of charts. 

When we say "oriented", we will mean that the orientation has been provided, and
we will always orient Riemannian manifolds and Lipschitz manifolds according to one
of their two standard orientations, and we will always assign them an atlas restricted
from the standard charts used to define them as manifolds.
\end{rmrk}

\begin{defn} \label{def-weight}
A {\bf multiplicity} function (or weight) on a countably $\mathcal{H}^m$ rectifiable space
$X$  with $\mathcal{H}^m(X)<\infty$
is a Borel measurable function $\theta: X \to \N$ whose weighted volume,
\be
Vol\left(X, \theta\right):=\int_X \theta d\mathcal{H}^m,
\ee
is finite.
\end{defn}

Note that on a Riemannian manifold, with multiplicity $\theta=1$, 
the weighted volume is the volume.  Later we will
define the mass of these spaces which will agree with the weighted volume on Riemannian manifolds with
arbitrary multiplicity functions but will not be equal to the weighted volume for
more general spaces.

\begin{rmrk}
Given a multiplicity function and an atlas, one may refine the atlas so that the multiplicity function is constant on the image of
each chart.
\end{rmrk}


Recall the notion of the lower $m$-dimensional density, 
$\theta_{*m}(\mu,p)$, of a Borel measure $\mu$ at $p\in X$ is defined by
\be  \label{eqn-lower-density}
 \Theta_{*m}\left(\mu, p\right):= \liminf_{r\to 0} \frac{\mu(B_p(r))}{\omega_m r^m}.
\ee

We introduce the following new concept:

\begin{defn} \label{def-settled}
A weighted oriented countably $\mathcal{H}^m$ rectifiable metric space, $\left(X, d, [\{\phi_i\}], \theta\right)$, is 
called {\bf completely settled} iff
\be
X=\{p\in \bar{X}: \,\, \Theta_{*m}\left(\theta \mathcal{H}^m, p\right)>0\}.
\ee
\end{defn}

\begin{example} \label{example-settled}
An oriented Riemannian manifold with a conical singular point and constant
multiplicity $\theta=1$, which includes the singular point, is a completely
settled space.  An oriented Riemannian manifold with a cusped singular point and 
constant multiplicity $\theta=1$, which does not
include the singular point is a completely settled space.  In particular a completely settled space need not be complete.

An oriented Riemannian manifold with a cusped singular point $p$
and a multiplicity function, $\theta$,  approaching infinity at $p$ such that
\be
\lim_{r\to 0} \frac{1}{r^m}\int_{B_p\left(r\right)} \theta \, d\mathcal{H}^m >0
\ee
is completely settled only if it includes $p$.  
\end{example}

In Subsection~\ref{Subsect-Def-Current-Space-3} we will define our
current spaces as metric
spaces with current structures.  We will prove in Proposition~\ref{prop-current-spaces}   that a metric space is a nonzero integer rectifiable current space iff it
is a completely settled weighted oriented countably $\mathcal{H}^m$-rectifiable metric space.  Note that the notion of a completely settled space does not appear
in Ambrosio-Kirchheim's work and is introduced here to allow us to understand
current spaces in an intrinsic way.
Integral current spaces will have an added condition that their boundaries 
are integer rectifiable metric spaces as well.

\subsection{Reviewing Ambrosio-Kirchheim's Currents on Metric Spaces} \label{Subsect-Def-Current-Space-2} 

In this subsection we review all definitions and theorems of Ambrosio-Kirchheim
and Federer-Fleming necessary to define current structures on metric spaces
\cite{AK}\cite{FF}.

For readers familiar with the Federer-Fleming theory of currents
one may recall that an $m$ dimensional current, $T$, 
acts on smooth $m$ forms (e.g. $\omega=f d\pi_1\wedge\cdots \wedge d\pi_m$).   
An integer rectifiable current is defined by integration over a rectifiable set
in a precise way with integer weight
and the notion of the boundary of $T$ is defined as in
Stokes theorem: $\partial T(\omega)=T(d\omega)$.  
This approach extends naturally
to smooth manifolds but not to metric spaces which do not have differential
forms.  

In the place of differential forms, Ambrosio-Kirchheim use
$m+1$ tuples, $\omega\in \mathcal{D}^m(Z)$,
\be
\omega=f\pi=\left(f,\pi_1 ..., \pi_m\right) \in \mathcal{D}^m(Z)
\ee 
where
$f: Z \to \R$ is a bounded Lipschitz function and
$\pi_i: Z \to \R$ are Lipschitz.  They credit this approach to DeGiorgi \cite{DeGiorgi}.

In \cite{AK} Definitions 2.1, 2.2, 2.6 and 3.1, an $m$ dimensional
current $T\in \curr_m(Z)$ is defined
as a multilinear functional on $\mathcal{D}^m(Z)$ such that
$T\left(f,\pi_1,..., \pi_m\right) $ satisfies a variety of functional properties similar to $T\left(\omega\right)$ where
$\omega= f d\pi_1 \wedge \dots \wedge d\pi_m$ in the
smooth setting as follows:  

\begin{defn}[Ambrosio-Kirchheim]
An
$m$ dimensional {\bf \em current}, $T$, on a complete metric space, $Z$, 
is a real valued
{\em multilinear functional} on $\mathcal{D}^m(Z)$, with the
following required properties:

i) {\bf Locality}:
\be \label{def-locality}
T(f, \pi_1,..., \pi_m)=0 \textrm{ if }\exists i\in \{1,...m\} \textrm{ s.t. }\pi_i
\textrm{ is constant on a nbd of } \{f\neq0\}.
\ee

ii) {\bf Continuity}: 
$$
T \textrm{ is continuous with respect to the pointwise convergence
of the }\pi_i \textrm{ such that } \Lip(\pi_i)\le 1.
$$

iii) {\bf Finite mass}: there exists a
finite Borel measure $\mu$ on $Z$ such that
\be \label{def-AK-current-iii}
|T(f,\pi_1,..., \pi_m)| \le \prod_{i=1}^m \Lip(\pi_i)  \int_Z |f| \,d\mu \qquad \forall (f,\pi_1,..., \pi_m)\in \mathcal{D}^m(Z).
\ee
The space of $m$ dimensional currents on $Z$, is denoted, $\curr_m(Z)$.
\end{defn}

\begin{example}  \label{basic-current}
Given an $L^1$ function $h: A \subset \R^m \to \Z$, one can define an
$m$ dimensional current $\Lbrack h \Rbrack $
as follows
\be \label{def-current-from-function}
\Lbrack h \Rbrack \left(f, \pi\right) := \int_{A \subset \R^m}  h f \det\left(\nabla \pi\right) \, d\mathcal{L}^m =\int_{A \subset \R^m} hf \, d\pi_1 \wedge \dots \wedge d\pi_m.
\ee
Given a Borel measurable set, $A\subset \R^m$, the
current $\Lbrack \One_A \Rbrack$ is defined by the indicator function $\One_A:\R^m \to \R$.  Ambrosio-Kirchheim prove $\Lbrack h \Rbrack \in \curr_m(Z)$ \cite{AK}. 
\end{example}
\begin{rmrk}
  Stronger versions of locality and continuity, as well as product and
chain rules are  proven in \cite{AK}[Theorem 3.5].  In particular, they prove
\be
T(f, \pi_{\sigma(1)},..., \pi_{\sigma(m)})= \sgn(\sigma) T(f, \pi_1,..., \pi_m)
\ee
for any permutation, $\sigma$, of $\{1,2,..., m\}$.
\end{rmrk}

The following definition will allow us to define the most
important currents explicitly:

\begin{defn}[Ambrosio-Kirchheim]
Given a Lipschitz map $\varphi:Z\to Z'$, the {\em push
forward} of a current $T\in \curr_m(Z)$
to a current $\varphi_\# T \in \curr_m(Z')$ is given in \cite{AK}[Defn 2.4] by
\be \label{def-push-forward}
\varphi_\#T(f,\pi_1,..., \pi_m):=T(f\circ \varphi, \pi_1\circ\varphi,..., \pi_m\circ\varphi)
\ee
exactly as in Federer-Flemming when everything is smooth. 
\end{defn}

\begin{example}\label{basic-current-pushed}
If one has a bi-Lipschitz map, $\varphi:\R^m \to Z$, and 
a Lebesgue function $h\in L^1(A,\Z)$ where $A\subset \R^m$,
then $\varphi_\# \Lbrack h \Rbrack \in \curr_m(Z)$ is an example of an 
$m$ dimensional current in $Z$.  Note that
\be
\varphi_\# \Lbrack h \Rbrack (f,\pi_1,..., \pi_m)=
\int_{A \subset \R^m} (h\circ \varphi )(f\circ\varphi) \, 
d(\pi_1\circ \varphi) \wedge \dots \wedge d(\pi_m\circ\varphi)
\ee
where $d(\pi_i\circ\varphi)$ is well defined almost everywhere
by Rademacher's Theorem.  All currents of importance in this
paper are built from currents of this form.
\end{example}

The following are Definition 2.3
and Definition 2.5  in \cite{AK}:

\begin{defn}[Ambrosio-Kirchheim]\label{rmrk-def-boundary}
The {\bf boundary} of $T\in \curr_{m+1}(Z)$ is defined
\be \label{def-boundary}
\partial T(f, \pi_1, ... , \pi_m):= T(1, f, \pi_1,..., \pi_m) 
\ee
since in the smooth setting 
\be
\partial T(f d\pi_1 \wedge \cdots \wedge d\pi_m)
=T(1df\wedge d\pi_1\wedge \cdots\wedge d\pi_m).
\ee
Note that $\varphi_\#(\partial T)=\partial(\varphi_\#T)$ and $\partial \partial T=0$. 
\end{defn} 
 
\begin{defn}[Ambrosio-Kirchheim] 
The {\bf restriction} $T\rstr \omega\in \curr_m(Z)$
of a current $T\in M_{m+k}(Z)$ by a $k+1$ tuple
 $\omega=(g,\tau_1,..., \tau_k)\in \mathcal{D}^k(Z)$:
\be
(T\rstr\omega)(f,\pi_1,..., \pi_m):=T(f\cdot g, \tau_1,..., \tau_k, \pi_1,..., \pi_m).
\ee
\end{defn}

The following definition of the mass of a current is technical \cite{AK}[Defn 2.6].  
A simpler
formula for mass will be given in  Lemma~\ref{lemma-weight} when we restrict
ourselves to integer rectifiable currents.

\begin{defn}[Ambrosio-Kirchheim]  \label{defn-mass}  
The {\bf mass measure} $\|T\| $
of a current $T\in \curr_m(Z)$, is the smallest Borel measure $\mu$ such that
(\ref{def-AK-current-iii}) holds for all $m+1$ tuples, $(f,\pi)$.
%
The {\bf mass} of $T$ is defined
\be \label{def-mass-from-current}
M\left(T\right) = || T || \left(Z\right) = \int_Z \, d\| T\|.
\ee
\end{defn}

In particular
\be \label{eqn-mass}
\Big| T(f,\pi_1,..., \pi_m) \Big| \le \mass(T) |f|_\infty \Lip(\pi_1) \cdots \Lip(\pi_m).
\ee

Note that the currents in $\curr_m(Z)$
defined by Ambrosio-Kirchheim have finite mass by definition.  Urs Lang 
develops a variant of Ambrosio-Kirchheim theory that does not rely
on the finite mass condition in \cite{Lang-local-currents}.

Note the integral current, $\Lbrack h\Rbrack \in \curr_m(\R^m)$,
in Example~\ref{basic-current} has mass measure,
\be
||\Lbrack h \Rbrack ||= |h| d\mathcal{L}^m
\ee
and mass
\be
\mass\left(\Lbrack h \Rbrack \right) =\int_A |h| d\mathcal{L}^m.
\ee

\begin{rmrk}
In (2.4) \cite{AK}, Ambrosio-Kirchheim show that
\be  \label{mass-push}
||\varphi_\#T|| \le [\Lip(\varphi)]^m \varphi_\# ||T||,
\ee
so that when $\varphi$ is an isometry $||\varphi_\#T||=\varphi_\#||T||$
and $\mass(T)=\mass\left(\varphi_\#T\right)$.
\end{rmrk}

Computing the mass of the push
forward current in Example~\ref{basic-current-pushed}
is a little more complicated and will be done in the next section.

\subsection{Parametrized Integer Rectifiable Currents}\label{subsect-param}\label{Subsect-Def-Current-Space-3}

Ambrosio and Kirchheim define integer rectifiable currents, $\intrectcurr_m\left(Z\right)$, on an arbitrary complete
metric space $Z$ \cite{AK}[Defn 4.2].   Rather than giving their definition, we will
use their characterization of integer rectifiable currents given in \cite{AK}[Thm 4.5]:
{\em A current $T\in \curr_m(Z)$ is an integer rectifiable current iff it has a parametrization
of the following form:}

\begin{defn}[Ambrosio-Kirchheim] \label{def-param-rep}
A {\bf parametrization} 
$\left(\{\varphi_i\}, \{\theta_i\}\right)$ of an integer rectifiable current $T\in \intrectcurr_m\left(Z\right)$ with $m\ge 1$ is a countable collection of
bi-Lipschitz maps $\varphi_i:A_i \to Z$ with $A_i\subset\R^m$ precompact
Borel measurable and with pairwise disjoint images and
weight functions $\theta_i\in L^1\left(A_i,\N\right)$ such that
\be\label{param-representation}
T = \sum_{i=1}^\infty \varphi_{i\#} \Lbrack \theta_i \Rbrack \quad\text{and}\quad \mass\left(T\right) = \sum_{i=1}^\infty \mass\left(\varphi_{i\#}\Lbrack \theta_i \Rbrack\right).
\ee
The mass measure is
\be
||T|| = \sum_{i=1}^\infty ||\varphi_{i\#}\Lbrack \theta_i \Rbrack ||.
\ee
\end{defn}

Note that the current in Example~\ref{basic-current-pushed} is an integer
rectifiable current.  

\begin{ex} \label{basic-mani}
If one has an oriented  Riemannian manifold, $M^m$, of finite volume
and a bi-Lipschitz map $\varphi:M^m\to Z$, then $T=\varphi_\#\Lbrack\One_M\Rbrack$
is an integer rectifiable current of dimension $m$ in $Z$.  If $\varphi$
is an isometry, and $Z=M$
then $\mass(T)=\vol(M^m)$.   Note further that
$||T||$ is concentrated on $\varphi(M)$ which is a set of Hausdorff dimension $m$.
\end{ex}

In \cite{AK}[Theorem 4.6] Ambrosio-Kirchheim define a canonical set associated with any
integer rectifiable current:

\begin{defn}[Ambrosio-Kirchheim] \label{defn-set}
The {\bf canonical set} of a current, $T$,
 is the collection of points in $Z$ with positive lower density:
\be \label{def-set-current}
\set\left(T\right)= \{p \in Z: \Theta_{*m}\left( \|T\|, p\right) >0\},
\ee
where the definition of lower density is given in (\ref{eqn-lower-density}).
\end{defn}

\begin{rmrk}\label{good-set}
In \cite{AK}[Thm 4.6], Ambrosio-Kirchheim prove 
given a current $T \in \intrectcurr_m\left(Z\right)$ on a 
complete metric space $Z$ with a parametrization
$\left(\{\varphi_i\}, \theta_i\right)$ of $T$, we have
\be  \label{eqn-lem-weight-1}
\mathcal{H}^m\left(\set\left(T\right) \Lambda \bigcup_{i=1}^\infty \varphi_i\left(A_i\right)\right)=0,
\ee
where $\Lambda$ is the symmetric difference, 
\be
A \Lambda B= \left(A \setminus B\right) \cup \left(B \setminus A\right).
\ee
In particular
the canonical set, $\set\left(T\right)$,
endowed with the restricted metric, $d_Z$,
is a countably $\mathcal{H}^m$ rectifiable metric 
space, $\left(\set\left(T\right), d_Z\right)$.
\end{rmrk}

\begin{example} \label{basic-mani-sing}
Note that the current in Example~\ref{basic-mani}, has 
\be \label{basic-mani-sing-1}
\set\left(\varphi_\#\Lbrack \One_M \Rbrack\right)=\varphi(M).
\ee
when $M$ is a smooth oriented Riemannian manifold.
If $M$ has a conical singularity, then (\ref{basic-mani-sing-1})
holds as well.  However if $M$ has a cusp singularity at a point $p$
then
\be \label{basic-mani-sing-1}
\set\left(\varphi_\#\Lbrack \One_M \Rbrack\right)=\varphi(M\setminus\{p\}).
\ee
\end{example}

Recall that the support of a current (c.f. \cite{AK} Definition 2.8) is
\be \label{eqn-def-support}
\spt (T) := \spt ||T|| = \{p \in Z:   \|T\|(B_p(r)) >0\,\, \forall r>0\}.
\ee
Ambrosio-Kirchheim show the closure of $\set(T)$ is $\spt(T)$.

\begin{rmrk} \label{trouble-with-support}
Note that there are integer rectifiable currents $T^m$ on $\R^n$
such that the support is all of $\R^n$.  For example, take a countable dense collection of points $p_j \in \R^3$,
then $X=\bigcup_{j\in \N} \partial B_{p_j}\left(1/2^j\right)$ is the set of the current $T\in \intcurr_m\left(\R^3\right)$ defined
by integration over $X$ and yet the support is $\R^3$.  
\end{rmrk}

\begin{rmrk} \label{def-param-rep-2}
Given a parametrization of an integer rectifiable current $T$ one may refine this parametrization by choosing Borel measurable subsets
$A_i'$ of the $A_i$ such that
$\varphi_i: A'_i \to set\left(T\right)$.  The new collection of maps $\{\varphi_i: A_i'\to Z\}$ is also a parametrization of $T$
and we will call it a settled parametrization.  Unless stated otherwise, all our parametrizations will be settled.   We may also choose precompact
$A_i'\subset A_i$ such that $\varphi_i(A_i')\cap\varphi_j(A_j')=\emptyset$.
We will call such a parametrization a preferred settled parametrization.
\end{rmrk}

Recall the definition of orientation in Definition~\ref{def-orientation} and the definition of
multiplicity in Definition~\ref{def-weight}. 
The next lemma allows one to define the orientation and multiplicity of an integer rectifiable current [Definition~\ref{def-orient-mult-T}].

\begin{lem} \label{lemma-param-equiv}
Given two currents $T,T' \in \intrectcurr_m\left(Z\right)$ on a complete metric space $Z$ and respective parametrizations
$\left(\{\varphi_i\}, \theta_i\right)$, $\left(\{\varphi'_i\}, \theta'_i\right)$ we have $T=T'$ iff the following hold:

i) The symmetric difference satisfies,
\be
\mathcal{H}^m
\left(\bigcup_{i=1}^\infty \varphi_i\left(A_i\right)  \Lambda \bigcup_{i=1}^\infty \varphi_i'\left(A'_i\right) \right)=0.
\ee

ii) The union of the atlases $\{\varphi_i\}$ and $\{\varphi_i'\}$ is an oriented atlas of
\be
X=\bigcup_{i=1}^\infty \varphi_i\left(A_i\right)  \cup \bigcup_{i=1}^\infty \varphi_i'\left(A'_i\right).
\ee

iii) The sums:
\be \label{eqn-theta-prop1}
\sum_{i=1}^\infty \theta_i\circ\varphi_i^{-1}\One_{\varphi_i\left(A_i\right)}
=
\sum_{i=1}^\infty \theta'_i\circ{\varphi'_i}^{-1}\One_{\varphi'_i\left(A'_i\right)}
\qquad
\mathcal{H}^m a.e. \textrm{ on } Z.
\ee
\end{lem}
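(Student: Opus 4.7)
The plan is to reduce the equivalence to pointwise identities on Euclidean domains. Because each $\varphi_i, \varphi'_j$ is bi-Lipschitz and each composition $\pi_k\circ\varphi_i$ is a Lipschitz function on $A_i\subset\R^m$, classical Rademacher differentiability supplies the Jacobians needed, and the bi-Lipschitz map $\psi_{ij}:=(\varphi'_j)^{-1}\circ\varphi_i$ transports integrals between the two parametrizations.

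For the direction $T=T'\Rightarrow$(i),(ii),(iii), condition (i) is immediate from Remark~\ref{good-set}, since both image unions coincide with $\set(T)=\set(T')$ modulo $\mathcal{H}^m$-null sets. For (ii) and (iii), I fix indices $i,j$, set $E_{ij}:=\varphi_i(A_i)\cap\varphi'_j(A'_j)$, and test the identity $T=T'$ on tuples $(f,\pi)$ whose $f$ is supported near a point of $E_{ij}$ and whose $\pi_1,\dots,\pi_m$ are Lipschitz extensions to $Z$ of the coordinates of $(\varphi'_j)^{-1}$. Expanding both sides via the parametrizations and then changing variables through $\psi_{ij}$ on the $T$-side rewrites everything as Euclidean integrals over $\varphi_i^{-1}(E_{ij})$. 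Arbitrariness of $f$ forces
\[
\theta_i\,\det\nabla\psi_{ij}\;=\;(\theta'_j\circ\psi_{ij})\,|\det\nabla\psi_{ij}|\quad\text{a.e.}
\]
Since both $\theta_i$ and $\theta'_j$ are positive integers (refining the parametrizations to their $\{\theta>0\}$-parts if necessary), we conclude $\det\nabla\psi_{ij}>0$ a.e., which is (ii), and $\theta_i=\theta'_j\circ\psi_{ij}$, which on summation over the countable overlaps yields (iii).

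For the converse, assume (i)--(iii) and let $(f,\pi)\in\mathcal{D}^m(Z)$. Using (\ref{param-representation}) and Example~\ref{basic-current-pushed}, write
\[
T(f,\pi)=\sum_i\int_{A_i}\theta_i\,(f\circ\varphi_i)\,\det\nabla(\pi\circ\varphi_i)\,d\mathcal{L}^m,
\]
and analogously for $T'$. Form the common refinement $B_{ij}:=\varphi_i^{-1}(E_{ij})$. By (i), $\{B_{ij}\}_j$ partitions $A_i$ up to $\mathcal{H}^m$-null sets, and symmetrically $\{(\varphi'_j)^{-1}(E_{ij})\}_i$ partitions $A'_j$, so the leftover symmetric-difference pieces contribute nothing. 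On each $B_{ij}$, the bi-Lipschitz change of variables via $\psi_{ij}$ transports the corresponding $T'$-contribution to an integral over $B_{ij}$; the chain rule gives $\det\nabla(\pi\circ\varphi'_j)\circ\psi_{ij}\cdot\det\nabla\psi_{ij}=\det\nabla(\pi\circ\varphi_i)$, condition (ii) makes $\det\nabla\psi_{ij}>0$ (so no absolute-value sign appears), and condition (iii) identifies $\theta_i=\theta'_j\circ\psi_{ij}$. Summing over $i,j$ yields $T(f,\pi)=T'(f,\pi)$.

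The main obstacle is carrying out the change-of-variables step cleanly in the metric-space setting. Although $Z$ itself is an arbitrary complete metric space with no differentiable structure, every integral actually being manipulated lives on a Euclidean domain, and the compositions $\pi_k\circ\varphi_i$ and $\psi_{ij}$ are ordinary Lipschitz (resp.\ bi-Lipschitz) maps between Euclidean sets to which Rademacher and the classical bi-Lipschitz area/change-of-variables formula apply. The judicious choice of test tuples $(f,\pi)$ (with $\pi$ given by local Lipschitz extensions of the coordinates of $(\varphi'_j)^{-1}$) is what converts the global identity $T=T'$ into the pointwise comparison of $\theta_i$ with $\theta'_j$; the remaining work over the countably many overlap pairs $(i,j)$ is routine bookkeeping.
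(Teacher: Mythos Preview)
Your proposal is correct and follows essentially the same route as the paper's proof: both localize to the overlap sets $E_{ij}=\varphi_i(A_i)\cap\varphi'_j(A'_j)$, perform the bi-Lipschitz change of variables through $\psi_{ij}=(\varphi'_j)^{-1}\circ\varphi_i$, and read off the sign condition (ii) and the weight matching (iii) from the resulting pointwise identity, with the converse obtained by summing the piecewise equalities and invoking (i). The only cosmetic difference is that the paper first restricts the currents, $T\rstr A_{i,j}=T'\rstr A_{i,j}$, and then pushes back to $\R^m$ via $(\varphi'_j)^{-1}$ before testing against arbitrary Lipschitz $f$, whereas you stay in $Z$ and test directly on tuples $(f,\pi)$ with $\pi$ a Lipschitz extension of the coordinates of $(\varphi'_j)^{-1}$; these amount to the same computation, and the restriction viewpoint has the minor advantage that it sidesteps any worry about whether Lipschitz $f$ supported in the Borel set $E_{ij}$ form a rich enough class (the extension of finite-mass currents to bounded Borel $f$, implicit in Ambrosio--Kirchheim, handles this either way).
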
 

\begin{defn} \label{def-orient-mult-T}
Given $T$, the sum in (\ref{eqn-theta-prop1}) will be called the {\bf multiplicity} function, $\theta_T$.  This function is an $\mathcal{H}^m$ measurable
function from $Z$ to $\N \cup \{0\}$.   The uniquely defined equivalence class of oriented 
atlases of $\set\left(T\right)$
will be called the orientation of $T$.   
\end{defn}

A similar result is in \cite{AK}[Thm 9.1]
with a less Riemannian approach to the notion of orientation. The $\theta$ in
their theorem is our $\theta_T$.

\begin{proof} 
We begin by relating some equations and then prove the theorem.

Note that by restricting to $A_{i,j}:= \varphi_i\left(A_i\right) \cap {\varphi'}_j\left({A'}_j\right)$,
we can focus on one term in the parametrization at a time:
\be
T \rstr A_{i,j} = \sum_{k=1}^\infty \varphi_{k\#} \Lbrack \theta_k \Rbrack \rstr A_{i,j} =
\varphi_{i\#} \Lbrack \theta_i \Rbrack \rstr A_{i,j} =
 \varphi_{i\#} \Lbrack \theta_i \One_{\varphi_i^{-1}\left(A_{i,j}\right)} \Rbrack .
\ee
Thus $T \rstr A_{i,j}=T'\rstr A'_{i,j}$ iff
\be \label{eqn41}
\varphi_{i\#} \Lbrack \theta_i \One_{\varphi_i^{-1}\left(A_{i,j}\right)} \Rbrack
= {\varphi'_{j\#}} \Lbrack \theta'_j \One_{{\varphi'}_j^{-1}\left(A_{i,j}\right)} \Rbrack
\,\,\,\textrm{ iff } \,\,\,
\Lbrack \theta'_j \One_{{\varphi'}_j^{-1}\left(A_{i,j}\right)} \Rbrack={ \varphi'_{j\#}}^{-1}\varphi_{i\#} \Lbrack \theta_i \One_{\varphi_i^{-1}\left(A_{i,j}\right)} \Rbrack.
\ee
This is true iff for any Lipschitz function $f$ defined on $A'_j$ we have
\be
\int_{\varphi_j^{'-1}\left(A_{i,j}\right)} \theta'_j \cdot f  \, d\mathcal{L}^m = \int_{\varphi_i^{-1}\left(A_{i,j}\right)} \theta_i \cdot (f
\circ {\varphi'_j}^{-1} \circ \varphi_i )\, \det\left(\nabla\left({\varphi'_j}^{-1}\circ \varphi_i\right) \right) d\mathcal{L}^m.
\ee
By the change of variables formula,
this is true iff 
\be
\int_{\varphi_j^{'-1}\left(A_{i,j}\right)} \theta'_j \cdot f  \, d\mathcal{L}^m = 
\int_{\varphi_j^{'-1}\left(A_{i,j}\right)} (\theta_i \circ \varphi_i^{-1}\circ \varphi'_j )
\cdot f  \sgn\det\left(\nabla(\varphi_i^{-1}\circ\varphi'_j) \right)d\mathcal{L}^m
\ee
because the change of variables formula involves the absolute value of the determinant.
This is true iff the following two equations hold:
\be \label{eqn-new-iii}
\theta'_j=\theta_i\circ \varphi_i^{-1}\circ \varphi'_j  \qquad \mathcal{L}^m \textrm{ a.e. on }{\varphi'_j}^{-1}\left(A_{i,j}\right)
\ee
and
\be \label{eqn-new-ii}
\sgn\det(\nabla(\varphi_i^{-1}\circ\varphi'_j))
=1  \qquad \mathcal{L}^m \textrm{ a.e. on }{\varphi_j'}^{-1}\left(A_{i,j}\right).
\ee

Setting
\be
Y:= \bigcup_{i=1}^\infty \varphi_i(A_i) \textrm{ and } 
Y':= \bigcup_{j=1}^\infty \varphi'_j(A'_j),
\ee
we have $X= Y \cup Y'$ and $\bigcup_{i,j=1}^\infty A_{i,j} = Y \cap Y'$.
Furthermore by Remark~\ref{good-set}, we have
\be \label{eqn-set-Lambda-i}
(i) \qquad \textrm{ iff }\qquad 
\mathcal{H}^m\left(Y\Lambda Y'\right) 
\qquad\textrm{ iff  }\qquad
 \mathcal{H}^m\left(set\left(T\right) \Lambda set\left(T'\right)\right)=0.
\ee

We may now prove the theorem.
If $T=T'$, then $\set\left(T\right)=set\left(T'\right)$ and we have (i).  Furthermore
 $T\rstr A_{i,j}=T'\rstr A_{i,j}$ for all $i,j$ which implies (\ref{eqn-new-ii}) which implies (ii). 
We also have (\ref{eqn-new-iii}), which implies
\be \label{eqn-theta-on-X}
\sum_{i=1}^\infty \theta_i\circ\varphi_i^{-1}\One_{\varphi_i\left(A_i\right)}
=
\sum_{i=1}^\infty \theta'_i\circ\varphi_i^{-1}\One_{\varphi'_i\left(A'_i\right)}
\ee
holds $\mathcal{H}^m$ almost everywhere on $\bigcup_{i,j=1}^\infty A_{i,j} = Y\cap Y'$.
Since we already have (i) then (\ref{eqn-set-Lambda-i}) implies 
(\ref{eqn-theta-on-X}) holds 
 $\mathcal{H}^m$ almost everywhere on $ Y\cup Y'=X$ and we get (iii).

Conversely if (i), (ii), (iii) hold for a pair of parametrizations, then (ii) implies (\ref{eqn-new-ii}) and  (iii) implies
(\ref{eqn-new-iii}).  Thus, by (\ref{eqn41}) we have $T\rstr A_{i,j}= T'\rstr A_{i,j}$ for all $i,j$. 
Summing over $i$ and $j$ we have $T\rstr X = T'\rstr X$.  
By (i) and (\ref{eqn-set-Lambda-i}),
we have
\be
T= T \rstr \bigcup_{i=1}^\infty \varphi_i\left(A_i\right) =T \rstr Y = T'\rstr Y' = T'\rstr \bigcup_{j=1}^\infty \varphi_j\left(A'_j\right)=T'.
\ee
\end{proof}

In Proposition~\ref{prop-current-spaces} we will prove that
{\em if $T\in\intrectcurr_m(Z)$ is an integer rectifiable current, then
$(\set(T), d_Z, [\{\varphi_i\}], \theta_T)$ as defined in Definition~\ref{def-orient-mult-T}
is a completely settled weighted oriented
countably $\mathcal{H}^m$ rectifiable metric space} as in
Definitions~\ref{def-weight} and~\ref{def-settled}.
To prove this we must show $\set(T)$ is completely settled.
Thus we must better understand the relationship between the
mass measure of $T$, $||T||$, which is used to define the canonical
set and the weight $\theta_T \mathcal{H}^m$ which is used to defined
settled.  Both measures must have positive density at the same locations.

\begin{rmrk}
In the proof of \cite{AK}[Theorem 4.6], Ambrosio-Kirchheim note that
\be \label{AK4.6-measure}
||T||=\Theta_{*m}(||T||, \cdot) \mathcal{H}^m\rstr \set(T).
\ee
\end{rmrk}

\begin{example}\label{weight-match}
Suppose $T\in \intcurr_m(M^m)$ in a smooth oriented Riemannian manifold
of finite volume
is defined $T=\Lbrack \One_M\Rbrack$.  Then $\theta_T=1$
while $||T||$ is the Lebesgue measure on $M$.  
Since the Hausdorff
and Lebesgue measures agree on a smooth Riemannian manifold,
we have $\Theta_{*m}(||T||,p)=1$ as well.   
The Hausdorff and Lebesgue measures also agree on
manifolds that have point singularities
as in Example~\ref{basic-mani-sing}, so that $\set(T)$ is completely
settled with respect to $\theta_T d\mathcal{H}^m$ in both cases given in that example
as well.  In that case we again have $\theta_T=1$ everywhere, but
$\Theta_{*m}(||T||,p)=\Theta_{*m}(\theta_T\mathcal{H}^m,p)<1$ at
conical singularities and $0$ at cusp points.
\end{example}

In general, however, the lower density of $T$ need not agree with the
weight, $\theta_T$.
 To find a formula
relating the multiplicity $\theta_T$
to the lower density of $||T||$ we need a notion called the area
factor of a normed space $V$ (c.f. \cite{AK}(9.11)):
\be\label{def-area-factor}
\lambda_V:=\frac{2^m}{\omega_m} \sup \,\,\left\{\frac{\mathcal{H}^m(B_0(1))}
{\mathcal{H}^m(R)} \right\},
\ee
where the supremum is taken over all parallelepipeds $R\subset V$ which 
contain the unit ball $B_0(1)$.  

\begin{rmrk}\label{rmrk-lambda}
In \cite{AK}[Lemma 9.2], Ambrosio-Kirchheim prove that
\be \label{eqn-lem-weight-lambda-pre}
\lambda_V\in [m^{-m/2}, 2^m/\omega_m]
\ee
and 
observe that
$\lambda_V=1$ whenever $B_0(1)$ is a solid ellipsoid.  This
will occur when $V$ is the tangent space on a Riemannian
manifold because the norm is an inner product.    It is also possible that $\lambda_V=1$
when $V$ does not have an inner product norm (c.f. \cite{AK} Remark 9.3).
\end{rmrk}

The following lemma consolidates a few results in \cite{AK} and \cite{Kirchheim}:

\begin{lem}\label{lemma-weight}
Given an integer rectifiable current $T \in \intrectcurr_m(Z)$,
in a complete metric space $Z$ there is a  function 
\be \label{eqn-lem-weight-lambda}
\lambda:\set(T) \to [m^{-m/2}, 2^m/\omega_m]
\ee
satisfying
\be \label{eqn-lem-weight-new-key}
\Theta_{*m}(||T||,x)=\theta_T(x)\lambda(x),   
\ee
for $\mathcal{H}^m$ almost every $x\in \set (T)$
such that
\be \label{eqn-lem-weight-2}
||T||=\theta_T \lambda \mathcal{H}^m \rstr \set(T).
\ee
In particular $\set(T)$ with the restricted metric from $Z$
 is a completely settled 
weighted oriented countably $\mathcal{H}^m$ rectifiable metric space with respect to
the weight function $\theta_T$ defined in Definition~\ref{def-orient-mult-T}.

When $T=\varphi_\#\Lbrack \One_A\Rbrack$, with a bi-Lipschitz function, $\varphi$,
then for $x\in \varphi(A)$ we have
$\lambda(x)=\lambda_{V_x}$ where $V_x$ is $\R^m$
with the norm defined by the metric differential
$md\varphi_{\varphi^{-1}(x)}$.
\end{lem}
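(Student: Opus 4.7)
The plan is to reduce everything to the push-forward of a current of the form $\Lbrack \theta \One_A \Rbrack$ by a single bi-Lipschitz chart, and then invoke Ambrosio--Kirchheim's formulas for the mass of such push-forwards together with Kirchheim's metric area formula. Fix a preferred settled parametrization $(\{\varphi_i\}, \{\theta_i\})$ of $T$ as in Remark~\ref{def-param-rep-2}, so that $T=\sum_i \varphi_{i\#}\Lbrack\theta_i\Rbrack$ with pairwise disjoint images covering $\set(T)$ up to an $\mathcal{H}^m$-null set, and $\|T\|=\sum_i \|\varphi_{i\#}\Lbrack\theta_i\Rbrack\|$. Since the $\varphi_i(A_i)$ are pairwise disjoint, it suffices to establish the identity $\|\varphi_{i\#}\Lbrack\theta_i\Rbrack\|=\theta_T\lambda\,\mathcal{H}^m\rstr \varphi_i(A_i)$ for each $i$.

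For a single bi-Lipschitz chart $\varphi:A\subset\R^m\to Z$, the key input is the formula from \cite{AK} (the metric area formula, relying on Kirchheim's metric differential): for any Borel $E\subset A$,
\begin{equation*}
\|\varphi_{\#}\Lbrack\theta\One_A\Rbrack\|(\varphi(E))=\int_E \theta(y)\,\jac(md\varphi_y)\,d\mathcal{L}^m(y),
\end{equation*}
while Kirchheim's area formula gives
\begin{equation*}
\mathcal{H}^m(\varphi(E))=\int_E \lambda_{V_y}^{-1}\,\jac(md\varphi_y)\,d\mathcal{L}^m(y),
\end{equation*}
where $V_y=(\R^m, md\varphi_y)$ is the approximate tangent normed space and $\lambda_{V_y}$ is its area factor (as defined in (\ref{def-area-factor}); see \cite{AK} Lemma 9.2 and Theorem 9.5). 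Dividing, we obtain $\|\varphi_{\#}\Lbrack\theta\One_A\Rbrack\|=(\theta\circ\varphi^{-1})\,\lambda_{V_\cdot}\,\mathcal{H}^m\rstr\varphi(A)$. Summing over $i$ and recalling from Definition~\ref{def-orient-mult-T} that $\theta_T=\sum_i (\theta_i\circ\varphi_i^{-1})\One_{\varphi_i(A_i)}$, I set $\lambda(x):=\lambda_{V_x}$ on $\varphi_i(A_i)$ (which is well defined $\mathcal{H}^m$-a.e.\ on $\set(T)$ since the images are disjoint), giving formula (\ref{eqn-lem-weight-2}). The range condition (\ref{eqn-lem-weight-lambda}) then follows from the general bound on the area factor recalled in Remark~\ref{rmrk-lambda}.

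To obtain the pointwise identity (\ref{eqn-lem-weight-new-key}), I compare (\ref{eqn-lem-weight-2}) with (\ref{AK4.6-measure}): both express $\|T\|$ as a density times $\mathcal{H}^m\rstr\set(T)$, and the Radon--Nikodym densities must agree $\mathcal{H}^m$-a.e., yielding $\Theta_{*m}(\|T\|,x)=\theta_T(x)\lambda(x)$ for $\mathcal{H}^m$-a.e.\ $x\in\set(T)$. Finally, for the completely settled conclusion, note that since $\lambda(x)\ge m^{-m/2}>0$ everywhere, the positivity of $\Theta_{*m}(\|T\|,x)$ is equivalent to the positivity of $\Theta_{*m}(\theta_T\mathcal{H}^m,x)$; combined with $\set(T)=\{p\in\bar Z:\Theta_{*m}(\|T\|,p)>0\}$ (Definition~\ref{defn-set}) and the $\mathcal{H}^m$-rectifiability noted in Remark~\ref{good-set}, this shows $\set(T)$ endowed with $d_Z$ satisfies Definition~\ref{def-settled}. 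The orientation is the equivalence class of the atlas $\{\varphi_i\}$ from the parametrization, well-defined by Lemma~\ref{lemma-param-equiv}.

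The main obstacle I anticipate is pinning down the metric area formula rigorously enough to extract the factor $\lambda_{V_x}$ in the correct place — the ordinary Jacobian $\jac(md\varphi_y)$ computes $\mathcal{H}^m$-measure on the target, while the current mass is computed with respect to a Lebesgue-type comparison, and the mismatch between these is precisely what $\lambda_V$ measures. Once this ratio is identified through the simultaneous use of \cite{AK} Theorem 9.5 and Kirchheim's area formula, the remainder of the argument is a bookkeeping exercise comparing two absolutely continuous measures.
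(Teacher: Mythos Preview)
Your proposal is correct and takes essentially the same approach as the paper: both arguments rest on Ambrosio--Kirchheim's mass representation (their (9.10)/Theorem 9.5) together with Kirchheim's density result, and the completely settled conclusion follows in either case from the uniform two-sided bounds on $\lambda$. The only cosmetic difference is that the paper cites \cite{AK}(9.10) directly and then obtains (\ref{eqn-lem-weight-new-key}) via $\Theta_{*m}(\mathcal{H}^m\rstr\set(T),x)=1$ a.e.\ from \cite{Kirchheim}[Theorem 9], whereas you derive the same conclusion by comparing Radon--Nikodym densities against (\ref{AK4.6-measure}); both routes are valid.
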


\begin{proof}
On the top of page 58 in \cite{AK}, Ambrosio-Kirchheim observe that
for $\mathcal{H}^m$ almost every $x\in S=\set(T)$, one can define
a approximate tangent space $\Tan^m(S,x)$ which
is $\R^m$ with a norm.    
Taking
$\lambda(x)=\lambda_{\Tan^m(S,x)}$ and applying \cite{AK}(9.10), one 
sees they have proven
(\ref{eqn-lem-weight-2}).  
We then deduce (\ref{eqn-lem-weight-new-key}) using the fact
that $\Theta_{*m}(\mathcal{H}^m \rstr \set (T),x)=1$ almost everywhere
\cite{Kirchheim}[Theorem 9].

The bounds on $\lambda$ in (\ref{eqn-lem-weight-lambda})
come from (\ref{eqn-lem-weight-lambda-pre}) and they allow us 
to conclude that the lower density of $\theta_T \mathcal{H}^m$
and the lower density of  $||T||$ are positive at the same collection
of points.

Examining the proof of \cite{AK}, Theorem 9.1, one sees that
$V_x=\Tan^m(S,x)$ in this setting.
\end{proof}



In this section we introduce the notion of an integer rectifiable current structure on a metric space and define
integer rectifiable current spaces.  We then prove Proposition~\ref{prop-current-spaces} that integer rectifiable current
spaces are completely settled weighted oriented $\mathcal{H}^m$ rectifiable metric spaces using the lemmas
from Subsection~\ref{Subsect-Def-Current-Space-2}.

\begin{defn} \label{def-current-structure} \label{defn-current-space}
An $m$-dimensional {\bf integer rectifiable
current structure} on a metric space $\left(X,d\right)$ is an integer rectifiable current $T\in\intrectcurr_m\left(\bar{X}\right)$ on the completion, $\bar{X}$,
of $X$ such that  $\set\left(T\right)=X$.  We call such a space an {\bf integer rectifiable current space} and denote it $\left(X,d,T\right)$.

Given an integer rectifiable current space $M=\left(X,d,T\right)$ , we let
$\set\left(M\right)$ and $X_M$ denote $X$,  $d_M=d$ and $\Lbrack M \Rbrack =T $. 
\end{defn}

\begin{rmrk} \label{rmrk-separable}  
By \cite{AK} Defn 4.2, any metric space with an $m$-dimensional current structure must be countably $\mathcal{H}^m$-rectifiable
because the set of an $m$ dimensional integer rectifiable current is countably $\mathcal{H}^m$ rectifiable.
By \cite{AK} Thm 4.5, there is a countably collection of
bi-Lipscitz charts with compact domains
which map onto a dense subset
of the metric space (because we only include points of 
positive density).  In particular, the space is  separable.  
\end{rmrk}

\begin{rmrk}
We do not use the support, $spt(T)$, in this definition as it
is not necessarily countably $\mathcal{H}^m$ rectifiable  
and may have a higher dimension as described in Remark~\ref{trouble-with-support}.  See Example~\ref{example-dense-support}.
\end{rmrk}

\begin{rmrk}  \label{Lip-mani-structure}
Recall that in Remark~\ref{Lip-mani-charts} we said that any $m$ dimensional oriented
connected Lipschitz or Riemannian manifold, $M$, is endowed with a standard atlas of charts 
with a fixed orientation.  We will also view these spaces as having
multiplicity or weight $1$.  If $M$ has finite volume and we've chosen an 
orientation, then we can
define an integer rectifiable current structure, $T=\Lbrack M \Rbrack \in \intrectcurr_m\left(M\right)$,
parametrized by a finite disjoint selection of charts with weight $1$.   It is easy to verify that
$\set \left(T\right)=M$.   
\end{rmrk}


\begin{lem} \label{lemma-isom-to-a-set}\label{lemma-isom-to-set}
Suppose $\left(X,d,T\right)$ is an integer rectifiable current space and $Z$ is a complete metric space.
If ${\phi}: X \to Z$ is an isometric embedding then the induced map on the completion,
$\bar{\phi}:\bar{X} \to Z$, is also an isometric embedding.  Furthermore
the pushforward
$
\bar{\phi}_\#T
$
is an integer rectifiable current on $Z$ and
\be
\phi: X \to \set\left(\bar{\phi}_\# T\right)
\ee
is an isometry.
\end{lem}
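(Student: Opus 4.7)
The plan is to treat the three claims in order, since each feeds into the next. First I would extend $\phi$ to $\bar\phi$ by the standard completion argument: given $x\in\bar X$, take any Cauchy sequence $x_k\in X$ with $x_k\to x$; then $\phi(x_k)$ is Cauchy in $Z$ because $\phi$ is distance-preserving on $X$, and so converges in the complete space $Z$ to a limit which we define to be $\bar\phi(x)$. Independence of the chosen sequence and the equality $d_Z(\bar\phi(x),\bar\phi(y))=d(x,y)$ for all $x,y\in\bar X$ both follow by a routine $\varepsilon/3$ argument from distance-preservation on $X$ and the continuity of the metric. In particular $\bar\phi$ is $1$-Lipschitz, so the push-forward $\bar\phi_\# T\in\curr_m(Z)$ is defined by Definition~\ref{def-push-forward}, and by Ambrosio--Kirchheim the class of integer rectifiable currents is closed under Lipschitz push-forwards, giving $\bar\phi_\# T\in\intrectcurr_m(Z)$.

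The remaining and only substantial point is to identify $\set(\bar\phi_\# T)$ with $\phi(X)=\bar\phi(X)$. Since $\bar\phi$ is distance-preserving, equation (\ref{mass-push}) together with its converse inequality (applied to the inverse of $\bar\phi$ regarded as a map from its image back to $\bar X$) yields
\begin{equation}
\|\bar\phi_\# T\| = \bar\phi_\# \|T\|.
\end{equation}
Because $\bar\phi$ is an isometric embedding, for each $p\in\bar X$ and every $r>0$ we have $\bar\phi^{-1}(B_{\bar\phi(p)}(r))=B_p(r)\cap\bar X$, so
\begin{equation}
\|\bar\phi_\# T\|(B_{\bar\phi(p)}(r)) \;=\; \|T\|(B_p(r)),
\end{equation}
and hence $\Theta_{*m}(\|\bar\phi_\# T\|,\bar\phi(p))=\Theta_{*m}(\|T\|,p)$. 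Moreover, for any $q\in Z\setminus\bar\phi(\bar X)$, the measure $\bar\phi_\# \|T\|$ is concentrated on $\bar\phi(\bar X)$; but $q$ might be a limit of points in $\bar\phi(\bar X)$, so one has to exclude this by noting that if $q$ is such a limit, then $q=\bar\phi(p)$ for some $p\in\bar X$ by completeness of $\bar X$ and distance-preservation, a contradiction. So $\Theta_{*m}(\|\bar\phi_\# T\|,q)=0$ for $q\notin\bar\phi(\bar X)$.

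Combining the two density computations with $\set(T)=X$ (which holds by Definition~\ref{defn-current-space}), we obtain
\begin{equation}
\set(\bar\phi_\# T) \;=\; \bar\phi(\set(T)) \;=\; \bar\phi(X) \;=\; \phi(X).
\end{equation}
Since $\phi\colon X\to\phi(X)$ is a distance-preserving bijection onto its image, it is an isometry onto $\set(\bar\phi_\# T)$, completing the proof. The main technical obstacle, as anticipated, is the clean identification of the canonical set: one must simultaneously use the push-forward identity for mass measures, the metric characterization of isometric embeddings, and the completeness of $\bar X$ to rule out spurious density points outside $\bar\phi(\bar X)$.
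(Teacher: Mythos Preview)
Your proof is correct and follows essentially the same approach as the paper. The paper's own proof is a single line citing Ambrosio--Kirchheim for the identity $\set(\bar\phi_\# T)=\bar\phi(\set(T))$; you have simply supplied the natural argument for that identity via the mass-measure equality $\|\bar\phi_\# T\|=\bar\phi_\#\|T\|$ for isometric embeddings and the closedness of $\bar\phi(\bar X)$ in $Z$.
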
 

\begin{proof}
Follows from the fact that $\set\left(\bar{\phi}_\# T\right)= \bar{\phi}\left( \set\left(T\right) \right)$ \cite{AK}.
\end{proof}

Conversely, if $T$ is an integer rectifiable current in $Z$, 
then $\left(\set\left(T\right), d_Z, T\right)$ is an an $m$ dimensional integer rectifiable current space.

\begin{prop} \label{prop-current-spaces} 
There is a one-to-one correspondence between completely settled weighted oriented countably $\mathcal{H}^m$ rectifiable metric
spaces, $\left(X, d, [\{\phi\}], \theta\right)$, and integer rectifiable current spaces $\left(X,d,T\right)$ as follows: 

Given $\left(X,d,T\right)$, we define a weight $\theta=\theta_T$
and orientation, $[\{\varphi_i\}]$ as in Definition~\ref{def-orient-mult-T}, 
so that
\be \label{eqn-theta-prop2}
\theta:= \theta_T=\sum_{i=1}^\infty \theta_i\circ\varphi_i^{-1}\One_{\varphi_i\left(A_i\right)},
\ee
and the corresponding space is $(X,d,[\{\varphi_i\}],\theta )$. 

Given $\left(X, d, [\{\varphi\}], \theta\right)$, we define a
unique induced current structure $T\in\intrectcurr_m\left(\bar{X}\right)$ given by
\be \label{eqn-T-prop1}
T\left(f,\pi\right)= \sum \varphi_{i\#}\Lbrack \theta\circ\varphi_i\Rbrack \left(f,\pi\right) = \sum \int_{A_i} \theta\circ\varphi_i f \circ \varphi_i \det\left(\nabla\left(\pi\circ\varphi_i\right)\right)\, d\mathcal{L}^m,
\ee
and the corresponding space is then $(X,d,T)$ because $\set(T)=X$.
\end{prop}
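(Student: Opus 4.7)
The plan is to prove the two directions of the bijection and then verify they are mutual inverses, leaning heavily on Lemma~\ref{lemma-param-equiv} (for well-definedness of $\theta_T$ and orientation) and Lemma~\ref{lemma-weight} (for the relationship between $\set(T)$, $\theta_T$, and the mass measure).

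First I would handle the direction from an integer rectifiable current space $(X,d,T)$ to a completely settled weighted oriented rectifiable space. Pick any parametrization $(\{\varphi_i\},\{\theta_i\})$ of $T$ as in Definition~\ref{def-param-rep}, refined to be preferred and settled (Remark~\ref{def-param-rep-2}). Define $\theta_T$ and the orientation $[\{\varphi_i\}]$ by Definition~\ref{def-orient-mult-T}; Lemma~\ref{lemma-param-equiv} guarantees these are independent of the chosen parametrization. Finiteness of $\int_X \theta_T\,d\mathcal{H}^m$ and hence validity of Definition~\ref{def-weight} follows from Lemma~\ref{lemma-weight}, which gives $\|T\| = \theta_T \lambda \mathcal{H}^m \rstr \set(T)$ with $\lambda$ bounded below by $m^{-m/2}$; thus $\int \theta_T\,d\mathcal{H}^m \le m^{m/2}\mass(T)<\infty$. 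The completely settled property holds because $\Theta_{*m}(\theta_T\mathcal{H}^m,\cdot)>0$ precisely where $\Theta_{*m}(\|T\|,\cdot)>0$ (again by the two-sided bounds on $\lambda$), and this locus equals $\set(T)=X$ by the definition of an integer rectifiable current structure.

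For the reverse direction, start from $(X,d,[\{\varphi_i\}],\theta)$ and select a preferred oriented atlas: charts with precompact Borel domains $A_i \subset \R^m$ and pairwise disjoint images. Set $T_i := \varphi_{i\#}\Lbrack \theta\circ\varphi_i\Rbrack$; each $T_i$ is an integer rectifiable current by Example~\ref{basic-current-pushed}. To sum them into a single current, I need $\sum_i \mass(T_i) < \infty$. Using Lemma~\ref{lemma-weight} applied to each $T_i$ in conjunction with the mass bound $\mass(T_i) \le (2^m/\omega_m)\int_{\varphi_i(A_i)}\theta\,d\mathcal{H}^m$, and the disjointness of images plus finiteness of $\int_X \theta\,d\mathcal{H}^m$, the sum converges absolutely. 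Define $T := \sum_i T_i$. Since the images are pairwise disjoint, the mass measures add, so $T$ admits $(\{\varphi_i\},\{\theta\circ\varphi_i\})$ as a parametrization in the sense of Definition~\ref{def-param-rep} and is therefore integer rectifiable by the Ambrosio-Kirchheim characterization.

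It remains to verify $\set(T) = X$ exactly (not merely up to $\mathcal{H}^m$-null sets), which is where the completely settled hypothesis is essential and where I expect the main subtlety to lie. By Lemma~\ref{lemma-weight}, $\|T\| = \theta_T \lambda \mathcal{H}^m \rstr \bigcup \varphi_i(A_i)$ with $\theta_T = \theta$ on $X$, so $\Theta_{*m}(\|T\|,p) > 0$ iff $\Theta_{*m}(\theta\mathcal{H}^m,p) > 0$; by the definition of completely settled this holds iff $p\in X$. Independence of the construction from the chart representative chosen follows directly from Lemma~\ref{lemma-param-equiv}: two preferred atlases from the same orientation class satisfy conditions (i)--(iii) of that lemma and hence yield the same current. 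Finally, the two constructions are mutual inverses: starting from $(X,d,T)$ and going back we recover $T$ by the identity $T=\sum \varphi_{i\#}\Lbrack \theta_T\circ\varphi_i\Rbrack$ applied to any preferred settled parametrization; starting from $(X,d,[\{\varphi_i\}],\theta)$, the multiplicity extracted from $T$ via Definition~\ref{def-orient-mult-T} is $\theta$ almost everywhere, and the orientation class is the one we started with. The hard part is reconciling the pointwise equality $\set(T)=X$ with the fact that Ambrosio-Kirchheim's theory only controls currents up to $\mathcal{H}^m$-null sets; the completely settled condition is exactly the device that upgrades almost-everywhere agreement to pointwise agreement on the underlying metric space.
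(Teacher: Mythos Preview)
Your proposal is correct and follows essentially the same approach as the paper's proof: both directions are handled by invoking Lemma~\ref{lemma-param-equiv} for well-definedness of the weight and orientation, and Lemma~\ref{lemma-weight} to pass between positivity of $\Theta_{*m}(\|T\|,\cdot)$ and $\Theta_{*m}(\theta\mathcal{H}^m,\cdot)$, so that the completely settled condition becomes exactly the statement $\set(T)=X$. Your write-up is in fact slightly more explicit than the paper's in verifying finiteness of the weighted volume in the $(X,d,T)\mapsto(X,d,[\{\varphi_i\}],\theta)$ direction and in spelling out why the two constructions are mutual inverses.
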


\begin{proof}
Given $\left(X, d, [\{\varphi_i\}], \theta\right)$ we first define a current on the completion $\bar{X}$ using a preferred oriented atlas
as in (\ref{eqn-T-prop1}).  This is well defined because
\be
\sum_{i=1}^\infty M \left(\varphi_{i\#} \Lbrack \theta \circ \varphi_i \Rbrack\right) \le
C_m \sum_{i=1}^\infty \int_{\varphi_i\left(A_i\right)}\theta \, d\mathcal{H}^m<\infty
\ee
where $C_m$ is a constant that may be computed using  Lemma~\ref{lemma-weight}.
The sum is then finite by Definition~\ref{def-weight}.

So we have a current with a parametrization $\left(\{\varphi_i\}, \{\theta_i\}\right)$ where $\theta_i := \theta \circ \varphi_i$.  The weight
function $\theta_T$ of the current $T$ defined below Lemma~\ref{lemma-param-equiv}
agrees with the weight function $\theta$ on $X$ because for almost every $x\in X$
there is a chart such that $x \in \varphi_i\left(A_i\right)$, and
\be \label{eqn-theta-T-theta}
\theta_T\left(x\right)=\theta_i\circ \varphi_i^{-1}\left(x\right)=\theta\left(x\right).
\ee

Furthermore $\set\left(T\right)= \{p \in \bar{X}: \Theta_{*m}\left( \|T\|, p\right) >0\}$,
so by Lemma~\ref{lemma-weight} we have
\be
\set\left(T\right)= \left\{p \in \bar{X}: \Theta_{*m}\left(\theta \, d\mathcal{H}^m \rstr \bigcup_{i=1}^\infty \varphi_i\left(A_i\right), p\right)>0\right\}
\ee
which is $X$ because $X$ is completely settled.  Since $X$ is a countably $\mathcal{H}^m$ rectifiable space, we know
$T \in \intrectcurr_m\left(\bar{X}\right)$.  Thus we have an integer rectifiable current space $\left(X,d,T\right)$.

Conversely we start with $\left(X,d,T\right)$.  Applying Lemma~\ref{lemma-param-equiv}, we have a unique well defined orientation and
weight function $\theta_T$.  Thus $\left(\set\left(T\right), d, [\{\varphi_i\}], \theta_T\right)$ is an oriented weighted countably $\mathcal{H}^m$ rectifiable
metric space.  Since $\set\left(T\right)=X$ in the definition of a current space, we have shown $\left(X, d, [\{\varphi_i\}], \theta_T\right)$
is an oriented weighted countably  
$\mathcal{H}^m$ rectifiable metric space.   As in the above paragraph,
we see that $\set\left(T\right)$ is a completely
settled subset of $\bar{X}$.  So $X$ is completely settled.

Note that since the $\{\varphi_i\}$ from the preferred atlas are the $\{\varphi_i\}$ of the parametrization and the
weights agree in (\ref{eqn-theta-T-theta}), this pair of maps is a correspondence.
\end{proof}

We may now define the mass and relate it to the weighted volume:

\begin{defn} \label{defn-space-mass}
The {\bf mass} of an integer rectifiable current space $\left(X,d,T\right)$ is defined to be the 
mass, $\mass\left(T\right)$, of the current structure, $T$. 
\end{defn}

Note that the mass is always finite by (iii) in the definition of a current.  

\bl \label{lem-push-mass}
If $\varphi: X\to Y$ is a $1$-Lipschitz map, then $\mass(\varphi_\#(T))
\le \mass(T)$.  
Thus if $\varphi: X\to Y$ is an isometric embedding, then $\mass(T)=\mass(\varphi_\#(T))$.  
\el

Recall Definition~\ref{def-weight} of the weighted volume, $Vol\left(X, \theta\right)$.
We have the following corollary of Lemma~\ref{lemma-weight} and Proposition~\ref{prop-current-spaces}:

\bl\label{lemma-mass-current-space}
The mass of an integer rectifiable current space $\left(X,d,T\right)$ with multiplicity or weight, $\theta_T$, satisfies
\be
\mass(T)=\int_X \theta_T(x)\lambda(x) d\mathcal{H}^m(x).
\ee
In particular, 
\be  \label{eqn-lem-weight-3}
M\left(T\right) \in \left[ m^{-m/2}Vol(X,\theta), \frac{2^m}{\omega_m} Vol(X,\theta)\right] ,  
\ee
where $Vol(X,\theta)$ is the weighted volume defined in Definition~\ref{def-weight}.
\el

Note that on a Riemannian manifold with multiplicity one, the mass and the weighted volume agree and are
both equal to the volume of the manifold.  On reversible Finsler spaces, 
$\lambda(x)$ depends on the norm of the tangent space at $x$.


\subsection{Integral Current Spaces} \label{Subsect-Def-Current-Space-4}

In this subsection, we define the boundaries of integer rectfiable current spaces and the notion of
an integral current space. We begin with Ambrosio-Kirchheim's
extension of Federer-Fleming's notion of an
integral current \cite{AK}[Defn 3.4 and 4.2]:

\begin{defn}[Ambrosio-Kirchheim]
An 
{\bf integral current}
is an 
integer rectifiable current,  $ T\in\intrectcurr_m(Z)$, such that $\partial T$ 
defined as
\be
\partial T \left(f, \pi_1,..., \pi_{m-1}\right) := T \left(1, f, \pi_1,..., \pi_{m-1}\right)
\ee
satisfies the requirements to be
a current.  One need only verify that $\partial T$ has finite mass as the
other conditions always hold.
We use the standard notation, $\intcurr_m\left(Z\right)$, to denote 
the space of $m$ dimensional integral currents on $Z$.
\end{defn}

\begin{rmrk} 
By the boundary rectifiability theorem of Ambrosio-Kirchheim
\cite{AK}[Theorem 8.6], $\partial T$ is then an integer rectifiable current
itself.  And in fact it is an integral current whose boundary is $0$.
\end{rmrk}


Thus we can make the following new definition:

\begin{defn} \label{def-integral-space} \label{defn-integral-current-space}
An $m$ dimensional {\bf integral current space} is an
 integer rectifiable current space, 
$\left(X,d,T\right)$, whose current structure, 
$T$, is an integral current (that is
$\partial T$ is an integer
rectifiable current in $\bar{X}$).  The boundary of $\left(X,d,T\right)$ is then the
integral current space:
\be
\partial \left(X,d_X,T\right) := \left(\set\left(\partial T\right), d_{\bar{X}}, \partial T\right).
\ee
If $\partial T=0$ then we say $\left(X,d,T\right)$ is an integral current without boundary or with zero boundary.
\end{defn}

Note that $\set\left(\partial T\right)$ is not necessarily a subset of $\set\left(T\right)=X$ but it is always a subset
of $\bar{X}$.   As in Definition~\ref{defn-current-space}, given an integer rectifiable current 
space $M=\left(X,d,T\right)$ we will use
$\set\left(M\right)$ or $X_M$ to denote $X$,  $d_M=d$ and $\Lbrack M \Rbrack =T $. 

\begin{rmrk}
On an oriented Riemannian manifold with boundary, 
$M$, the boundary $\partial M$ defined as a current space
agrees with the definition of $\partial M$ in Riemannian geometry.  
In that setting an atlas of $M$ can be
restricted to provide an atlas for $\partial M$.    It is not always possible to do this
on integer rectifiable current spaces. 
In fact the boundaries of charts need not even have finite mass for an individual chart.
If a chart $\varphi: K \subset \R^m \to Z$ with $K$ compact, then
$\partial \varphi_\# \Lbrack \One_K\Rbrack$ is an integral current iff $K$ has
finite perimeter.  
\end{rmrk}
 
 \begin{rmrk}  \label{rmrk-biLip-matching}
Suppose  $M$ and $N$ are connected  $m$-dimensional oriented 
Lipschitz manifolds 
with the standard current structures, $\Lbrack  M \Rbrack$ and $\Lbrack N \Rbrack$ 
as in Remark~\ref{Lip-mani-charts}
and  $\psi: M\to N$ a bi-Lipschitz homeomorphism. 
Then one can do a computation mapping charts on $M$ to
charts on $N$ and applying Lemma~\ref{lemma-param-equiv},  to see that
\be \label{rmrk-biLip-matching-1}
\psi_\#\Lbrack M\Rbrack = \pm\Lbrack N\Rbrack.
\ee  
That is, the bi-Lipschitz
homeomorphism is either a current preserving or a current reversing map.
When $M$ and $N$ are isometric, then the isometry is also current preserving or
current reversing.  

When $M$ and $N$ are integral current spaces, they may have multiplicity, so 
that a bi-Lipschitz homeomorphism or isometry from $\set\left(M\right)$ to $\set\left(N\right)$ does not
in general push $\Lbrack  M \Rbrack$ to $\Lbrack N \Rbrack$.  
Even with multiplicity $1$, the fact that orientations are defined using disjoint charts
can lead to different signs on different charts so that (\ref{rmrk-biLip-matching-1}) fails.
\end{rmrk}

 As in Federer, Ambrosio-Kirchheim define the total mass and we do as well:
 
 \begin{defn} \label{def-total-mass}
The {\bf total mass} of an integral current with boundary, $T$, is
\be \label{total-mass}
\nmass\left(T\right)=\mass\left(T\right) +\mass\left(\partial T\right). 
\ee
Naturally we can extend this concept to current spaces: $\nmass\left(X,d,T\right)=\nmass\left(T\right)$.
\end{defn}

Recall that by Remark~\ref{rmrk-separable}, an integral current space is separable and
has a collection of disjoint biLipshitz charts whose image is dense and the boundary of
the integral current space has the same property.  An integral current space need not be
precompact or bounded.  An integral current space is not necessarily a geodesic space.


\section{{\bf The Intrinsic Flat Distance Between Current Spaces}}\label{sect-flat-distance}

Let $\mathcal{M}^m$ be the space of $m$ dimensional integral current spaces as
defined in Definition~\ref{defn-integral-current-space}. 
Recall they have the form $M=\left(X_M,d_M,T_M\right)$ where
$T_M \in \intcurr_m\left(\bar{X}_M\right)$ and $\set(T_M)=X_M$. Note $\mathcal{M}^m$
also includes the zero current denoted ${\bf{0}}$.

Definition~\ref{def-flat1} in the introduction naturally applies to any $M,N \in \mathcal{M}^m$
so that:
\begin{equation}\label{eqn-local-defn}
d_{\Fm}\left(M,N\right):=\inf\{\mass\left(U\right)+\mass\left(V\right)\}
\end{equation}
where the infimum is taken over all complete metric spaces,
$\left(Z,d\right)$, and all integral currents,
$U\in\intcurr_m\left(Z\right), V\in\intcurr_{m+1}\left(Z\right)$,
such that there exists isometric embeddings
\be
\varphi : \left(\bar{X}_M, d_{\bar{X}_M}\right)\to \left(Z,d\right) \textrm{ and }\psi: \left(\bar{X}_N,d_{\bar{X}_N}\right)\to \left(Z,d\right)
\ee
with
\begin{equation} \label{eqn-Federer-Flat-2}
\varphi_\# T_M- \psi_\# T_N=U+\bdry V.
\end{equation}
Here we consider the ${\bf{0}}$ space to isometrically embed into any $Z$
with $\varphi_\#0=0\in \intcurr_m\left(Z\right)$.

Note that, by the definition, $d_{\Fm}$ is clearly symmetric.
In Subsection~\ref{subsect-triangle} we prove 
that $d_{\mathcal{F}}$ satisfies the triangle inequality on $\mathcal{M}^m$
[Theorem~\ref{triangle}].
As a consequence, the distance between integral
current spaces is always finite and is easy to estimate [Remark~\ref{finite}].

In Subsection~\ref{subsect-flat-dist-1}, we review the compactness theorems of 
Gromov and of Ambrosio-Kirchheim,
and present a compactness theorem for
intrinsic flat convergence [Theorem~\ref{GH-to-flat}], which
follows immediately from theirs.

In Subsection~\ref{subsect-flat-dist-2}, we prove
Theorem~\ref{inf-dist-attained} that the infimum in the definition of
the intrinsic flat distance is attained
between precompact integral current spaces.  
That is, there exists a common metric space, $Z$,
and integral currents, $U,V\in \intcurr_m(Z)$, achieving the infimum in (\ref{eqn-local-defn}).

In Subsection~\ref{subsect-flat-dist-3} we prove 
that $d_{\mathcal{F}}$ is a distance on $\mathcal{M}_0^m$.
That is, we prove that when two precompact integral current
spaces are a distance zero apart, there is a current preserving isometry between them [Theorem~\ref{zero-mani}].
Thus $d_{\Fm}$ is a distance on $\mathcal{M}_0^m$ where
\be
\mathcal{M}_0^m =\{ M \in \mathcal{M}^m: \,\, X_M \textrm{ is precompact}\}.
\ee

\begin{rmrk} \label{rmrk-distinct-flat}
Note that the flat distance $d^Z_F$ given above Definition~\ref{def-flat1} 
has an infimum that is taken over all 
$U\in\intcurr_m\left(Z\right), V\in\intcurr_{m+1}\left(Z\right)$ where the supports of $U$ and $V$
may be noncompact or even unbounded as long as they have finite mass.  
Thus  we can have unbounded 
limits [Example~\ref{ex-unbounded}] and bounded noncompact limits
[Example~\ref{ex-many-tips}].
\end{rmrk}

\subsection{The Triangle Inequality} \label{subsect-triangle}

In this section we prove the triangle inequality for the intrinsic flat distance
between integral current spaces:

\begin{thm} \label{triangle}
For all $M_1, M_2, N \in \mathcal{M}^m$, we have
\begin{equation}\label{eqn-triangle-here}
d_{\Fm}\left(M_1,M_2\right)\le d_{\Fm}\left(M_1,N\right)+d_{\Fm}\left(N, M_2\right).
\end{equation}
\end{thm}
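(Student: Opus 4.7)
The plan is to adapt the classical metric gluing argument that proves the triangle inequality for the Gromov--Hausdorff distance to the flat setting. Fix $\varepsilon>0$. By the definition of $d_{\Fm}$, there exist complete metric spaces $Z_1,Z_2$, isometric embeddings
\[
\varphi_1: \bar{X}_{M_1}\to Z_1,\quad \psi_1:\bar{X}_N\to Z_1,\quad \varphi_2:\bar{X}_N\to Z_2,\quad \psi_2:\bar{X}_{M_2}\to Z_2,
\]
and integral currents $U_i\in\intcurr_m(Z_i)$, $V_i\in\intcurr_{m+1}(Z_i)$ with
\[
\varphi_{1\#}T_{M_1}-\psi_{1\#}T_N=U_1+\partial V_1,\qquad
\varphi_{2\#}T_N-\psi_{2\#}T_{M_2}=U_2+\partial V_2,
\]
and $\mass(U_i)+\mass(V_i) < d_{\Fm}(M_1,N)+\varepsilon/2$ and $d_{\Fm}(N,M_2)+\varepsilon/2$ respectively.

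Next I would construct a common target space $Z$ by identifying the two copies of $\bar{X}_N$ inside $Z_1\sqcup Z_2$. Concretely, let $Z'=(Z_1\sqcup Z_2)/\!\sim$ where $\psi_1(x)\sim\varphi_2(x)$ for all $x\in\bar{X}_N$, equipped with the quotient pseudometric
\[
d_{Z'}(a,b)=\inf_{x\in\bar{X}_N}\bigl(d_{Z_1}(a,\psi_1(x))+d_{Z_2}(\varphi_2(x),b)\bigr)
\]
for $a\in Z_1,b\in Z_2$ (and the original metrics on $Z_i$). Standard quotient arguments, using that $\psi_1,\varphi_2$ are isometric embeddings, show that $d_{Z'}$ is a genuine metric, that the inclusions $Z_1\hookrightarrow Z',Z_2\hookrightarrow Z'$ are isometric, and that $\psi_1=\varphi_2$ becomes a single isometric embedding of $\bar{X}_N$ into $Z'$. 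Let $Z$ be the metric completion of $Z'$.

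Viewing everything in $\intcurr_*(Z)$ via the canonical isometric inclusions, the two defining relations combine into
\[
\varphi_{1\#}T_{M_1}-\psi_{2\#}T_{M_2}=(U_1+U_2)+\partial(V_1+V_2),
\]
since $\psi_{1\#}T_N$ and $\varphi_{2\#}T_N$ now coincide. Using that mass is preserved under isometric pushforward (Lemma \ref{lem-push-mass}) and subadditive under sums, we obtain
\[
\mass(U_1+U_2)+\mass(V_1+V_2)\le \sum_{i=1,2}\bigl(\mass(U_i)+\mass(V_i)\bigr)<d_{\Fm}(M_1,N)+d_{\Fm}(N,M_2)+\varepsilon.
\]
Since $\varphi_1,\psi_2$ embed $\bar{X}_{M_1},\bar{X}_{M_2}$ isometrically into the complete space $Z$, this witnesses $d_{\Fm}(M_1,M_2)< d_{\Fm}(M_1,N)+d_{\Fm}(N,M_2)+\varepsilon$, and letting $\varepsilon\to 0$ finishes the proof. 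The zero space case is handled by the convention that $\mathbf{0}$ embeds into any $Z$ with $\varphi_\#0=0$, so the argument applies verbatim.

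The only delicate point is the gluing step: one must check that the quotient pseudometric is actually a metric (i.e.\ that distinct points in $Z_1\sqcup Z_2$ are not collapsed) and that the isometric-embedding property of $\psi_1$ and $\varphi_2$ survives the identification. This is the standard amalgamation lemma in metric geometry and follows from the fact that $\psi_1,\varphi_2$ are isometries onto their images; no new idea is required beyond setting it up carefully.
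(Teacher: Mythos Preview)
Your proof is correct and follows essentially the same approach as the paper: both glue $Z_1$ and $Z_2$ along the two isometric copies of $\bar X_N$ (the paper states this as Lemma~\ref{gluing-lemma}), push the currents into the glued space so that the two images of $T_N$ coincide, and then add the decompositions and use subadditivity of mass. The only cosmetic difference is that you run an $\varepsilon$-argument while the paper keeps the $Z_i,U_i,V_i$ arbitrary and takes the infima at the end.
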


In the proof of this theorem, we do not assume the infimum in (\ref{eqn-local-defn})
is finite.   Naturally the theorem is immediately true if the right hand side
of (\ref{eqn-triangle-here}) is infinite.  It is a consequence of the theorem that
when the right hand side is finite, the left hand side is finite as well. 
Applying the theorem with $N_1={\bf 0}$, we may then conclude the
distance is finite and estimate it using the masses of $M_1$ and $M_2$:

\begin{rmrk}\label{finite} 
Taking $U=M$ and $V=0$ in (\ref{eqn-local-defn}), we see that
$
d_{\Fm}\left(M,0\right) \le   \mass\left(M\right),
$
so  the intrinsic flat distance between any pair of integral current spaces of finite mass
is finite 
\be
d_{\Fm}\left(M_1,M_2\right) \le d_{\Fm}\left(M_1,0\right)+d_{\Fm}\left(0,M_2\right) \le \mass\left(M_1\right)+\mass\left(M_2\right).
\ee 
In particular, when $M_i$ are Riemannian manifolds, then $\mass\left(M_i\right)=\vol\left(M_i\right)$ and
we have 
\be
d_{\Fm}\left(M_1,M_2\right) \le \vol\left(M_1\right)+\vol\left(M_2\right).
\ee
\end{rmrk}

To prove Theorem~\ref{triangle} we apply the following well-known
gluing lemma (c.f. \cite{BBI}):

\begin{lem} \label{gluing-lemma}
Given three
metric spaces $\left(Z_1,d_1\right)$, $(Z_2,d_2)$ and $\left(X, d_X\right)$ and two isometric embeddings, $\varphi_i: X \to Z_i$,
we can glue $Z_1$ to $Z_2$ along the isometric images of $X$ to create a space
$Z=Z_1 \disjointunion _{X} Z_2$ where $d_Z\left(x,x'\right)= d_i\left(x,x'\right)$ when $x,x'\in Z_i$ and
\be
d_Z\left(z,z'\right)=\inf_{x\in X} \left(d_1\left(z,\varphi_1\left(x\right)\right) + d_2\left(\varphi_2\left(x\right), z'\right) \right)
\ee
whenever $z\in Z_1, z'\in Z_2$.  There exist natural isometric embeddings $f_i: Z_i \to Z$
such that $f_1\circ\varphi_1=f_2 \circ\varphi_2$ is an isometric embedding of $X$ into $Z$.
\end{lem}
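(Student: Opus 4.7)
The plan is to construct $Z$ as the quotient of the disjoint union $Z_1 \disjointunion Z_2$ under the equivalence relation generated by $\varphi_1(x) \sim \varphi_2(x)$ for every $x \in X$, equip it with the distance formula given in the lemma, and then verify the metric axioms and the isometric embedding properties. The natural maps $f_i : Z_i \to Z$ will be the compositions of the inclusions $Z_i \hookrightarrow Z_1 \disjointunion Z_2$ with the quotient projection, and $f_1 \circ \varphi_1 = f_2 \circ \varphi_2$ will hold by the very definition of $\sim$.

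First I would check that the formula actually defines a function on $Z \times Z$, i.e.\ that it is consistent with the identifications: for $z, z' \in Z_1$, the claimed value $d_1(z,z')$ must agree with any ``mixed'' evaluation one could obtain by first moving $z$ or $z'$ across to $Z_2$ via the relation. More generally, one sees directly from the formula (and the isometry property of $\varphi_1, \varphi_2$) that inserting a factor through $X$ can never produce something smaller than the direct distance, since for any $x, y \in X$,
\[
d_1(z,\varphi_1(x)) + d_2(\varphi_2(x),\varphi_2(y)) + d_1(\varphi_1(y),z') \;\ge\; d_1(z, z'),
\]
using $d_2(\varphi_2(x),\varphi_2(y)) = d_X(x,y) = d_1(\varphi_1(x),\varphi_1(y))$ and the triangle inequality in $Z_1$. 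Symmetry of $d_Z$ is immediate from the symmetry of the formula together with the symmetric formula in the opposite cross case (obtained by interchanging the roles of $Z_1$ and $Z_2$).

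Next I would verify the triangle inequality $d_Z(z,z') \le d_Z(z,z'') + d_Z(z'',z')$ by case analysis on which of $Z_1, Z_2$ each of $z, z', z''$ lies in. The case where all three are in one $Z_i$ is immediate from the triangle inequality in $Z_i$. In mixed cases, one takes near-optimal $x, y \in X$ realizing the two infima on the right and chains them using the key identity $d_2(\varphi_2(x), \varphi_2(y)) = d_1(\varphi_1(x), \varphi_1(y)) = d_X(x,y)$, which allows one to convert any portion of the path that traverses the wrong $Z_i$ into a portion in the correct $Z_i$ with no increase in length; then the triangle inequality within that single $Z_i$ finishes the job. Positivity on the cross case, $d_Z(z,z') = 0$ with $z \in Z_1, z' \in Z_2$, forces a sequence $x_n \in X$ with $\varphi_1(x_n) \to z$ and $\varphi_2(x_n) \to z'$; since $(x_n)$ is then Cauchy in $X$ (its images are Cauchy in each $Z_i$ and $\varphi_i$ are isometric), the points $z$ and $z'$ represent the same class under the generated equivalence relation in the quotient, so $z = z'$ in $Z$.

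Finally, the maps $f_i : Z_i \to Z$ are isometric embeddings because $d_Z$ restricted to the image of $Z_i$ is exactly $d_i$ by the first step above, and injectivity on $Z_i$ is automatic from positivity of $d_i$. The identity $f_1 \circ \varphi_1 = f_2 \circ \varphi_2$ holds by construction of $\sim$, and the common composite is an isometric embedding of $X$ since it factors as an isometric embedding $\varphi_i$ followed by an isometric embedding $f_i$. The main obstacle is really only bookkeeping in the triangle inequality case analysis; once one establishes the single observation that every path crossing between $Z_1$ and $Z_2$ can be ``straightened'' through $X$ using the isometry of the $\varphi_i$, all other assertions in the lemma follow by routine checking.
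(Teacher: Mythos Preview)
The paper does not actually prove this lemma; it is stated as ``well-known'' with a reference to Burago--Burago--Ivanov and used without proof. Your verification is the standard one and is essentially correct.

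One small gap worth noting: in your positivity step you argue that if $d_Z(z,z')=0$ with $z\in Z_1$, $z'\in Z_2$, then the Cauchy sequence $(x_n)$ in $X$ forces $z$ and $z'$ into the same class ``under the generated equivalence relation.'' This fails if $X$ is not complete and the images $\varphi_i(X)$ are not closed in $Z_i$. For instance, take $X=(0,1)$ and $Z_1=Z_2=[0,1]$ with $\varphi_i$ the inclusion: the two copies of $0$ are at $d_Z$-distance $0$ but are \emph{not} identified by the equivalence relation generated by $\varphi_1(x)\sim\varphi_2(x)$ for $x\in(0,1)$. The clean fix is simply to define $Z$ as the metric quotient of the pseudometric space $(Z_1\sqcup Z_2, d_Z)$, i.e.\ identify all pairs at pseudodistance zero; positivity is then automatic, and your verifications that $d_Z$ restricts to $d_i$ on each $Z_i$ and that $f_1\circ\varphi_1=f_2\circ\varphi_2$ go through unchanged. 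In the paper's applications $X$ is always a metric completion $\bar{X}_N$, so the issue never actually arises.
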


We now prove Theorem~\ref{triangle}:

\begin{proof}
Let $M_i=(X_i, d_i, T_i)$ and $N=(X,d,T)$, and let $Z_1, Z_2$ be
metric spaces and let $\psi_i:\bar{X}_i \to Z_i$ and $\varphi_i:\bar{X} \to Z_i$
be isometric embeddings.  Let $U_i \in \intcurr_m(Z_i)$ and $V_i\in \intcurr_{m+1}(Z_i)$
such that 
\begin{equation} \label{triangle-1}
\varphi_{i\#} T- \psi_{i\#} T_i=U_i+\bdry V_i.
\end{equation}
Applying Lemma~\ref{gluing-lemma}, we create a metric space $Z$ with
isometric embeddings $f_i: Z_i \to Z$ such that 
$f_1 \circ \varphi_1=f_2 \circ \varphi_2$ is an isometric embedding of $X$ into $Z$.
Pushing forward the current structures to $Z$, we have 
$f_{1\#} \varphi_{1\#} T =f_{2\#}\varphi_{2\#}T$, so
\begin{eqnarray}
f_{1\#}\psi_{1\#} T_1 - f_{2\#}\psi_{2\#}T_2 &= &
f_{1\#}\psi_{1\#} T_1 - f_{1\#} \varphi_{1\#} T + f_{2\#}\varphi_{2\#}T-f_{2\#}\psi_{2\#}T_2 \\
&=& f_{1\#}(\psi_{1\#} T_1 - \varphi_{1\#} T) + f_{2\#}(\varphi_{2\#}T-\psi_{2\#}T_2 )\\
&=& f_{1\#} (-U_1 -  \partial V_1) +  f_{2\#}( U_2 +  \partial V_2)\\ 
&=& -f_{1\#} U_1 -  \partial f_{1\#}V_1 +  f_{2\#}U_2 +  \partial f_{2\#}V_2\\ 
&=& f_{2\#} U_2 -f_{1\#}U_1  + \partial (f_{2\#} V_2 -f_{1\#} V_1). 
\end{eqnarray}
So by (\ref{eqn-local-defn}) applied to the isometric embeddings 
$f_i\circ\psi_i:\bar{X}_i\to Z$, we have
\be
d_{\Fm}\left(M_1,M_2\right)
 \le \mass(f_{2\#} U_2 -f_{1\#}U_1) +  \mass (f_{2\#} V_2 -f_{1\#} V_1).
\ee
Applying the fact that mass is a norm and Lemma~\ref{lem-push-mass}
we have,
\begin{eqnarray}
d_{\Fm}\left(M_1,M_2\right)
&\le& \mass(f_{2\#} U_2) +\mass(f_{1\#}U_1) +  \mass (f_{2\#} V_2) +\mass(f_{1\#} V_1) \\
&=&\mass(U_2) +\mass(U_1) +  \mass ( V_2) + \mass (V_1).
\end{eqnarray}
Taking an infimum over all $U_i$ and $V_i$ satisfying (\ref{triangle-1}), we see
that 
\be
d_{\Fm}(M_1,M_2) \le d^{Z_1}_F(\varphi_{1\#} T,\psi_{1\#} T_1) +d^{Z_2}_F(\varphi_{2\#} T,\psi_{2\#} T_2).
\ee
Taking an infimum over all metric spaces $Z_1, Z_2$ and all isometric embeddings 
$\psi_i:\bar{X}_i \to Z_i$ and $\varphi_i:\bar{X} \to Z_i$ we obtain the triangle inequality.
\end{proof}

\subsection{A Brief Review of Existing Compactness Theorems} \label{subsect-flat-dist-1}

Gromov defined the following distance between metric spaces in \cite{Gromov-metric}:

\begin{defn}[Gromov]\label{defn-GH} 
Recall that the Gromov-Hausdorff distance between two metric spaces $\left(X, d_X\right)$ and $\left(Y, d_Y\right)$
is defined as
\be \label{eqn-GH-def}
d_{GH}\left(X,Y\right) := \inf  \, d^Z_H\left(\varphi\left(X\right), \psi\left(Y\right)\right)
\ee
where $Z$ is a complete metric space, and $\varphi: X \to Z$ and $\psi:Y\to Z$ are
isometric embeddings and where the Hausdorff distance in $Z$ is defined as
\be
d_{H}^Z\left(A,B\right) = \inf\{ \epsilon>0: A \subset T_\epsilon\left(B\right) \textrm{ and } B \subset T_\epsilon\left(A\right)\}.
\ee
\end{defn}

Gromov proved that this is indeed a distance on compact metric spaces: $d_{GH}\left(X,Y\right)=0$
iff there is an isometry between $X$ and $Y$.   There are many equivalent definitions
of this distance, we choose to state this version because it inspired our definition
of the intrinsic flat distance.  Gromov also introduced the following notions:

\begin{defn}[Gromov]\label{defn-equibounded} 
A collection of metric spaces is said to be equibounded or
uniformly bounded if there is a uniform upper bound
on the diameter of the spaces.  
\end{defn}

\begin{rmrk}\label{rmrk-NXr}
 We will write $N\left(X,r\right)$ to denote the maximal
number of disjoint balls of radius $r$ in a space $X$.   Note that $X$ can always be covered
by $N\left(X,r\right)$ balls of radius $2r$.
\end{rmrk}

\begin{defn}[Gromov] \label{defn-equicompact} 
A collection of spaces is said to
be equicompact or uniformly compact if they 
have a common upper bound $N\left(r\right)$
such that $N\left(X,r\right) \le N\left(r\right)$ for all spaces $X$ in the collection.
\end{defn}

Note that Ilmanen's Example depicted in Figure~\ref{fig-hairy-sphere} is not
equicompact, as the number of balls centered on the tips approaches infinity
[Example~\ref{ex-hairy-sphere}].

Gromov's Compactness Theorem states that sequences of equibounded and equicompact
metric spaces have a Gromov-Hausdorff converging subsequence \cite{Gromov-French}.  
In fact,
Gromov proves a stronger version of this statement in 
a subsequent work, \cite{Gromov-poly}{p 65}, which 
we state here so that we may apply it:

\begin{thm}[Gromov's Compactness Theorem] \label{Thm-Gromov} 
If a sequence of compact metric spaces, $X_j$, is equibounded and equicompact, then
there is a pair of compact metric spaces, $Y \subset Z$, and a subsequence $X_{j_i}$ which
isometrically embed into $Z$: $\varphi_{j_i}: X_{j_i} \to Z$ such that 
\be
\lim_{i\to \infty} d_H^Z\left(\varphi_{j_i}\left(X_{j_i}\right), Y\right)  =0.
\ee
So $\left(Y,d_Z\right)$ is the Gromov-Hausdorff limit of the $X_{i_j}$.
\end{thm}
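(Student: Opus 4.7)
The plan is to prove this by the standard three-step diagonal argument, closing with an iterated application of the gluing Lemma~\ref{gluing-lemma} to realize everything in a single metric space. First, I would exploit equicompactness to extract coherent nets. For each integer $k\ge 1$, let $N_k := N(1/k)$ be the equicompactness bound, and for each $j$ choose a $(1/k)$-net $S_{j,k}\subset X_j$ of cardinality at most $N_k$. After passing to a subsequence, I may assume $|S_{j,k}| = n_k$ is independent of $j$ for each $k$, and, inductively refining the nets, that $S_{j,k}\subset S_{j,k+1}$ for all $j,k$. Enumerating $S_{j,k}=\{x^{(j)}_{k,1},\dots,x^{(j)}_{k,n_k}\}$ coherently across $k$, the matrices $\left(d_j(x^{(j)}_{k,a},x^{(j)}_{k,b})\right)_{a,b}$ lie in a bounded subset of $\R^{n_k\times n_k}$ by equiboundedness, so Bolzano–Weierstrass plus a diagonal extraction over $k$ yields a subsequence $X_{j_i}$ along which all these finite distance matrices converge to limits $d_\infty(x^\infty_{k,a},x^\infty_{k,b})$.

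Second, I would build the limit space. Define $Y_0 := \bigcup_k \{x^\infty_{k,1},\dots,x^\infty_{k,n_k}\}$ as a set, equipped with the limiting pseudometric $d_\infty$; quotient by zero-distance and complete to obtain a metric space $\left(Y,d_Y\right)$. The compatibility condition $S_{j,k}\subset S_{j,k+1}$ guarantees that for each $k$ the image of the net points at scale $k$ forms a $(1/k)$-net in $Y$, so $Y$ is totally bounded, hence compact (being complete).

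Third, I would assemble the common metric space $Z$ by iterated gluing. For each $i$ and each $k$, the bijection $\beta_{i,k}: S_{j_i,k}\to S^\infty_k := \{x^\infty_{k,1},\dots,x^\infty_{k,n_k}\}$ is an $\varepsilon_{i,k}$-approximate isometry with $\varepsilon_{i,k}\to 0$ as $i\to\infty$ for each fixed $k$. Using these as near-identifications and thickening by at most $\varepsilon_{i,k}$, each $X_{j_i}$ admits an isometric embedding into a small enlargement of $Y$; concretely, apply Lemma~\ref{gluing-lemma} iteratively over $i$, each time gluing $X_{j_i}$ to the already-constructed space along the $(1/k_i)$-nets for a well-chosen $k_i\to\infty$. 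This produces a (separable, bounded) metric space $Z$ with isometric embeddings $\varphi_{j_i}:X_{j_i}\to Z$ and an isometric embedding of $Y$ into $Z$. Since every point of $X_{j_i}$ lies within $1/k_i$ of a net point whose image is within $\varepsilon_{i,k_i}+1/k_i$ of a point of $Y$, and conversely, we obtain $d^Z_H\left(\varphi_{j_i}(X_{j_i}),Y\right)\to 0$. Taking $Z$ itself to be the closure in the Kuratowski embedding keeps it compact.

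The main obstacle is the bookkeeping in the gluing step: one must ensure a \emph{single} ambient $Z$ accommodates all $\varphi_{j_i}$ simultaneously, while the natural construction glues one $X_{j_i}$ at a time. The cleanest resolution is to replace sequential gluings by a Kuratowski-type embedding of the abstract coherent system: define a pseudometric on the disjoint union $\bigsqcup_i X_{j_i}\sqcup Y$ by declaring net points at scale $k$ identified (so that distances between points in different $X_{j_i}$'s are computed through the common image in $Y$), check the triangle inequality, and pass to the quotient. Compactness of $Z$ then follows from the compactness of $Y$ together with the uniform Hausdorff convergence, and the Hausdorff estimate is governed entirely by $1/k_i + \varepsilon_{i,k_i}$.
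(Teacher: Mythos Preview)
The paper does not supply its own proof of this theorem; it is cited as Gromov's result, with the single remark that ``Gromov's proof of the stronger statement involves a construction of a metric on the disjoint union of the sequence of spaces.'' Your sketch is the standard argument and lands precisely on this disjoint-union construction in your final paragraph, so there is nothing to compare against beyond that one-line description, which you match.

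One small imprecision worth tightening: in the last paragraph you say to define the pseudometric on $\bigsqcup_i X_{j_i}\sqcup Y$ by ``declaring net points at scale $k$ identified.'' Literal identification fails because the distance matrices only converge, they do not coincide; the pseudometric would violate the triangle inequality. What one actually does (and what you gesture at earlier with ``thickening by at most $\varepsilon_{i,k}$'') is set $d_Z(x^{(j_i)}_{k,a},x^\infty_{k,a}) := \varepsilon_{i,k_i}$ for a suitably chosen slack, then extend by the usual chain-infimum formula. With that correction the construction goes through exactly as you describe.
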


Gromov's proof of the stronger statement  involves a construction of a metric
on the disjoint union of the sequence of spaces.  This method of proving the Gromov compactness theorem relies on
the fact that infimum in (\ref{defn-GH}) can be estimated arbitrarily well by taking $Z$ to be a disjoint union of
the spaces and choosing a clever metric on $Z$. 

The reason we have stated this stronger version of Gromov's Compactness Theorem
is because it can be applied in combination with Ambrosio-Kirchheim's compactness
theorem to prove our first compactness theorem for integral current spaces [Theorem~\ref{GH-to-flat}].

Recall the notion of total mass [Definition~\ref{def-total-mass}].
Ambrosio Kirchheim's Compactness Theorem, which extends Federer-Fleming's Flat Norm 
Compactness Theorem, is stated in terms of weak convergence of
currents.  See Definition 3.6 in \cite{AK} which extends Federer-Fleming's notion of weak convergence:

\begin{defn}[Weak Convergence] \label{def-weak}
A sequence of integral currents $T_j \in \intcurr_m\left(Z\right)$ is said to converge weakly to
a current $T$ iff the pointwise limits satisfy
\be
\lim_{j\to \infty}  T_j\left(f, \pi_1,..., \pi_m\right) = T\left(f, \pi_1,..., \pi_m\right) 
\ee
for all bounded Lipschitz $f$ and Lipschitz $\pi_i$.
\end{defn}

\begin{rmrk} \label{rmrk-unif-to-weak}
If we suppose one has a sequence of isometric embeddings, $\varphi_i: X \to Z$,
which converge uniformly to $\varphi: X\to Z$, and $T\in \intcurr_m(X)$,
then $\varphi_{i\#}T$ converges to $\varphi_\#T$.  This can be seen by
applying properties (ii) and (iii) in the definition of a current
as follows:
\begin{eqnarray*}
\lim_{i\to\infty} \varphi_{i\#}T(f, \pi_1,..., \pi_m) 
&=&\lim_{i\to\infty} T(f\circ \varphi_i, \pi_1\circ\varphi_i,..., \pi_m\circ\varphi_i)\\
&=& T(f\circ \varphi, \pi_1\circ\varphi,..., \pi_m\circ\varphi)=\varphi_\#T(f, \pi_1,..., \pi_m).
\end{eqnarray*}
\end{rmrk}

\begin{rmrk}
If $T_j\in \intcurr_m(Z)$ has
$\mass(T_j) \to 0$, then by (\ref{eqn-mass}),
\be 
\Big| T_j(f,\pi_1,..., \pi_m) \Big| \le \mass(T_j) |f|_\infty \Lip(\pi_1) \cdots \Lip(\pi_m)\to 0,
\ee
so $T_j$ converges weakly to $0$.
\end{rmrk}

\begin{rmrk}\label{rmrk-flat-implies-weak}
Note that flat convergence implies weak convergence because $T_j \Fto T$
implies there exists $U_j,V_j$ with $\mass(U_j)+\mass(V_j)\to 0$
such that $T_j-T=U_j +\partial V_j$.  This implies that $U_j$ and
$V_j$ must converge weakly to $0$ and $\partial V_j$ must as well.
So $T_j -T\weaklyto 0$ and $T_j \weaklyto T$.
\end{rmrk}

\begin{rmrk} \label{rmrk-lower-mass}
Immediately below the definition of weak convergence \cite{AK} Defn 3.6,
Ambrosio-Kirchheim prove 
the lower semicontinuity of mass.  In particular, 
if $T_j$ converges weakly to $T$, then $\liminf_{j\to\infty} \mass(T_j) \ge \mass(T)$.  
\end{rmrk}

\begin{rmrk} It should be noted here that weak convergence as defined in Federer \cite{Federer}
is tested only with differential forms of compact support while weak convergence in
Ambrosio-Kirchheim does not require the test tuples to have compact support.  Sequences
of unit spheres in Euclidean space whose centers diverge to infinity converge weakly to
$0$ in the sense of Federer but not in the sense of Ambrosio-Kirchheim.  
\end{rmrk}

\begin{thm}[Ambrosio-Kirchheim Compactness]\label{AK-compact}
Given any complete metric space 
$Z$, a compact set $K \subset Z$ and any sequence of integral currents  $T_j \in \intcurr_m \left(Z\right)$
with a uniform upper bound on their total mass $\nmass\left(T_j\right)=\mass\left(T_j\right) +\mass\left(\partial T_j\right) \le M_0$,
such that $\set\left(T_j\right) \subset K$, there exists a subsequence, $T_{j_i}$, and a limit current $T \in \intcurr_m\left(Z\right)$
such that $T_{j_i}$ converges weakly to $T$.
\end{thm}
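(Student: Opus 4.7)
The plan is to proceed in three stages. First, since $\set(T_j)\subset K$ with $K$ compact, each mass measure $\|T_j\|$ is supported in $K$ with $\|T_j\|(K)\le M_0$. By Prokhorov compactness for finite Borel measures on the compact space $K$, pass to a subsequence so that $\|T_{j_i}\|$ converges weakly as measures to some finite Borel measure $\mu$ on $K$. The space of bounded $1$-Lipschitz functions on $K$ is separable by Arzel\`a--Ascoli; let $\mathcal{F}$ be a countable dense family. By the finite mass axiom (\ref{def-AK-current-iii}), for each tuple in $\mathcal{F}^{m+1}$ the scalars $T_{j_i}(f,\pi_1,\dots,\pi_m)$ are uniformly bounded by $M_0\,|f|_\infty\prod_k\Lip(\pi_k)$, so a standard diagonal extraction yields a further subsequence, still denoted $T_{j_i}$, along which these scalars converge for every tuple drawn from $\mathcal{F}$.

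Second, extend to all of $\mathcal{D}^m(Z)$ by equicontinuity. Multilinearity together with (\ref{def-AK-current-iii}) provides uniform-in-$i$ estimates that control $|T_{j_i}(f,\pi)-T_{j_i}(f',\pi')|$ in terms of the sup-norm distance from $f$ to $f'$ and the Lipschitz distances of the $\pi_k$ to $\pi'_k$, scaled by $M_0$ and by products of Lipschitz constants. Hence an arbitrary Lipschitz tuple can be uniformly approximated on $K$ by elements of $\mathcal{F}$, and
\[
T(f,\pi_1,\dots,\pi_m):=\lim_i T_{j_i}(f,\pi_1,\dots,\pi_m)
\]
is well defined. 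Locality and multilinearity descend to pointwise limits, continuity follows from the equicontinuity estimate, and $\mu$ serves as a dominating Borel measure; thus $T\in\curr_m(Z)$.

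Third, and this is the main obstacle, one must show that $T$ is integer rectifiable and that $\partial T$ has finite mass, so that $T$ is actually integral. For rectifiability, invoke the Ambrosio--Kirchheim closure theorem, which asserts that integer rectifiable currents with uniformly bounded mass and uniformly bounded boundary mass form a weakly sequentially closed class. Their proof is inductive on $m$ via slicing by $1$-Lipschitz functions $Z\to\R$, using that weak convergence descends to $\mathcal{H}^1$-almost every slice; the base case $m=0$ reduces to the elementary fact that weak limits of atomic integer measures with uniformly bounded total mass remain atomic integer measures. For the boundary, the currents $\partial T_{j_i}\in\intcurr_{m-1}(Z)$ satisfy $\mass(\partial T_{j_i})\le M_0$, and they converge weakly to $\partial T$ directly from the identity $\partial T(g,\pi_1,\dots,\pi_{m-1})=T(1,g,\pi_1,\dots,\pi_{m-1})$. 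Lower semicontinuity of mass (Remark~\ref{rmrk-lower-mass}) then yields $\mass(\partial T)\le M_0$, so $\partial T$ is itself an integer rectifiable current and $T\in\intcurr_m(Z)$, as desired.
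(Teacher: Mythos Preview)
The paper does not prove this theorem; it is stated in the review subsection and attributed directly to Ambrosio--Kirchheim \cite{AK}. Your sketch follows the broad architecture of their original argument---diagonal extraction on a countable family of test tuples, extension to all tuples, then the closure theorem for integer rectifiability---so there is nothing in the present paper to compare against.

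That said, your second stage has a genuine gap. You assert that multilinearity together with the finite-mass axiom (\ref{def-AK-current-iii}) yields a uniform-in-$i$ bound on $|T_{j_i}(f,\pi)-T_{j_i}(f,\pi')|$ in terms of the distance from $\pi_k$ to $\pi'_k$. But axiom (iii) alone bounds $|T(f,\dots,\pi_k-\pi'_k,\dots)|$ only by $M_0\,|f|_\infty\,\Lip(\pi_k-\pi'_k)\prod_{l\ne k}\Lip(\pi_l)$, and $\Lip(\pi_k-\pi'_k)$ is not controlled by $\sup_K|\pi_k-\pi'_k|$. Since your countable family $\mathcal{F}$ is only sup-norm dense in the $1$-Lipschitz functions on $K$ (Lipschitz functions on $K$ are not separable in the Lipschitz seminorm), the extension step fails with this estimate. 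The correct equicontinuity comes from the uniform bound on \emph{total} mass: the product rule of \cite{AK}~Theorem~3.5 gives, for each $T_{j_i}$,
\[
T_{j_i}(f,\pi_1-\pi'_1,\pi_2,\dots,\pi_m)=\partial T_{j_i}\bigl(f(\pi_1-\pi'_1),\pi_2,\dots,\pi_m\bigr)-T_{j_i}\bigl(\pi_1-\pi'_1,f,\pi_2,\dots,\pi_m\bigr),
\]
and applying the mass axiom to each term on the right (now with $f(\pi_1-\pi'_1)$, respectively $\pi_1-\pi'_1$, sitting in the zeroth slot) yields, when all $\Lip(\pi_l)\le 1$,
\[
|T_{j_i}(f,\pi)-T_{j_i}(f,\pi')|\le\bigl[\mass(\partial T_{j_i})\,|f|_\infty+\mass(T_{j_i})\,\Lip(f)\bigr]\sum_k\sup_K|\pi_k-\pi'_k|,
\]
which is uniform in $i$ because $\nmass(T_{j_i})\le M_0$. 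With this estimate in hand the rest of your outline goes through.
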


The key point of this theorem is that the limit current is an integral current and has a rectifiable
set with finite mass and rectifiable boundary with bounded mass.

In order to apply Ambrosio-Kirchheim's result we need a result of the second author from
\cite{Wenger-flat}[Theorem 1.4] which generalizes a theorem of Federer-Fleming relating the
weak and flat norms.  As in Federer-Fleming one needs a uniform bound on total mass
to have the relationship.  To simplify the statement of \cite{Wenger-flat}[Theorem 1.4],
we restrict the setting to Banach spaces although his result is far more general:

\begin{thm}[Wenger Flat=Weak Convergence] \label{weak=flat} 
Let $E$ be a Banach space and $m \ge 1$.  
If we assume a sequence of integral currents,
$T_j \in \intcurr_m\left(E\right)$, has a uniform upper
bound on total mass $\mass\left(T_j\right)+\mass\left(\partial T_j\right)$, then $T_j$ converges weakly to 
$T\in \intcurr_m\left(E\right)$ iff $T_j$ converges to $T$ in the flat sense.
\end{thm}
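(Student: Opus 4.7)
The direction flat $\Rightarrow$ weak is recorded in Remark~\ref{rmrk-flat-implies-weak}: writing $T_j-T=U_j+\partial V_j$ with $\mass(U_j)+\mass(V_j)\to 0$, the mass bound (\ref{eqn-mass}) applied both to $U_j(f,\pi)$ and to $\partial V_j(f,\pi)=V_j(1,f,\pi)$ forces $T_j(f,\pi)\to T(f,\pi)$ for every test tuple $(f,\pi_1,\ldots,\pi_m)\in\mathcal{D}^m(E)$, so the remaining content is the implication weak + bounded total mass $\Rightarrow$ flat.

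Set $S_j:=T_j-T$. Then $S_j\weaklyto 0$ and $\sup_j \nmass(S_j)<\infty$, and the goal is $d_F^E(S_j,0)\to 0$. I would argue by contradiction and compactness: if $d_F^E(S_{j_k},0)\ge\varepsilon_0>0$ along some subsequence, it suffices to extract a further sub-subsequence converging in the flat sense to some $S_\infty\in\intcurr_m(E)$. Remark~\ref{rmrk-flat-implies-weak} then forces $S_{j_{k_\ell}}\weaklyto S_\infty$, while by hypothesis $S_{j_{k_\ell}}\weaklyto 0$, so $S_\infty=0$, contradicting $d_F^E(S_{j_k},0)\ge\varepsilon_0$. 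The core work is therefore to establish \emph{flat precompactness} of a sequence of integral currents in $E$ with uniformly bounded total mass.

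For flat precompactness I would adapt the Federer--Fleming deformation theorem to the Banach setting. Given $\delta>0$, for each $S\in\intcurr_m(E)$ with $\nmass(S)\le C$, construct a decomposition
\begin{equation*}
 S = P + R + \partial Q,
\end{equation*}
where $P$ is supported on a fixed finite-dimensional polyhedral $m$-skeleton $\Sigma_\delta\subset E$ (with $\mass(P)$ and $\mass(\partial P)$ bounded in terms of $\nmass(S)$), and the errors satisfy $\mass(R)+\mass(Q)\le C_m\,\delta\,\nmass(S)$. In Euclidean space this is the classical cubical projection; in a general Banach space, $\Sigma_\delta$ must be built as the image of a suitable Lipschitz retraction, and the error currents $R,Q$ are produced by filling the "holes" that arise when sliding $\spt(S)$ onto $\Sigma_\delta$. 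The uniform estimate on $\mass(R)+\mass(Q)$ is supplied by an isoperimetric filling inequality $\mass(V)\le C_m\mass(\partial V)^{(m+1)/m}$ for integral cycles of small mass in $E$. Applying this with a common $\delta$ to all $S_{j_k}$, the projected pieces $P_{j_k}$ live on the common finite-dimensional skeleton $\Sigma_\delta$, where classical Federer--Fleming flat compactness yields a flat-convergent sub-subsequence $P_{j_{k_\ell}}$. A diagonal argument letting $\delta\to 0$ then produces a genuine flat Cauchy sub-subsequence of $S_{j_k}$, whose limit exists in $\intcurr_m(E)$.

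A preliminary \emph{localization} step is needed because the supports of the $S_j$ need not lie in a common bounded set: slicing each $S_j$ by the distance function from a fixed basepoint and using the uniform bound on $\nmass(S_j)$ together with the slicing mass inequality, one truncates the mass outside a large ball $B(0,R)$ at the cost of an arbitrarily small flat error. This reduces the construction above to a situation where the deformation onto $\Sigma_\delta$ can be taken inside a bounded region. The principal obstacle throughout is the Banach-space deformation/filling step: the isoperimetric inequality and the accompanying Lipschitz retraction must be established with explicit constants depending only on $m$, independent of the ambient $E$.
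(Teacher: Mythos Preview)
The paper does not prove this theorem; it is quoted from \cite{Wenger-flat}[Theorem~1.4] as a background result, so there is no in-paper proof to compare against. Your overall architecture---the easy direction via Remark~\ref{rmrk-flat-implies-weak}, and for the converse a contradiction argument reducing the question to flat precompactness under a uniform total-mass bound---is sound and matches the shape of the argument in \cite{Wenger-flat}.

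The point of divergence, and the genuine soft spot in your sketch, is the deformation step. You propose pushing each $S_j$ onto a fixed finite-dimensional polyhedral $m$-skeleton $\Sigma_\delta\subset E$ via a Lipschitz retraction and then invoking classical Federer--Fleming flat compactness on $\Sigma_\delta$. In a general Banach space there is no canonical cubical or polyhedral structure, and producing such a skeleton together with a retraction that controls mass in the right way is precisely the difficulty---you correctly flag it as ``the principal obstacle'' but leave it unresolved, so as written the argument does not close. Wenger's actual proof in \cite{Wenger-flat} does not attempt a polyhedral deformation. The engine is instead the isoperimetric filling inequality for integral currents in Banach spaces (from \cite{Wenger-isoper}), which directly converts small-mass cycles into small flat norm; this is combined with a decomposition that separates off, for each $S_j$, a portion with uniformly controlled local lower density from a remainder of small mass, so that covering and filling arguments apply without ever projecting onto a skeleton. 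Your localization-by-slicing idea and the appeal to an isoperimetric inequality are both genuinely part of the original approach; what you should drop is the polyhedral projection and replace it with the density-based decomposition.
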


For our purposes, it suffices to have a Banach space, because we may apply Kuratowski's embedding theorem
to embed any complete metric space into a Banach space:

\begin{thm}[Kuratowski Embedding Theorem] \label{Kuratowski}
Let $Z$ be a complete metric space, and $\ell^\infty\left(Z\right)$ be the space of
bounded real valued functions on $Z$ endowed with the sup norm.  Then the
map $\iota: Z \to \ell^\infty\left(Z\right)$ defined by fixing a basepoint $z_0\in Z$
and letting $\iota\left(z\right)= d_Z\left(z_0,\cdot\right)-d_Z\left(z, \cdot\right)$ is an isometric embedding.
\end{thm}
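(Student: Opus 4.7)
The plan is to verify directly that the formula $\iota(z) = d_Z(z_0, \cdot) - d_Z(z, \cdot)$ defines a map into $\ell^\infty(Z)$ and that it preserves distances. Both facts follow from the reverse triangle inequality, so the argument is short and the main task is bookkeeping rather than any genuine obstacle.

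First I would show that $\iota(z)$ actually lies in $\ell^\infty(Z)$. For any $w \in Z$, the reverse triangle inequality applied to the triangle with vertices $z_0, z, w$ gives $|d_Z(z_0, w) - d_Z(z, w)| \leq d_Z(z_0, z)$, so the function $\iota(z): w \mapsto d_Z(z_0, w) - d_Z(z, w)$ is bounded in sup norm by $d_Z(z_0, z)$. Hence each $\iota(z)$ is a well-defined element of $\ell^\infty(Z)$, which is precisely the role played by the basepoint $z_0$.

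Next I would compute the sup-norm distance between $\iota(z)$ and $\iota(z')$. The $d_Z(z_0, w)$ term cancels in the subtraction, leaving $\iota(z)(w) - \iota(z')(w) = d_Z(z', w) - d_Z(z, w)$. Applying the reverse triangle inequality once more, now to the triangle with vertices $z, z', w$, yields $|d_Z(z', w) - d_Z(z, w)| \leq d_Z(z, z')$ uniformly in $w$, so $\|\iota(z) - \iota(z')\|_\infty \leq d_Z(z, z')$.

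Finally, to get equality (and thus the isometric embedding property), I would evaluate the difference at the specific point $w = z$, which gives $d_Z(z', z) - d_Z(z, z) = d_Z(z, z')$. Since this lower bound matches the earlier upper bound, we conclude $\|\iota(z) - \iota(z')\|_\infty = d_Z(z, z')$. Injectivity is automatic from distance preservation. Note that completeness of $Z$ is never used in the proof; it appears in the hypothesis only because the theorem is stated in a context where the ambient metric spaces of interest (the completions $\bar{X}$) are already complete.
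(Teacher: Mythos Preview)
Your proof is correct and is the standard argument for the Kuratowski embedding theorem. The paper does not actually supply a proof of this statement; it is stated as a classical result and used as a tool, so there is no paper proof to compare against. Your remark that completeness of $Z$ plays no role in the argument is also accurate.
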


\begin{rmrk} \label{rmrk-Kuratowski}
By the Kuratowski embedding theorem,
the infimum in (\ref{eqn-local-defn})
can be taken over Banach spaces, $Z$. 
\end{rmrk}

Combining Kuratowski's Embedding Theorem with 
Gromov and Ambrosio-Kirchheim's Compactness Theorems we immediately obtain:

\begin{thm} \label{GH-to-flat}
Given a sequence of $m$ dimensional integral current spaces $M_j=\left(X_j, d_j, T_j\right)$ such that $X_j$ are equibounded and
equicompact and such that $\nmass\left(T_j\right)$ is uniformly bounded above, then a subsequence
converges in the
Gromov-Hausdorff sense $\left(X_{j_i}, d_{j_i}\right) \to \left(Y,d_Y\right)$ and in the 
intrinsic flat sense 
$\left(X_{j_i}, d_{j_i}, T_{j_i}\right) \to \left(X,d,T\right)$
where either $\left(X,d,T\right)$ is an $m$ dimensional integral current space
with $X \subset Y$
or it is the ${\bf 0}$ current space.
\end{thm}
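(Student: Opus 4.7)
The plan is to chain together the four compactness/embedding results that were carefully assembled in the preceding subsection, and then verify that the current obtained as a weak limit really does define an integral current space whose underlying set sits inside the Gromov--Hausdorff limit $Y$.

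First, I would apply the strong form of Gromov's Compactness Theorem (Theorem~\ref{Thm-Gromov}) to the equibounded, equicompact sequence $(X_j,d_j)$. This produces a compact ambient metric space $Z$, a compact subset $Y\subset Z$, and a subsequence (which I will continue to call $X_j$) together with isometric embeddings $\varphi_j:\bar{X}_j\to Z$ such that $\varphi_j(X_j)\to Y$ in the Hausdorff distance in $Z$. I then push the current structures forward to $Z$, obtaining $S_j:=\varphi_{j\#}T_j\in\intcurr_m(Z)$. Since isometric embeddings preserve mass (Lemma~\ref{lem-push-mass}) and push--forward commutes with boundary, the total masses $\nmass(S_j)=\mass(S_j)+\mass(\partial S_j)$ are uniformly bounded by the hypothesis on $\nmass(T_j)$. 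Moreover $\set(S_j)=\varphi_j(X_j)$ is contained in the fixed compact set $Z$.

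Next, I would invoke Ambrosio--Kirchheim Compactness (Theorem~\ref{AK-compact}) applied in $Z$ with $K=Z$ to extract a further subsequence $S_{j_i}$ weakly converging to some $T\in\intcurr_m(Z)$. Because $\varphi_{j_i}(X_{j_i})$ Hausdorff--converges to $Y$ and $Y$ is closed in $Z$, the mass measures $\|S_{j_i}\|$ are asymptotically supported in any neighborhood of $Y$; testing against tuples $(f,\pi_1,\dots,\pi_m)$ with $f$ vanishing on a neighborhood of $Y$ shows $T(f,\pi_1,\dots,\pi_m)=0$, hence $\spt(T)\subset Y$ and in particular $\set(T)\subset Y$. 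If $T=0$, then the limit integral current space is $\mathbf{0}$; otherwise $(X,d,T):=(\set(T),d_Z{\upharpoonright}\set(T),T)$ is an $m$-dimensional integral current space by the boundary rectifiability theorem used in Definition~\ref{defn-integral-current-space}, and by construction $X\subset Y$.

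Finally, to upgrade weak convergence to intrinsic flat convergence I would compose with the Kuratowski embedding $\iota:Z\to\ell^\infty(Z)$ (Theorem~\ref{Kuratowski}), which is isometric, and consider $\iota_\# S_{j_i}$ and $\iota_\# T$ in the Banach space $E=\ell^\infty(Z)$. Weak convergence is preserved by push--forward (Remark~\ref{rmrk-unif-to-weak}), and the total masses remain uniformly bounded. Wenger's Flat=Weak Theorem~\ref{weak=flat} then yields flat convergence $\iota_\# S_{j_i}\Fto \iota_\# T$ in $\ell^\infty(Z)$: there exist $U_i\in\intcurr_m(E)$ and $V_i\in\intcurr_{m+1}(E)$ with $\iota_\# S_{j_i}-\iota_\# T=U_i+\partial V_i$ and $\mass(U_i)+\mass(V_i)\to 0$. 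Since $\iota\circ\varphi_{j_i}:\bar{X}_{j_i}\to E$ and the inclusion of $\bar{X}:=\overline{\set(T)}$ into $E$ are isometric embeddings into the common complete space $E$, this realizes the infimum in Definition~\ref{def-flat1} along the subsequence and gives $d_{\mathcal F}(M_{j_i},(X,d,T))\to 0$.

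The routine pieces are already packaged as named theorems, so the main conceptual step is the one in the second paragraph: verifying $\set(T)\subset Y$ from Hausdorff convergence of $\varphi_j(X_j)$ to $Y$, and recognizing that the Ambrosio--Kirchheim limit $T$ canonically defines an integral current space (or the zero space when $T=0$). This is really the only place where one must combine the geometric convergence on the set side with the analytic convergence on the current side, and it is what ensures the dichotomy in the conclusion.
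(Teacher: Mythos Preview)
Your proof is correct and follows essentially the same approach as the paper: Gromov compactness to get a common compact $Z$ and Hausdorff limit $Y$, Ambrosio--Kirchheim compactness for a weak limit current supported in $Y$, and then Kuratowski plus Wenger's Flat=Weak to upgrade to flat (hence intrinsic flat) convergence. The only cosmetic differences are that the paper applies the Kuratowski embedding \emph{before} invoking Ambrosio--Kirchheim (working in $\ell^\infty(Z)$ throughout) and verifies $\spt(T)\subset Y$ via lower semicontinuity of $\|T\|$ on balls rather than by testing against tuples with $f$ vanishing near $Y$; also, your parenthetical citation of Remark~\ref{rmrk-unif-to-weak} is not quite the right one (that remark concerns varying maps and a fixed current), though the fact you need---that a fixed Lipschitz push-forward preserves weak convergence---is immediate from the definitions.
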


Note that $X$ might be a strict subset of $Y$ as seen in Example~\ref{example-not-length} depicted
in Figure~\ref{figure-not-length}. 

\begin{proof}
By Gromov's Compactness Theorem, there exists a compact space $Z$ and isometric embeddings $\varphi_j:X_j \to Z$
such that a subsequence of the $\varphi_j\left(X_j\right)$, still denoted $\varphi_j(X_j)$,  
converges in the Hausdorff sense to a closed subset, $Y' \subset Z$.  
We then apply Kuratowski's Theorem
to define isometric embeddings 
$\varphi'_j=\iota\circ \varphi_j: X_j \to \ell^\infty\left(Z\right)$.  Note that $K=\iota\left(Z\right)\subset \ell^\infty\left(Z\right)$ is compact and
\be
\spt \varphi'_{j\#}\left(T_j\right) \subset Cl\left(\varphi'_j\left(X_j\right)\right) \subset \iota\left(Z\right) = K.
\ee 
Let $Y=\iota(Y')$ with the restricted metric.  

We now apply the Ambrosio-Kirchheim Compactness Theorem to see that there exists a further
subsequence $\varphi'_{j_i \#} T_{j_i}$ converging weakly to an integral current 
$S \in \intcurr_m\left(\ell^\infty\left(Z\right)\right)$.  We claim $\spt S \subset Y$.
If not then there exists $x \in \spt S\setminus Y$, and there exists $r>0$
such that $B(x,r)\cap Y=0$.   By definition of support,
$||S||(B(x,r/2)) >0$.  By weak convergence, there is an $i$
sufficiently large that $||S_{j_i}||(B(x,r))>0$.  In particular $x \in T_{r/2}(\spt S_{j_i})$.
Taking $i\to\infty$, we see that $x\in T_r(Y)$ because $Y$ is the Hausdorff
limit of the $\spt S_{j_i}$.

Since $E=\ell^\infty\left(Z\right)$ is a Banach space and there
is a uniform upper bound on the total mass, we apply Wenger's Flat=Weak Convergence
Theorem to see that
\be
d^E_F\left(\varphi'_{j_i \#} T_{j_i}, S\right) \to 0.
\ee
We now define our limit current space $\left(X,d,T\right)$ by taking $X=\set(S)$, $d=d_E$
and $T=S$.  The identity map isometrically embeds $X$ into $E$ and takes $T$ to $S$.
Since $\set(S) \subset \spt(S) \subset Y$, we are done.
\end{proof}

We have the following immediate corollary of Theorem~\ref{GH-to-flat}:

\begin{cor} \label{lower-dim}
Given a sequence of precompact
$m$ dimensional integral current spaces, $M_j=\left(X_j, d_j, T_j\right)$,
with a uniform upper bound on their total mass  such that $X_j$ converge in the
Gromov-Hausdorff sense to a compact limit space,$Y$,
of lower Hausdorff dimension, $dim_\mathcal{H}(Y)<m$, then $M_j$
converges in the intrinsic flat sense 
to the $\bf{0}$ current space because the zero current is the only $m$ dimensional
integral current whose canonical set has Hausdorff dimension less than $m$.  
\end{cor}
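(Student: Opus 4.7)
The plan is to extract an intrinsic flat subsequential limit via Theorem~\ref{GH-to-flat}, force that limit to be the zero current by a Hausdorff dimension argument, and then upgrade subsequential convergence to convergence of the whole sequence through the standard ``every subsequence has a further convergent subsequence'' trick.

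First, I would verify the hypotheses of Theorem~\ref{GH-to-flat}. Since $X_j \GHto Y$ with $Y$ compact, the $X_j$ are equibounded (uniform diameter bound inherited from $Y$) and equicompact: for any $r>0$, once $i$ is so large that the Hausdorff distance between $\varphi_{j_i}(X_{j_i})$ and $Y$ in a common metric space is less than $r/2$, a covering of $Y$ by $N(Y,r/2)$ balls of radius $r$ produces a covering of $\varphi_{j_i}(X_{j_i})$ by balls of radius $2r$, giving a uniform bound on $N(X_{j_i},r)$. Combined with the hypothesized uniform upper bound on $\nmass(T_j)$, Theorem~\ref{GH-to-flat} supplies a subsequence $M_{j_i}$ converging in the intrinsic flat sense to a limit $(X,d,T)\in\mathcal{M}^m$ (or directly to $\mathbf{0}$), with $X\subset Y$.

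Second, I would show $T=0$. By monotonicity of Hausdorff dimension, $X\subset Y$ gives $\dim_\mathcal{H}(X)\le\dim_\mathcal{H}(Y)<m$, hence $\mathcal{H}^m(X)=0$. By Lemma~\ref{lemma-weight},
\be
\|T\| \;=\; \theta_T\,\lambda\,\mathcal{H}^m\rstr\set(T),
\ee
and since $\set(T)=X$ has vanishing $m$-dimensional Hausdorff measure, $\mass(T)=\|T\|(\bar X)=0$. The mass bound (\ref{eqn-mass}) then forces $T(f,\pi_1,\dots,\pi_m)=0$ for every admissible $(f,\pi_1,\dots,\pi_m)$, so $T=0$ and the subsequential intrinsic flat limit is the zero current space $\mathbf{0}$.

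Finally, the same argument applies verbatim with any subsequence $M_{j_k}$ in place of $M_j$: Gromov-Hausdorff convergence of $X_{j_k}$ to $Y$ and the uniform total mass bound pass to subsequences, so every subsequence admits a further subsequence converging in $d_\Fm$ to $\mathbf{0}$. This standard subsubsequence principle promotes subsequential convergence to convergence of the full sequence, giving $M_j\Fto\mathbf{0}$. No step here is subtle; the only point requiring a moment's care is the equicompactness of $\{X_j\}$ used to invoke Theorem~\ref{GH-to-flat}, which as noted follows immediately from Gromov--Hausdorff convergence to a compact space.
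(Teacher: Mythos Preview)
Your proposal is correct and follows exactly the reasoning the paper intends: the paper states this as an immediate corollary of Theorem~\ref{GH-to-flat} together with the observation (built into the statement) that no nonzero $m$-dimensional integral current can have canonical set of Hausdorff dimension $<m$, and your argument simply spells out those details. The one point you make explicit that the paper elides is the subsubsequence argument needed to pass from the subsequential limit furnished by Theorem~\ref{GH-to-flat} to convergence of the full sequence; this is indeed required and your treatment of it is fine.
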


\begin{rmrk} \label{vol-to-zero}
Note that by Remark~\ref{finite} any collapsing sequence of Riemannian manifolds, $M_j^m$ such that
$\vol\left(M_j\right) \to 0$, converges in the intrinsic flat sense to the 
${\bf{0}}$ integral current
space.  Thus even when the Gromov-Hausdorff limit is higher
dimensional as in Example~\ref{ex-jungle-gym} 
the intrinsic flat limit may collapse to the ${\bf 0}$ current space.
\end{rmrk}

\subsection{The Infimum is Achieved} \label{subsect-flat-dist-2} 

In this subsection we prove the infimum in the definition of the intrinsic flat
distance (\ref{eqn-local-defn}) is achieved for precompact integral current
spaces.  

\begin{thm}\label{inf-dist-attained}\label{achieved}
Given a pair of precompact integral current spaces, $M=(X,d,T)$
and $M'=(X',d',T')$, there exists a compact metric space, $Z$,
integral
currents $U\in\intcurr_m\left(Z\right)$ and  $V\in\intcurr_{m+1}\left(Z\right)$,
and isometric embeddings
$\varphi : \bar{X}\to Z$ and $\varphi':\bar{X}' \to Z$
with
\begin{equation} \label{eqn-Federer-Flat-3}
\varphi_\# T- \varphi'_\# T'=U+\bdry V
\end{equation}
such that
\begin{equation}\label{eqn-local-defn-2}
d_{\Fm}\left(M,M'\right)=\mass\left(U\right)+\mass\left(V\right).
\end{equation}
In fact, we can take $Z=\spt\left(U\right)\cup \spt(V)$.
\end{thm}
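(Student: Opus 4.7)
The plan is to extract a subsequential limit of a minimizing sequence of candidate fillings, embedded in a common compact ambient space via Gromov's and Ambrosio--Kirchheim's compactness theorems.

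Since $d_{\Fm}(M,M')$ is finite by Remark~\ref{finite}, I would choose a minimizing sequence of complete metric spaces $Z_j$, isometric embeddings $\varphi_j:\bar X\to Z_j$ and $\varphi_j':\bar X'\to Z_j$, and integral currents $U_j\in\intcurr_m(Z_j)$, $V_j\in\intcurr_{m+1}(Z_j)$ satisfying $\varphi_{j\#}T-\varphi_{j\#}'T'=U_j+\partial V_j$ with $\mass(U_j)+\mass(V_j)\to d_{\Fm}(M,M')$. Since $M,M'$ are precompact, $K_j:=\varphi_j(\bar X)\cup\varphi_j'(\bar X')\subset Z_j$ is compact of uniformly bounded diameter.

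I would next use an Ambrosio--Kirchheim slicing argument with the $1$-Lipschitz function $\rho_j(z):=d_{Z_j}(z,K_j)$ and the coarea inequality $\int_0^\infty\mass\langle V_j,\rho_j,r\rangle\,dr\le\mass(V_j)$ to truncate $V_j$ at a suitable radius $r_j$, replacing the filling by $\tilde V_j:=V_j\rstr\{\rho_j<r_j\}$ and $\tilde U_j:=U_j\rstr\{\rho_j<r_j\}+\langle V_j,\rho_j,r_j\rangle$ (using that $\spt(\varphi_{j\#}T)$ and $\spt(\varphi_{j\#}'T')$ lie in $K_j$, so the identity $\tilde U_j+\partial\tilde V_j=\varphi_{j\#}T-\varphi_{j\#}'T'$ persists). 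A diagonal choice of $r_j$ keeps the added slice mass vanishingly small while bounding the diameter of $W_j:=\spt(\tilde U_j)\cup\spt(\tilde V_j)\cup K_j$ uniformly. The lower density bound for integer rectifiable currents from \cite{AK}, together with the uniform mass bound, yields a uniform upper bound on the covering number $N(W_j,r)$ for every $r>0$, so the sequence $(W_j,d_{Z_j})$ is equibounded and equicompact.

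By Gromov's Compactness Theorem~\ref{Thm-Gromov}, a subsequence of $W_j$ embeds isometrically via $\Psi_j:W_j\to Z$ into a fixed compact metric space $Z$ with Hausdorff-converging images, and by Arzela--Ascoli the compositions $\Psi_j\circ\varphi_j$ and $\Psi_j\circ\varphi_j'$ may be arranged to converge uniformly to fixed isometric embeddings $\varphi,\varphi':\bar X,\bar X'\to Z$. Applying Theorem~\ref{AK-compact} in $Z$ extracts weak subsequential limits $U\in\intcurr_m(Z)$ and $V\in\intcurr_{m+1}(Z)$ of the pushed-forward $\Psi_{j\#}\tilde U_j$ and $\Psi_{j\#}\tilde V_j$, using that $\mass(\partial\tilde V_j)\le\mass(\tilde U_j)+\mass(T)+\mass(T')$ is uniformly bounded. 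Via Remark~\ref{rmrk-unif-to-weak} the pushforwards of $T$ and $T'$ converge weakly to $\varphi_\#T$ and $\varphi_\#'T'$, so the filling identity passes to the weak limit as $\varphi_\#T-\varphi_\#'T'=U+\partial V$. Hence $(U,V)$ is a valid filling, and lower semicontinuity of mass (Remark~\ref{rmrk-lower-mass}) gives $\mass(U)+\mass(V)\le d_{\Fm}(M,M')$, forcing equality with the reverse inequality from the definition. Restricting the ambient to the compact set $\spt(U)\cup\spt(V)$ preserves the filling relation, yielding the final claim.

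The main obstacle I anticipate is the truncation/diagonalization: one must balance the slicing radius $r_j$ against the slice mass so that both the diameter of $W_j$ is uniformly bounded and the total mass remains minimizing in the limit, while ensuring the added slice terms $\langle V_j,\rho_j,r_j\rangle$ vanish weakly so the filling identity survives. The key technical tool is the density lower bound for integer rectifiable currents, which converts the uniform mass bound into the equicompactness needed to apply Gromov's theorem.
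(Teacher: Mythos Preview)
Your overall architecture matches the paper's---minimizing sequence, common compact ambient, Ambrosio--Kirchheim compactness, lower semicontinuity of mass---but the equicompactness of the $W_j$ is a genuine gap. The lower density statement in \cite{AK} is pointwise and $\|T\|$-a.e.: it says $\Theta_{*m}(\|T\|,x)>0$ for a.e.\ $x\in\set(T)$, but the radius below which $\|T\|(B(x,s))\ge c s^m$ holds depends on $x$ and on the current, and is not uniform across a family with only a mass bound. For a concrete obstruction, let $S_j$ be integration over $j$ disjoint flat $m$-disks of radius $j^{-1/m}$ centred at mutually $1$-separated points in some metric space; then $\mass(S_j)=\omega_m$ and $\mass(\partial S_j)$ is uniformly bounded, yet $N(\spt S_j,\tfrac13)\ge j\to\infty$. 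A uniform total-mass bound therefore does not force equicompactness of supports. What your argument would actually need is a monotonicity-type inequality $\|T\|(B(x,r))\ge c_m r^m$ for \emph{every} $x\in\spt T$ at distance $>r$ from $\spt\partial T$; establishing such an inequality in general metric spaces (after Kuratowski embedding, via isoperimetric arguments) is essentially the technical core of \cite{Wenger-compactness}, which this paper deliberately does not invoke at this stage.

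The paper bypasses the issue entirely with a different device. Only the sets $Y_n:=\varphi_n(\bar X)\cup\varphi_n'(\bar X')$ need to be equicompact, and they trivially are since $N(Y_n,r)\le N(\bar X,r)+N(\bar X',r)$; Gromov's theorem then embeds the $Y_n$ isometrically into a single compact $Z'$. Taking $Z$ to be Isbell's injective envelope of $Z'$ (again compact), injectivity extends the isometric embeddings $Y_n\hookrightarrow Z$ to $1$-Lipschitz maps $\bar\psi_n:Z_n\to Z$ on the full ambient spaces. Pushing $U_n,V_n$ forward by these $1$-Lipschitz maps can only decrease mass, and now everything lives in the fixed compact $Z$, so Ambrosio--Kirchheim compactness and Arzela--Ascoli apply directly---no slicing, no truncation, and no control on the supports of the fillings whatsoever.
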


This theorem also holds for $M'=\bf{0}$, where we just take $T'=0$ and do not concern
ourselves with embedding $X'$ into $Z$.

In our proof of this theorem, we use the notion of an injective metric space and
Isbell's theorem regarding the existence of an injective envelope of a metric space
\cite{Isbell}:

\begin{defn}
A metric space $W$ is said to be injective iff it has the following property:
given any pair of metric spaces, $Y\subset Z$, and
any $1$ Lipschitz function, $f: Y\subset Z \to W$, we can extend $f$ to
a $1$ Lipschitz function $\bar{f}: Z \to W$.
\end{defn}

\begin{thm} [Isbell] 
Given any metric space $X$, there is a smallest injective space, 
which contains $X$, called the injective envelope.
Furthermore, when $X$ is compact, its injective envelope
is compact as well.
\end{thm}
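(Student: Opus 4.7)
The plan is to construct the injective envelope explicitly as a space of real-valued functions on $X$ and then verify injectivity, minimality, and (for compact $X$) compactness. I would define
\[
\Delta(X) = \{f \colon X \to \R : f(x) + f(y) \geq d(x,y) \text{ for all } x,y \in X\},
\]
and let $E(X) \subset \Delta(X)$ consist of the pointwise-minimal elements, equivalently those $f$ with $f(x) = \sup_{y \in X}(d(x,y) - f(y))$ for every $x$. Equip $E(X)$ with the sup metric $d_E(f,g) = \sup_{x \in X}|f(x) - g(x)|$, and define the canonical map $\iota \colon X \to E(X)$ by $\iota(x) = d(x,\cdot)$. Membership $\iota(x) \in E(X)$ follows because the sup in $\sup_z(d(y,z) - d(x,z))$ is attained at $z = x$, and $d_E(\iota(x),\iota(x')) = d(x,x')$ follows from the triangle inequality tested at $y \in \{x, x'\}$, so $\iota$ is an isometric embedding.

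The main technical step is to show $E(X)$ is injective. I would derive this from the intermediate property of \textbf{hyperconvexity}: any family of closed balls $\{B(f_\alpha, r_\alpha)\}$ in $E(X)$ satisfying $d_E(f_\alpha, f_\beta) \leq r_\alpha + r_\beta$ for all $\alpha,\beta$ has nonempty common intersection. The candidate element is $h(x) = \sup_\alpha(f_\alpha(x) - r_\alpha)$, which the pairwise distance condition forces into $\Delta(X)$; a Zorn's lemma descent through $\Delta(X)$ produces a minimal $g \in E(X)$ with $g \leq h$ pointwise, and one verifies that the descent preserves both the upper bound $g \leq f_\alpha + r_\alpha$ (inherited from minimality applied against $f_\alpha$) and the lower bound $g \geq f_\alpha - r_\alpha$ (inherited from $h$), placing $g$ in every ball. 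Given hyperconvexity, injectivity is a standard one-point-at-a-time extension via Zorn: for 1-Lipschitz $\phi \colon A \to E(X)$ with $A \subset Y$ and $y \in Y \setminus A$, the balls $B(\phi(a), d(y,a))$ satisfy the pairwise condition, so any common point extends $\phi$.

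For minimality, I would show that if $W$ is any injective metric space containing $X$ isometrically via $\iota' \colon X \to W$, then injectivity of $W$ extends $\iota' \circ \iota^{-1} \colon \iota(X) \to W$ to a 1-Lipschitz $\Phi \colon E(X) \to W$; conversely injectivity of $E(X)$ extends $\iota \circ (\iota')^{-1}$ on $\iota'(X)$ to a 1-Lipschitz left inverse, and the extremality built into $E(X)$ forces $\Phi$ to be an isometry onto its image, exhibiting $E(X)$ as an isometric subspace of $W$. For compactness when $X$ is compact of diameter $D$: every $f \in E(X)$ is 1-Lipschitz as a sup of 1-Lipschitz functions in $x$, and satisfies $0 \leq f(x) \leq D$ (lower bound from $y = x$ in the defining inequality, upper bound from $d(x,y) - f(y) \leq D$ combined with $f \geq 0$). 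Thus $E(X) \subset C(X)$ is uniformly bounded and equicontinuous, hence precompact by Arzel\`a--Ascoli; a pointwise limit of minimal functions in $\Delta(X)$ remains minimal, so $E(X)$ is closed in $C(X)$ and therefore compact.

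The main obstacle is the hyperconvexity step, specifically the upgrade from the natural candidate $h \in \Delta(X)$ to an actual element of $E(X)$ that still witnesses membership in every ball simultaneously. The Zorn descent on $\Delta(X)$ must simultaneously preserve the upper bounds from the individual balls and the lower bound recovered from $h$; verifying that every decreasing chain has a lower bound still lying above $h - $ (sup of radii offsets) is the delicate point. Everything else---the embedding, the $C(X)$-based compactness, and the universal property---is then formal once hyperconvexity is in hand.
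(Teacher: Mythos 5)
The paper does not prove this statement at all: it is quoted as Isbell's theorem with a citation to \cite{Isbell}, so the only meaningful comparison is with the classical tight-span argument of Isbell/Dress, which is indeed the route you take. Your construction of $E(X)$, the isometric embedding $\iota(x)=d(x,\cdot)$, the minimality argument via extending $1$-Lipschitz maps both ways and using that a $1$-Lipschitz self-map of $E(X)$ fixing $\iota(X)$ must be the identity, and the Arzel\`a--Ascoli compactness argument are all sound at the level of a sketch.

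However, the hyperconvexity step---which you yourself flag as the crux---is wrong as written, in two ways. First, $h(x)=\sup_\alpha\left(f_\alpha(x)-r_\alpha\right)$ is \emph{not} forced into $\Delta(X)$ by the pairwise condition: take $X=\{a,b\}$ with $d(a,b)=2$, a single ball centered at $f_1=\iota(a)=(0,2)$ with $r_1=1$ (the pairwise condition is vacuous); then $h=(-1,1)$ and $h(a)+h(b)=0<2$. Second, even granting some element of $\Delta(X)$, descending by Zorn to a minimal $g\le h$ can never yield the lower bound $g\ge f_\alpha-r_\alpha$: being \emph{below} $h$ is the wrong side of the inequality, so "inherited from $h$" does not follow. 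The standard repair reverses the roles of the two envelopes: the correct candidate is the \emph{upper} envelope $p(x)=\inf_\alpha\left(f_\alpha(x)+r_\alpha\right)$, which the pairwise condition does place in $\Delta(X)$, since for any $\alpha,\beta$ one has $f_\alpha(x)+r_\alpha+f_\beta(y)+r_\beta\ge f_\alpha(x)+f_\beta(y)+\sup_z|f_\alpha(z)-f_\beta(z)|\ge f_\alpha(x)+f_\alpha(y)\ge d(x,y)$. Zorn (with pointwise infima of chains staying in $\Delta(X)$) gives a minimal, hence extremal, $g\le p\le f_\alpha+r_\alpha$; the lower bound then comes for free from extremality rather than from $h$: for every $y$, $g(x)\ge d(x,y)-g(y)\ge d(x,y)-f_\alpha(y)-r_\alpha$, and taking the supremum over $y$ and using extremality of $f_\alpha$ gives $g(x)\ge f_\alpha(x)-r_\alpha$. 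With that substitution your outline becomes the standard proof; one further small point worth recording in the noncompact case is that the sup metric on $E(X)$ is finite, which follows from the $1$-Lipschitz bound $f(x)\le f(x_0)+d(x,x_0)$ together with $g(x)\ge d(x,x_0)-g(x_0)$.
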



We now prove Theorem~\ref{inf-dist-attained}.
 
\begin{proof}
Let $Z_n$ and $U_n\in\intcurr_m\left(Z_n\right)$ and $V_n\in\intcurr_{m+1}\left(Z_n\right)$ 
approach the infimum in the definition of the flat distance
between current spaces (\ref{eqn-local-defn}).  That is there exists
isometric embeddings $\varphi_n: \bar{X}\to Z_n$ and $\varphi'_n: \bar{X}'\to Z_n$ 
such that
\be
\varphi_{n\#}T  - \varphi'_{n\#}T' = U_n +\bdry V_n
\ee
where
 \begin{equation} 
  \mass\left(U_n\right)+\mass\left(V_n\right) \leq d_{\Fm}\left(M,M'\right) + \frac{1}{n}.
 \end{equation}
 
 We would like to apply Ambrosio-Kirchheim's Compactness Theorem, so we
 need to find a common compact metric space, $Z$, and push $U_n$ and $V_n$
 into this common space and then take their limits to find $U$ and $V$.   We will
 build $Z$ in a few stages using Gromov's Compactness Theorem and Isbell's
 Theorem.   The $Z_n$ we have right now need not be equicompact or
 equibounded.
 
 We first claim that $\varphi_n, \varphi_n'$ and $Z_n$ may be chosen so that
  \be
 \diam(Z_n) \le 3 \diam(\varphi_n(\bar{X}))+ 3\diam(\varphi'_n(\bar{X}'))
 =3 \diam(X) + 3 \diam(X').
 \ee
 If not, then there exists $p_n\in \varphi_n(\bar{X})$ and $p_n'\in \varphi'_n(\bar{X}')$
 such that the closed balls
 \be
 \bar{B}(p_n,2\diam(X)) \cap \bar{B}(p_n',2\diam(X')) =\emptyset.
 \ee
 Taking $A_n=Z_n\setminus (\bar{B}(p_n,2\diam(X)) \cup \bar{B}(p'_n,2\diam(X')))$,
 we would then define 
 $
 Z_n':=Z_n / A_n 
 $
 with the quotient metric 
 \be
 d_{Z_n'}\left([z_1],[z_2]\right) := \inf \left\{ d_{Z_n}(x_1,a_1)+d_{Z_n}(a_2,x_2): \,\, 
                                                        x_i \in [z_i], a_i \in A_n\right\}. 
 \ee
Then $Z_n'$ has the required bound on diameter and we need only construct
the embeddings.
 
 Let $p: Z_n \to Z_n/A$ be the projection.  Then $p$ is an isometric 
 embedding when restricted to 
  $\varphi_n(X)\subset \bar{B}(p_n,\diam(X))$  or to 
  $\varphi_n(X')\subset \bar{B}(p'_n,\diam(X'))$.   Thus 
  $p\circ\varphi_n:\bar{X} \to Z_n/A$ and 
  $p\circ\varphi'_n:\bar{X}' \to Z_n/A$ are isometric embeddings.
 Furthermore $p$ is $1$-Lipschitz on $Z_n$, so
 \be
p_\#\varphi_{n\#}T  -p_\# \varphi'_{n\#}T' = p_\#U_n +\bdry p_\#V_n
\ee
and, by Lemma~\ref{lem-push-mass},
 \begin{equation} 
  \mass\left(p_\#U_n\right)+\mass\left(p_\#V_n\right) \leq
\mass\left(U_n\right)+\mass\left(V_n\right) 
\end{equation}
 So our first claim is proven.
 
Now let $Y_n:=\varphi_n(\bar{X}) \cup \varphi'_n(\bar{X}')\subset Z_n$ with the restricted
 metric from $Z_n$.  By our first claim, the diameters of the $Y_n$ are uniformly bounded.
 In fact the $Y_n$ are equicompact because the number of disjoint
 balls of radius $r$ may easily be estimated:
 \be
 N(Y_n,r) \le N(\varphi_n(X),r) + N(\varphi_n'(X'),r)=N(X,r) + N(X',r).
 \ee
 Thus, by Gromov's Compactness Theorem, there exists a compact metric
 space, $Z'$, and isometric embeddings $\psi_n: Y_n \to Z'$.
 
 Recall that $U_n\in\intcurr_m\left(Z_n\right)$ and $V_n\in\intcurr_{m+1}\left(Z_n\right)$,
 so we need to extend $\psi_n$ to $Z_n$ in order to push forward
 these currents into the common compact metric space, $Z$, and 
 take their limits.
 
 By Isbell's Theorem, we may take $Z$ to be the injective envelope of $Z'$.   
 Since $Z$ is injective, we can extend the $1$-Lipschitz
 maps, $\psi_n$, to $1$-Lipschitz maps, $\bar{\psi}_n: Z_n \to Z$.
 So now we have a common compact metric space, $Z$,
 isometric embeddings $\bar{\psi}_n\circ \varphi_n: \bar{X} \to Z$
 and $\bar{\psi}_n\circ \varphi'_n:\bar{X}'\to Z$,
 such that
 \be \label{four-currents}
\bar{\psi}_{n\#} \varphi_{n\#}T  - \bar{\psi}_{n\#}\varphi'_{n\#}T' = 
\bar{\psi}_{n\#}U_n +\bdry \bar{\psi}_{n\#}V_n
\ee
where
 \begin{equation} 
  \mass\left(\bar{\psi}_{n\#}U_n\right)+\mass\left(\bar{\psi}_{n\#}V_n\right) \leq 
  d_{\Fm}\left(M,M'\right) + \frac{1}{n}.
 \end{equation}

By Arzela-Ascoli's Theorem, after taking a subsequence, the isometric embeddings
$\bar{\psi}\circ\varphi_n: X\to Z$ and $\bar{\psi}\circ\varphi'_n: X'\to Z$ 
converge uniformly to isometric embeddings
$\varphi: X \to Z$ and $\varphi': X'\to Z$.  As in Remark~\ref{rmrk-unif-to-weak},
we then have weak convergence:
\be
\bar{\psi}_{n\#} \varphi_{n\#}T   \weaklyto \varphi_\#T \textrm{ and }
\bar{\psi}_{n\#}\varphi'_{n\#}T' \weaklyto \varphi'_\# T'.
\ee 

By Ambrosio-Kirchheim's Compactness Theorem, after possibly taking
a further subsequence, there exists $U\in\intcurr_m\left(Z\right), 
V\in\intcurr_{m+1}\left(Z\right)$ such that
\be 
\bar{\psi}_\#U_n \weaklyto U \textrm{ and } \bar{\psi}_\#V_n \weaklyto V.
\ee
In particular, $\varphi_\#T - \varphi'_\# T'= U- \partial V$.

By the lower semicontinuity of mass (c.f. Remark~\ref{rmrk-lower-mass}), 
\begin{equation} 
  \mass(U)+\mass(V) \leq 
  d_{\Fm}\left(M,M'\right) + \frac{1}{n} \qquad \forall n \in \N
 \end{equation}
and we are done.
\end{proof}



\subsection{Current Preserving Isometries} \label{subsect-flat-dist-3}

We can now prove that the intrinsic flat distance is a distance
on the space of precompact oriented Riemannian manifolds with boundary
and, more generally, on precompact integral current spaces in $\mathcal{M}_0^m$.

\begin{defn}
Given $M,N \in \mathcal{M}^m$, an isometry $f: X_M \to X_N$ is called
a current preserving isometry between $M$ and $N$, if its
extension $\bar{f}: \bar{X}_M \to \bar{X}_N$ pushes forward the
current structure on $M$ to the current structure on $N$:
$
\bar{f}_\# T_M= T_N
$
\end{defn}

When $M$ and $N$ are oriented Riemannian manifolds 
or other Lipschitz manifolds with the standard
current structures as described in Remark~\ref{Lip-mani-charts}
then orientation preserving isometries are exactly current preserving 
isometries.  {See Remark~\ref{rmrk-biLip-matching}.}

\begin{thm} \label{zero-mani}
If $M,N$ are precompact integral current spaces such that $d_{\Fm}\left(M,N\right)=0$
then there is a current preserving isometry from $M$ to $N$.
Thus $d_{\mathcal{F}}$ is a distance on $\mathcal{M}^m_0$.
\end{thm}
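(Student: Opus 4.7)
The plan is to reduce the problem to an equality of pushforward currents in a single ambient metric space by invoking the infimum attainment theorem, and then to read off the required isometry from the canonical sets. Applying Theorem~\ref{inf-dist-attained} to the precompact pair $(M,N)$, I obtain a compact metric space $Z$, isometric embeddings $\varphi:\bar X_M\to Z$ and $\varphi':\bar X_N\to Z$, and integral currents $U\in\intcurr_m(Z)$, $V\in\intcurr_{m+1}(Z)$ satisfying
\[
\varphi_\# T_M - \varphi'_\# T_N = U + \partial V,\qquad \mass(U)+\mass(V) = d_{\mathcal{F}}(M,N)=0.
\]
Since the mass is a nonnegative norm on $\intcurr_\ast(Z)$ that vanishes only on the zero current, I conclude $U=V=0$, so setting $S:=\varphi_\# T_M=\varphi'_\# T_N\in\intcurr_m(Z)$, the two pushforwards agree as currents in $Z$.

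With this identity in hand, Lemma~\ref{lemma-isom-to-set} tells me that both $\varphi|_{X_M}\colon X_M\to\set(S)$ and $\varphi'|_{X_N}\colon X_N\to\set(S)$ are isometric bijections onto the same subset of $Z$. I will then define
\[
f:=(\varphi'|_{X_N})^{-1}\circ\varphi|_{X_M}\colon X_M\to X_N,
\]
which is an isometry. Because $X_M$ is dense in the complete space $\bar X_M$, the map $f$ extends uniquely to an isometric bijection $\bar f\colon\bar X_M\to\bar X_N$, and on the dense subset $X_M$ we have $\varphi'\circ\bar f=\varphi$ by construction; by continuity this identity persists on all of $\bar X_M$.

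The last step will be to verify $\bar f_\# T_M = T_N$. Pushing forward by $\varphi'$ gives
\[
\varphi'_\#\bar f_\# T_M = (\varphi'\circ\bar f)_\# T_M = \varphi_\# T_M = \varphi'_\# T_N,
\]
so the task reduces to showing that pushforward by the isometric embedding $\varphi'$ is injective on $\intcurr_m(\bar X_N)$. This is the one point I expect to require a short extra argument, and I regard it as the main obstacle. I will handle it by scalar McShane extension: given any test tuple $(\tilde f,\tilde\pi_1,\dots,\tilde\pi_m)\in\mathcal{D}^m(\bar X_N)$, each component extends from the isometric image $\varphi'(\bar X_N)\subset Z$ to a Lipschitz function on $Z$ of the same Lipschitz constant, producing $(f,\pi_1,\dots,\pi_m)$ on $Z$ with $f\circ\varphi'=\tilde f$ and $\pi_i\circ\varphi'=\tilde\pi_i$. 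By the push-forward formula (\ref{def-push-forward}), this yields $S(\tilde f,\tilde\pi)=\varphi'_\# S(f,\pi)$ for any $S\in\intcurr_m(\bar X_N)$, from which the required injectivity follows. Combined with the displayed equality above, this gives $\bar f_\# T_M=T_N$, so $f$ is the desired current preserving isometry and $d_{\mathcal{F}}$ separates points on $\mathcal{M}^m_0$.
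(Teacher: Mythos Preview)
Your proof is correct and follows essentially the same route as the paper's: apply Theorem~\ref{inf-dist-attained} to force $\varphi_\#T_M=\varphi'_\#T_N$, use Lemma~\ref{lemma-isom-to-set} to identify the canonical sets, and define $f$ as the composite isometry. The only difference is that where the paper simply asserts that $\varphi'_\#$ is injective because $\varphi'$ is an isometric embedding, you supply the explicit McShane extension argument to justify it.
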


It should be noted that a pair of precompact metric
spaces, $X,Y$ such that $d_{GH}(X,Y)=0$ need not be isometric (e.g.  
the Gromov-Hausdorff distance between a Riemannian manifold, and
the same manifold with one point removed is $0$).  However, if $X$ and $Y$
are compact, then Gromov proved
$d_{GH}(X,Y)=0$ implies they are isometric \cite{Gromov-metric}.

While we do not require that our spaces be complete, the definition
of an integral current space requires that the spaces be completely
settled [Defn~\ref{def-settled}] so that $X=\set (T)$
[Defn~\ref{defn-integral-current-space}].  This is as essential to
the proof of Theorem~\ref{zero-mani} as the compactness is 
essential in Gromov's theorem.
Precompactness on the other hand, is not a necessary condition.
Theorem~\ref{zero-mani} can be extended to noncompact 
integral current spaces applying Theorem 6.1 in the second author's
compactness paper \cite{Wenger-compactness}.

\begin{proof}
By Theorem~\ref{inf-dist-attained} and the fact that an integral current
has zero mass iff it is $0$,
we know there exists a compact
space $Z$ and isometric embeddings,
$\varphi : \left(\bar{X}_M, d_{\bar{X}}\right) \to \left(Z,d\right)$ and $\psi: \left(\bar{X}_N , d_{\bar{X}_N}\right)\to \left(Z,d\right)$,
with
\begin{equation} \label{eqn-zero-mani-1}
{\varphi}_\# T_M- {\psi}_\# T_N=0 \in \intcurr_m\left(Z\right).
\end{equation}
Thus
\be
\set\left({\varphi}_\# T_M\right) =\set \left({\psi}_\# T_N\right).
\ee
By Lemma~\ref{lemma-isom-to-set}, we know $\varphi: X_M \to \set\left({\varphi}_\# T_M\right)$
and $\psi: X_N \to \set\left({\psi}_\# T_N\right)$ are isometries.

We define our isometry $f: X_M \to X_N$ to be $f=\psi^{-1}\circ \varphi$.
Then $\bar{f}: \bar{X}_M \to \bar{X}_N$, pushes $T_M\in \intcurr_m\left(\bar{X}_M\right)$
to $\bar{f}_\# T_M \in \intcurr_m\left(\bar{X}_N\right)$,
so that with (\ref{eqn-zero-mani-1}) we have,
\be
{\psi}_\# \bar{f}_\# T_M= {\varphi}_\# T_M = {\psi}_\# T_N.
\ee
Since ${\psi}_\# \,\left(f_\#T_M - T_N\right) =0 \in \intcurr_m\left(Z\right)$ and ${\psi}$
is an isometry, we have
 $f_\#T_M - T_N=0 \in \intcurr_m\left(\bar{X}_N\right)$.
\end{proof}

The following is an immediate consequence of Theorem~\ref{zero-mani}: 

\begin{cor}
If $M^m$ and $N^m$ are precompact oriented Riemannian manifolds with finite volume,
then $d_{\mathcal{F}}(M^m,N^m)=0$ iff there is an orientation preserving isometry,
$\psi: M^m \to N^m$.  Thus $d_{\mathcal{F}}$ is a distance on the space of precompact
oriented Riemannian manifolds with finite volume.
\end{cor}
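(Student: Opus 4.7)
The plan is to derive this from Theorem~\ref{zero-mani} together with Remark~\ref{rmrk-biLip-matching}, which identifies current-preserving isometries of oriented Lipschitz manifolds (with their standard current structures) with orientation-preserving isometries.

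First I would observe that a precompact oriented Riemannian manifold $M^m$ of finite volume, equipped with its standard current structure $\Lbrack M\Rbrack$ from Remark~\ref{Lip-mani-structure}, is a precompact integral current space in $\mathcal{M}_0^m$. Indeed, the standard atlas gives a bi-Lipschitz atlas with multiplicity one, the mass equals the volume and is finite, and the boundary $\partial M$ is again an oriented Riemannian manifold (possibly empty) whose standard current structure equals $\partial \Lbrack M\Rbrack$, so $\Lbrack M\Rbrack \in \intcurr_m(\bar{M})$. The analogous statement holds for $N^m$.

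For the forward direction, assume $d_{\Fm}(M^m,N^m) = 0$. By Theorem~\ref{zero-mani} applied in $\mathcal{M}_0^m$, there is a current-preserving isometry $f : X_M \to X_N$, that is, an isometry of the underlying metric spaces whose extension $\bar f$ to the completions satisfies $\bar f_\# \Lbrack M\Rbrack = \Lbrack N\Rbrack$. Since $M$ and $N$ are Lipschitz (even smooth) manifolds with their standard current structures, Remark~\ref{rmrk-biLip-matching} tells us that any bi-Lipschitz homeomorphism between them is either current-preserving or current-reversing, and moreover that the sign is determined by whether the map preserves or reverses the standard orientation. Since $f$ is current-preserving, it must preserve orientations, giving the desired orientation-preserving isometry $\psi := f : M \to N$.

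For the converse, suppose there is an orientation-preserving isometry $\psi : M \to N$. Then, again by Remark~\ref{rmrk-biLip-matching}, $\bar\psi_\# \Lbrack M\Rbrack = \Lbrack N\Rbrack$. Take $Z = \bar N$, let $\varphi' = \mathrm{id}_{\bar N}$ and $\varphi = \bar\psi$ in the definition of $d_{\Fm}$; these are isometric embeddings and $\varphi_\# \Lbrack M\Rbrack - \varphi'_\# \Lbrack N\Rbrack = 0$, so taking $U = 0$ and $V = 0$ in (\ref{eqn-local-defn}) yields $d_{\Fm}(M,N) = 0$. The final assertion that $d_{\Fm}$ is a distance then follows, because symmetry and the triangle inequality on $\mathcal{M}^m$ are already established (Theorem~\ref{triangle}) and now the vanishing characterization gives non-degeneracy on the subclass of precompact oriented Riemannian manifolds of finite volume.

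The only nontrivial input is the identification of current-preserving isometries with orientation-preserving isometries for the standard structures on Riemannian manifolds; this is exactly the content flagged in Remark~\ref{rmrk-biLip-matching}, so the corollary is essentially a direct translation of Theorem~\ref{zero-mani} from the language of current structures to the Riemannian language.
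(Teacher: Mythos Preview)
Your proposal is correct and matches the paper's approach: the paper states this corollary as an immediate consequence of Theorem~\ref{zero-mani}, and you have simply spelled out the translation via Remark~\ref{rmrk-biLip-matching} (current-preserving $\Leftrightarrow$ orientation-preserving for standard structures) together with the trivial converse direction.
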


\begin{rmrk}
Initially we were hoping to prove that if the intrinsic flat distance between 
two Riemannian manifolds
is zero then the manifolds are isometric.  This is false
unless the manifold has an orientation reversing isometry
as we prove in Theorem~\ref{zero-mani}.  We thought we
might use a $\Z_2$ notion of integral currents to avoid the issue of
orientation.  However, at the time there was no such theory, so we
settled on this version of the theorem with this notion of intrinsic
flat distance.  Very recently Ambrosio-Katz \cite{Ambrosio-Katz}
and Ambrosio-Wenger \cite{Ambrosio-Wenger} 
completed work covering this theory and one expects this will
lead to interesting new ideas.
Alternatively one could try to use
the even more recent theory of DePauw-Hardt \cite{DH-chains}.
\end{rmrk}


\section{{\bf Sequences of Integral Current Spaces}} 

In this section we describe the properties of sequences of integral
current spaces which converge in the intrinsic flat sense.

In Subsection~\ref{subsectseq} we take a
Cauchy or converging sequence of precompact integral current
spaces and construct a common metric space, $Z$, into which the entire sequence
embeds [Theorem~\ref{cauchy} and Theorem~\ref{converge}].  Note that
$Z$ need not be compact even when the spaces are.  Relevant examples
are given and an open question appears in Remark~\ref{rmrk-cauchy}.

In Subsection~\ref{subsect-properties} we remark on the properties
of converging sequences of integral current spaces.
We prove the lower semicontinuity
of mass [Theorem~\ref{mass-drops}] which is a direct consequence of
Ambrosio-Kirchheim \cite{AK}.  We remark on the continuity of filling
volume which follows from work of the second author \cite{Wenger-flat}.

In Subsection~\ref{subsect-SorWen1} we state consequences of
the authors' first paper \cite{SorWen1} concerning limits of sequences
of Riemannian manifolds with contractibility conditions as in work
of Greene-Petersen \cite{Greene-Petersen}.  We discuss how to
avoid the kind of cancellation depicted in Example~\ref{example-cancels}
depicted in Figure~\ref{figure-cancels} using Gromov's filling volume \cite{Gromov-filling}.

In Subsection~\ref{subsect-Ricci} we discuss noncollapsing sequences
of manifolds with nonnegative Ricci or positive scalar curvature particularly
Theorem~\ref{thm-ricci} and Conjecture~\ref{conj-scalar}
which appear in our first paper \cite{SorWen1}.

In Subsection~\ref{subsect-Wenger-compactness} we state the second
author's compactness theorem [Theorem~\ref{thm-Wenger-compactness}]
which is proven in \cite{Wenger-compactness}.  We then prove
Theorem~\ref{Cauchy-to-Converge} which provides a common metric
space $Z$ for a Cauchy sequence bounded as in the compactness theorem.
In particular, any Cauchy sequence of integral current spaces with a uniform
upper bound on diameter and total mass converges to an integral current
space.

\subsection{Embeddings into a Common Metric Space}\label{subsectseq}\label{subsect-flat-dist-4}

In this subsection we prove Theorems~\ref{cauchy},~\ref{converge}
and~\ref{convergeto0} which describe how
Cauchy and converging
sequences of integral current spaces, $M_i$, can be isometrically embedded into a common 
separable complete metric space $Z$ as a flat Cauchy or converging sequence. 
These theorems are essential to understanding sequences of manifolds which
do not have Gromov-Hausdorff limits.
We will also apply them
to prove Theorem~\ref{Cauchy-to-Converge}.  

\begin{thm}\label{cauchy}
Given an intrinsic  flat  Cauchy sequence of
integral current spaces, $M_j=\left(X_j, d_j, T_j\right) \in \mathcal{M}^m$,
there exists a separable complete
metric space
$Z$, and a sequence
of isometric embeddings $\varphi_j: X_j \to Z$
such that $\varphi_{j\#}T_j$ is a flat Cauchy
sequence of integral currents in $Z$.
\end{thm}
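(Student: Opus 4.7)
The plan is to build the common metric space $Z$ by iteratively gluing together spaces that realize (approximately) the intrinsic flat distance between successive terms of a rapidly convergent subsequence, using Lemma~\ref{gluing-lemma}, and then attaching the remaining terms by one further gluing each.

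First, since $\{M_j\}$ is intrinsic flat Cauchy, I extract a subsequence $\{M_{j_k}\}$ with $d_{\mathcal F}(M_{j_k}, M_{j_{k+1}}) < 2^{-k}$. By Definition~\ref{def-flat1}, for each $k$ I choose a complete metric space $Z_k$, isometric embeddings $\alpha_k\colon \bar{X}_{j_k} \to Z_k$ and $\beta_k\colon \bar{X}_{j_{k+1}} \to Z_k$, and integral currents $U_k\in\intcurr_m(Z_k)$, $V_k\in\intcurr_{m+1}(Z_k)$ with $\alpha_{k\#} T_{j_k} - \beta_{k\#} T_{j_{k+1}} = U_k + \partial V_k$ and $\mass(U_k) + \mass(V_k) < 2^{-k}$. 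Replacing $Z_k$ by the closure of $\alpha_k(\bar{X}_{j_k}) \cup \beta_k(\bar{X}_{j_{k+1}}) \cup \spt(U_k) \cup \spt(V_k)$ inside $Z_k$, I may assume $Z_k$ is separable, since each $\bar{X}_{j_k}$ is separable by Remark~\ref{rmrk-separable} and integer rectifiable currents have separable supports.

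Next, I glue iteratively. Set $W_1 := Z_1$; given $W_k$, which contains a distinguished isometric copy of $\bar{X}_{j_{k+1}}$ (via $\beta_k$ followed by the isometric inclusion $Z_k \hookrightarrow W_k$), I apply Lemma~\ref{gluing-lemma} to form
\[
W_{k+1} := W_k \disjointunion_{\bar{X}_{j_{k+1}}} Z_{k+1}
\]
with canonical isometric inclusions $W_k \hookrightarrow W_{k+1}$ and $Z_{k+1} \hookrightarrow W_{k+1}$ that agree on the image of $\bar{X}_{j_{k+1}}$. Let $W_\infty := \bigcup_k W_k$ with the direct-limit metric, and let $Z'$ be its completion; $Z'$ is separable since $W_\infty$ is a countable union of separable spaces. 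Each $\bar{X}_{j_k}$ embeds isometrically into $Z'$ as a map $\psi_k$. Pushing the defect equation from $Z_k$ forward through the isometric inclusion $Z_k \hookrightarrow Z'$ and using Lemma~\ref{lem-push-mass} to preserve mass, I obtain $d^{Z'}_F(\psi_{k\#} T_{j_k}, \psi_{k+1\#} T_{j_{k+1}}) < 2^{-k}$; telescoping with the triangle inequality for $d^{Z'}_F$ shows the subsequence is flat Cauchy in $Z'$.

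Finally, for each index $n$ outside the subsequence, choose $k(n)$ with $d_{\mathcal F}(M_n, M_{j_{k(n)}}) < 1/n$ and pick a witness space $Y_n$ as in Step~1. Gluing $Y_n$ to the previously constructed space along the isometric copy of $\bar{X}_{j_{k(n)}}$ via Lemma~\ref{gluing-lemma}, and taking the completion of the resulting countable union, produces the required separable complete metric space $Z$. All $\bar{X}_j$ embed isometrically as maps $\varphi_j$, and the flat Cauchy property of $\{\varphi_{j\#}T_j\}$ in $Z$ follows by combining the previous paragraph with the estimate $d^Z_F(\varphi_{n\#}T_n, \psi_{k(n)\#} T_{j_{k(n)}}) < 1/n$ for out-of-subsequence indices. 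The main obstacle is purely technical: verifying that the direct-limit metric on $W_\infty$ is a genuine metric (which follows because each $W_k \hookrightarrow W_{k+1}$ is an isometric embedding), and that all iterated pushforwards preserve both the defect-current equations and the mass bounds. Care is required because the $M_j$ are not assumed precompact, so Theorem~\ref{inf-dist-attained} does not apply and I must work only with the approximate minimizers furnished by Definition~\ref{def-flat1}.
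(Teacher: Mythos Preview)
Your proposal is correct and follows essentially the same strategy as the paper: build a tree whose trunk is a rapidly convergent subsequence connected by witness spaces, and attach each remaining term by a single branch to a nearby trunk vertex. The paper packages the iterated gluing into a single tree-gluing lemma (Lemma~\ref{lemma-tree-graph}), whereas you carry it out by repeated application of Lemma~\ref{gluing-lemma} and a direct-limit completion; the resulting space and the Cauchy estimate are the same.
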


The classic example of a Cauchy sequence of integral currents converging to
Gabriel's horn shows that a uniform upper bound on mass is required to
have a limit space which is an integral current space [Example~\ref{ex-Gabriel's-horn}].
So the Cauchy sequence in this theorem need not converge without an
additional assumption on total mass.
In Example~\ref{ex-unbounded} we see that even with the uniform bound on
total mass, the sequence may have a limit which is unbounded.
In Example~\ref{ex-many-tips} depicted in Figure~\ref{fig-many-tips} we
see that even with a uniform bound on total mass and diameter, the limit
space need not be precompact.  See also Remark~\ref{rmrk-cauchy} and Theorem~\ref{Cauchy-to-Converge}.

If we assume that the Cauchy sequence of integral current spaces converges
to a given integral current space, than we can embed the entire sequence
including the limit into a common metric space $Z$:

\begin{thm}\label{converge}\label{converges}
If a sequence of 
integral current spaces,
$M_{j}=\left(X_j, d_j, T_j\right)$, converges  
to an 
integral current space,
 $M_0=\left(X_0,d_0,T_0\right)$
 in the intrinsic flat sense , then
there is a separable
complete metric space, $Z$, and isometric embeddings  $\varphi_j: X_j \to Z$ such that
$\varphi_{j\#}T_j$ flat converges to $\varphi_{0\#} T_0$ in $Z$
and thus converge weakly as well.
\end{thm}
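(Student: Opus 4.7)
The plan is to realize each witness of the flat convergence $d_\mathcal{F}(M_j, M_0) \to 0$ in an individual metric space $Z_j$ and then glue all the $Z_j$'s together along the common copy of $\bar X_0$ to produce a single separable complete metric space $Z$ into which the entire sequence embeds isometrically, with the flat convergence inequality transported along the way.

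First, I would use the definition of $d_\mathcal{F}$ (equation~(\ref{eqn-local-defn})) rather than Theorem~\ref{inf-dist-attained}, since no precompactness is assumed. For each $j\ge 1$, pick a complete metric space $Z_j$, isometric embeddings $\psi_{0,j}:\bar X_0 \to Z_j$ and $\psi_j:\bar X_j \to Z_j$, and integral currents $U_j \in \intcurr_m(Z_j)$, $V_j \in \intcurr_{m+1}(Z_j)$ with
\begin{equation}
\psi_{0,j\#} T_0 - \psi_{j\#} T_j = U_j + \partial V_j,\qquad \mass(U_j)+\mass(V_j) \le d_\mathcal{F}(M_j,M_0) + 1/j.
\end{equation}
Next, construct $Z$ by iterated application of the gluing lemma (Lemma~\ref{gluing-lemma}) with the pivot space $\bar X_0$. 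Inductively set $W_1=Z_1$ with $\eta_{1,0}=\psi_{0,1}$, $\eta_{1,1}=\psi_1$, and having built $W_{n-1}$ with distinguished isometric embeddings $\eta_{n-1,0}:\bar X_0 \to W_{n-1}$ and $\eta_{n-1,j}:\bar X_j \to W_{n-1}$ for $j<n$, glue $Z_n$ to $W_{n-1}$ along $\bar X_0$ via $\eta_{n-1,0}$ and $\psi_{0,n}$. The lemma produces a metric space $W_n$ together with isometric embeddings $f_{n-1}:W_{n-1}\to W_n$ and $g_n:Z_n \to W_n$ satisfying $f_{n-1}\circ\eta_{n-1,0}=g_n\circ\psi_{0,n}$. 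Define the new distinguished embeddings $\eta_{n,0}:=f_{n-1}\circ\eta_{n-1,0}$, $\eta_{n,j}:=f_{n-1}\circ\eta_{n-1,j}$ for $j<n$, and $\eta_{n,n}:=g_n\circ\psi_n$. Take $W_\infty$ to be the direct limit of the resulting isometric tower $W_1 \hookrightarrow W_2 \hookrightarrow \cdots$ (i.e., the union carrying the compatible metric), and let $Z$ be its metric completion; separability follows from Remark~\ref{rmrk-separable} since each $\bar X_j$ is separable and the $\eta_{n,j}(\bar X_j)$ are cofinal in $W_\infty$.

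Finally, write $\varphi_0:\bar X_0 \to Z$ for the (common) embedding coming from the $\eta_{n,0}$ and $\varphi_j:\bar X_j \to Z$ for the embedding coming from $\eta_{n,j}$ with $n\ge j$. For each $j$, the composition $\iota_j := g_j \circ (\text{later transitions}) : Z_j \to Z$ is an isometric embedding with $\iota_j \circ \psi_{0,j} = \varphi_0$ and $\iota_j \circ \psi_j = \varphi_j$. Push the defining identity of $U_j, V_j$ forward by $\iota_j$: setting $\tilde U_j := \iota_{j\#}U_j \in \intcurr_m(Z)$ and $\tilde V_j := \iota_{j\#}V_j \in \intcurr_{m+1}(Z)$, we obtain
\begin{equation}
\varphi_{0\#} T_0 - \varphi_{j\#} T_j = \tilde U_j + \partial \tilde V_j.
\end{equation}
By Lemma~\ref{lem-push-mass} and the fact that $\iota_j$ is $1$-Lipschitz, $\mass(\tilde U_j)+\mass(\tilde V_j)\le \mass(U_j)+\mass(V_j) \to 0$, so $\varphi_{j\#} T_j \to \varphi_{0\#} T_0$ in flat distance within $Z$, and weak convergence then follows by Remark~\ref{rmrk-flat-implies-weak}.

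The main obstacle is the countable gluing step: ensuring that after infinitely many gluings along $\bar X_0$, we still have a bona fide metric space in which every $Z_j$ embeds isometrically, and that the limit space can be taken separable and complete. The key observation that makes this work is that all gluings use the same pivot $\bar X_0$, so the direct limit is well-defined without metric collapse, and each $Z_j$ embeds via a composition of isometric embeddings produced by Lemma~\ref{gluing-lemma}; once this is in hand, transporting the flat inequalities is immediate from functoriality of pushforward and $1$-Lipschitz mass-decrease.
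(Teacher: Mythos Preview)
Your argument is correct and in fact takes a cleaner route than the paper. The paper reuses the machinery from its proof of Theorem~\ref{cauchy}: it extracts a geometric subsequence $\{j_k\}$, then applies Lemma~\ref{lemma-tree-graph} to a tree whose edges are $\{(j_k,0):k\in\N\}\cup\{(j_k,i):j_k\le i<j_{k+1}\}$, so that subsequence terms attach directly to $M_0$ while the remaining terms hang off the subsequence; flat convergence is then obtained by a two-step triangle inequality estimate. Your construction is the special case of Lemma~\ref{lemma-tree-graph} for the star graph with center $0$ and leaves $1,2,\dots$: every $M_j$ is glued directly to $M_0$ via its witness space $Z_j$, and the flat estimate $d_F^Z(\varphi_{j\#}T_j,\varphi_{0\#}T_0)\le\mass(U_j)+\mass(V_j)\to 0$ is immediate with no detour through a subsequence. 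This is more natural here precisely because a common pivot $\bar X_0$ is available, whereas the paper's tree was designed for the Cauchy setting where no such pivot exists.

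One small point to tighten: your separability claim does not follow from the $\bar X_j$ alone being separable, since $W_\infty$ is the union of the full spaces $Z_j$, not just the images of the $\bar X_j$. As in the paper's proof of Theorem~\ref{cauchy}, you should at the outset replace each $Z_j$ by $\spt U_j\cup\spt V_j\cup\psi_{0,j}(\bar X_0)\cup\psi_j(\bar X_j)$, which is separable; then $W_\infty$ is a countable union of separable spaces and hence separable, and so is its completion $Z$.
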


Note that one cannot construct a compact $Z$ as Gromov did in \cite{Gromov-groups}
even when one knows the sequence converges in the intrinsic flat sense to a compact
limit space and that the sequence has a uniform bound on total mass.   The sequence of hairy spheres
in Example~\ref{ex-hairy-sphere} converges to a sphere in the flat norm but cannot be isometrically
embedded into a common compact space because the sequence is not equicompact.

The special case of Theorem~\ref{converges} where $M_j$ converges to the
$\bf{0}$ space can have prescribed pointed isometries:

\begin{thm}\label{convergeto0}
If a sequence of 
integral current spaces
$M_{j}=\left(X_j, d_j, T_j\right)$ converges 
in the intrinsic flat sense to the 
zero integral current space, $\bf{0}$, then
we may choose points $p_j\in X_j$ and a
separable
complete metric space, $Z$, and isometric embeddings  
$\varphi_j: \bar{X}_j \to Z$ such that
$\varphi_j(p_j)=z_0\in Z$ and
$\varphi_{j\#}T_j$ flat converges to $0$ in $Z$ and thus converges weakly as well.
\end{thm}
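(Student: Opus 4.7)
The plan is to construct $Z$ as a countable wedge sum of metric spaces, glued along one chosen basepoint in each $X_j$, exploiting the fact that convergence to the $\bf{0}$ space gives us total freedom in choosing the embedding for each term independently.

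First, using $d_\mathcal{F}(M_j,\bf{0}) \to 0$, for each $j$ I apply the definition of intrinsic flat distance to choose a complete metric space $Z_j$, an isometric embedding $\psi_j:\bar{X}_j \to Z_j$, and integral currents $U_j\in\intcurr_m(Z_j)$, $V_j\in\intcurr_{m+1}(Z_j)$ with $\psi_{j\#}T_j = U_j + \partial V_j$ and $\mass(U_j) + \mass(V_j) < d_\mathcal{F}(M_j,\bf{0}) + 1/j$. Replacing $Z_j$ by the closed subset $\psi_j(\bar{X}_j) \cup \spt U_j \cup \spt V_j$ of $Z_j$ (which remains complete, and is separable because each of its three constituents is separable), I may assume each $Z_j$ is itself separable. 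Choose any basepoint $p_j\in X_j$; the countably many indices where $M_j=\bf{0}$ contribute nothing and can be ignored.

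Next, define
\[
Z := \Big(\bigsqcup_{j=1}^\infty Z_j\Big)\Big/\sim,
\]
where $\sim$ identifies all points $\psi_j(p_j)$ to a single common point $z_0$, equipped with the wedge metric
\[
d_Z(z,z') := \begin{cases} d_{Z_j}(z,z'), & z,z'\in Z_j, \\ d_{Z_j}(z,\psi_j(p_j)) + d_{Z_k}(\psi_k(p_k),z'), & z\in Z_j,\ z'\in Z_k,\ j\neq k. \end{cases}
\]
One verifies that $(Z,d_Z)$ is a metric space, separable as a countable union of separable sets, and complete: any Cauchy sequence either eventually lies in a single $Z_j$ (hence converges there) or must satisfy $d_Z(z_n,z_0)\to 0$ by the wedge form of the metric across distinct $Z_j$'s. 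The canonical inclusions $f_j:Z_j\hookrightarrow Z$ are then isometric embeddings sending $\psi_j(p_j)$ to $z_0$.

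Setting $\varphi_j := f_j\circ\psi_j$, each $\varphi_j:\bar{X}_j\to Z$ is an isometric embedding with $\varphi_j(p_j)=z_0$, and $\varphi_{j\#}T_j = f_{j\#}U_j + \partial f_{j\#}V_j$. Since $f_j$ is an isometric embedding, Lemma~\ref{lem-push-mass} gives $\mass(f_{j\#}U_j) + \mass(f_{j\#}V_j) = \mass(U_j) + \mass(V_j) \to 0$, so $d_F^Z(\varphi_{j\#}T_j,0)\to 0$. By Remark~\ref{rmrk-flat-implies-weak} the push-forwards then also converge weakly to $0$. The only real obstacle is maintaining separability and completeness of $Z$ despite the sequence potentially having unbounded diameter or growing geometric complexity; the countable wedge construction handles this precisely because identifying a single point from each $Z_j$ (rather than attempting an equibounded embedding in Gromov's sense) avoids any compactness-type hypothesis on the sequence.
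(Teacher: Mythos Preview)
Your proof is correct and follows essentially the same approach as the paper: both take approximately-optimal ambient spaces $Z_j$ from the definition of intrinsic flat distance, cut them down to separable pieces, and glue them all at a single wedge point to form $Z$. Your version is slightly more careful in recording the $+1/j$ error and in explicitly verifying completeness of the wedge, but the idea is identical.
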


We prove this theorem first since it is the simplest.

\begin{proof}
By the definition of the flat distance, we know there exists a complete 
metric space $Z_j$ and $U_j\in \intcurr_m(Z_j)$ and $V_j\in \intcurr_{m+1}(Z_j)$
and an isometry $\varphi_j: X_j\to Z_j$ such that
$\varphi_{j\#}T_j=U_j +\partial V_j$ and 
\be
d_\mathcal{F}(M_j, {\bf{0}}) \le\mass(U_j)+\mass(V_j) \to 0.
\ee
We may choose $Z_j=\spt U_j \cup \spt V_j$, so it is separable.

We then create a common complete separable metric space $Z$ by gluing
all the $Z_j$ together at the common point $\varphi_j(p_j)$:
\be
Z=Z_1 \disjointunion Z_2 \disjointunion\cdots
\ee
where $d_Z(z_1,z_2)= d_{Z_i}(z_1,z_2)$ when there exists an $i$ with $z_1,z_2\in Z_i$
and 
\be
d_Z(z_i,z_j)=d_{Z_i}(z_i,\varphi_i(p_i))+d_{Z_j}(z_j,\varphi_j(p_j)).
\ee
We then identify all the $\varphi_i(p_i)=\varphi_j(p_j)\in Z$ so that this is a metric.
Since mass is preserved under isometric embeddings, we have
$d^Z_F(\varphi_{j\#}T_j,0) \le \mass(U_j)+\mass(V_j) \to 0$.
\end{proof}

To prove Theorems~\ref{cauchy} and~\ref{converge}, we need to glue together
our spaces $Z$ in a much more complicated way.  So we first prove the
following two lemmas and then prove the theorems.
 We close the section with Remark~\ref{rmrk-cauchy}
which discusses a related open problem.

Recall the well known gluing lemma [Lemma~\ref{gluing-lemma}]
that we applied to prove the triangle inequality in Subsection~\ref{subsect-triangle}.
One may apply this gluing of metric spaces countably many times, to glue together countably
many distinct metric spaces: 

\begin{lem} \label{lemma-tree-graph} \label{lemma-shrub}
We are given 
a connected tree with countable vertices $\{V_i : i \in A \subset \N\}$ and edges 
$\{E_{i,j} : (i,j)\in B \}$ where $B\subset \{(i,j): \, i<j, i,j \in A\}$,
and a corresponding countable collection of metric spaces 
$\{X_i: i \in A\}$ and $\{ Z_{i,j} : \, \left(i,j\right) \in B\}$ 
and isometric embeddings 
\be
\varphi_{i, (i,j)}: X_i \to Z_{i,j} \textrm{ and }
\varphi_{j, (i,j)}: X_j \to Z_{i,j} \qquad \forall (i,j)\in B.
\ee
Then there is a unique metric space $Z$ defined by gluing the $Z_{i,j}$ along the isometric
images of the $X_i$.
In particular there exists isometric embeddings $f_{i,j}: Z_{i,j} \to Z$ for all
$(i,j)\in B$ such that for all $(i,j),(j,k) \in B$ we have
isometric embeddings
\be \label{eqn-f-i-j}
f_{i,j} \circ \varphi_{j,(i,j)} = f_{j,k} \circ \varphi_{j, (j,k)}: \, X_{j} \to Z.
\ee
If $Z_{i,j}$ are separable then so is $Z$.
\end{lem}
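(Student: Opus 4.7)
The plan is to build $Z$ by iterating the binary gluing of Lemma~\ref{gluing-lemma} along the tree. Fix a root $i_0 \in A$ and enumerate the edges as $e_1, e_2, \ldots$ in such a way that for each $n \ge 2$ the edge $e_n$ shares exactly one endpoint $v_n$ with the subtree spanned by $e_1, \ldots, e_{n-1}$. Such an enumeration exists because the tree is connected and at most countable: for instance, order the edges by graph-distance of their farther endpoint from $i_0$, breaking ties arbitrarily.

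Construct a nested sequence $Z^{(1)} \subset Z^{(2)} \subset \cdots$ inductively. Set $Z^{(1)} := Z_{e_1}$, endowed with the two isometric embeddings of the vertex spaces at the endpoints of $e_1$. Assume that $Z^{(n-1)}$ has been built together with a family of isometric embeddings $F_v : X_v \hookrightarrow Z^{(n-1)}$, one for each vertex $v$ already incident to $e_1, \ldots, e_{n-1}$. Apply Lemma~\ref{gluing-lemma} to glue $Z^{(n-1)}$ and $Z_{e_n}$ along $X_{v_n}$ via $F_{v_n}$ and $\varphi_{v_n, e_n}$, producing $Z^{(n)}$ with isometric inclusions of both $Z^{(n-1)}$ and $Z_{e_n}$. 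For the new endpoint $w$ of $e_n$, define $F_w := \varphi_{w, e_n} : X_w \hookrightarrow Z^{(n)}$. Finally, let $Z := \bigcup_n Z^{(n)}$ with the direct-limit metric; this is well-defined because the inclusions $Z^{(n)} \hookrightarrow Z^{(n+1)}$ are isometric. The maps $f_{e_n} : Z_{e_n} \hookrightarrow Z^{(n)} \hookrightarrow Z$ are the required embeddings, and (\ref{eqn-f-i-j}) holds because whenever two edges share a vertex $j$, both of their pushforwards of $X_j$ coincide with the single map $F_j$ introduced the first time $j$ was incorporated. Separability follows since each $Z^{(n)}$, being a binary gluing of two separable spaces, is the union of two separable subspaces, and $Z$ is a countable union of the $Z^{(n)}$.

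The main obstacle is the uniqueness clause, that is, showing that $Z$ does not depend on the enumeration of edges. The natural way is to describe $d_Z$ intrinsically. For $z \in Z_e$ and $z' \in Z_{e'}$, let $e = c_0, c_1, \ldots, c_L = e'$ be the unique tree path of edges joining $e$ to $e'$, with consecutive edges $c_{\ell-1}, c_\ell$ meeting at the vertex $w_\ell$. A short induction on $L$, repeatedly invoking the distance formula of Lemma~\ref{gluing-lemma}, shows that
\[
d_Z\bigl(f_e(z), f_{e'}(z')\bigr) \;=\; \inf \sum_{\ell=0}^{L} d_{Z_{c_\ell}}\bigl(y_\ell, y'_\ell\bigr),
\]
where the infimum is taken over choices $x_\ell \in X_{w_\ell}$ that determine the intermediate points by $y'_\ell := \varphi_{w_{\ell+1}, c_\ell}(x_{\ell+1})$ and $y_\ell := \varphi_{w_\ell, c_\ell}(x_\ell)$, with the convention $y_0 := z$ and $y'_L := z'$. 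This formula involves only the tree and the original data, so the resulting metric on $Z$ is canonical. Positivity follows from the fact that each summand is nonnegative and that the two isometric embeddings into each $Z_{c_\ell}$ force distinct orbits to remain separated; the triangle inequality is routine, using that the tree contains a unique path between any two edges.
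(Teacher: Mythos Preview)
Your argument is correct, but it is organised differently from the paper's. The paper starts from the disjoint union $\bigsqcup_{(i,j)\in B} Z_{i,j}$, writes down the path-infimum formula for $d_Z$ directly (exactly the formula you derive at the end), verifies the triangle inequality by a case analysis on where the middle point sits relative to the tree path, and then quotients by the resulting zero-distance equivalence. There is no induction and no appeal to Lemma~\ref{gluing-lemma}; uniqueness is immediate because the formula is written in terms of the given data only.

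Your route trades one piece of work for another. By building $Z$ as a direct limit of binary gluings you inherit the triangle inequality and positivity for free from Lemma~\ref{gluing-lemma}, so you never have to check them by hand; on the other hand you introduce an auxiliary choice (the enumeration of edges) and must then recover the intrinsic path formula to show the construction is independent of that choice. The induction you sketch is more naturally indexed by the stage $n$ of the construction than by the path length $L$: at stage $n$ the gluing formula of Lemma~\ref{gluing-lemma} contributes one infimum over $X_{v_n}$, and unfolding back through earlier stages produces the full chain. Once that formula is in hand, your remarks on positivity are superfluous, since $Z$ is already a metric space by construction and the formula is merely a computation of $d_Z$. Both approaches are short; the paper's is slightly more self-contained, while yours makes clearer that the tree lemma is nothing more than an iterated application of the two-space gluing.
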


\begin{proof}
Let $Z$ be the disjoint union of the $Z_{i,j}$.  We define a quasimetric on $Z$ and then identify the
images of the $X_i$ so that the quasimetric becomes a metric.  Let $z,z'\in Z$, so
each lies in one of the $Z_{i,j}$ and thus has a corresponding edge $E\left(z\right), E\left(z'\right)\in \{E_{i,j}: (i,j)\in B\}$.

If $E\left(z\right)=E\left(z'\right)$ then they lie in the same $Z_{i,j}$ and we
let $d_Z\left(z,z'\right):=d_{Z_{i,j}}\left(z,z'\right)$ which we denote as $d_{i,j}$ to avoid excessive subscripts below.

If $E(z)\neq E(z')$, then because the graph is a connected tree there is a unique sequence of distinct edges
$\{E_{i_0,i_1}, E_{i_1,i_2}, ..., E_{i_{n},i_{n+1}}\}$ where $E\left(z\right)=E_{i_0,i_1}$ and $E\left(z'\right)=E_{i_n,i_{n+1}}$.
We define   
\begin{equation*}
\begin{split}
d_Z\left(z,z'\right) & = \inf \Bigg\{ \,\, d_{i_0,i_1}\left(z, \varphi_{i_1, (i_0,i_1)}\left(y_1\right)\right) \\
  & \qquad \qquad
       + \sum_{j=1}^{n-1} d_{i_j,i_{j+1}}\left(\varphi_{i_j, (i_j,i_{j+1})}\left(y_j\right), 
       \varphi_{i_{j+1}, (i_j, i_{j+1}) }\left(y_{j+1}\right) \right)\\
  & \qquad \qquad
   + \, d_{i_n, i_{n+1}}\left(\varphi_{i_n, (i_n,i_{n+1})}\left(y_n\right), z' \right)
   \, :\,\left(y_1,..., y_n\right)\in X_{i_1}\times \cdots \times X_{i_n}  \Bigg\}.
\end{split}
\end{equation*}
One may then easily verify the triangle inequality $d_Z\left(a,b\right)+ d_Z\left(b,c\right) \ge d_Z\left(a,c\right)$ by breaking into cases
regarding the location of $E\left(b\right)$ relative to $E\left(a\right)$ and $E\left(c\right)$.
Finally we identify points $z$ and $z'$ such that $d_Z\left(z,z'\right)=0$.
\end{proof}

We can now prove Theorem~\ref{cauchy}:

\begin{proof}
Recall that we have a Cauchy sequence of current spaces, so for all $\epsilon>0$, there exists $N_\epsilon \in \N$
such that 
\be
r_{i,j}= d_{\Fm}\left(M_i,M_j\right) < \epsilon \qquad \forall i,j \ge N_\epsilon.
\ee

By the definition of the intrinsic flat distance between $M_i$ and $M_j$ in 
(\ref{eqn-local-defn}),
 there
exist metric spaces $Z_{i,j}$ and isometric embeddings
$\varphi_{i, (i,j)}: \bar{X}_i\to Z_{i,j}$ and $\varphi_{j,(i,j)}: \bar{X}_j\to Z_{i,j}$
and integral currents
$U_{i,j} \in \intcurr_m\left(Z_{i,j}\right)$ and $V_{i,j} \in \intcurr_{m+1}\left(Z_{i,j}\right)$
with
\be \label{cauchy-1}
\varphi_{i, (i,j)\#}T_i-\varphi_{j, (i,j)\#}T_j= 
U_{i,j} + \partial V_{i,j} \in \intcurr_m\left(Z_{i,j}\right)
\ee
such that
\be \label{cauchy-exact-1}
r_{i,j}:= d_{\Fm}\left(M_i,M_j\right) =
d_F^{Z_{i,j}}\left(\varphi_{i,(i,j)\#}T_i, \varphi_{j,(i,j)\#}T_j\right)\le \mass\left(U_{i,j}\right)+\mass\left(V_{i,j}\right) \le 3r_{i,j}/2.
\ee
We choose $Z_{i,j}=\spt U_j \cup \spt V_j$ and so it is separable.

Since the sequence is Cauchy, we know 
there exists a subsequence $j_k \in \Bbb{N}$ such that 
$j_1=1$ and when $k\ge 2$ we have $r_{j_k,i}\le 1/2^k$
$\forall i \ge j_k$. In particular $r_{j_k,j_{k+1}}\le 1/2^k$ when $k\ge 2$.
We call this special subsequence, a {\em geometric subsequence}.

We now apply Lemma~\ref{lemma-tree-graph} to the graph whose
vertices are $\{V_i : i \in A = \N \}$ and edges 
$\{E_{i,j} : \left(i,j\right) \in B \subset \N\times \N \}$ where
\be
B=\{\left(j_k,j_{k+1}\right): k \in \N\} \cup \{ \left(j_k,i\right) : i=j_k, ..., j_{k+1} -1\}.
\ee
Intuitively this is a tree whose trunk is the geometric subsequence and whose branches consist
of single edges attached to the nearest vertex on the trunk.

As a result we have a complete metric space $Z$  and isometric embeddings 
$f_{i,j}:Z_{i,j} \to Z$ such that
\be \label{cauchy2}
f_{i,j} \circ \varphi_{j,(i,j)} = f_{j,i'} \circ \varphi_{j, (j,i')}: \, X_{j} \to Z
\ee
are isometric embeddings for all $(i,j), (j,i') \in B$.  In particular
each current space $M_j$ has been mapped to a unique current in $Z$:
\be \label{cauchy-4}
T'_j:= f_{i,j\#} \varphi_{j,(i,j)\#} T_j = f_{j,i'\#} \varphi_{j, (j,i')\#} T_j
\in \intcurr_m\left(Z\right)
\ee
So $f_{i,j} \circ \varphi_{j,(i,j)}$
is a current preserving isometry from
$M_j=\left(X_j, d_j, T_j\right)$ 
to $\left(\set (T'_j), d_Z, T'_j\right)$.

Applying (\ref{cauchy-1}), we have for any $\left(i,j\right)\in B$:
\be
T'_i-T'_j = f_{i,j \#}\varphi_{i, (i,j)\#}T_i-f_{i,j\#}\varphi_{j, (i,j)\#}T_j= f_{i,j\#}U_{i,j} + \partial f_{i,j\#}V_{i,j} \in \intcurr_m\left(Z\right).
\ee
Since mass is conserved under isometries (c.f. Lemma~\ref{lem-push-mass}) we have 
\be \label{add-error-1}
d^Z_F\left(T'_i, T'_j\right) \le \mass\left(f_{i,j\#}U_{i,j}\right) + \mass\left( f_{i,j\#}V_{i,j}\right) = \mass\left(U_{i,j}\right) + \mass \left(V_{i,j}\right) = 3r_{i,j}/2.
\ee
In particular by our choice of $B$ in (\ref{cauchy2}), we have for the geometric subsequence:
\be
d^Z_F\left(T'_{j_k}, T'_{j_{k+1}}\right) \le 3/2^k \,\,\forall k\ge 2.
\ee
For $i,i' \ge j_2$ we have 
$k,k' \ge 2$ respectively such that $\left(i,j_k\right), \left(i', j_{k'}\right) \in B$ such that
\be
d^Z_F\left(T'_{j_k}, T'_{i}\right) \le 3/2^k \textrm{ and }
d^Z_F\left(T'_{j_{k'}}, T'_{i'}\right) \le 3/2^{k'}.
\ee
So we have
\begin{eqnarray}
d^Z_F\left(T'_i, T'_{i'}\right) &\le& d^Z_F\left(T'_{j_k}, T'_{i}\right) + \sum_{h=k}^{k'-1}  d^Z_F\left(T'_{j_h}, T'_{j_{h+1}}\right)+ d^Z_F\left(T'_{j_{k'}}, T'_{i'}\right) \\
&\le& 3/2^k + \left(3/2^k + 3/2^{k+1} + \cdots +3/2^{k'}\right) < 9/2^k.
\end{eqnarray}
and thus our sequence of integral current spaces has been 
mapped into a Cauchy sequence of integral currents.
\end{proof}


We now prove Theorem~\ref{converge}.  Since we have
already proven Theorem~\ref{convergeto0}, we will assume
we have a nonzero limit in this proof:

\begin{proof}
As in the proof of Theorem~\ref{cauchy},  we take a geometrically converging 
subsequence of the converging sequence of current spaces.  
This time we apply Lemma~\ref{lemma-tree-graph} to the tree whose
vertices are $\{V_i : i \in A = 0 \cup \N\}$ and edges $\{E_{i,j} : \left(i,j\right) \in B \subset \N\times \N \}$ where
\be
B=\{\left(j_k,0\right): k \in \N\} \cup \{ \left(j_k,i\right) : i=j_k, ..., j_{k+1} -1\}.
\ee
so that all the terms in the geometric subsequence will be directly attached to the limit, and everything else
will be attached to the subsequence as before.  As in (\ref{cauchy-4}) we obtain unique currents
$T_j' \in \intcurr_m\left(Z\right)$ such that $\left(\set (T'_j), d_Z, T'_j\right)$ has a current preserving isometry with $\left(X_j, d_j, T_j\right)$.  This time our currents flat converge, because for any $i \in [j_k, j_{k+1}-1]$ we have
\be
d^Z_F\left(T'_i, T'_{0}\right) \le d^Z_F\left(T'_{j_k}, T'_{0}\right) + d^Z_F\left(T'_i, T'_{j_{k}}\right) \le 3/2^k + 3/2^k.
\ee
Weak convergence then follows by Remark~\ref{rmrk-flat-implies-weak}.
\end{proof}

\begin{rmrk} \label{rmrk-cauchy} 
We do not know if the sequence $\varphi_{j\#}T_j$ in Theorem~\ref{cauchy}
when given a uniform bound on total mass
converges in the flat sense to an integral current in $Z$.  Without a uniform
bound on total mass it is possible there is no limit integral current space [Example~\ref{ex-Gabriel's-horn}].

It is an open question
whether flat Cauchy sequences with uniform upper bounds on total
mass have flat converging subsequences which converge to an integral current
in the sense of Ambrosio-Kirchheim.  In Federer-Fleming, one needs to add
a diameter bound because integral currents in Federer-Fleming have compact support.  
In Ambrosio-Kirchheim compactness is never assumed so an unbounded limit
like the one in Example~\ref{ex-unbounded}  is not a counter example here.

In Theorem~\ref{Cauchy-to-Converge} we prove 
that adding a uniform bound
on diameter as well as the bound on total mass,
we can find a common metric space $Z$ 
where where $\varphi_{j\#}T_j$  do converge.  The metric space $Z$ in that theorem
may not be the metric space constructed in Theorem~\ref{cauchy}.   
To prove that theorem we need Theorem~\ref{converge} as well as
the second author's compactness theorem, Theorem~\ref{thm-Wenger-compactness}.  
It would be of interest to eliminate the bound on diameter or find a counter example.
\end{rmrk}

\subsection{Properties of Intrinsic Flat Convergence} \label{subsect-properties}
As a consequence of Theorems~\ref{converge} and~\ref{convergeto0}
and Kuratowski's Embedding Theorem,
we may now observe that sequences of integral current spaces that
converge in the intrinsic flat sense have all the same properties
Ambrosio-Kirchheim have proven for 
sequences of integral currents that converge weakly in a Banach space.
Most importantly, one has the lower semicontinuity of mass.  Applying work
of the second author in \cite{Wenger-flat} [Theorem 1.4], one also observes that one has 
continuity of the filling volume.  Here we only give the details on lower
semicontinuity of mass and leave it to the reader to extend the ideas to
other properties of integral currents.

\begin{thm}\label{mass-drops}  
If a sequence of 
integral current spaces
$M_{j}=\left(X_j, d_j, T_j\right)$ converges in the intrinsic flat sense
to 
$M_0=\left(X_0,d_0,T_0\right)$ 
then $\partial M_j$ converges to $\partial M_0$ in the intrinsic flat sense. 
\be
\liminf_{j\to\infty} \mass\left(M_j\right) \ge \mass \left(M_0\right)
\textrm{ and }
\liminf_{j\to\infty} \mass\left(\partial M_j\right) \ge \mass \left(\partial M_0\right).
\ee
\end{thm}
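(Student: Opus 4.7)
The plan is to reduce both assertions to facts about a single sequence of currents in a common ambient space by invoking Theorem~\ref{converge}. That theorem produces a separable complete metric space $Z$ together with isometric embeddings $\varphi_j:\bar{X}_j \to Z$ (for all $j\ge 0$) such that the push-forwards $S_j := \varphi_{j\#} T_j \in \intcurr_m(Z)$ satisfy $S_j \Fto S_0$ in $Z$, and hence by Remark~\ref{rmrk-flat-implies-weak} also $S_j \weaklyto S_0$. Both conclusions of the theorem will then be extracted from this single ambient flat/weak convergence.

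For the boundary convergence, for each $j$ I would pick a flat decomposition $S_j - S_0 = U_j + \partial V_j$ in $Z$ with $\mass(U_j) + \mass(V_j) \to 0$. Applying $\partial$ and using $\partial\partial = 0$ gives $\partial S_j - \partial S_0 = \partial U_j$, which is itself a flat decomposition in dimension $m-1$ (take the codimension-zero piece to be $0$ and the filling to be $U_j$) with mass sum at most $\mass(U_j) \to 0$. Since $\partial$ commutes with push-forward (Definition~\ref{rmrk-def-boundary}) we have $\partial S_j = \varphi_{j\#}(\partial T_j)$, and because $\set(\partial T_j) \subset \bar{X}_j$ the map $\varphi_j$ restricts to an isometric embedding of the boundary current space $\partial M_j = (\set(\partial T_j), d_{\bar{X}_j}, \partial T_j)$ into $Z$. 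The definition of $d_\mathcal{F}$ then yields $d_\mathcal{F}(\partial M_j, \partial M_0) \le d_F^Z(\partial S_j, \partial S_0) \to 0$.

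For the mass inequalities, the Ambrosio-Kirchheim lower semicontinuity of mass under weak convergence (Remark~\ref{rmrk-lower-mass}) applied to $S_j \weaklyto S_0$ in $Z$ gives $\liminf_j \mass(S_j) \ge \mass(S_0)$. Because each $\varphi_j$ is an isometric embedding, Lemma~\ref{lem-push-mass} yields $\mass(S_j) = \mass(T_j) = \mass(M_j)$ and the same for the limit, so $\liminf_j \mass(M_j) \ge \mass(M_0)$. The identical argument applied to $\partial S_j \weaklyto \partial S_0$ (weak convergence follows from the flat convergence of boundaries established in the previous paragraph, again via Remark~\ref{rmrk-flat-implies-weak}) produces the analogous inequality $\liminf_j \mass(\partial M_j) \ge \mass(\partial M_0)$.

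I do not expect any serious obstacle: the theorem is essentially the translation of Ambrosio-Kirchheim's ambient-space semicontinuity into the intrinsic setting, and the work of producing the common ambient space has already been done in Theorem~\ref{converge}. The only point requiring a moment of care is observing that the isometric embeddings of $\bar{X}_j$ automatically restrict to isometric embeddings of the boundary current spaces, which is immediate from $\set(\partial T_j) \subset \bar{X}_j$ as recorded in Definition~\ref{defn-integral-current-space}.
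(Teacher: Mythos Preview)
Your proof is correct and follows essentially the same approach as the paper: embed via Theorem~\ref{converge} into a common $Z$, observe that the flat decomposition $S_j - S_0 = U_j + \partial V_j$ yields $\partial S_j - \partial S_0 = \partial U_j$ (which is exactly the paper's inequality $d_F^Z(\partial S_j, \partial S_0) \le d_F^Z(S_j, S_0)$), and then invoke Ambrosio--Kirchheim lower semicontinuity of mass together with Lemma~\ref{lem-push-mass}. The only cosmetic difference is that the paper also cites Theorem~\ref{convergeto0} explicitly for the zero-limit case, but since Theorem~\ref{converge} as stated already covers that case, your citation is adequate.
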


In Example~\ref{example-cancels} depicted in Figure~\ref{figure-cancels}
we see that the mass of the limit
space may be $0$ despite a uniform lower bound on the mass of the sequence.  

\begin{proof}
First we isometrically embed the converging sequence into a common metric
space, $Z$, applying Theorem~\ref{converge} and Theorem~\ref{convergeto0}:
$\varphi_j: \bar{X}_j \to Z$ such that $\varphi_{j\#}T_j$ converges in the
flat sense in $Z$ to $\varphi_{0\#}T_0$.  Note that
$$
d^Z_F(\partial \varphi_{j\#} T_j,\partial \varphi_{0\#} T_0) \le
d^Z_F(\varphi_{j\#} T_j,\varphi_{0\#} T_0) \to 0.
$$
By the definition of $\partial M=(\set (\partial T), d, \partial T)$
and the fact that $\partial \varphi_{j\#} T=\varphi_{j\#}\partial T$,
we have
\be
d_{\mathcal{F}}(\partial M_j, \partial M_0) \le d^Z_F(\varphi_{j\#}\partial T_j,
\varphi_{0\#}T_0)\to 0.
\ee

Immediately below the definition of weak convergence of currents in a metric
space $Z$ in \cite{AK}[Defn 3.6], Ambrosio
Kirchheim remark that the mapping $T \mapsto ||T||(A)$ is lower semicontinuous
with respect to weak convergence for any open set $A \subset Z$.   
Since $\varphi_{j\#} T_j$ converge weakly to $\varphi_{0\#}T_0$, we may take 
$A=Z$ and apply Lemma~\ref{lem-push-mass}, to see that
\be
\liminf_{j\to\infty} \mass(M_j)=\liminf_{j\to\infty}\mass(\varphi_{j\#}T_j)
\ge \mass(\varphi_{0\#} T_0) = \mass(M_0).
\ee
The same may be done to the boundaries to conclude
that 
$$
\liminf_{j\to\infty} \mass\left(\partial M_j\right) \ge \mass \left(\partial M_0\right).
$$
\end{proof}

\begin{rmrk}
Note that there are also local versions of the lower semicontinuity of mass
which can be seen by taking $A$ in the proof above to be a ball $B_{\varphi_0(x_0)}(r)$.  
These local versions require an application of Ambrosio-Kirchheim's
Slicing Theorem \cite{AK} Thm 5.6,
 which implies that $\varphi_{j\#}T_j \rstr B_{\varphi_0(x_0)}(r)$ is 
an integral current for almost all values of $r$.  
The reader is referred to \cite{SorWen1} where local versions of lower
semicontinuity of mass and continuity of filling volume are applied.
\end{rmrk}

\subsection{Cancellation and Intrinsic Flat Convergence} \label{subsect-SorWen1}

When a sequence of integral currents converges to the ${0}$ current
due to the effect
of two sheets of opposing orientation coming together, this is referred to as
cancellation.  In Example~\ref{example-cancels} depicted in Figure~\ref{figure-cancels},
we see that the same effect can occur causing  a sequence of Riemannian
manifolds to converge in the intrinsic flat sense to the ${\bf 0}$ current space.  Naturally 
it is of great importance to avoid this situation.

In \cite{SorWen1}, the authors proved a few theorems providing conditions
that prevent cancellation of certain weakly converging sequences of integral currents.  
These theorems immediately
apply to prevent the cancellation of certain sequences of Riemannian manifolds 
although they do not extend to arbitrary integral current spaces.   The reader is
referred to \cite{SorWen1} for the most general statements of these results.

In this section we give some of the intuition that led to these results,
then review Greene-Petersen's compactness theorem and finally
review a result  of \cite{SorWen1}, Theorem~\ref{thm-contractibility-no-cancellation},
which states that under the conditions of Greene-Petersen's theorem,
there is no cancellation and, in fact, 
the intrinsic flat and Gromov-Hausdorff limits agree. 

\begin{rmrk}
The initial observation that lead to the results in \cite{SorWen1} was that the
sequence in Example~\ref{example-cancels} depicted in
Figure~\ref{figure-cancels} has increasing topological type.
The only way to bring two sheets together with an intrinsic distance on a smooth
Riemannian manifold, was to create many small tubes between the two sheets,
and all these tubes lead to increasing local topology.  
\end{rmrk}

\begin{rmrk} \label{rmrk-avoid-cancellation}
The second observation was that, in order to avoid cancellation, one needed
to locally bound the filling volume of spheres away from $0$.  More precisely
the filling volumes of distance spheres of radius $r$ had to be bounded below by
$Cr^m$, so that the filling volumes in the limit would have the same bound.
Since the volume of a ball is larger than the filling volume of the sphere,
we could then prove the limit points had positive density.  
\end{rmrk}

Note that
if a sequence of Riemannian manifolds converges to a Riemannian
manifold with a cusp singularity as in Example~\ref{ex-to-cusp}
depicted in Figure~\ref{fig-to-cusp}, the cusp point disappears
in the limit because it does not have positive density [Example~\ref{example-settled},
Example~\ref{basic-mani-sing}].    To avoid cancellation,
we need to prevent points from disappearing.

\begin{figure}[h] 
   \centering
   \includegraphics[width=4.5in]{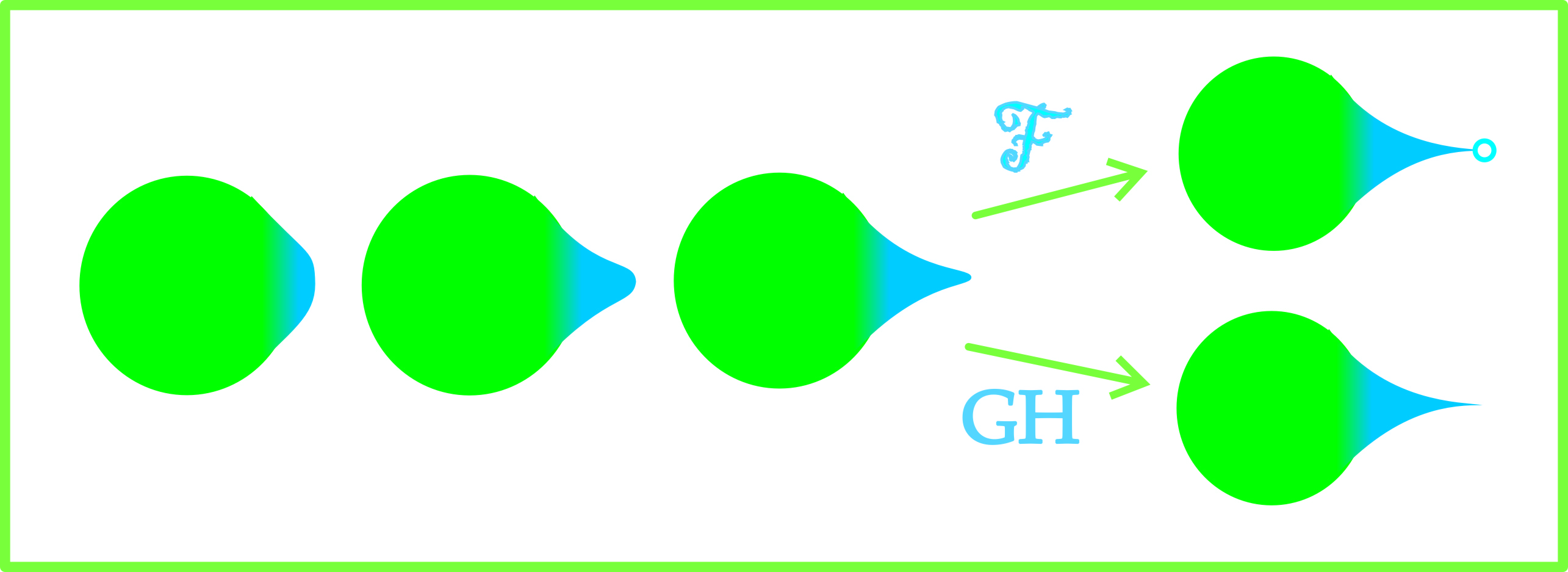} 
   \caption{The intrinsic flat limit does not include the tip of the cusp.}
   \label{fig-to-cusp}
\end{figure}

In Gromov's
initial paper defining filling volume, he proved the filling volume could be
bounded from below by the filling radius and the filling radius could be bounded
from below by applying contractibility estimates \cite{Gromov-filling}.
Greene-Petersen applied Gromov's technique to estimate the filling
volumes of balls and consequently prove the following compactness
theorem \cite{Greene-Petersen}.  They needed a uniform estimate
on contractibility to prove their theorem:

\begin{defn}
On a Riemannian manifold, $M^m$,
a geometric contractibility function, $\rho:(0,r_0] \to (0,\infty)$, is a function
such that $\lim_{r\to 0}\rho(r)=0$ and such that
any ball $B_p(r)\subset M^m$ is contractible in 
$B_p\left(\rho\left(r\right)\right)\subset M^m$.
\end{defn}           

\begin{thm}[Greene-Petersen] \label{thm-Greene-Petersen} If a
sequence of Riemannian manifolds  $M^m_j$ 
without boundary have  a uniform geometric contractibility function, 
$\rho:(0,r_0] \to (0,\infty)$
then one can construct uniform lower bound $\nu_{\rho,m}: (0,D] \to (0,\infty)$ such that 
\be
\vol\left(B_p\left(r\right) \right) \ge \fillvol(\partial B_p(r)) \ge \nu_{\rho,m}(r)   
\ee
for all balls $B_p(r)$ in all the manifolds.
If, in addition, there is a uniform upper bound on volume $\vol(M^m_j)\le V$,
then a subsequence $M_j^m \GHto Y$.
\end{thm}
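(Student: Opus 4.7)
The proof splits naturally into two parts: (A) establishing the uniform ball volume lower bound $\vol(B_p(r)) \ge \nu_{\rho,m}(r)$, then (B) deducing equicompactness and equiboundedness from (A) together with the volume upper bound, and applying Gromov's compactness theorem.

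For Part (A), my approach is to prove the chain of inequalities
\begin{equation*}
\vol(B_p(r)) \;\ge\; \fillvol(\partial B_p(r)) \;\ge\; c_m \fillrad(\partial B_p(r))^m.
\end{equation*}
The first inequality is immediate: by Ambrosio-Kirchheim's slicing theorem (\cite{AK} Thm 5.6) applied to the distance function from $p$, the slice $\partial B_p(r)$ is an integral current for almost every $r$, and the integral current associated to $B_p(r)$ is itself a valid filling of $\partial B_p(r)$, so its mass $\vol(B_p(r))$ bounds $\fillvol(\partial B_p(r))$ from above. The second inequality is Gromov's filling inequality from \cite{Gromov-filling}, valid in any complete metric space after Kuratowski embedding. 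Part (A) therefore reduces to the lower bound $\fillrad(\partial B_p(r)) \ge \mu_{\rho,m}(r)$ for some explicit $\mu_{\rho,m}$ depending only on $\rho$ and $m$.

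The hard part---and the main obstacle---is this lower bound on the filling radius, which is where the contractibility hypothesis enters nontrivially. My plan is to adapt Gromov's argument from \cite{Gromov-filling}. Suppose toward contradiction that $\fillrad(\partial B_p(r)) < \epsilon$ with $\epsilon \ll r$. Then some integer rectifiable $m$-chain $C$ in the Kuratowski embedding satisfies $\partial C = \partial B_p(r)$ with $\spt C$ within $\epsilon$ of $\partial B_p(r)$. Using the contractibility function $\rho$, one builds a Lipschitz retraction at a controlled scale so that $C$ can be pulled back into $M$, and the difference of this pulled-back chain with the filling given by $B_p(r)$ itself produces a top-dimensional integral cycle in the closed oriented connected manifold $M^m$. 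Classification of top-dimensional integral cycles forces this to equal $k\cdot\Lbrack M\Rbrack$, and a multiplicity comparison at the point $p$ (which lies outside the $\epsilon$-thickened support) pins down $k = -1$. Combining the resulting mass constraints with Gromov's iterated filling decomposition then extracts the quantitative lower bound $\mu_{\rho,m}(r)$. This is the step where one really pays for Gromov's filling technology; a purely topological contractibility is not enough.

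For Part (B), setting $\nu_{\rho,m}(r) := c_m \mu_{\rho,m}(r)^m$, equicompactness is immediate from a packing argument: a disjoint collection $\{B_{p_i}(r/2)\}$ in $M_j^m$ has $\sum_i \vol(B_{p_i}(r/2)) \le V$, so there are at most $N(r) := V/\nu_{\rho,m}(r/2)$ such balls, giving $N(M_j, r/2) \le N(r)$ uniformly in $j$. Equiboundedness then follows from connectedness: $M_j^m$ is covered by at most $N(r_0/2)$ balls of radius $r_0$, and chaining any two points through this finite cover gives $\diam(M_j^m) \le 2 r_0 N(r_0/2)$, a uniform constant. Gromov's compactness theorem (Theorem~\ref{Thm-Gromov}) then delivers the converging subsequence $M_j^m \GHto Y$.
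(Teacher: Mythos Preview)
The paper does not give its own proof of this theorem; it is quoted as a result of Greene--Petersen \cite{Greene-Petersen}, with the paragraph preceding the statement explaining that Greene--Petersen applied Gromov's technique from \cite{Gromov-filling}: bound filling volume below by a power of filling radius, then bound filling radius below using the contractibility function. Your outline in Part~(A) is precisely this strategy, and your Part~(B) is the standard packing-plus-Gromov-compactness deduction, so you are recovering the original argument rather than offering an alternative.

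A couple of minor remarks on execution. First, Greene--Petersen worked in the smooth category and handled $\partial B_p(r)$ via smoothing rather than Ambrosio--Kirchheim slicing; your substitution is perfectly valid but anachronistic for the original result. Second, your contradiction sketch for the filling-radius lower bound is on the right track but compressed to the point of being hard to verify: the step ``builds a Lipschitz retraction at a controlled scale so that $C$ can be pulled back into $M$'' is exactly where the contractibility function $\rho$ does real work, and making this precise requires the skeleton-by-skeleton extension argument (choosing a fine triangulation of $C$, mapping vertices to nearby points of $M$, then extending over simplices inductively using that each lands in a ball that contracts in a $\rho$-controlled larger ball). Once that retraction is in hand, the pushed-forward chain bounds $\partial B_p(r)$ inside a thin annular neighborhood of $\partial B_p(r)$, while $B_p(r)$ itself bounds the same cycle but contains $p$ far from that annulus; comparing these two fillings gives the contradiction. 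Your phrasing ``$k=-1$'' via the fundamental class is one way to package this, though the more direct route is simply that the two fillings differ by a cycle supported away from $p$, forcing equality of local degrees at $p$---which is impossible since one filling covers $p$ and the other does not.
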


Immediately below the statement of this theorem, Greene-Petersen mention
that if $\rho$ is linear, $\rho(r)=\lambda r$, then there exists a constant
$C_m>0$ such that $\nu_{\rho,m}(r)\ge C_m r^m$.  This is exactly the bound
needed to avoid cancellation.

If the geometric contractibility function $\rho$ is not linear then one can have a sequence
of Riemannian manifolds which converge to a Riemannian
manifold with a cusp singularity as in Example~\ref{ex-to-cusp}
depicted in Figure~\ref{fig-to-cusp}.  
The lack of a uniform linear geometric contractibility function for that sequence of
is depicted in Figure~\ref{fig-GHFcusp2}.  

 \begin{figure}[h] 
   \centering
   \includegraphics[width=4.5in]{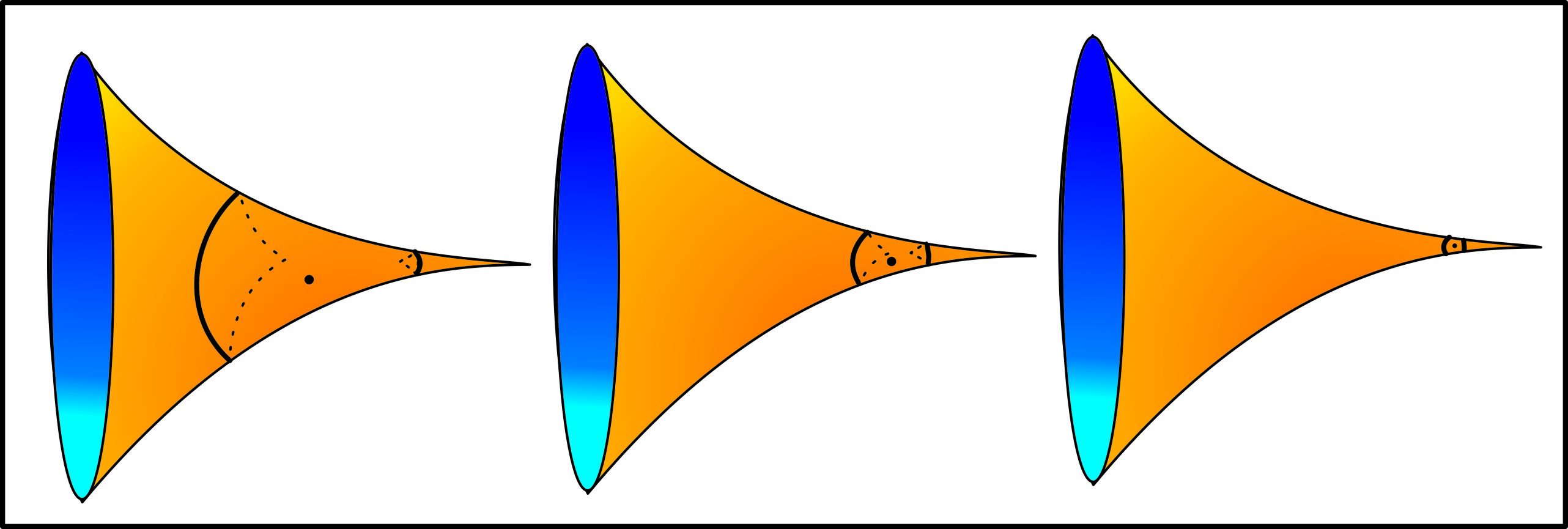} 
   \caption{The first ball contracts in a ball of twice its radius, the second in a ball of
   3 times its radius, the next in a ball of five times its radius...}
   \label{fig-GHFcusp2} 
\end{figure}

Cones have linear geometric contractibility functions (as seen in 
Figure~\ref{fig-GHFcusp3}).  Riemannian manifolds with conical
singularities viewed as integral current spaces include their
conical singularities [Example~\ref{example-settled},
Example~\ref{basic-mani-sing}]. 

 \begin{figure}[h] 
   \centering
   \includegraphics[width=4.5in]{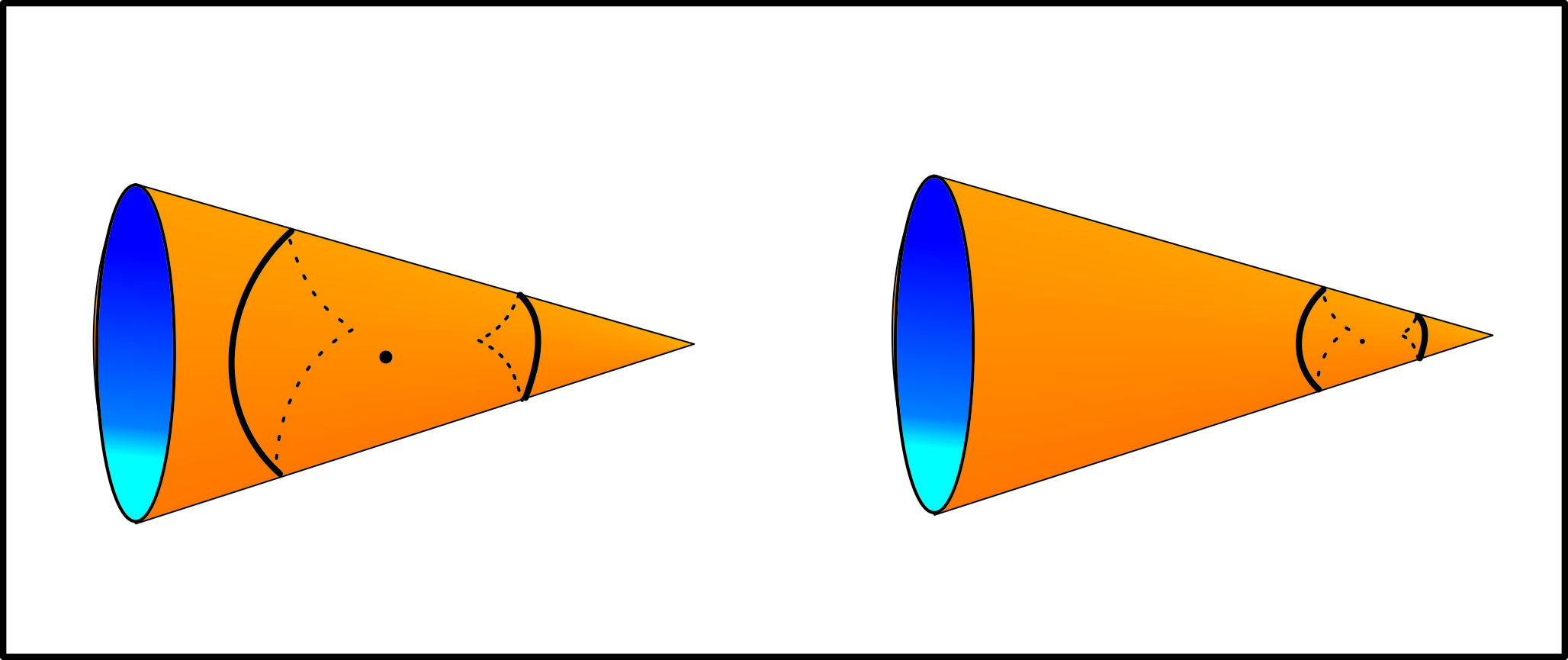} 
   \caption{The contractibility function is $\rho(r)=2r$ here.}
   \label{fig-GHFcusp3} 
\end{figure}


In \cite{SorWen1}, we dealt with a far more general class of integral current
spaces than Riemannian manifolds.  We began by applying Gromov's
compactness theorem to isometrically embed the sequence into a
common metric space where 
we used a notion of integral filling volume (c.f. \cite{Wenger-isoper}), 
which is well defined for integral currents without boundary.
We did not use Greene-Petersen's
smoothing arguments applying Ambrosio-Kirchheim's Slicing Theorem instead.
We needed to adapt everything to integral filling
volumes, so we applied a new
Lipschitz extension theorem akin to that of Lang-Schlichenmaier  \cite{Lang-Schlichenmaier}.
This lead to the following local theorem we could apply to avoid cancellation.
The following is a simplified restatement of \cite{SorWen1} Theorem 4.1:

\begin{thm} \cite{SorWen1}\label{thm-contractible}\label{contractibility-no-cancellation}
If $M^m$ is an oriented Lipschitz manifold of finite volume
with integral current structure, $T$, and if there is a ball,
 $B_x(r)\subset M^m$, that has $\partial T \rstr B_x(r)=0$ and if
$B_x(r)$ has a uniform linear geometric contractibility function,
$\rho:[0,2r]\to [0,\infty)$, with $\rho(r)=\lambda r$, then
\be \label{sorwen1.estimate}
||T||(B_x(s))\ge \textrm{Fillvol}_\infty(\partial (T\rstr \bar{B}_x(r)) \ge C_\lambda s^m \,\,
a.e. \,s\in [0, r/(2^{m+6}\lambda^{m+1})].
\ee
\end{thm}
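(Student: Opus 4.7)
Set $V(s) := \|T\|(B_x(s))$ and $T_s := T \rstr \bar{B}_x(s)$. My plan is to combine Ambrosio-Kirchheim's slicing theorem with a Gromov-style filling radius argument, carried out in the metric-current setting via a Lang-Schlichenmaier-type Lipschitz extension. I would first apply the slicing theorem to the $1$-Lipschitz function $d_x := d(x,\cdot)$: for a.e.\ $s \in (0,r)$ the restriction $T_s$ is an integral current whose boundary $\Sigma_s := \partial T_s$ is concentrated on the distance sphere at $s$, and since $\partial T \rstr B_x(r) = 0$ the boundary of $T_s$ comes entirely from the slice, so the coarea estimate gives $\mass(\Sigma_s) \le V'(s)$ for a.e.\ $s$. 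The first inequality is then immediate: Kuratowski-embed $\bar{X}$ into $\ell^\infty$ and observe that the pushforward of $T_s$ is an integral $m$-current whose boundary is the pushforward of $\Sigma_s$, so $\fillvol_\infty(\Sigma_s) \le \mass(T_s) = V(s)$.

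For the lower bound on $\fillvol_\infty(\Sigma_s)$ I would derive a differential inequality for $V$ by establishing (a) the reverse comparison $V(s) \le C_\lambda \fillvol_\infty(\Sigma_s)$ and (b) a Wenger-type isoperimetric inequality $\fillvol_\infty(\Sigma_s) \le C'_\lambda \mass(\Sigma_s)^{m/(m-1)}$ inside a linearly contractible ball. Combining (a), (b), and the slicing bound $\mass(\Sigma_s) \le V'(s)$ yields $V(s)^{(m-1)/m} \le C''_\lambda V'(s)$; integrating this gives $V(s) \ge c_\lambda s^m$, and feeding the result back into (a) produces the desired $\fillvol_\infty(\Sigma_s) \ge C_\lambda s^m$ on the stated range. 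Both (a) and (b) rest on the same core construction: given a near-optimal filling $F \in \intcurr_m(\ell^\infty)$ of $\Sigma_s$, use the linear contractibility $\rho(r)=\lambda r$ and an iterated dyadic Lipschitz-extension procedure in the spirit of Lang-Schlichenmaier to build a map $\Psi \colon \spt F \to B_x(C\lambda s) \subset M$ that restricts to the identity on $\Sigma_s$ with $\Lip(\Psi) \le C\lambda^{m+1}$. The pushforward $W := \Psi_\# F$ is then an integer rectifiable $m$-current in $M$ with $\partial W = \Sigma_s$ and controlled mass. Because $M$ is $m$-dimensional countably rectifiable, $\intcurr_{m+1}(M) = 0$, and the closed cycle $T_s - W$ is supported in a ball that is still linearly contractible inside $B_x(r)$ for $s$ in the stated range, so a second application of the extension procedure forces $T_s = W$, giving (a).

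The main obstacle is the controlled Lipschitz extension underlying (a): one must inductively extend $\Psi$ over a dyadic skeleton of $\spt F$, using linear contractibility at each scale to contract through a nested ball, and arrange that the Lipschitz constants and image radii stay bounded by a geometric series with constants depending only on $\lambda$ and $m$. The threshold $s \le r/(2^{m+6}\lambda^{m+1})$ in the statement is precisely the budget that keeps the total radius of the iterated contraction inside $B_x(r)$, so that the final closed cycle $T_s - W$ lies in a region where contractibility still applies for the second Lipschitz extension. Once the extension estimates are in place, (b) follows from the cone-over-$\Sigma_s$ construction using the same machinery, and the differential inequality closes the proof.
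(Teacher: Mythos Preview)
The paper does not prove this statement here; it is quoted as a simplified restatement of \cite{SorWen1} Theorem 4.1, and the surrounding text only sketches the ingredients used there (Ambrosio--Kirchheim slicing in place of smoothing, integral filling volumes, and a Lipschitz extension theorem ``akin to that of Lang--Schlichenmaier''). Your outline correctly identifies all of these, and the core step (a)---pulling an $\ell^\infty$-filling back into $M$ via a controlled Lipschitz extension and then invoking $\intcurr_{m+1}(M)=0$ on the cycle $T_s-W$ to force $T_s=W$---is the right use of the machinery and matches what the paper attributes to \cite{SorWen1}.

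There is one gap. Your justification of (b) by ``the cone-over-$\Sigma_s$ construction using the same machinery'' does not yield the isoperimetric exponent: coning (in $\ell^\infty$ or in $M$ via the contraction) gives only $\fillvol_\infty(\Sigma_s)\le C\,s\,\mass(\Sigma_s)$, and feeding this linear bound into (a) together with the slicing estimate produces $V'(s)/V(s)\ge c/s$, hence $V(s)\ge c\,s^{\alpha}$ with $\alpha$ governed by the extension constants rather than $\alpha=m$. To close your differential inequality with the correct power you must invoke the genuine Euclidean-type isoperimetric inequality for integral cycles in a Banach space (e.g.\ \cite{Wenger-isoper}), which is an independent and nontrivial input, not a corollary of the extension procedure. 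Once (b) is taken from there your argument goes through; alternatively, the paper's discussion just before the theorem points toward Gromov's route via filling radius---$\fillrad_\infty(\Sigma_s)\ge c_\lambda s$ from the extension, then $\fillvol_\infty\ge c_m\,\fillrad_\infty^{\,m}$---which avoids the differential inequality altogether.
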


\begin{example}
Note that the condition here that $\partial T \rstr B_x(r)=0$ is necessary.
If $M^m$ were a thin flat strip $[0,1]\times[0,\epsilon]$
, all balls in $M^m$ would have $\rho(r)=r$,
but the volumes of the balls would be less than $2r\epsilon$.
\end{example}

This theorem combined with the ideas described in 
Remark~\ref{rmrk-avoid-cancellation} leads to the the following theorem
demonstrating that the limits occurring in Greene-Petersen's compactness
theorem have no cancellation:

\begin{thm} \cite{SorWen1}\label{thm-contractibility-no-cancellation}
If a sequence of connected oriented
Lipschitz manifolds 
without boundary, $M^m_j=(X_j,d_j,T_j)$ has
a uniform linear geometric contractibility function,
$\rho:[0,r_0]\to [0,\infty)$, with $\rho(r)=\lambda r$, 
and a uniform upper bound on volume, then a subsequence converges in
both the intrinsic flat sense and the Gromov-Hausdorff sense to
the same space $M^m=(X,d,T)$.  In particular, $M^m$ is a countably
$\mathcal{H}^m$ rectifiable metric space.
\end{thm}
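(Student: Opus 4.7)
The plan is to assemble both convergences from results already stated and then rule out cancellation using Theorem~\ref{thm-contractible}. Since $\partial M_j^m = 0$, the hypothesis gives a uniform bound on $\nmass(T_j) = \mass(T_j) = \vol(M_j^m)$. Greene--Petersen's Theorem~\ref{thm-Greene-Petersen} supplies, after passing to a subsequence, a Gromov--Hausdorff limit $X_j \GHto Y$ realized by isometric embeddings $\varphi_j : X_j \to Z$ into a compact metric space $Z$. Applying Kuratowski's Theorem~\ref{Kuratowski} to embed $Z$ into the Banach space $E = \ell^\infty(Z)$, and then Theorem~\ref{GH-to-flat}, a further subsequence satisfies $\varphi_{j\#} T_j \to T$ both weakly and in the flat sense in $E$, for some $T \in \intcurr_m(E)$, yielding an integral current space $M = (X, d, T)$ with $X = \set(T) \subset Y$ and $d$ the metric restricted from $E$.

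The main work is to establish $Y \subset X$. Fix $y \in Y$ and choose $p_j \in X_j$ with $\varphi_j(p_j) \to \iota(y)$, so that $\epsilon_j := d_E(\varphi_j(p_j), \iota(y)) \to 0$. Since $\partial T_j = 0$, Theorem~\ref{thm-contractible} applies at each $p_j$ with any fixed $r \le r_0/2$, producing a uniform constant $C_\lambda > 0$ and $s_0 = r/(2^{m+6}\lambda^{m+1})$ with
\[
||T_j||(B_{p_j}(s)) \ge C_\lambda s^m \quad \text{for a.e. } s \in (0, s_0].
\]
Because $\varphi_j$ is isometric, $B_{p_j}(s) \subset \varphi_j^{-1}(B_{\iota(y)}(s + \epsilon_j))$, and consequently $||\varphi_{j\#} T_j||(B_{\iota(y)}(s + \epsilon_j)) \ge C_\lambda s^m$ for a.e. $s \in (0, s_0]$. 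A local lower semicontinuity of mass under weak convergence (along the lines of the remark following Theorem~\ref{mass-drops}, relying on Ambrosio--Kirchheim's Slicing Theorem to ensure $T_j \rstr B$ and $T \rstr B$ are integral for almost every radius) then lets one pass to the limit $j \to \infty$ with $\epsilon_j \to 0$ to conclude $||T||(B_{\iota(y)}(s)) \ge C_\lambda s^m$ for a.e.\ $s \in (0, s_0]$. Dividing by $\omega_m s^m$ and sending $s \to 0$ along such radii gives $\Theta_{*m}(||T||, \iota(y)) \ge C_\lambda/\omega_m > 0$, so $\iota(y) \in \set(T) = X$. Combined with $X \subset Y$, this yields $X = Y$ as metric spaces, and the rectifiability claim follows from Remark~\ref{rmrk-separable}.

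The principal obstacle lies in the limiting step of the second paragraph: the estimate of Theorem~\ref{thm-contractible} excludes a $j$-dependent Lebesgue null set of radii, and local lower semicontinuity of mass requires choosing $s$ for which $||\varphi_{j\#} T_j||(\partial B_{\iota(y)}(s)) = 0$ and $||T||(\partial B_{\iota(y)}(s)) = 0$ simultaneously. A combined Slicing and Fatou/Fubini argument extracts a full-measure set of admissible radii $s \in (0, s_0]$ along which the density lower bound propagates to the weak limit; this is precisely the technical core already carried out in \cite{SorWen1}. Once this is in hand the rest of the theorem is a direct consequence of the setup in the first paragraph.
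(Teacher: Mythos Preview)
Your outline matches the strategy the paper itself describes in the discussion surrounding this theorem (Remark~\ref{rmrk-avoid-cancellation} together with Theorem~\ref{thm-contractible}): obtain Gromov--Hausdorff convergence via Greene--Petersen, upgrade to intrinsic flat convergence via Theorem~\ref{GH-to-flat}, and then show every $y\in Y$ has positive lower density for $\|T\|$ by propagating the uniform estimate $\|T_j\|(B_{p_j}(s))\ge C_\lambda s^m$ to the weak limit. Note that the paper does not actually give a proof here---the theorem is cited from \cite{SorWen1}, and you correctly identify that the delicate part (handling the almost-every-$s$ exclusions and the local lower semicontinuity via slicing) is exactly what is carried out there.
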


A more general version of Theorem~\ref{thm-contractibility-no-cancellation}
which allows for boundaries, is stated as Corollary 1.6 in our paper \cite{SorWen1}.

\begin{rmrk}\label{rmrk-Schul-Wenger}
If the contractibility function is not linear, Schul-Wenger
have shown the limit space need not be countably $\mathcal{H}^m$ rectifiable
in the Appendix of
\cite{SorWen1}.  Note that Ferry-Okun have shown that without a uniform
upper bound on volume, these sequences can converge to an infinite
dimensional space \cite{Ferry-Okun}.
\end{rmrk}

\subsection{Ricci and Scalar Curvature}\label{subsect-Ricci}

Gromov proved that a sequence of manifolds, $M^m_j$, with nonnegative Ricci curvature
and a uniform upper bound on diameter, have a subsequence which converges in
the Gromov-Hausdorff sense to a compact geodesic space, $Y$ \cite{Gromov-metric}.   
Cheeger-Colding
proved that in the noncollapsed setting, where the volumes are uniformly bounded
below, the manifolds converge in the metric measure sense to $Y$ with the
Hausdorff measure, $\mathcal{H}^m$.  In particular, if $p_j \in M_j$ converge to
$y\in Y$ then $\vol(B_{p_j}(r))$ converges to $\mathcal{H}^m(B_y(r))$.
Furthermore $Y$ is countably
$\mathcal{H}^m$ rectifiable with Euclidean tangent cones almost everywhere.
Points with Euclidean tangent cones are called {\em regular points} and, at such
points, the density of the Hausdorff measure is $1$.  In fact
$\lim_{r\to 0} \mathcal{H}^m(B_y(r))/r^m=\omega_m$. \cite{ChCo-PartI}.

Such sequences
do not have uniform geometric contractibility functions as seen by Perelman's example
in \cite{Perelman-example}.  In fact Menguy proved the limit space could
have infinite topological type \cite{Menguy-inf-top-type}.  Nevertheless, in \cite{SorWen1},
the authors proved that the Gromov-Hausdorff and intrinsic flat distances agree in
this setting:

\begin{thm} \label{thm-ricci} \cite{SorWen1}
If a noncollapsing
sequence of oriented Riemannian manifolds 
without boundary, $M^m_j=(X_j,d_j,T_j)$ has
nonnegative Ricci curvature and a uniform upper bound on diameter,
then a subsequence converges in
both the intrinsic flat sense and the Gromov-Hausdorff sense to
the same space $M^m=(X,d,T)$.  
\end{thm}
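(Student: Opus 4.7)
The plan is to combine Gromov's compactness theorem with Cheeger--Colding's results on Ricci limits, then use the intrinsic flat compactness result Theorem~\ref{GH-to-flat} to extract a flat limit sitting inside the Gromov--Hausdorff limit, and finally identify the two by showing that no cancellation can occur. First, under the hypotheses (nonnegative Ricci, uniform diameter bound, noncollapsing), Bishop--Gromov volume comparison yields both a uniform upper bound on $\vol(M_j)$ and, for every $p_j \in M_j$ and every $r \le D$, a uniform lower bound
\be
\vol(B_{p_j}(r)) \ge C_0\, r^m,
\ee
where $C_0$ depends only on the dimension, the diameter bound, and the noncollapsing constant. Gromov's compactness theorem then produces a subsequence (still denoted $M_j$) and a compact geodesic space $Y$ with $M_j \GHto Y$, and Theorem~\ref{GH-to-flat} applied to this same subsequence yields a further subsequence converging in the intrinsic flat sense to an integral current space $M = (X,d,T)$ with $X \subset Y$ and the metric $d$ restricted from the common space $Z$ constructed in the proof of that theorem.

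The key step is to show $X = Y$. After passing to the common metric space $Z$ of Theorem~\ref{GH-to-flat}, the isometric images $\varphi_j(M_j)$ Hausdorff converge to $Y$, and $\varphi_{j\#}T_j$ converges weakly to $T$ in $\intcurr_m(Z)$. Fix any $y \in Y$ and pick $p_j \in M_j$ with $\varphi_j(p_j) \to y$. For almost every $r>0$ in a suitable range, Ambrosio--Kirchheim's Slicing Theorem \cite{AK}[Thm~5.6] applies to the $1$-Lipschitz function $d_Z(y,\cdot)$ and guarantees that $(\varphi_{j\#}T_j)\rstr B_y(r)$ and $T\rstr B_y(r)$ are integral currents whose weak limits relate properly. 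Combined with the local form of lower semicontinuity of mass discussed in the remark after Theorem~\ref{mass-drops}, and with the fact that $\varphi_j(B_{p_j}(r-\epsilon)) \subset B_y(r)$ for all large $j$, we obtain
\be
\|T\|(B_y(r)) \ge \liminf_{j\to\infty}\|\varphi_{j\#}T_j\|(B_y(r)) \ge \liminf_{j\to\infty}\vol(B_{p_j}(r-\epsilon)) \ge C_0(r-\epsilon)^m.
\ee
Letting $\epsilon \to 0$ yields $\Theta_{*m}(\|T\|,y) \ge C_0/\omega_m > 0$, so $y \in \set(T) = X$. Hence $Y \subset X$ and combined with $X \subset Y$ we conclude $X = Y$ as sets with the same restricted metric from $Z$.

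The main obstacle is the passage from the global lower semicontinuity of mass in Theorem~\ref{mass-drops} to the local statement needed above: one must verify that the slices of $\varphi_{j\#}T_j$ by $d_Z(y,\cdot)$ actually form integral currents for \emph{almost every} radius $r$ \emph{uniformly} in $j$, and that the weak convergence of the full currents descends to weak convergence of the slices in a manner compatible with the Hausdorff convergence $\varphi_j(M_j) \to Y$. This is the content that requires the Ambrosio--Kirchheim slicing machinery from \cite{AK} together with a Fatou-type argument that discards a measure-zero set of bad radii. A secondary subtlety is that the argument only uses Bishop--Gromov and not any linear contractibility, so one cannot invoke Theorem~\ref{thm-contractibility-no-cancellation} directly; instead the lower volume bound plays the role of the filling-volume bound from Remark~\ref{rmrk-avoid-cancellation}, which is precisely why the hypothesis of noncollapsing is essential and why the conclusion fails once $\vol(M_j) \to 0$.
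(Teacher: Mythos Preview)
There is a genuine gap at the heart of your argument: the inequality you attribute to lower semicontinuity of mass,
\[
\|T\|(B_y(r)) \;\ge\; \liminf_{j\to\infty}\|\varphi_{j\#}T_j\|(B_y(r)),
\]
is in the wrong direction. Lower semicontinuity (Remark~\ref{rmrk-lower-mass} and Theorem~\ref{mass-drops}) says precisely the opposite: for open sets $A$ one has $\|T\|(A)\le \liminf_j \|\varphi_{j\#}T_j\|(A)$. This is not a cosmetic slip; it is exactly the phenomenon of cancellation (Example~\ref{example-cancels}). If your inequality held, cancellation would be impossible for \emph{any} weakly converging sequence, and the entire content of noncancellation theorems would be vacuous. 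The Bishop--Gromov lower bound $\vol(B_{p_j}(r))\ge C_0 r^m$ in the sequence tells you nothing, by itself, about $\|T\|(B_y(r))$ in the limit.

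The paper's route (carried out in \cite{SorWen1}) replaces mass by a quantity that is \emph{continuous} rather than merely lower semicontinuous under flat convergence: the integral filling volume of the boundary slice $\partial(T\rstr B_y(r))$. Continuity of filling volume (from \cite{Wenger-flat}) transfers lower bounds from the sequence to the limit in the correct direction. To obtain those lower bounds in the sequence one invokes Theorem~\ref{thm-contractible}, which requires a \emph{linear} local geometric contractibility function; this is supplied not by Bishop--Gromov alone but by Perelman's theorem \cite{Perelman-max-vol} that balls of nearly maximal volume in nonnegative Ricci curvature are contractible, combined with Cheeger--Colding's volume convergence and the fact that almost every point of the Gromov--Hausdorff limit is regular \cite{ChCo-PartI}. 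Your proposal explicitly sets aside the contractibility route, but that is precisely the mechanism that converts the volume lower bound into a filling-volume lower bound and hence into a lower bound on $\|T\|$ at the limit point.
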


This theorem can be viewed as an example of a noncancellation theorem.
The proof is based on Theorem~\ref{thm-contractible} and the fact
that Perelman proved that balls of large volume in a manifold with
nonnegative Ricci curvature are contractible \cite{Perelman-max-vol}.
We also applied the work of Cheeger-Colding \cite{ChCo-PartI}, which
states that in this setting the volumes of balls converge and that almost
every point in the Gromov-Hausdorff limit is a regular point.  Regular
points have Euclidean tangent cones and 
$\lim_{r\to 0} \mathcal{H}^m(B_y(r))/r^m=\omega_m$.

\begin{rmrk}
It would be interesting if one could prove this theorem directly without
resorting to the powerful theory of Cheeger-Colding.  That would give new
insight perhaps allowing one to extend this result to situations with weaker
conditions on the curvature.
\end{rmrk}

In \cite{SorWen1} we presented an example of a sequence of three
dimensional Riemannian manifolds with positive scalar curvature that
converge in the intrinsic flat sense to the $0$ integral current space.
Example~\ref{example-cancels} depicted in Figure~\ref{figure-cancels}
is a 2 dimensional version of this example.  The example with positive
scalar curvature is constructed by connecting a pair of standard three
dimensional spheres by an increasingly dense collection of tunnels.
Each tunnel is constructed using Schoen-Yau or Gromov-Lawson's
method \cite{MR535700} \cite{MR569070}.    
This
sequence has increasingly negative Ricci and sectional curvatures
within the tunnels but the scalar curvature remains positive.  
Note that each tunnel has a minimal two sphere inside.  It is natural
in the study of general relativity, to require that a manifold have positive
scalar curvature and no interior minimal surfaces.  The boundary is
allowed to consist of minimal surfaces.

The following conjecture is based upon discussions with Ilmanen:

\begin{conjecture}  \label{conj-scalar}
A converging sequence of three dimensional
Riemannian manifolds with positive
scalar curvature, a uniform lower bound on volume,
 and no interior minimal surfaces converges without
cancellation to a nonzero integral current space.
\end{conjecture}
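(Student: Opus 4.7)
The plan is to adapt the filling-volume noncancellation framework of \thmref{thm-contractible} by replacing the uniform geometric contractibility hypothesis with positive scalar curvature together with the absence of interior minimal surfaces. The target estimate is a uniform Euclidean-type lower bound on the volumes of small metric balls: constants $C>0$ and $r_0>0$, depending only on the uniform data of the sequence, such that
\be
\vol(B_p(r))\ge C\, r^3 \qquad \forall\, p\in X_j,\ r\in(0,r_0].
\ee
Once this is in hand, one isometrically embeds the sequence into a common metric space $Z$ via \thmref{converge} and applies the local lower-semicontinuity of mass (Ambrosio--Kirchheim slicing combined with the localized form of \thmref{mass-drops} noted in the remark afterward). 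This shows every accumulation point $y$ of the embedded sequences has positive lower $m$-density for the mass measure of the limit current $T$. Hence $y\in\set(T)$, so the limit integral current space is nonempty, mass does not escape, and no cancellation occurs.

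The whole analytic content of the conjecture sits in the ball-volume bound, and in three dimensions the natural tool is the Huisken--Ilmanen weak inverse mean curvature flow. Given a small outward-perturbed coordinate sphere $\Sigma_0=\partial B_p(r)$, the no-minimal-surface hypothesis forbids the flow from jumping over compact minimizing hulls, so weak IMCF exists and foliates a neighborhood of infinity-from-$p$ by connected surfaces $\Sigma_t$ with $|\Sigma_t|=e^t|\Sigma_0|$. Positive scalar curvature then yields monotonicity of the Hawking mass along $\Sigma_t$ via the Geroch formula, so a degenerately thin $\Sigma_0$ of very negative Hawking mass is incompatible with the flow reaching moderate scales. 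Running this argument backward from a fixed scale forces $|\partial B_p(r)|\ge c\, r^2$ uniformly, which feeds into the coarea formula for $\dist(p,\cdot)$ to give the desired volume bound. A complementary route runs through minimizing hulls directly: the volume-minimizing region containing $B_p(r)$ cannot be bounded by a stable minimal surface inside $M_j^3$, and Schoen--Yau-type rigidity for stable minimal two-spheres in manifolds of positive scalar curvature then forces near-Euclidean behavior at small scales.

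The hard step will be making these estimates genuinely uniform across a sequence of manifolds whose global topology and geometry may be wild away from $p$. Unlike the Ricci setting of \thmref{thm-ricci}, Bishop--Gromov is unavailable, and positive scalar curvature alone is borderline for such rigidity, so one effectively needs a local Penrose-type inequality ruling out thin necks. Adapting Huisken--Ilmanen's flow, which was engineered for complete asymptotically flat ends, to a purely interior local setting, and ensuring the resulting constant $C$ depends only on the scalar curvature sign, the uniform volume lower bound, and the no-minimal-surface hypothesis (with no unavailable a priori control on injectivity radius or sectional curvature), is precisely the technical obstacle that has kept the statement a conjecture. A final subtlety is that after embedding into $Z$ the ``ambient manifold'' for applying IMCF is always one of the $M_j$, so the no-minimal-surface hypothesis is used fiberwise along the sequence rather than in the limit, and one must verify that the constant produced is stable under the eventual passage to the flat limit.
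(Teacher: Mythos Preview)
The statement you are attempting to prove is labeled a \emph{conjecture} in the paper, not a theorem; the paper provides no proof and explicitly says ``A solution to this conjecture would have applications in general relativity.'' So there is no paper proof to compare against.

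Your proposal is not a proof but a research outline, and you yourself acknowledge this: you write that making the ball-volume estimate uniform ``is precisely the technical obstacle that has kept the statement a conjecture.'' That is an honest and accurate assessment. The strategy you sketch---using weak inverse mean curvature flow and Hawking mass monotonicity to extract a uniform local volume lower bound $\vol(B_p(r))\ge Cr^3$, then feeding this into the filling-volume machinery of \thmref{thm-contractible} and the lower-semicontinuity of mass---is a reasonable line of attack and is in the spirit of the paper's discussion (the conjecture arose from conversations with Ilmanen, and IMCF is Huisken--Ilmanen's tool). But the central analytic step, a purely local Penrose-type inequality valid uniformly across a sequence with no control on sectional curvature or injectivity radius, is genuinely open, and nothing in your outline closes that gap. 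In particular, weak IMCF as developed by Huisken--Ilmanen requires an asymptotically flat end to flow toward; adapting it to a compact interior region with only the no-interior-minimal-surface hypothesis is not a matter of bookkeeping but a substantial new theorem.

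So: the paper does not prove this, your proposal does not prove this, and you correctly identify why.
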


A solution to this 
conjecture would have applications in general relativity and is essential to
solving Ilmanen's 2004 proposal that a new weak form of convergence needs
to be developed to better understand manifolds with positive scalar curvature.

\vspace{.1cm}

\subsection{Wenger's Compactness Theorem}\label{subsect-Wenger-compactness}


In \cite{Wenger-compactness}, the second author has proven the key
compactness theorem for the intrinsic flat distance:

\begin{thm}\label{thm-Wenger-compactness}  {\em{\cite{Wenger-compactness} [Theorem 1.2]}}  
Let $m, C, D > 0$ and let $\bar{X}_j$ be a sequence of complete metric spaces. 
Given $T_j \in \intcurr_m(\bar{X}_j)$ with uniform bounds on total mass and diameter:
\be \label{eqn-compact-1}
\mass(T_j) +\mass(\partial T_j) \le C
\ee 
and 
\be \label{eqn-compact-2}
\diam(\spt(T_j) ) \le D
\ee
then 
there exists a subsequence $T_{j_i}$, a complete metric space $Z$, an integral current $T \in \intcurr_m(Z)$
and isometric embeddings $\varphi_{j_i}: \bar{X}_{j_i} \to Z$ such that 
\be \label{eqn-compact-3}
d^Z_F\left(\varphi_{j_i\#}T_{j_i},T\right) \to 0.
\ee 

In particular, if $M_n=(X_n, d_n,T_n)$ is a sequence of integral current spaces satisfying  (\ref{eqn-compact-1})
and (\ref{eqn-compact-2}), then a subsequence converges in the intrinsic flat sense to an integral
current space of the same dimension.  The limit space is in fact $M=(\set(T), d_Z, T)$.
\end{thm}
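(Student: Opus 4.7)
The plan is to reduce to a setting where Gromov's and Ambrosio--Kirchheim's compactness theorems apply, by approximating each $T_j$ up to arbitrarily small flat distance by an integral current with equicompact support, and then combining a diagonal extraction with the gluing construction of Lemma~\ref{lemma-shrub}. The main obstacle is that nothing in the hypotheses rules out the supports $\spt(T_j)$ from developing arbitrarily much small-scale complexity (as in the hairy-sphere Example~\ref{ex-hairy-sphere}), so Gromov's theorem (Theorem~\ref{Thm-Gromov}) cannot be applied directly to $\spt(T_j)$ and an equicompact substitute must be built by hand.

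The technical heart of the argument is an \emph{equicompact approximation lemma}: for every $\epsilon>0$ there exists $N=N(m,C,D,\epsilon)\in\N$ such that each $T_j$ admits an integral current $S_j^\epsilon\in\intcurr_m(\bar X_j)$ with $d_F^{\bar X_j}(T_j,S_j^\epsilon)<\epsilon$, $\nmass(S_j^\epsilon)\le C'=C'(m,C,D)$, and $\spt(S_j^\epsilon)$ coverable by at most $N$ balls of radius $\epsilon$. To produce $S_j^\epsilon$, I would select a maximal $\epsilon$-net in $\spt(T_j)$, whose cardinality is controlled in terms of $m,C,D,\epsilon$ via Ambrosio--Kirchheim's lower density bounds for integral currents together with the mass and diameter hypotheses. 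I would then slice $T_j$ against the distance functions to the net using Ambrosio--Kirchheim's slicing theorem, cutting out a piece supported in a small neighborhood of the net while controlling the mass of the boundary slices; the leftover boundary pieces would be filled by applying Wenger's isoperimetric inequality for integral currents in metric spaces from \cite{Wenger-flat}. This step is where the heavy metric GMT enters and is the only place where the Banach-space-valued theory plays a role.

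With the approximation lemma in hand, fix $\epsilon_k=1/k$. For each $k$, the family $\{\spt(S_j^{\epsilon_k})\}_j$ is equibounded and equicompact, so Gromov's compactness theorem yields, after a subsequence in $j$, a compact space $Y_k$ and isometric embeddings of the supports into $Y_k$. Exactly as in the proof of Theorem~\ref{inf-dist-attained} I would pass to the injective envelope of $Y_k$ (compact by Isbell's theorem) and extend to isometric embeddings $\psi_{j,k}:\bar X_j\to Z_k$. Ambrosio--Kirchheim's Compactness Theorem~\ref{AK-compact} together with Wenger's Flat = Weak Theorem~\ref{weak=flat} then produces a further subsequence along which $\psi_{j,k\#}S_j^{\epsilon_k}$ flat-converges in $Z_k$ to some $R_k\in\intcurr_m(Z_k)$.

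A standard diagonal extraction across $k$ gives a single subsequence $j_i$ for which convergence holds at every scale simultaneously. Applying the gluing construction of Lemma~\ref{lemma-shrub} to the resulting countable tree of compact spaces $\{Z_k\}$ (as in the proof of Theorem~\ref{cauchy}) yields a separable complete metric space $Z$ with isometric embeddings $\varphi_{j_i}:\bar X_{j_i}\to Z$. The triangle inequality, combined with the bound $d_F^Z(\varphi_{j_i\#}T_{j_i},\varphi_{j_i\#}S_{j_i}^{\epsilon_k})<1/k$ and the $k$-scale convergence, shows $\{\varphi_{j_i\#}T_{j_i}\}$ is flat-Cauchy in $Z$. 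One last Kuratowski embedding of $Z$ into $\ell^\infty(Z)$ plus a final application of Theorem~\ref{weak=flat} and Theorem~\ref{AK-compact} identifies the flat limit as an integral current $T\in\intcurr_m(Z)$, and $M=(\set(T),d_Z,T)$ is the desired integral current space.
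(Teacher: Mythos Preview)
This theorem is not proved in the paper: it is stated with a citation to \cite{Wenger-compactness} and used as a black box (see the sentence just before the statement and the remark in the introduction that ``We do not apply this theorem in this paper except for a few immediate corollaries''). So there is no in-paper proof to compare against; your proposal is an attempt to reconstruct the argument of \cite{Wenger-compactness}. The overall architecture you describe --- equicompact approximation, then Gromov + Ambrosio--Kirchheim compactness on the approximants, then a diagonal/gluing argument --- is indeed the shape of Wenger's actual proof.

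That said, there is a genuine gap in your equicompact approximation step. You write that a maximal $\epsilon$-net in $\spt(T_j)$ has cardinality bounded in terms of $m,C,D,\epsilon$ ``via Ambrosio--Kirchheim's lower density bounds''. This is false as stated, and the hairy-sphere example (Example~\ref{ex-hairy-sphere}) you yourself invoke shows why: the supports $\spt(T_j)$ are \emph{not} equicompact under the hypotheses, so maximal $\epsilon$-nets in them can have unbounded cardinality. The lower density statement $\Theta_{*m}(\|T\|,x)\ge c$ is asymptotic as $r\to 0$ and gives no uniform lower bound on $\|T_j\|(B(x,\epsilon))$ at a fixed scale $\epsilon$; at the tips of the hairs, balls of fixed radius $\epsilon$ carry arbitrarily little mass. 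What one can bound is the number of \emph{disjoint} $\epsilon$-balls each carrying mass at least some threshold $\delta>0$ (trivially $\le C/\delta$), which is a different set.

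The repair --- and this is the real content of \cite{Wenger-compactness} --- is a thick/thin decomposition: one must show that the restriction of $T_j$ to the ``thin'' set $\{x:\|T_j\|(B(x,\epsilon))<\delta\}$ has small flat norm, not merely that its complement is equicompact. This requires combining the slicing theorem with the isoperimetric inequality in a quantitative way (controlling both the created boundaries and the fillings simultaneously, at all scales), and it is substantially more delicate than ``slice against distance functions to the net and fill the leftovers''. Your sketch names the right tools but skips the step where the argument actually happens; as written, the approximation lemma does not follow.
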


In particular, sequences of oriented Riemannian manifolds with boundary
with a uniform upper bound on volume, on the volume of the boundary and
on diameter have a subsequence which converges in the intrinsic flat sense
to an integral current space.  Note that even when the sequence of manifolds 
is compact, the limit space need not be precompact as seen in Example~\ref{ex-many-tips}
depicted in Figure~\ref{fig-many-tips}.


We now apply this compactness theorem combined
with techniques from the proof of
Theorem~\ref{converge} to prove Theorem~\ref{Cauchy-to-Converge}.
We do not apply this compactness theorem anywhere else in this paper.

Contrast this with Theorem~\ref{cauchy} and see  Remark~\ref{rmrk-cauchy}.

\begin{thm} \label{Cauchy-to-Converge}  
Given an intrinisic flat Cauchy sequence of
integral current spaces, 
$M^m_j=\left(X_j, d_j, T_j\right)$
with a uniform bound on total mass, $\nmass(M_j) \le V_0$, and a uniform bound
on diameter, $\diam(M_j) \le D$, there exists a complete metric space
$Z$, and a sequence
of isometric embeddings $\varphi_j: X_j \to Z$
such that $\varphi_{j\#}T_j$ is a flat Cauchy
sequence of integer rectifiable currents in $Z$
which converges in the flat sense to an integral current $T\in \intcurr_m(Z)$.

Thus $M^m_j$ converges in the intrinsic flat sense to an integral current
space $(\set(T), d_Z, T)$.
\end{thm}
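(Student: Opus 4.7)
The plan is to reduce the theorem to an application of Wenger's compactness theorem [Theorem~\ref{thm-Wenger-compactness}] combined with the embedding result for convergent sequences [Theorem~\ref{converge}]. The uniform bounds on total mass and diameter are precisely the hypotheses of Wenger's theorem, so they immediately yield a subsequential limit, and the Cauchy hypothesis then upgrades this to convergence of the full sequence.

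First, apply Theorem~\ref{thm-Wenger-compactness} to the currents $T_j \in \intcurr_m(\bar{X}_j)$: the assumption $\nmass(M_j) \le V_0$ gives $\mass(T_j) + \mass(\partial T_j) \le V_0$, and since $\spt(T_j) \subset \bar{X}_j$ one has $\diam(\spt(T_j)) \le \diam(X_j) \le D$. This produces a subsequence $\{M_{j_k}\}$ and a limit integral current space $M_\infty = (X_\infty, d_\infty, T_\infty)$ with $M_{j_k} \to M_\infty$ in the intrinsic flat sense.

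Next, promote this subsequential convergence to convergence of the full sequence: because $\{M_j\}$ is intrinsic flat Cauchy, the triangle inequality [Theorem~\ref{triangle}] gives
\begin{equation*}
d_{\mathcal{F}}(M_j, M_\infty) \le d_{\mathcal{F}}(M_j, M_{j_k}) + d_{\mathcal{F}}(M_{j_k}, M_\infty),
\end{equation*}
so choosing $k$ large and then $j$ large yields $d_{\mathcal{F}}(M_j, M_\infty) \to 0$ along the original indexing. Now apply Theorem~\ref{converge} to the convergent sequence $M_j \to M_\infty$: this produces a separable complete metric space $Z$ and isometric embeddings $\varphi_j : X_j \to Z$ and $\varphi_\infty : X_\infty \to Z$ such that $\varphi_{j\#} T_j$ converges in the flat sense in $Z$ to $T := \varphi_{\infty\#} T_\infty \in \intcurr_m(Z)$. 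Flat convergence automatically implies flat Cauchy, giving the required Cauchy sequence of integer rectifiable currents in $Z$, and the induced intrinsic flat limit is the integral current space $(\set(T), d_Z, T)$, current-preserving isometric to $M_\infty$.

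The only nontrivial step is the invocation of Wenger's compactness theorem; everything else is a standard Cauchy-plus-convergent-subsequence argument glued to the previously proved embedding result. As discussed in Remark~\ref{rmrk-cauchy}, dropping the diameter hypothesis while keeping only the total-mass bound is an open problem, so the diameter assumption is essential to this approach and cannot be removed by the present strategy.
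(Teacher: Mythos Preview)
Your proof is correct and follows essentially the same route as the paper: apply Wenger's compactness theorem to extract a subsequential intrinsic flat limit, use the Cauchy hypothesis to upgrade to full-sequence convergence, and then invoke Theorem~\ref{converge} to realize everything in a common complete metric space $Z$. Your write-up is in fact more detailed than the paper's, explicitly verifying the hypotheses of Theorem~\ref{thm-Wenger-compactness} and spelling out the triangle-inequality step.
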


\begin{proof}
First there is a subsequence
$(X_{j_i}, d_{j_i}, T_{j_i})$ which converges in the intrinsic flat sense
to an integral current space $(X,d,T)$, by Wenger's compactness theorem.  
Since $(X_j,d_j,T_j)$ is
Cauchy, it also converges to $(X,d,T)$.  Then Theorem~\ref{converge}
then yields the claim.
\end{proof}


\section{{\bf Lipschitz Maps and Convergence}} \label{Sect-reln}

We review Lipschitz convergence and
prove that when sequences of manifolds
converge in the Lipschitz sense, then they converge in the intrinsic flat
sense.  As a consequence, sequences of manifolds which converge in
the $C^{k,\alpha}$ sense or the $C^\infty$ sense, also converge in the
intrinsic flat sense.  Lemmas in this section will also be useful when proving
the examples in the final section of the paper.

\subsection{Lipschitz Maps}  \label{subsect-lip-maps}


The purpose of this subsection is to list some basic properties of the intrinsic 
flat norm of an integral current space. 
Some of the lemmas will be used later on for the construction 
of examples in the Appendix. 
Others will be used to relate the Lipschitz convergence to intrinisic flat convergence
[Theorem~\ref{thm-lip-to-flat}].

Recall that a metric space $X$ is called injective if for every metric space $Y$, every subset $A\subset Y$ and every Lipschitz map $\varphi: A\to X$ there exists a Lipschitz extension $\bar{\varphi}: Y\to X$ of $\varphi$ with the same Lipschitz constant. It is not difficult to check that given a set $Z$, the Banach space $l^\infty\left(Z\right)$ of bounded functions, endowed with the supremum norm, is injective
(c.f. \cite{BenLinText} p 12-13).

Given a complete metric space $X$ and $T\in\intcurr_m\left(X\right)$ we define
\begin{equation}
 \flatnorm_X\left(T\right):= \inf\left\{\mass\left(U\right) + \mass\left(V\right): U\in\intcurr_m\left(X\right), V\in\intcurr_{m+1}\left(X\right), T = U+\bdry V\right\}
\end{equation}
whereas
\begin{equation}
 \flatnorm\left(T\right):= \inf\left\{\flatnorm_Z\left(\varphi_{\#}T\right): \text{ $Z$ metric space, $\varphi: X\hookrightarrow Z$ isometric embedding}\right\}.
\end{equation}

\bl\label{lemma:injective-spaces-flatnorm} \label{lem-wen-1.1}
Given $X$ an injective metric space and $T\in\intcurr_m\left(X\right)$ we have $\flatnorm\left(T\right) = \flatnorm_X\left(T\right).$
\el

\begin{proof}
 Let $Z$ be a metric space and $\varphi: X\hookrightarrow Z$ an isometric embedding. Since $X$ is injective there exists a $1$-Lipschitz extension $\psi: Z\to X$ of $\varphi^{-1}: \varphi\left(X\right) \to X$. Let $U\in\intcurr_m\left(Z\right)$ and $V\in\intcurr_{m+1}\left(Z\right)$ with $\varphi_{\#}T=U+\bdry V$ and observe that $U':= \psi_{\#}U$ and $V':= \psi_{\#}V$ satisfy $T = U' + V'$ and
\begin{equation}
  \mass\left(U'\right)+\mass\left(V'\right) \leq \mass\left(U\right) + \mass\left(V\right).
 \end{equation}
 Since $U$ and $V$ were arbitrary, it follows that $\flatnorm_X\left(T\right)\leq\flatnorm\left(T\right)$.
\end{proof}

\bl\label{lemma-Lipschitz-fillvol} \label{lem-wen-1.2}
Let $X$ and $Y$ be complete metric spaces and let $\varphi: X \to Y$ be a $\lambda$-bi-Lipschitz map.
Then for each
$T\in\intcurr_m\left(X\right)$  we have
$$\flatnorm\left(T\right)\leq\lambda^{m+1}\flatnorm_Y\left(\varphi_\# T\right).$$
\el

\begin{proof}
Let $\iota: X\to l^\infty\left(X\right)$ be the Kuratowski embedding and let $\bar{\varphi}: Y\to l^\infty\left(X\right)$ be a $\lambda$-Lipschitz extension of $\iota\circ\varphi^{-1}$. Given $U\in\intcurr_m\left(Y\right)$ and  $V\in\intcurr_{m+1}\left(Y\right)$ with $\varphi_\#T= U+\bdry V$ then $\iota_\#T=\bar{\varphi}_{\#}U + \bdry\left(\bar{\varphi}_{\#}V\right)$ and thus
\begin{equation}
\flatnorm\left(T\right)= \flatnorm_{l^\infty\left(X\right)}\left(\iota_\# T\right) \le \mass\left(\bar{\varphi}_\#U\right) + \mass\left(\bar{\varphi}_\#V\right)\leq \lambda^{m}\mass\left(U\right) + \lambda^{m+1}\mass\left(V\right).
\end{equation}
Minimizing over all $U$ and $V$ selected as above completes the proof.
\end{proof}


%

\bl  \label{Euclidean-fillvol}  \label{lem-wen-1.3}
  Let $X$ be a complete metric space and $\varphi: X\to\R^N$ a $\lambda$-lipschitz map
  where $\lambda \ge 1$. For $T\in\intcurr_m\left(X\right)$ we have
 \begin{equation}
  \flatnorm\left(T\right)\geq \left(\sqrt{N}\lambda\right)^{-\left(m+1\right)}\flatnorm_{\R^N}\left(\varphi_\#T\right).
 \end{equation}
\el

We illustrate the use of the lemma by a simple example: Let $M$ be an $m$-dimensional oriented submanifold of $\R^N$ of finite volume and finite boundary volume. Endow $M$ with the length metric and call the so defined metric space $X$. Clearly, the inclusion $\varphi: X\to\R^N$ is $1$-Lipschitz. Let $T$  be the integral current in $X$ induced by integration over $M$. The above lemma thus implies
\begin{equation}
 \flatnorm\left(T\right)\geq N^{-\frac{m+1}{2}}\flatnorm_{\R^N}\left(\Lbrack M\Rbrack\right)
\end{equation}
where $\Lbrack M\Rbrack$ is the current in $\R^N$ induced by integration over $M$.

\begin{proof}
\CS{Let $A=\iota\left(X\right) \subset l^\infty\left(X\right)$ where $\iota: X\to l^\infty\left(X\right)$ denotes the Kuratowski embedding. 
 Then $\varphi\circ \iota^{-1}: A \to \R^N$ is a $\lambda$-Lipschitz map.}
 By McShane's extension theorem there exists a $\sqrt{N}\lambda$-Lipschitz extension $\psi: l^\infty\left(X\right)\to\R^N$ of $\varphi\circ\iota^{-1}: A \to \R^N$ \cite{McShane-34}.
 
 Thus, if $U\in\intcurr_m\left(l^\infty\left(X\right)\right)$ and $V\in\intcurr_{m+1}\left(l^\infty\left(X\right)\right)$ are such that $\iota_\# T = U + \bdry V$ then
\be
\varphi_\#T= \psi_\# \iota_\# T= \psi_\# U + \psi_\# \left(\partial V\right)=\psi_\#U + \bdry\left(\psi_\#V\right)
\ee 
and
\begin{equation}
 \flatnorm_{\R^N}\left(\varphi_\#T\right)\leq \mass\left(\psi_\#U\right) + \mass\left(\psi_\#V\right) \leq \left(\sqrt{N} \lambda\right)^{m+1}[\mass\left(U\right) + \mass\left(V\right)].
\end{equation}
We now obtain the claim by minimizing over all $U$ and $V$ and using \lemref{lemma:injective-spaces-flatnorm}.
\end{proof}
 
\CS{ In the following lemma we bound the intrinsic flat distance between an integral
current space and its image under a bi-Lipschitz map.   Recall the total mass
$\nmass\left(T\right)=\mass\left(T\right)+\mass\left(\partial T\right)$ [Definition~\ref{def-total-mass}].}

\bl \label{Lip-In} 
Let $X$ and $Y$ be complete metric spaces and let $\varphi: X \to Y$ be a $\lambda$-bi-Lipschitz map for some $\lambda>1$.
Then for $T\in\intcurr_m\left(X\right)$ viewed as an integral current space
$T=\left(\set\left(T\right), d_X,T\right)$ and $\varphi_\#T=\left(\set\left(\varphi_\#T\right), d_Y, \varphi_\# T\right)$
we have
\begin{equation}
d_{\mathcal{F}}\left(T, \varphi_\#T\right) \le 
k_{\lambda,m}
 \max\{\diam\left(\spt T\right), \diam\left(\varphi\left(\spt T\right)\right)\}\,\nmass\left(T\right)
\end{equation}
where $k_{\lambda,m}:=\frac{1}{2}\left(m+1\right)\lambda^{m-1}\left(\lambda - 1\right)$.
\el

 
\begin{proof}
 Let $C_0:= \spt T$, $C_1:= \varphi\left(C_0\right)$, and denote by $d_0$ and $d_1$ the metric on $C_0$ and $C_1$, respectively. Let $D:= \max\{\diam C_0, \diam C_1\}$. Let $d_Z$ be the metric on $Z:= C_0\sqcup C_1$ which extends $d_0$ on $C_0$ and $d_1$ on $C_1$ and which satisfies
\begin{equation}
 d_Z\left(x, x'\right)= \inf\{d_0\left(x,\bar{x}\right)+d_1\left(\varphi\left(\bar{x}\right), x'\right): \bar{x}\in C_0\} + \lambda'D,
\end{equation}
whenever $x\in C_0$ and $x'\in C_1$ and where $\lambda':= \frac{1}{2}\lambda^{-1}\left(\lambda - 1\right)$.
It is not difficult to verify that $d_Z$ is in fact a metric.

Let $\varphi_i: C_i\to l^\infty\left(Z\right)$ 
be the composition of the inclusion map with the Kuratowski embedding. 
Note that these are isometric embeddings. Define a map
$\psi: [0,1]\times C_0\to l^\infty\left(Z\right)$ using linear interpolation:
\begin{equation} \label{psi-def-1}
\psi\left(t,x\right):= \left(1-t\right)\varphi_0\left(x\right)+t\varphi_1\left(\varphi\left(x\right)\right).
\end{equation}
 It is then clear that 
 \be \label{eqn-lip1}
 \Lip\left(\psi\left(\cdot,x\right)\right)= \lambda' D \quad \forall x\in C_0
 \quad \textrm{and} \quad \Lip\left(\psi\left(t,\cdot\right)\right)\leq\lambda \quad \forall t\in[0,1].
 \ee  
We now apply the linear interpolation to define two currents,
\be
\begin{split}
U & :=\psi_\#\left([0,1]\times \bdry T\right)\in  \intcurr_m\left(l^\infty\left(Z\right)\right)
\textrm{ and } \\
 V & := \psi_{\#}\left([0,1]\times T\right) \in \intcurr_{m+1}\left(l^\infty\left(Z\right)\right),
\end{split}
 \ee
  where the product of currents is defined as in \cite{Wenger-isoper} Defn 2.8.   
  By Theorem 2.9 in \cite{Wenger-isoper},
\begin{equation}
 \bdry\left([0,1]\times T\right) = [1]\times T - [0]\times T - [0,1]\times \bdry T.
\end{equation}
So if we push forward by $\psi$ applying (\ref{psi-def-1})
we get
\begin{eqnarray*}
\bdry V &=& \psi_\# \left([1]\times T\right) -\psi_\#\left( [0]\times T\right) -\psi_\#\left( [0,1]\times \bdry T\right)\\
&=&\varphi_{1\#}\varphi_{0\#}T - \varphi_{0\#}T - U.
\end{eqnarray*}
Since $\varphi_0$ is an isometric embedding  we have
\begin{equation}
d_{\mathcal{F}}\left(\varphi_\# T, T\right) \le d_F^Z\left(\varphi_{0\#}\varphi_\# T, \varphi_{0\#}T\right) \le
\mass\left(U\right) + \mass\left(V\right).
\end{equation}
By Proposition 2.10 in \cite{Wenger-isoper}, we have
\begin{equation} \label{mass-est}
\mass\left(U\right) + \mass\left(V\right) \le m\lambda^{m-1}\lambda' D\, \mass\left(\bdry T\right) + \left(m+1\right)\lambda^m\lambda' D\, \mass\left(T\right).
\end{equation}
Thus we obtain the lemma.
\end{proof}

\subsection{Lipschitz and Smooth Convergence}\label{subsect-lip-conv}


Over the years various notions
of smooth convergence and compactness theorems have been proven.  We recommend
Petersen's textbook \cite{Petersen-text}  for a survey of these various notions of
convergence progressing from $C^{1,\alpha}$ to $C^\infty$ convergence.  All these notions
involve maps $f_j:M_j\to M_\infty$ and the push forward of the metric tensors $g_j$
from $M_j$ to positive definite tensors $f_{j*}g_j$ on $M$ and then studying the
appropriate convergence of these tensors to $g$.  

A weaker notion than these
notions is Gromov's Lipschitz convergence introduced in 1979
which does not require one to  examine
the metric tensors but rather just the distances on the spaces \cite{Gromov-metric}[Defn 1.1 and Defn 1.3].  
In this section we will briefly review Lipschitz convergence and prove that
whenever a sequence of manifolds converges in the Lipschitz sense then it
converges in the intrinsic flat sense [Theorem~\ref{thm-lip-to-flat}].  As a consequence,
$C^{1,\alpha}$ convergence and all other smooth forms of convergence are
stronger than intrinsic flat convergence as well.  That is, any sequence of manifolds
converging in the smooth sense to a manifold, converges in the intrinsic flat sense
as well.

\begin{defn}[Gromov] \label{defn-lip-conv} 
The Lipschitz distance between two metric spaces $X,Y$, is defined as
\be
d_L\left(X,Y\right)= \inf \{ \,|\log \dil \left(f\right)| + |\log \dil\left( f^{-1} \right)|:   \textrm{ bi-Lipschitz }f:X\to Y \, \}
\ee
where 
\be
\dil\left(f\right)=\sup \left\{ \frac {d\left(f\left(x\right),f\left(y\right)\right)}{d\left(x,y\right)}: \,\, x,y\in X \textrm{ s.t. } x\neq y\, \right\}.
\ee
When there is no bi-Lipschitz map from $X$ to $Y$ one says $d_L\left(X,Y\right)=\infty$.
\end{defn}

Note that if a sequence of orientable Riemannian manifolds $M_j$ converges in the Lipschitz
sense to a metric space $M$, then $M$ is bi-Lipschitz to an
orientable  Riemannian manifold.
In particular $M$ is an orientable  Lipschitz manifold and by Remarks~\ref{rmrk-biLip-matching}
and~\ref{Lip-mani-structure}, it has a natural structure as an integral current space
determined completely by choosing an orientation on the space.

\begin{thm} \label{thm-lip-to-flat}
If $M_j$ are orientable Lipschitz manifolds converging in the Lipschitz sense
to an oriented Lipschitz manifold $M$, then after matching orientations of
the $M_j$ to the limit manifold, $M$,  the oriented Lipschitz manifolds $\Lbrack M_j\Rbrack$ 
converge in the intrinsic flat sense to $\Lbrack M \Rbrack$.

In fact, whenever $M$ and $N$ are Lipschitz manifolds with matching orientations,
\begin{equation} \label{mani-flat-lip-1}
d_{\mathcal{F}}\left(M, N\right) < k_{\lambda,m} \max \{\diam\left(M\right), \diam\left(N\right)   \} \,   ( \vol\left(M\right) +\vol(\partial M))
\end{equation}
where $k_{\lambda,m}:=\frac{1}{2}\left(m+1\right)\lambda^{m-1}\left(\lambda - 1\right)$
and where 
$\lambda=e^{d_L\left(M,N\right)}$.  
\end{thm}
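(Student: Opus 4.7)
The plan is to derive both claims directly from Lemma~\ref{Lip-In}, so I focus on the bound \eqref{mani-flat-lip-1} first. Given oriented Lipschitz manifolds $M$ and $N$ with matching orientations and $\lambda=e^{d_L(M,N)}$, the first step is to produce, for each $\varepsilon>0$, a bi-Lipschitz homeomorphism $f\colon M\to N$ with $\max(\dil(f),\dil(f^{-1}))\le \lambda+\varepsilon$. This uses the observation that for any bi-Lipschitz bijection at least one of $\dil(f),\dil(f^{-1})$ is $\ge 1$, so
$$\log\max(\dil(f),\dil(f^{-1}))\le |\log\dil(f)|+|\log\dil(f^{-1})|;$$
combined with the definition of $d_L$ as an infimum this yields the desired near-optimal $f$.

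Since $f$ is a bi-Lipschitz homeomorphism between oriented Lipschitz manifolds with matching orientations, Remark~\ref{rmrk-biLip-matching} gives $f_\#\Lbrack M\Rbrack=\Lbrack N\Rbrack$. Applying Lemma~\ref{Lip-In} to $T=\Lbrack M\Rbrack$ and $\varphi=f$, and noting that $\spt\Lbrack M\Rbrack=\bar M$ has diameter $\diam(M)$, that $f(\bar M)=\bar N$ has diameter $\diam(N)$, and that $\nmass(\Lbrack M\Rbrack)=\vol(M)+\vol(\partial M)$, we obtain
$$d_{\mathcal F}(M,N)\le k_{\lambda+\varepsilon,m}\max\{\diam(M),\diam(N)\}\,(\vol(M)+\vol(\partial M)).$$
Since $\lambda\mapsto k_{\lambda,m}$ is continuous, letting $\varepsilon\to 0$ gives the bound \eqref{mani-flat-lip-1}; the strict inequality in the statement reflects that the infimum defining $d_L$ need not be attained.

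For the convergence claim, if $M_j\to M$ in the Lipschitz sense then $\lambda_j:=e^{d_L(M_j,M)}\to 1$. Near-optimal bi-Lipschitz maps $g_j\colon M_j\to M$ exist and, by Remark~\ref{rmrk-biLip-matching}, are either current preserving or current reversing; orienting each $M_j$ appropriately matches its orientation with that of $M$. The dilation bounds force $\diam(M_j)\le\lambda_j\diam(M)$, $\vol(M_j)\le\lambda_j^m\vol(M)$, and $\vol(\partial M_j)\le\lambda_j^{m-1}\vol(\partial M)$, all uniformly bounded. Since $k_{\lambda_j,m}=\tfrac{1}{2}(m+1)\lambda_j^{m-1}(\lambda_j-1)\to 0$, inequality \eqref{mani-flat-lip-1} applied to $M_j$ and $M$ forces $d_{\mathcal F}(M_j,M)\to 0$.

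The main subtlety lies in the orientation matching in Remark~\ref{rmrk-biLip-matching}: a bi-Lipschitz map between oriented Lipschitz manifolds is either current preserving or current reversing, so one must rule out the reversing case. For connected manifolds with matching orientations this is automatic; for disconnected manifolds one applies the remark component by component, reorienting components of $M_j$ as needed. Beyond this technical point, the proof is essentially a direct application of Lemma~\ref{Lip-In} combined with the elementary dilation estimates above.
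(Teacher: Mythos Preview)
Your proof is correct and follows essentially the same approach as the paper's: both invoke Remark~\ref{rmrk-biLip-matching} to get $f_\#\Lbrack M\Rbrack=\Lbrack N\Rbrack$, apply Lemma~\ref{Lip-In} to a near-optimal bi-Lipschitz map, and then use the definition of $d_L$ to pass to $\lambda=e^{d_L(M,N)}$. Your $\varepsilon$-argument and the paper's inequality $\log\lambda\le |\log\dil(\psi)|+|\log\dil(\psi^{-1})|$ are two phrasings of the same limiting step, and your added remarks on the strict inequality and on handling disconnected manifolds component by component are reasonable clarifications.
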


Gromov has proved that Lipschitz convergence implies
Gromov-Hausdorff convergence \cite{Gromov-metric}[Prop 3.7].  So that in
this setting the Gromov-Hausdorff limits and intrinsic flat limits agree.
Gromov's proof applies to any sequence of metric spaces.  We cannot
extend our theorem to arbitrary integral current spaces because, in general,
one cannot just reverse orientations to match the orientations between
a pair of bi-Lipschitz homeomorphic integral current spaces.



\begin{proof}

Recall Remarks~\ref{rmrk-biLip-matching} and~\ref{Lip-mani-structure}, 
that when   $\psi: M^m\to N^m$ is a  bi-Lipschitz homeomorphism
between connected oriented Lipschitz
manifolds then $\psi_\#\Lbrack M\Rbrack = \pm\Lbrack N\Rbrack$.
Once the orientations have been fixed to match, the sign becomes positive.

Lemma~\ref{Lip-In} implies that
\begin{equation} \label{mani-flat-lip-1}
d_{\mathcal{F}}\left(M, N\right) \le \frac{1}{2}\left(m+1\right)\lambda^{m-1}\left(\lambda - 1\right) \max\{\diam\left(M\right), \diam\left(N\right)\}\,(\vol\left(M\right)+ \vol(\partial M))
\end{equation}
where $\lambda>1$ is the bi-Lipschitz constant for $\psi$.
Note further that
\be
\log \lambda \le |\log dil\left(\psi\right)|+|\log dil\left(\psi^{-1}\right)|\le 2\log \lambda.
\ee
Taking the infimum of this sum over all $\psi$
and applying (\ref{mani-flat-lip-1}), we see that 
\begin{equation} \label{mani-flat-lip-2}
d_{\mathcal{F}}\left(M, N\right) \le k_{\lambda.m}
\max\{\diam\left(M\right), \diam\left(N\right)\}\,(\vol\left(M\right)+ \vol(\partial M))
\end{equation}
where $\lambda=e^{d_L\left(M,N\right)}$.  

Now whenever a sequence of
Lipschitz manifolds, $M_j$, converges in the Lipschitz sense 
to a Lipschitz manifold, $M$,  then
\be
\lambda_j= e^{d_L\left(M_j,M\right)} \to 1
\textrm{ and  }\diam\left(M_j\right) \to \diam\left(M\right).
\ee  
Thus $d_{\mathcal{F}}\left(M_j, M\right)$ is less than or equal to
\begin{equation} \label{mani-flat-lip-3}
k_{\lambda_j,m} \max\left\{\diam\left(M_j\right), \diam\left(M\right)\right\}\,\left( \vol\left(M\right)_{\,}^{\,}+\vol(\partial M)\right)
\end{equation}
which converges to $0$ as $j\to \infty$.
\end{proof}



\appendix
\section{\bf{Examples by C. Sormani}} \label{sect-examples}

In this section we present proofs of all the examples referred
to throughout the paper.
In order to prove our examples converge in the intrinsic flat sense,
we need convenient ways to isometrically embed our Riemannian
manifolds into a common metric space, $Z$.   In most examples we explicitly
construct $Z$.  Two major techniques we develop are the {\em bridge
construction} [Lemma~\ref{lem-bridge-Z} and Proposition~\ref{prop-bridge-filling}]
and the {\em pipe filling construction}  [Remark~\ref{rmrk-pipe-filling}].  
In all examples in this section, the common metric space $Z$
is an integral current spaces whose 
tangent spaces are Euclidean almost everywhere so that the weighted volume
and mass agree [Lemma~\ref{lemma-weight} and Remark~\ref{rmrk-lambda}].  We also have multiplicity one (so that the volume and
mass agree) enabling us to use volumes to estimate the intrinsic flat distance.

\subsection{Isometric Embeddings}\label{subsect-isom-embed}

Recall that a metric space is a geodesic or length space if the metric is determined
by taking an infimum over the lengths of all rectifiable curves.   In Riemannian
manifolds, the lengths of curves are defined by integrating the curve using
the metric tensor.
Given a connected subset, $X$,
of a metric space, $Z$, one has the restricted metric, $d_Z$, on $X$ as well as an
induced length metric on $X$, $d_X$, which is found by taking the infimum of
all lengths of rectifiable curves lying within $X$ where the lengths of the curves are computed
locally using $d_Z$:
\be
L\left(C\right) = \sup_{0=t_0<t_1<\cdots t_k=1} \sum_{i=1}^k d_Z\left(c\left(t_i\right), c\left(t_{i-1}\right)\right).
\ee
When one uses this induced length metric on $X$, then
$X$ may no longer isometrically embed into $Z$.

In our first lemma, we describe a process of attaching one geodesic metric space,
$Y$, to another metric space, $Z$, along a closed subset, 
$X\subset Z$, to form a metric space, $Z'$, into
which $Z$ isometrically embeds.  This lemma is one sided, as $Y$ need not isometrically
embed into $Z'$ [see Figure~\ref{fig-hemi-bump}].  

\begin{lem} \label{lem-local-filling}  
Let $\left(Z, d_Z\right)$ and $\left(Y, d_Y\right)$ be geodesic metric spaces and let $X \subset Z$
be a closed subset.  
Suppose $\psi:\left(X,d_X\right)\to \left(Y, d_Y\right)$ is an isometric embedding. 

Then we can create a metric space $Z'=Z\disjointunion Y/ \sim \,\,$
where $z\sim y$ iff $z\in X\subset Z$ and $y=\psi\left(z\right)$.  We endow $Z'$ with the induced
length metric where lengths of curves are measured by $d_Z$ between points in $Z$
and by $d_Y$ between points in $Y$.  The natural map $\varphi_Z: Z \to Z'$
is an isometric embedding.

If we assume further that $Y\setminus \psi\left(X\right)$ is locally convex then
 the natural map $f: Y \to Z'$ is a bijection onto its image which is a local
isometry on $Y\setminus \psi\left(X\right)$.
\end{lem}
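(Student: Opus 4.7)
The metric on $Z'$ is defined as the infimum of a "mixed length" functional over all continuous curves in $Z'$, where the length of a curve is computed by summing $d_Z$-lengths of its sub-arcs contained in $\varphi_Z(Z)$ and $d_Y$-lengths of its sub-arcs contained in $f(Y)$, with the two types of sub-arcs meeting at the glued closed set $X \sim \psi(X)$. Symmetry and the triangle inequality are formal. The two nontrivial claims are (a) that this formula gives a genuine (positive definite) metric, and (b) that $\varphi_Z$ and $f$ have the asserted isometric properties; (a) will fall out of the proof of (b).

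For the isometric embedding $\varphi_Z$, fix $z_1, z_2 \in Z$. The bound $d_{Z'}(\varphi_Z(z_1), \varphi_Z(z_2)) \le d_Z(z_1, z_2)$ is trivial since every $Z$-curve lifts to a $Z'$-curve of the same length. For the reverse bound, take any rectifiable curve $\gamma$ in $Z'$ from $\varphi_Z(z_1)$ to $\varphi_Z(z_2)$. Since $\varphi_Z(X) = f(\psi(X))$ is closed in $Z'$, the $\gamma$-preimage of its complement decomposes into countably many open intervals, each mapping into $\varphi_Z(Z\setminus X)$ or into $f(Y\setminus \psi(X))$, with endpoints on the glued set. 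For each $Y$-sub-arc $\alpha$ running from $\psi(x)$ to $\psi(x')$, the key chain of inequalities
\begin{equation*}
L_Y(\alpha) \;\ge\; d_Y(\psi(x), \psi(x')) \;=\; d_X(x, x') \;\ge\; d_Z(x, x')
\end{equation*}
holds: the equality uses that $\psi$ is an isometric embedding from the \emph{length} metric $(X, d_X)$ into $(Y, d_Y)$, and the final inequality uses that $d_X \ge d_Z|_X$ since any $X$-curve is a $Z$-curve of the same $d_Z$-length. Since $Z$ is geodesic, each such $Y$-sub-arc can be replaced by a $Z$-curve from $x$ to $x'$ of length arbitrarily close to $d_Z(x, x')$, producing a $Z$-curve from $z_1$ to $z_2$ whose total length does not exceed that of $\gamma$. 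Taking infima gives $d_Z(z_1, z_2) \le d_{Z'}(\varphi_Z(z_1), \varphi_Z(z_2))$, and equality follows.

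Injectivity of $f: Y \to Z'$ is immediate from injectivity of $\psi$: the equivalence relation identifies each $\psi(x) \in \psi(X)$ only with its unique preimage $x \in X$ and produces no identifications among distinct points of $Y$. For the local isometry property at $y_0 \in Y \setminus \psi(X)$, choose $\varepsilon > 0$ small enough that $B_Y(y_0, 3\varepsilon)$ is simultaneously disjoint from $\psi(X)$ and convex in $Y$. For any $y_1, y_2 \in B_Y(y_0, \varepsilon)$, the $Y$-geodesic between them has length $d_Y(y_1, y_2) < 2\varepsilon$ and remains inside the convex ball, hence inside $Y \setminus \psi(X)$; viewed as a $Z'$-curve this yields $d_{Z'}(f(y_1), f(y_2)) \le d_Y(y_1, y_2)$. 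Conversely, any $Z'$-curve that enters $\varphi_Z(Z)$ must cross $\psi(X)$ and therefore accumulates $Y$-length at least $2\, d_Y(y_1, \psi(X)) > 2\varepsilon$; so only curves staying in $f(Y)$ are relevant for distances below $2\varepsilon$, and for such curves the convexity of $B_Y(y_0, 3\varepsilon)$ gives $d_{Z'}(f(y_1), f(y_2)) \ge d_Y(y_1, y_2)$.

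The main technical obstacle is the rigorous handling of the sub-arc decomposition of $\gamma$ along the closed glued set, since the preimage of that set in $[0,1]$ may be an arbitrary closed subset, forcing an argument with countably many complementary intervals and an approximation step. This is a standard exhaustion by maximal open intervals, but the careful bookkeeping of "mixed length" additivity under such a decomposition is where the real care lies; once this is set up the length inequalities above give the conclusion directly.
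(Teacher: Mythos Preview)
Your proposal is correct and follows essentially the same approach as the paper's proof: both decompose a competing curve in $Z'$ into sub-arcs along the glued set, replace each $Y$-excursion between points $\psi(x),\psi(x')$ by a $Z$-curve of no greater length, and handle the local isometry on $Y\setminus\psi(X)$ by a convex-ball argument showing that any short curve leaving $f(Y)$ would have to be too long. Your write-up is in fact more explicit than the paper's at the key step, spelling out the chain $L_Y(\alpha)\ge d_Y(\psi(x),\psi(x'))=d_X(x,x')\ge d_Z(x,x')$ where the paper simply asserts that the shortest replacement curve ``lies in $\varphi(X)$''.
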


We will say that $Z'$ is created by {\em attaching} $Y$ to $Z$
along $X$.  Note that  $f:Y \to Z'$ need not be an isometry.
This can be seen, for example, when $Z$ is the flat Euclidean plane, $X$ is the unit circle
in $Z$ and $Y$ is a hemisphere.  See Figure~\ref{fig-hemi-bump}.

\begin{figure}[h] 
   \centering
   \includegraphics[width=4.5in]{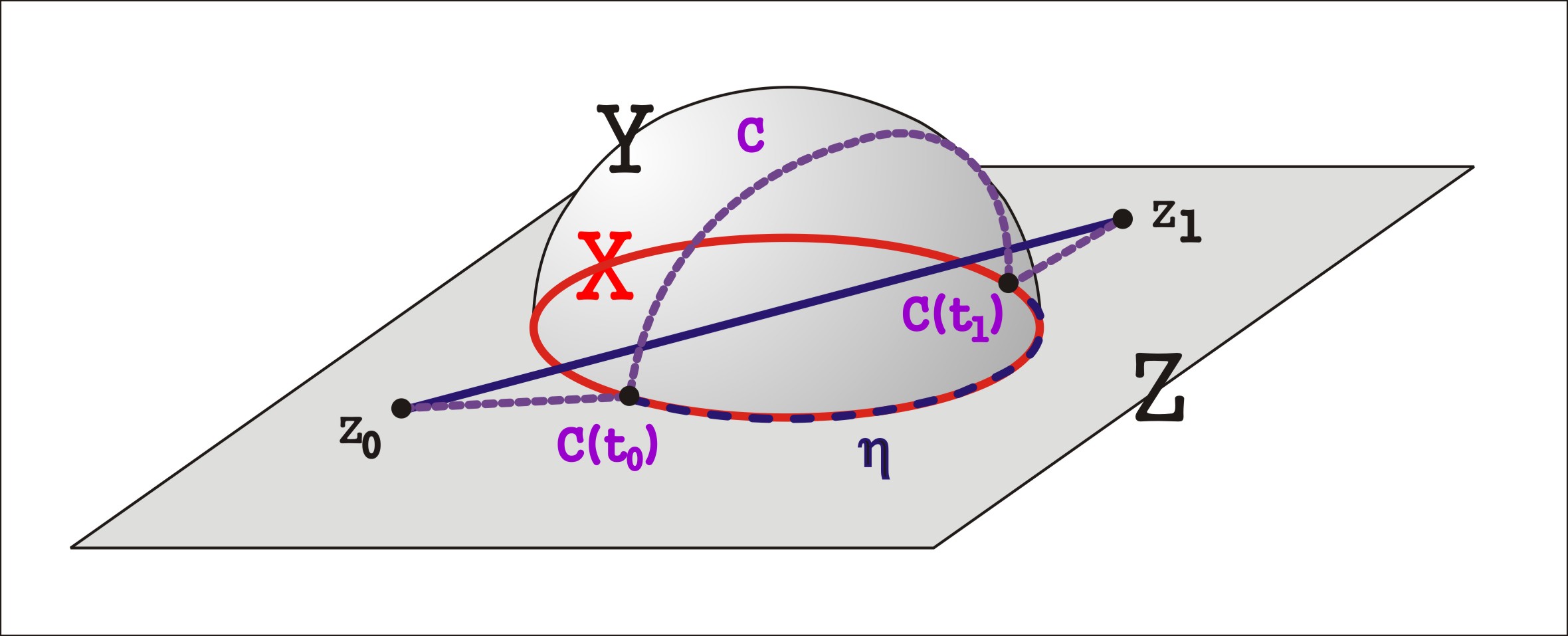} 
   \caption{Lemma~\ref{lem-local-filling} .}
   \label{fig-hemi-bump}
\end{figure}

\begin{proof}
First we show $\varphi_Z$ is an isometry. 
Let $z_0, z_1 \in Z$, so $d_Z\left(z_0, z_1\right)=L_Y\left(\gamma\right)$ where $\gamma:[0,1]\to Z$,
$\gamma\left(0\right)=z_0$ and $\gamma\left(1\right)=z_1$.   Since $\varphi_Z\circ \gamma$ runs
from $\varphi_Z\left(z_0\right)$ to $\varphi_Z\left(z_1\right)$ and has the same length, we know 
$d_{Z'}\left(\varphi_Z\left(z_0\right),\varphi_Z\left(z_1\right)\right) \le d_Z\left(z_0,z_1\right)$.  
Now suppose there is a shorter curve $C:[0,1]\to Z'$ running from
 $\varphi_Z\left(z_0\right)$ to $\varphi_Z\left(z_1\right)$.  If $C$ were the image of a curve in $Z$ under
 $\varphi_Z$, then $C$ would not be shorter than $\gamma$, so $C$ passes through
 $\varphi_Z\left(X\right)$ into $f\left(Y\right) \subset Z'$.  
 
 We claim there is
 a curve $C': [0,1] \to \varphi_Z\left(Z\right)$ running from $C\left(0\right)$ to $C\left(1\right)$
 with $L\left(C'\right)\le L\left(C\right)$, contradicting the fact that $\gamma$ is the shortest such curve.
 
 Since $Z\setminus X$ is open, $U=Z'\setminus \varphi_Z\left(Z\right)$ is open, and $C^{-1}\left(U\right)$
 is a collection of open intervals in $[0,1]$.  Let $t_0, t_1$ be any endpoints of a pair of
 such intervals so that $C:[t_0,t_1]\to f\left(Y\right)\subset Z'$ and $C\left(t_0\right), C\left(t_1\right)\in \varphi_Z\left(X\right) \subset Z$.
 Since $X$ isometrically embeds into $Y$, the shortest curve $\eta$ from
 $C\left(t_0\right)$ to $C\left(t_1\right)$ lies in $\varphi\left(X\right)$.  Thus we can replace this segment of $C$ with $\eta$
 without increasing the length.  We do this for all segments passing into $f\left(Y\right)$ and
 we have created $C'$ proving our claim.  Thus $\varphi_Z$ is an isometric embedding.
 
 Assuming now that $Y \setminus \psi(X)$ is locally convex, we know that
  $\forall p\in Y\setminus \psi\left(X\right)$ there exists a convex ball
$B_p\left(r_p\right)$.  We claim $f$ is an isometry on $B_p\left(r_p/2\right)$.  If $y_1, y_2 \in B_p\left(r_p/2\right)$
then the shortest curve between them, $\gamma$ has $L\left(\gamma\right)< r_p$ 
and lies in $B_p\left(r_p\right)$.  If there were a shorter curve, $C$, between $f\left(y_1\right)$ and
$f\left(y_2\right)$ in $Z$, then it could not be restricted to $f\left(Y\right)$ and in particular it would 
have to be long enough to reach $\partial B_p\left(r_p\right)$ and would thus have length
$L\left(C\right)\ge 2\left(r_p/2\right)$ which is a contradiction.
 \end{proof}
 
When we wish to isometrically embed two spaces with isometric subdomains into a common space $Z'$, 
we may attach them using an isometric product as a bridge between them.
Recall that the isometric product $Z\times [a,b]$ of a geodesic space, $Z$,
has a metric defined by
\be \label{defn-isom-product}
d\left(\left(z_1,s_1\right), \left(z_2,s_2\right)\right):=\sqrt{ \left(d_Z\left(z_1,z_2\right)\right)^2 + \left(s_1-s_2\right)^2},
\ee
and it is a geodesic metric space with this metric and a geodesic, $\gamma$,
projects to a geodesic, $\pi \circ \gamma$, in $Z$.  

\begin{lem}\label{lem-bridge-Z}
Suppose there exists an isometry, $\psi:U_1\subset M_1 \to U_2\subset M_2$,
between smooth connected open domains, $U_i$, in a pair of geodesic spaces, $M_i$,
each endowed with their own induced length metrics, $d_{U_i}$.
Let  
\be \label{choice-of-h}
h_i=\sqrt{\diam_{M_i}\left(\partial U_i\right)\left(2\diam_{M_i} \left(U_i\right)+ \diam_{M_i}\left(\partial U_i\right)\right)}.
\ee
Then there exist isometric embeddings $\varphi_i$ from each $M_i$ into a common
complete geodesic metric space,
\be
Z= M_1 \disjointunion \left(U_1 \times [-h_1, h_2] \right) \disjointunion M_2 \, / \sim \,,
\ee
where $z_1 \sim z_2$ if and only if  one of the following holds:
\be
z_1 \in U_1 \,\,\,\ and  \,\,\, z_2=\left(z_1,-h_1\right)\in U_1\times [-h_1,h_2] 
\ee
or visa versa or
\be
z_1 \in U_2 \,\,\,  and \,\,\,z_2=\left(\psi\left(z_1\right),h_2\right)\in U_1\times [-h_1,h_2] 
\ee
or visa versa.  The length metric on $Z$ is computed by taking the
lengths of segments from each region using $d_{M_i}$
and the product metric on $U_1\times [-h_1, h_2]$.  The isometries, $\varphi_i$
are mapped bijectively onto to the copies of $M_i$ lying in $Z$.
\end{lem}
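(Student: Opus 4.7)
My plan is to construct $Z$ as the quotient of the disjoint union $M_1 \sqcup (U_1 \times [-h_1,h_2]) \sqcup M_2$ modulo the stated identifications, equipped with the length metric defined piecewise (using $d_{M_i}$ on the $M_i$-pieces and the product metric on the bridge), and then verify that each natural map $\varphi_i: M_i \to Z$ is an isometric embedding. The quotient is well-defined as a metric space because the identifications glue along closed subsets on which the relevant metrics agree: $(U_i, d_{U_i})$ is isometric to the corresponding face of the bridge under the product metric. Completeness of $Z$ follows from completeness of each piece together with the bridge being bounded, and the fact that each piece is a geodesic space implies $Z$ is as well. The inequality $d_Z(\varphi_i(p),\varphi_i(q)) \le d_{M_i}(p,q)$ is automatic, since any rectifiable curve in $M_i$ descends to a curve in $Z$ of the same length.

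The content of the lemma is the reverse inequality. I would use a standard ``replace excursions'' strategy: given $p,q \in M_1$ and a rectifiable curve $\gamma$ in $Z$ joining $\varphi_1(p)$ to $\varphi_1(q)$, decompose $\gamma$ into maximal segments lying in $\varphi_1(M_1)$ separated by \emph{excursions} which enter the bridge at some $a \in U_1$, possibly continue into $M_2$, and return to the bridge at some $b \in U_1$. Provided each excursion of length $\ell$ satisfies $\ell \ge d_{M_1}(a,b)$, replacing each excursion by an $M_1$-geodesic yields a curve in $M_1$ of length at most $L(\gamma)$, giving $d_{M_1}(p,q) \le L(\gamma)$, and the desired inequality follows by minimizing over $\gamma$. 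The same argument with the roles of $M_1$ and $M_2$ swapped handles $\varphi_2$.

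The crux, and the raison d'\^etre of the specific formula defining $h_i$, is the excursion inequality. If an excursion never reaches the top face $U_1 \times \{h_2\}$, projecting onto the $U_1$-factor produces a curve in $(U_1, d_{U_1})$ of length $\ge d_{U_1}(a,b) \ge d_{M_1}(a,b)$, so this case is immediate. When the excursion does cross to $M_2$ at interior points $a',b' \in U_1$ of the top face, the bridge product metric gives a lower bound
\begin{equation*}
\sqrt{(h_1+h_2)^2 + d_{U_1}(a,a')^2} \;+\; d_{M_2}(\psi(a'),\psi(b')) \;+\; \sqrt{(h_1+h_2)^2 + d_{U_1}(b',b)^2}.
\end{equation*}
To match this against $d_{M_1}(a,b)$, one uses the Pythagorean identity encoded by the defining formula,
\begin{equation*}
h_i^2 + \diam_{M_i}(U_i)^2 \;=\; \bigl(\diam_{M_i}(\partial U_i) + \diam_{M_i}(U_i)\bigr)^2,
\end{equation*}
which says that a bridge trajectory with vertical span $h_i$ and horizontal span $\diam_{M_i}(U_i)$ is exactly $\diam_{M_i}(\partial U_i)$ longer than its horizontal component. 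This is precisely the deficit that must be absorbed, since by a triangle inequality through $\partial U_1$ one has $d_{M_1}(a,b) \le d_{M_1}(a,\partial U_1) + \diam_{M_1}(\partial U_1) + d_{M_1}(\partial U_1,b)$.

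The main obstacle is the subcase in which the $M_2$-portion of the excursion exits $U_2$ and exploits a shortcut through $M_2 \setminus U_2$. Here one uses that any such path has length at least $d_{U_1}(a',\partial U_1) + d_{U_1}(b',\partial U_1)$, since the parts of the path lying inside $U_2$ must reach $\partial U_2$ (and on these parts $d_{M_2}$ agrees with $d_{U_2}$, which equals $d_{U_1}$ via $\psi$). Combining this lower bound with the bridge Pythagoras estimates above and the triangle inequality through $\partial U_1$ reduces everything to the algebraic inequality $\sqrt{h_i^2 + x^2} \ge x + \diam_{M_i}(\partial U_i)$ for $x \le \diam_{M_i}(U_i)$, which is equivalent to the defining formula for $h_i$. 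This closes the excursion inequality in all cases and completes the proof.
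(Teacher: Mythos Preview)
Your proposal is correct and follows essentially the same strategy as the paper: reduce to showing that any excursion of a $Z$-curve away from $\varphi_1(M_1)$ is no shorter than the corresponding $M_1$-geodesic, handle the bridge-only case by projection, and in the remaining case exploit the Pythagorean identity hidden in the formula for $h_i$ together with the triangle inequality through $\partial U_1$. The paper streamlines the decomposition by splitting the curve only at its first and last entry into $M_2\setminus U_2$ (so the split points $x'',y''$ automatically lie in $\partial U_1$ and the middle piece can simply be discarded), which absorbs your ``main obstacle'' subcase into the basic estimate rather than treating it separately; your finer decomposition into excursions and top-face crossings works but does a bit more bookkeeping to reach the same algebraic inequality.
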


We will say that we have joined $M_1$ and $M_2$ with the bridge $U_1\times [-h_1,h_2]$
and refer to the $h_i$ as the heights of the bridge.    See Figure~\ref{fig-isom-ex}.

\begin{figure}[h] 
   \centering
   \includegraphics[width=4.7in]{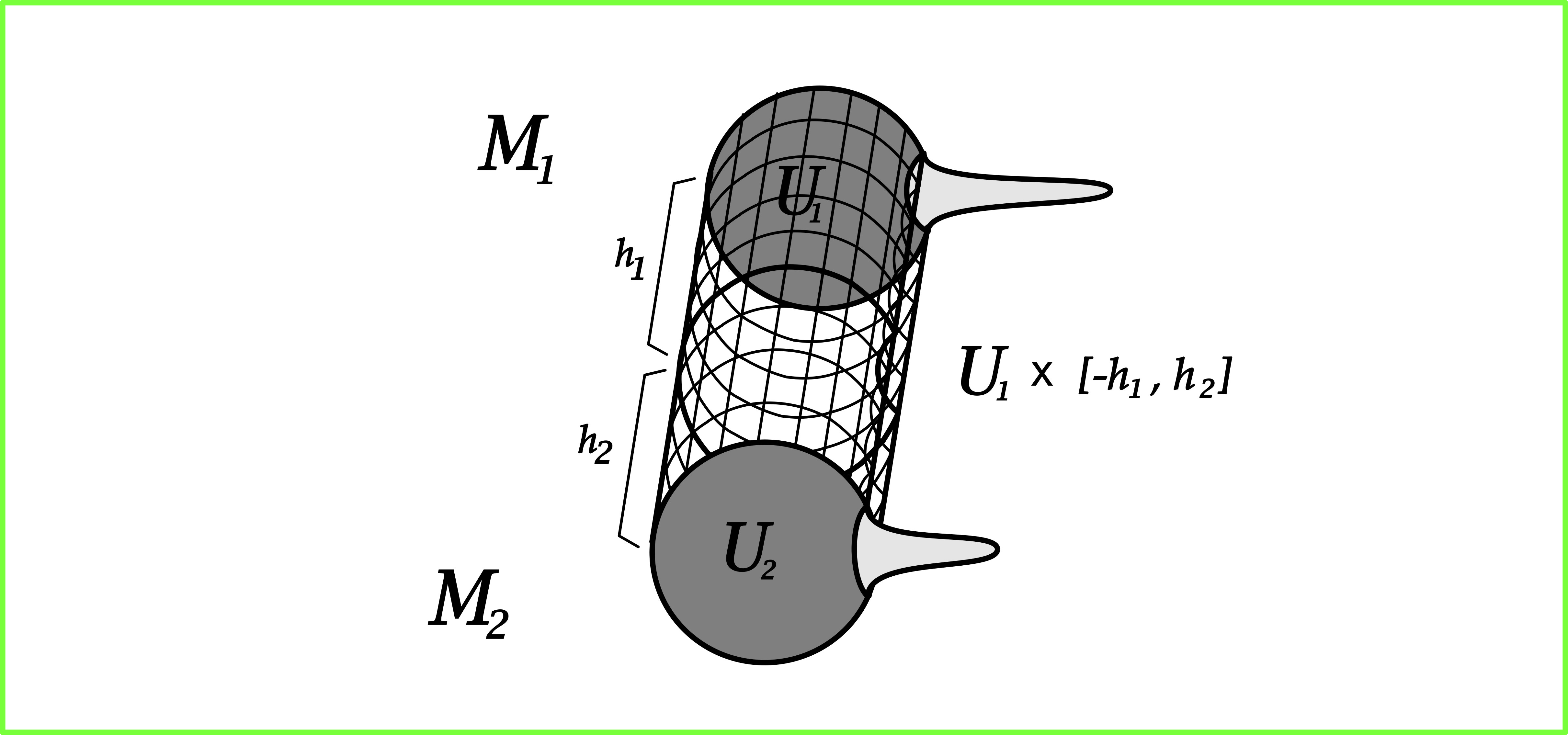} 
   \caption{The Bridge Construction [Lemma~\ref{lem-bridge-Z}].}
   \label{fig-isom-ex}
\end{figure}

\begin{proof}
Suppose $x,y\in M_1$, then there exists a geodesic $\gamma$ running
from $x$ to $y$ achieving the length between them, and clearly $\varphi_1\circ \gamma$
has the same length, so $d_{M_1}\left(x,y\right) \ge d_Z\left(\varphi_1\left(x\right),\varphi_1\left(y\right)\right)$.  Suppose on the contrary that $\varphi_1$ is not an 
isometric embedding.  So there is a curve $c:[0,1]\to Z$ 
running from $c\left(0\right)=\varphi_1\left(x\right)$ to $c\left(1\right)=\varphi_1\left(y\right)$ which is shorter than any curve running
from $x$ to $y$ in $M_1$.

If the image of $c$ lies in $M_1 \disjointunion \left(U_1 \times [-h_1, h_2] \right)\subset Z$, then the projection
of $c$ to $M_1$, $\pi\circ c$ would be shorter than $c$ and lie in $M_1$ and we would
have a contradiction.  Thus $c$ must pass into $M_2\setminus U_2 \subset Z$.

We divide $c$ into parts, $c_1$ runs from $\varphi_1\left(x\right)$ to $x'\in \partial \left(M_2\setminus U_2\right)$ 
and $c_3$ runs from a point $y'\in \partial \left(M_2\setminus U_2\right)$ 
to $\varphi\left(y\right)$ and $c_2$ lies between these.   Note that the projections
$\pi\left(x'\right)=\varphi_1\left(x"\right)$ and $\pi\left(y'\right)=\varphi_2\left(y"\right)$ where $x", y" \subset \partial U_1$.
Then
\begin{eqnarray}
\,\, L\left(c\right) &=& L\left(c_1\right) + L\left(c_2\right) + L\left(c_3\right)\\
&= & \sqrt{ L\left(\pi\circ c_1\right)^2 + \left(h_1+h_2\right)^2} + L\left(c_2\right) + \sqrt{ L\left(\pi\circ c_2\right)^2 + \left(h_1+h_2\right)^2}\\
& \ge & \sqrt{ L\left(\gamma_1\right)^2 + \left(h_1\right)^2}  + \sqrt{ L\left(\gamma_2\right)^2 + \left(h_1\right)^2}
\end{eqnarray}
where $\gamma_1$ is the shortest curve from $x$ to $x''$ and
$\gamma_2$ is the shortest curve from $y$ to $y''$ in $U_1\subset M_1$.  

By the definition of $h_i$ we know
\begin{eqnarray*}
L\left(\gamma_i\right)^2 +h_i^2 &=& L\left(\gamma_i\right)^2 +\diam\left(\partial U_i\right)\left(2\diam \left(U_i\right)+ \diam\left(\partial U_i\right)\right)\\
&\ge &L\left(\gamma_i\right)^2 +\diam\left(\partial U_i\right)\left(2L\left(\gamma_i\right)+ \diam\left(\partial U_i\right)\right)\\
&=& \left(L\left(\gamma_i\right) + \diam\left(\partial U_i\right) \right)^2.
\end{eqnarray*}
Thus
\be
L\left(c\right)  \ge  L\left(\gamma_1\right) + \diam\left(\partial U_1\right) + L\left(\gamma_2\right) + \diam\left(\partial U_1\right)
>L\left(\gamma_1\right)+L\left(\gamma_2\right)+\diam\left(\partial\left(U_i\right)\right).
\ee
Thus $c$ is longer than a curve lying in $M_1$ which runs from $x$ to $y$
via $x", y"\in \partial U_1$.  This is a contradiction.  We can similarly prove $\varphi_2$
is an isometric embedding.
\end{proof}

The difficulty with applying Lemma~\ref{lem-bridge-Z}, is that often $M_1$ and $M_2$
do not end up close together in the flat norm on $Z'$.  This can occur when 
$M_i\setminus U_i$
have large volume.  In the next proposition we combine this lemma with the prior
lemma to create a better $Z'$.

\begin{prop}\label{prop-bridge-filling}  
Suppose two oriented Riemannian manifolds with boundary, $M_i^m=\left(M_i,d_i,T_i\right)$
have connected open subregions, $U_i \subset M_i$, such that $T_i\rstr U_i \in \intcurr_m\left(M_i\right)$
and there exists an orientation preserving isometry, $\psi:U_1 \to U_2$.  Taking  
$V_i=M_i \setminus U_i$, 
and  geodesic metric spaces $X_i$ such that
\be
\psi_i:\left(V_i, d_{V_i}\right) \to \left(X_i, d_{X_i}\right)
\ee
are isometric embeddings and $X_i\setminus \psi_i\left(V_i\right)$ are locally convex.
Then if $B_i\in \intcurr_{m+1}\left(X_i\right)$ and $A_i\in \intcurr_m\left(X_i\right)$
with $\set\left(B_i\right), \set\left(A_i\right) \subset X_i \setminus \psi_i\left(V_i\right)$
satisfy
\be
\psi_{i\#}\left(T_i\rstr V_i\right)=A_i+\partial B_i. 
\ee
we have
\be
d_{\Fm}\left(M_1, M_2\right) \le \vol\left(U_1\right) \left(h_1+h_2\right) + \mass\left(B_1\right) 
+ \mass\left(B_2\right) +  \mass\left(A_1\right) +\mass\left(A_2\right)
\ee
where $h_i$ is as in (\ref{choice-of-h}) and
\be
d_{GH}\left(M_1, M_2\right) \le \left(h_1+h_2\right) + \diam\left(M_1\setminus U_1\right) + \diam\left(M_2\setminus U_2\right).
\ee
Note that when $X_i=V_i$, taking $B_i=0$ and $A_i=T\rstr V_i$ we have
\be
d_{\Fm}\left(M_1, M_2\right) \le \vol\left(U_1\right) \left(h_1+h_2\right) +  \vol\left(V_1\right) +\vol\left(V_2\right).
\ee
\end{prop}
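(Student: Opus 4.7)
The plan is to construct a complete geodesic metric space $Z$ containing isometric copies of both $M_1$ and $M_2$, and then to exhibit explicit integral currents witnessing the flat distance bound.

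First, I would assemble $Z$ in two stages, combining the bridge construction of Lemma~\ref{lem-bridge-Z} with the attaching construction of Lemma~\ref{lem-local-filling}. In the first stage, I would attach a bridge $U_1 \times [-h_1,h_2]$ to $M_1 \sqcup M_2$ by identifying $z \in U_1$ with $(z,-h_1)$ and $\psi(z) \in U_2$ with $(z,h_2)$; the choice of heights in~(\ref{choice-of-h}) is precisely what Lemma~\ref{lem-bridge-Z} requires to keep both $M_1$ and $M_2$ isometrically embedded in the glued space. In the second stage, I would attach each $X_i$ onto the corresponding $V_i \subset M_i$ via $\psi_i$ using Lemma~\ref{lem-local-filling}; the isometric embeddings $\varphi_i:M_i \hookrightarrow Z$ persist, and the local convexity of $X_i \setminus \psi_i(V_i)$ ensures that the induced map $f_i : X_i \to Z$ is a $1$-Lipschitz bijection onto its image that is a local isometry on $\spt B_i \cup \spt A_i$.

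Next, I would produce $U \in \intcurr_m(Z)$ and $V \in \intcurr_{m+1}(Z)$ satisfying $\varphi_{1\#}T_1 - \varphi_{2\#}T_2 = U + \partial V$, working from the decomposition $T_i = T_i\rstr U_i + T_i\rstr V_i$. For the $U_i$ pieces I would introduce the bridge current
\begin{equation*}
P := \Phi_\#\bigl(\Lbrack[-h_1,h_2]\Rbrack \times (T_1\rstr U_1)\bigr) \in \intcurr_{m+1}(Z),
\end{equation*}
where $\Phi$ is the bridge embedding; this current has mass $(h_1+h_2)\vol(U_1)$. Applying the product-boundary formula from \cite{Wenger-isoper}, together with $\psi_\#(T_1\rstr U_1) = T_2\rstr U_2$ (since $\psi$ is orientation preserving), gives $\partial P = \varphi_{2\#}(T_2\rstr U_2) - \varphi_{1\#}(T_1\rstr U_1)$ up to a lower-order side contribution on the cylinder's vertical boundary. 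For the $V_i$ pieces, the identity $\varphi_i|_{V_i} = f_i \circ \psi_i$ yields $\varphi_{i\#}(T_i\rstr V_i) = f_{i\#} A_i + \partial(f_{i\#} B_i)$. Setting $V := -P + f_{1\#} B_1 - f_{2\#} B_2$ and $U := f_{1\#} A_1 - f_{2\#} A_2$ (adjusted by any residual side term), and using Lemma~\ref{lem-push-mass} to bound $\mass(f_{i\#}\cdot)$ by $\mass(\cdot)$, I would read off
\begin{equation*}
\mass(V) \le (h_1+h_2)\vol(U_1) + \mass(B_1) + \mass(B_2), \quad \mass(U) \le \mass(A_1) + \mass(A_2),
\end{equation*}
delivering the desired intrinsic flat estimate.

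The main technical obstacle will be handling the side contribution $\Phi_\#\bigl(\Lbrack[-h_1,h_2]\Rbrack \times \partial(T_1\rstr U_1)\bigr)$ that appears in the computation of $\partial P$: the $\partial U_1$-component of $\partial(T_1\rstr U_1)$ must be shown to match, via $\psi$, with the corresponding boundary component of $\psi_{i\#}(T_i\rstr V_i)$ so as to be absorbed into the $A_i$ bookkeeping (or cancelled by an appropriate refinement of the bridge). The Gromov-Hausdorff bound will then follow directly from the construction: in $Z$, any point of $M_1$ lies within $\diam(M_1\setminus U_1)$ of $\bar U_1$, which is within $h_1+h_2$ of $\bar U_2 \subset M_2$ via the bridge, which in turn is within $\diam(M_2\setminus U_2)$ of any point of $M_2$, so $d_H^Z(\varphi_1(M_1),\varphi_2(M_2)) \le (h_1+h_2) + \diam(M_1\setminus U_1) + \diam(M_2\setminus U_2)$.
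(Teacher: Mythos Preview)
Your approach matches the paper's exactly: build $Z$ via Lemma~\ref{lem-bridge-Z}, attach the $X_i$ via Lemma~\ref{lem-local-filling} to form $Z'$, take the bridge current $B_3$ (your $P$) as integration over $U_1\times[-h_1,h_2]$, and read off the bound from
\[
\varphi'_{1\#}T_1 - \varphi'_{2\#}T_2 = (f_{1\#}A_1 - f_{2\#}A_2) + \partial(f_{1\#}B_1 - f_{2\#}B_2 + B_3).
\]
The paper simply asserts this identity and does not address the side contribution on $\partial U_1 \times [-h_1,h_2]$ that you correctly flag as the main technical obstacle; it is swept under the rug rather than cancelled or absorbed. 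In every application in the paper that extra term is harmless (since $\vol_{m-1}(\partial U_i)\cdot(h_1+h_2)\to 0$ along with everything else), so the convergence conclusions are unaffected, but strictly speaking the inequality as stated is not established without it. Your instinct to single this out is sharper than the paper's own proof. The Gromov--Hausdorff estimate is argued exactly as you outline, using only the bridge space $Z$.
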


See Figure~\ref{fig-flat-isom-ex}.  

\begin{figure}[h] 
   \centering
   \includegraphics[width=4.7in]{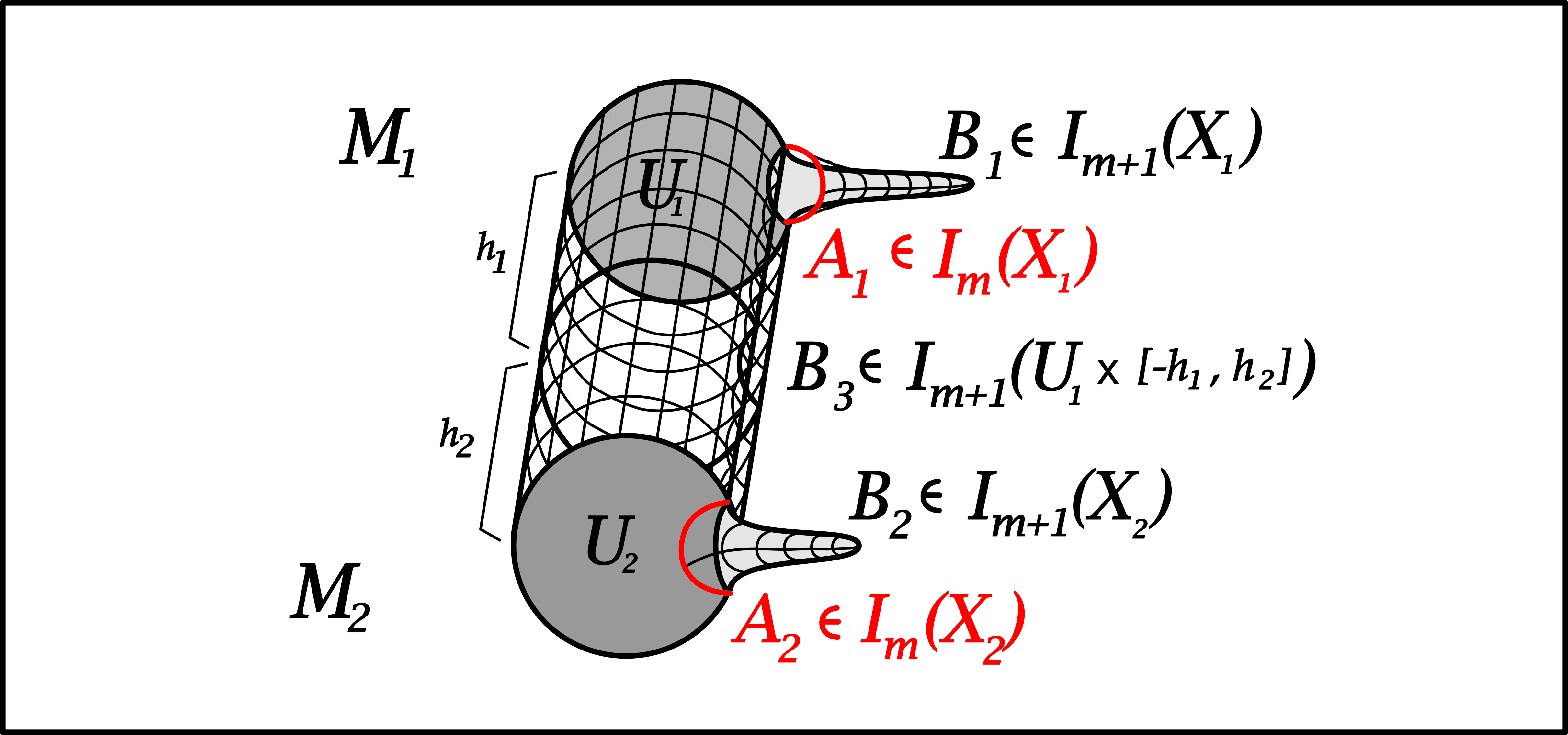} 
   \caption{Proposition~\ref{prop-bridge-filling}}
   \label{fig-flat-isom-ex}
\end{figure}

\begin{proof}
First we construct $Z$ exactly as in Lemma~\ref{lem-bridge-Z}.  
We obtain the estimate on the Gromov-Hausdorff
distance by observing that 
\be
d_H^{Z}\left(\varphi_1\left(M_1\right), \varphi_2\left(M_2\right)\right) \le \left(b_1+b_2\right) + \diam\left(M_1\setminus U_1\right) + \diam\left(M_2\setminus U_2\right).
\ee

To estimate the flat distance we construct
$Z'$ by applying Lemma~\ref{lem-local-filling}   to attach both $X_i$ to $Z$.
Note that $f_{i\#}B_i\in \intcurr_{m+1}\left(Z\right)$ and $f_{i\#}A_i\in \intcurr_m\left(Z\right)$
have the same mass as $B_i$ and $A_i$ respectively because $f_i: X_i -\psi_i\left(V_i\right)$
are locally isometries on $\set\left(B_i\right)$ and $\set\left(A_i\right)$.  Since $Z$ isometrically
embeds in $Z'$, the manifolds, $M_i$ 
are isometrically embedded and we will call the embeddings $\varphi'_i$.
Furthermore
\begin{eqnarray*}
 \varphi'_{1\#}T_1 - \varphi'_{2\#}T_2 
 &=& \varphi'_{i\#}\left(T_1\rstr V_1\right)-\varphi'_{2\#}\left(T_2\rstr V_2\right)\\
    & & +                            \varphi'_{i\#}\left(T_1\rstr U_1\right)-\varphi'_{2\#}\left(T_2\rstr U_2\right)\\
 &=& f_{1\#}A_1-f_{2\#}A_2 + f_{1\#}\partial B_1- f_{2\#}\partial B_2 + \partial B_3
 \end{eqnarray*}
 where $B_3\in \intcurr_{m+1}\left(Z\right)$ is defined as integration over
 $U_1 \times [-h_1, h_2]$ with the correct orientation.  Thus
 \be
 d_F^Z\left(\varphi_{1\#}T_1, \varphi_{2\#}\left(T_2\right) \right)\le \mass\left(B_3\right) 
 +\mass\left(B_1\right) +\mass\left(B_2\right)+\mass\left(A_1\right) +\mass\left(A_2\right) 
 \ee
 and we obtain the required estimate.
  \end{proof}


\subsection{Disappearing Tips and Ilmanen's Example} \label{subsect-example-ilmanen}

In this subsection we apply the bridge and filling techniques from the last subsection
to prove a few key examples.  We remark upon Gromov's square convergence
[Figure~\ref{figure-bumpy-hair}, Remark~\ref{rmrk-square}].  We
close with a proof that Ilmanen's Example depicted in Figure~\ref{fig-hairy-sphere}
does in fact converge in the intrinsic flat
sense [Example~\ref{ex-hairy-sphere}].  
Each example is written as a statement followed by a proof.

\begin{ex}\label{example-one-hair}
Let $M^m_j$ be spheres which have one increasingly thin tip 
as in Figure~\ref{figure-one-hair}.  In each $M_j$ there is a subdomain,
$U_j$, which is isometric to $U'_j= M_0\setminus B_p\left(r_j\right)$ where $M_0$
is the round sphere.  We further assume that $V_j=M_j\setminus U_j$
have $\vol\left(V_j\right) \to 0$.   We claim $M_j$ converges to $M_0$ in the intrinsic
flat sense.
\end{ex}

We prove this example converges with an explicit construction:

\begin{proof}
Since there is an isometry $\psi: U_j \to U_j'$ 
we join $M_j$ to the sphere $M_0$ with a bridge $U_j \times [-h_j, h_j']$
creating a metric space $Z$
as in Lemma~\ref{lem-bridge-Z} where $h_j, h_j' \to 0$ as $j \to \infty$.
Furthermore the isometric embeddings $\varphi_j: M_j \to Z$ and $\varphi_j':M_0\to Z$
push forward the current structures $T_j$ on $M_j$ and $T_0$ on $M_0$
so that
\be
\varphi_{j\#} T_j - \varphi'_j T_0 = \varphi_{j\#} \left(T_j \rstr U_j\right) - \varphi'_j \left(T_0 \rstr U'_j\right)
              + \varphi_{j\#} \left(T_j \rstr V_j\right) - \varphi'_j \left(T_0 \rstr V'_j\right)
\ee
where $V_j=M_j\setminus U_j$ and $V'_j=M_0\setminus U'_j$. 
We define $B_j \in \intcurr_3\left(Z\right)$
by integration over the bridge $U_j \times [-h_j, h_j']$ we have
\be
\varphi_{j\#} T_j - \varphi'_j T_0 = \varphi_{j\#} \left(T_j \rstr U_j\right)  - \varphi'_j \left(T_0 \rstr U'_j\right)+  \partial B_j 
\ee
 Note that  $\mass\left(T_j \rstr V_j\right)= \vol\left(V_j\right) \le 2/j^2$
and $\mass\left(T_0\rstr V_j'\right) $ both converge to $0$ as $j\to\infty$ and
$\mass\left(B_j\right) \le \vol\left(U_j\right)\left(h_j+h_j'\right)$ does as well because $\diam\left(\partial U_j\right)\to 0$
while $\diam\left(M_j\right)\le 4\pi$.   Thus $M_j$ converge to the sphere $M_0$ in the
intrinsic flat sense.
\end{proof}

\begin{rmrk} \label{rmrk-square}\label{rmrk-square-lambda}
The above example is similar to Gromov's Example on page 118 in \cite{Gromov-metric}.
The $\square_\lambda$ limit agrees with the flat limit for $\lambda>0$.  The
Gromov-Hausdorff limit of this sequence is the sphere with a unit length segment
attached.
Gromov points out that if $M_j \GHto M_\infty$ and $p_j \in M_j \to p_\infty\in M_\infty$
have a uniform positive lower bound on the measure of $B_{p_j}\left(1\right)$
the $\square_1$ limit of the $M_j$ which is a subset of $M_\infty$
includes $p_\infty$.  This is not true for the intrinsic flat limit
as can be seen in the following example.
\end{rmrk}

\begin{figure}[h] 
   \centering
   \includegraphics[width=4.7in]{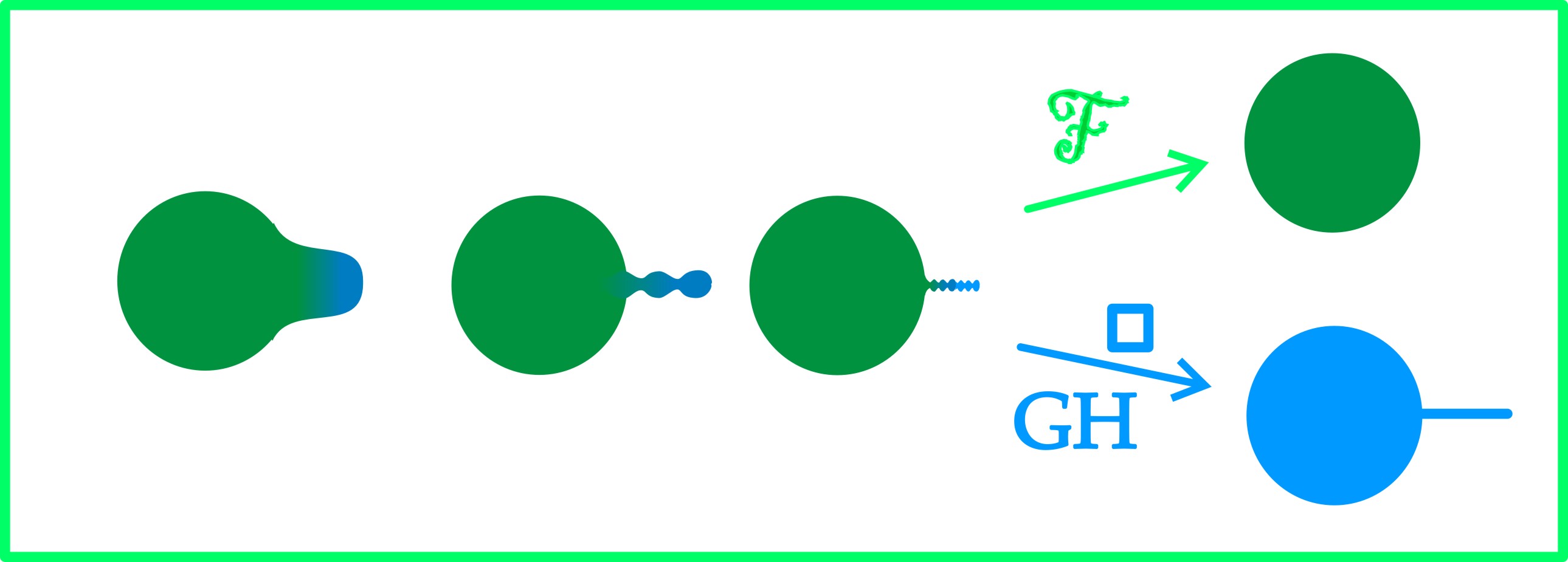} 
   \caption{Contrasting with Gromov's square limit.}
   \label{figure-bumpy-hair}
\end{figure}

\begin{ex}\label{ex-bumpy-hair}
Let $M^m_j$ be spheres which have one increasingly thin tip with uniformly
bounded volume  as in Figure~\ref{figure-bumpy-hair}.  
In each $M_j$ there is a subdomain
$U_j$ which is isometric to $U'_j= M_0\setminus B_p\left(r_j\right)$ where $M_0$
is the round sphere.  We further assume that $V_j=M_j\setminus U_j$
have $\vol\left(V_j\right)$ decreasing but $\ge  V_0 >0$ while $V_j$ 
converge in the Gromov-Hausdorff sense to a line segment.   
Then $M_j $ converges to $M_0$ in the intrinsic flat sense.
\end{ex}

\begin{proof}  
Since there is an isometry $\psi$ from $U_j$ to  $U'_j$,
we join $M_j$ to the sphere $M_0$ with a bridge $U_j \times [-h_j, h_j']$
creating a metric space $Z$
as in Lemma~\ref{lem-bridge-Z} where $h_j, h_j' \to 0$ as $j \to \infty$.
Furthermore the isometric embeddings $\varphi_j: M_j \to Z$ and $\varphi_j':M_0\to Z$
push forward the current structures $T_j$ on $M_j$ and $T_0$ on $M_0$

By Corollary~\ref{lower-dim} and $V_j\GHto [0,1]$, we know $\left(V_j, d_j,T_j \rstr V_j\right)$ 
converges to $0$ as an integral current space.
By Theorem~\ref{inf-dist-attained}, there is a metric space $X_j$ with an isometry $\phi_j: V_j \to X_j$
and integral currents $A_j, B_j$ such that $\phi_{j\#}\left(T\rstr V_j\right) = A_j +\partial B_j$
such that $\mass\left(A_j\right) +\mass\left(B_j\right) \to 0$.  We now apply Proposition~\ref{prop-bridge-filling},
attaching $X_j$ to $Z$ to create $Z'$, and we have
\be
d_{\Fm}\left(M_j, M_0\right) \le \vol\left(U_j\right) \left(h_j+h'_j\right) + \mass\left(B_j\right) + \mass\left(A_j\right) +\vol\left(M_0\setminus U'_j\right) \to 0.
\ee
Note that here we did not bother with two fillings as in the proposition.
\end{proof}

We now prove Ilmanen's example in Figure~\ref{fig-hairy-sphere} converges
to a standard sphere in the intrinsic flat sense.  Although Ilmanen's sequence
of examples have positive scalar curvature and are three dimensional, here
we show convergence in any dimension including two.

\begin{ex}\label{ex-hairy-sphere}
We assume $M_j$ are diffeomorphic to
spheres with a uniform upper bound on volume and  that each $M_j$
contains a connected open domain $U_j$ which is isometric to a domain
$U'_j= M_0 \setminus \bigcup_{i=1}^{N_j} B_{p_{j,i}}\left(R_j\right)$ where 
$M_0$ is the round sphere and $B_{p_{j,i}}\left(R_j\right)$ are pairwise disjoint.
We assume  that  each connected component, $U_{j,i}$
of $V_j =M_j\setminus U_j$ 
and each ball $B_{p_{j,i}}\left(R_j\right)$ has volume $\le v_j/N_j$
where $v_j \to 0$.   Then $M_j$ converges to a round sphere
in the intrinsic flat sense as long as $N_j \sqrt{R_j}\to 0$.
\end{ex}

\begin{proof}
We cannot directly apply Proposition~\ref{prop-bridge-filling}   in this setting because
$\diam\left(\partial U_j\right)$ are not converging to $0$.  So instead of building a
bridge $Z$ directly from $M_j$ to $M_0$, we build bridges from
$M_0=M_{j,0}$ to $M_{j,1}$ to $M_{j,2}$ and up to $M_{j, N_j}=M_j$ by adding
one bump at a time.  Each pair has only one new bump and so we can show
\be
d_{\Fm}\left(M_{j,i}, M_{j,i+1}\right) 
\le  \vol\left(U_{j,i}\right) \left(h_{j,i}+h'_{j,i}\right) + 2 v_j/N_j
\ee
where 
\begin{eqnarray*}
h_{i,j}, h'_{i,j} &\le&  \sqrt{\diam\left(\partial U_{j,i+1}\right)\left(\diam\left(M_{j,i}\right) +\diam\left(\partial U_{j,I+1}\right)\right) } \\
&\le& \sqrt{\pi R_j\left( \diam\left(M_j\right) +\pi R_j\right)}.
\end{eqnarray*}
Summing from $i=1$ to $N_j$ we see that 
\be
d_{\Fm}\left(M_0, M_j\right)\le \vol\left(U_j\right) 2 \sqrt{\pi R_j\left( \diam\left(M_j\right) +\pi R_j\right)} + 2v_j \to 0.
\ee
\end{proof}


\subsection{Limits with Point Singularities}

Recall that when defining an integral current space, $(X,d,T)$,
we required that $\set(T)=X$ so that all points in the space have
positive density [Defn~\ref{defn-set}  , Defn~\ref{defn-current-space}].  
In this subsection, we present two related examples.

\begin{ex}\label{ex-to-cone}
In  Figure~\ref{fig-to-cone} we have a sequence of Riemannian surfaces, $M_j$,
diffeomorphic to the sphere converging in the intrinsic flat sense
to a Lipschitz manifold, $M_0$, with a conical singularity.  
Since this sequence clearly converges in the Lipschitz sense
to $M_0$, this is proven by applying Theorem~\ref{thm-lip-to-flat}.
\end{ex}

\begin{figure}[h] 
   \centering
   \includegraphics[width=4.7in]{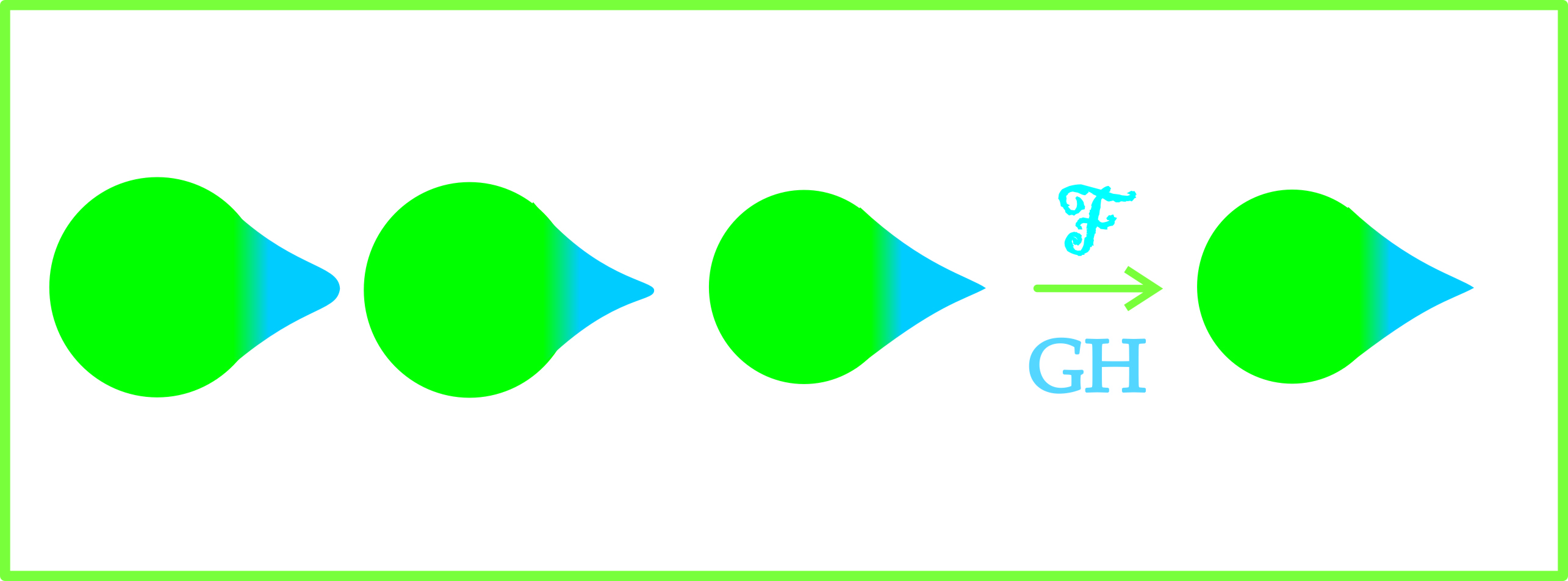} 
   \caption{The intrinsic flat limit does include the tip of the cone.}
   \label{fig-to-cone}
\end{figure}

\begin{ex}\label{ex-to-cusp}
In  Figure~\ref{fig-to-cusp} we see a sequence of Riemannian surfaces, $M_j$, 
diffeomorphic to the sphere converging
in the intrinsic flat sense
to a Riemannian manifold, $M_\infty$, with a cusp singularity.  The cusp singularity is
not included in the limit current space because we only include points of
positive lower density.
\end{ex}

 There is no Lipschitz convergence here
even if we were to include the cusp point so we prove this example:

\begin{proof} 
Note that $M_\infty$ is a geodesic space because no minimizing curves pass
over a cusp point.  So we apply Lemma~\ref{lem-bridge-Z} to build a bridge
$Z$ between $M_j$ and $M_\infty$ removing small balls, $V_j$ near their tips
so that $U_j=M_j\setminus V_j$ are locally isometric.
Now we apply Proposition~\ref{prop-bridge-filling} with $X_i=V_i$
which works even though $M_\infty$ has a point singularity because 
$\mass \left(V_\infty\right)=\vol\left(V_\infty\right)$.  So we have:
\be
d_{\Fm}\left(M_j, M_\infty\right) \le \vol\left(U_j\right) h_j +\vol\left(V_j\right) +\vol\left(V_\infty\right)
\ee
where $h_j=\sqrt{\diam\left(\partial U_{j}\right)\left(\diam\left(M_{j}\right) +\diam\left(\partial U_{j}\right) \right)} $
\end{proof}

\subsection{Limits need not be Precompact}

In this subsection,
we present a pair of integral current spaces which are not precompact and yet are the limits
of a sequence of Riemannian surfaces diffeomorphic to the sphere with a uniform upper
bound on volume.  Example~\ref{ex-unbounded} is not bounded and is a classic surface
of revolution of finite area.  Example~\ref{ex-many-tips} depicted in Figure~\ref{fig-many-tips}
is the limit of a sequence with a uniform upper bound on diameter and is bounded but has
infinitely many tips. 

\begin{ex}  \label{ex-unbounded}
Let $M_0$ be the surface of revolution in Euclidean space defined by
\be
M_0=\{(x,y,z): \,\,x^2+y^2= 1/(1-z)^4, \, z\ge 0\} \subset E^3
\ee
with the outward orientation and the induced Riemannian length metric.
Since $M_0$ has finite area and its boundary has finite length,
it is an integral current space.  

Let 
\be
M_j=\{(x,y,z): \,\,x^2+y^2= f_j(z)/(1-z)^4, \, z \ge 0\} \subset E^3
\ee
where $f_j(z)=1$ for $z\le j$ and such that $f_j(z)=0$ for $z\ge j+1/j$ and smoothly
decreasing between these values so that $M_j$ is smooth at $z=j+1/j$.  We
also orient $M_j$ outward and give it the induced Riemannian length metric.
Note that $\diam(M_j) \to \infty$ so $M_j$ is not Cauchy in the Gromov-Hausdorff
sense.  However $M_j$ converges to $M_0$ in the intrinsic flat sense.  
\end{ex}

\begin{proof}
Note that $U_j\in M_j$ be defined as $M_j \cap \{z\in[0,j]\}$ is locally isometric
to $U'_j \in M_0$ defined by $M_0\cap \{z\in [0,j]\}$.  
We join $M_j$ to the sphere $M_0$ with a bridge $U_j \times [-h_j, h_j']$
creating a metric space $Z$ where $h_j, h_j' $ are bounded by
\be 
\sqrt{ \frac{\pi}{(1-j)^4}  \left(2  (2j)+   \frac{\pi}{(1-j)^4}  \right)} \to 0 \textrm{ as } j\to \infty.
\ee
From here onward we may apply Proposition~\ref{prop-bridge-filling}
using the fact that $V_j=M_j\setminus U_j$
and $V'_j=M_0\setminus U'_j$ both have area converging to $0$.
\end{proof}

\begin{ex}\label{ex-many-tips}
The sequence of Riemannian manifolds $M_j$ in Figure~\ref{fig-many-tips} is defined by taking
a sequence of $p_j$ lying on a geodesic in the sphere $M_0$ 
converging to a point $p_\infty$
and choosing balls $B_{p_j}\left(r_j\right)$ that are disjoint.  The tips
are Riemannian manifolds, $N_j$ with boundary such that $\partial N_j$ is
isometric to $\partial B_{p_j}\left(r_j\right)$ and $N_j$ can be glued smoothly to
$M_0\setminus B_{p_j}\left(r_j\right)$.  We further require that $\diam\left(N_j\right)\le 2$
and $\vol\left(N_j\right) \le \left(1/2\right)^j$.  Then $M_j$ is formed by removing
the first $j$ balls from $M_0$ and gluing in the first $j$ tips, $N_1, N_2,...N_j$, with the
usual induced Riemannian length metric:
\be
M_j := \left(M_0 \setminus \bigcup_{i=1}^j B_{p_i}(r_i)\right) \disjointunion N_1 \disjointunion 
N_2 \disjointunion \cdots
\disjointunion N_j.
\ee
So the diameter and volume
of $M_j$ are uniformly bounded above.
  
The intrinsic flat limit $M_\infty$ is defined by removing all the balls and gluing in
all the tips:
\be
M_\infty := \left(M_0 \setminus \bigcup_{i=1}^j B_{p_i}(r_i)\right) \disjointunion N_1 \disjointunion 
N_2 \disjointunion \cdots
\ee
so that $M_\infty$ is not smooth at $p_\infty$ but it is a countably
$\mathcal{H}^m$ rectifiable space.  There are natural current structures
$T_j$ and $T_\infty$ on these spaces with weight 1 and orientation
defined by the orientation on $M_0$.   Note that $M_\infty$ has finite volume
and diameter but is not precompact
because it contains infinitely many disjoint balls of radius $1$.
\end{ex}

\begin{proof}
Let $\epsilon_j=d_{M_0}\left(p_j,p_\infty\right)$.  Then 
there is an isometry $\psi: U_j \to U_j'$ where
$U_j =M_j \setminus B_{p_\infty}\left(\epsilon_j -r_j\right) $
and $U_j'\subset M_\infty$.  So we
join $M_j$ to  $M_\infty$ with a bridge $U_j \times [-h_j, h_j']$
creating a metric space $Z$
as in Lemma~\ref{lem-bridge-Z} where $h_j, h_j' \to 0$ as $j \to \infty$.
Furthermore the isometric embeddings $\varphi_j: M_j \to Z$ and $\varphi_j':M_\infty\to Z$
push forward the current structures $T_j$ on $M_j$ and $T_\infty$ on $M_\infty$
so that
\be
\varphi_{j\#} T_j - \varphi'_j T_\infty = \varphi_{j\#} \left(T_j \rstr U_j\right) - \varphi'_j \left(T_\infty \rstr U'_j\right)
              + \varphi_{j\#} \left(T_j \rstr V_j\right) - \varphi'_j \left(T_\infty \rstr V'_j\right)
\ee
where $V_j=M_j\setminus U_j$ and $V'_j=M_\infty\setminus U'_j$. 
 Letting $B \in \intcurr_{m+1}\left(Z\right)$
to be defined by integration over the bridge $U_j \times [-h_j, h_j']$ we have
\be
\varphi_{j\#} T_j - \varphi'_j T_0 = \partial B_j+ \varphi_{j\#} \left(T_j \rstr V_j\right)  - \varphi'_j \left(T_\infty \rstr V'_j\right) 
\ee
However
\be
\begin{split}
&\mass\left(T_j \rstr V_j\right)= \vol\left(V_j\right) \le \omega_m\left(\epsilon_j-r_j\right)^m \to 0 \textrm{ and}
\\ 
&\mass\left(T_\infty\rstr V_j'\right) \le \sum_{i=j+1}^\infty \frac{1}{2^j} \to 0 \textrm{ and}
\\
&\mass\left(B_j\right) \le \vol\left(U_j\right)\left(h_j+h_j'\right) \to 0
\end{split}
\ee
because $\diam\left(\partial U_j\right)\to 0$
while $\diam\left(M_j\right)\le \pi+2$.   
Thus $M_j$ converge to the sphere $M_\infty$ in the intrinsic
flat sense.
\end{proof}

\subsection{Pipe Filling and Disconnected Limits}


In this subsection we study sequences of Riemannian manifolds which
converge to spaces which are not geodesic spaces.  Our examples
consist of spheres joined by cylinders where the cylinders disappear
in the intrinsic flat limit.    For these examples
we cannot just apply Lemma~\ref{lem-bridge-Z} because we do not
have connected isometric domains.    

We develop a new concept
called "pipe filling" [See  Remark~\ref{rmrk-pipe-filling}] .  Note that
a cylinder, $S^{m-1} \times [0,1]$, does not 
isometrically embed into a solid Euclidean cylinder, $D^m \times [0,1]$,
but that it does isometrically embed into a cylinder of higher dimension
$S^m \times [0,1]$.
We prove Example~\ref{example-not-length} depicted in Figure~\ref{figure-not-length},
and Example~\ref{example-gym2}  depicted in Figure~\ref{example-gym2}.  

\begin{example}\label{example-not-length}  
The sequence of manifolds in Figure~\ref{figure-not-length} are
smooth manifolds, $M'_j$, which are
bi-Lipschitz close to Lipschitz manifolds,
\be
M_j=\{\left(x,y,z\right): \,\, x^2+z^2 = f_j^2\left(y\right), \, y\in [-3, 3]\}, 
\ee
where $f_j\left(y\right)$ is a smooth function such that 
\be\label{eqn-pipe1}
f_j\left(y\right):= \sqrt{ 1-\left(y+2\right)^2} \textrm{ for } 
y \in [-3, -2+\sqrt{1-\left(1/j\right)^2}],
\ee
\be \label{eqn-pipe2}
f_j\left(y\right):= \sqrt{ 1-\left(y-2\right)^2} \textrm{ for } y \in [2-\sqrt{1-\left(1/j\right)^2}, 3] 
\ee
and $f_j\left(y\right)=1/j$ between these two intervals.
For $j=\infty$ we let $f_\infty\left(y\right)$ satisfy (\ref{eqn-pipe1}) and (\ref{eqn-pipe2})
and $f_\infty(y):=0$ between the two intervals so that $M_\infty$ is two spheres joined 
by a line segment.  

All $M_j$ for $j=1,2,3...$ are endowed
with geodesic metrics and outward orientations.  
Then $M_j$ Gromov-Hausdorff converges to the connected geodesic space
$M_\infty$ but converges in the intrinsic flat sense to two disjoint spheres,
 $N_\infty=\left(\set\left(T_\infty\right), d_{M_\infty}, T_\infty\right)$ where $T_\infty\in \intcurr_2\left(M_\infty\right)$ 
 is integration over the spheres.  Since $d_{lip}\left(M_j, M'_j\right)\to 0$
we also have a sequence of Riemannian manifolds converging to 
this disconnected limit space.   
\end{example}

\begin{proof}
We construct a common metric space $Z_j$ as in Figure~\ref{fig-pipe-filling}.  More precisely,
\be
Z_j=\{\left(x,y,z,w\right): \,\, x^2+z^2 = \bar{f}_j^2\left(y,w\right), \, y\in [-3, 3], \,\, w\in [0,1/j]\} 
\ee
where 
\be
\bar{f}_j\left(y,w\right)= \max\left\{ f_j\left(y\right)\sqrt{1-j^2w^2},  f_\infty\left(y\right)\right\} 
\ee
with the induced length metric from four dimensional Euclidean space.
$Z_j$ is roughly two spheres of radius 1 crossed with intervals, $S^2\times [0,1/j]$,
with a thin half cylinder, $S^2_{+,1/j} \times [-1,1]$, between them.   This
half cylinder is filling in the thin cylinder in $M_j$ and is the key
step in the pipe filling construction.

\begin{figure}[h] 
   \centering
   \includegraphics[width=4.7in]{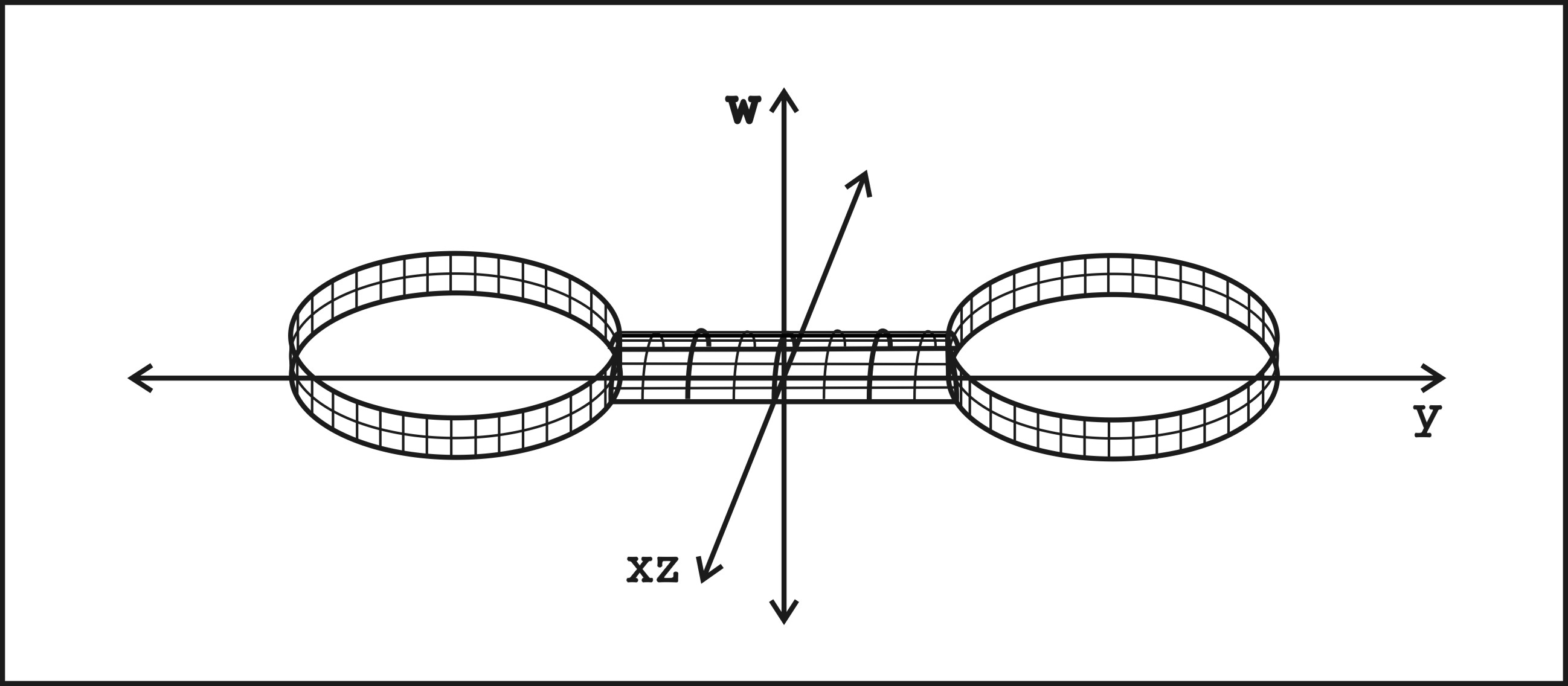} 
   \caption{Here the cylinder in the xzy plane is filled in by a half cylinder.}
   \label{fig-pipe-filling}
\end{figure}

It is easy to see that $\varphi_\infty: M_\infty \to Z_j$ such that
$\varphi_\infty\left(p\right)=\left(p, 1/j\right)$ is an isometric embedding because there
is a distance nonincreasing retraction from $Z_j$ to the level set $w=1/j$.   It
is also an isometric embedding when restricted to $N_\infty$.

Regretably $\varphi_j:M_j\to Z_j$ with $\varphi_j\left(p\right)=\left(p,0\right)$ is not 
isometric embedding.  It preserves distances between points which both lie in
one of the balls or both lie in the thin cylinder, but not necessarily between points
in different regions.  Let
\be
h_j=\sqrt{\pi/j + (\pi/(2j))^2}
\ee
and glue $M_j \times [-h_j,0]$ to $Z_j$ to create $Z_j'$ which can be viewed
as a metric space lying in four dimensional Euclidean space with the induced 
intrinsic length metric.  Then $\varphi'_j: M_j \to Z_j'$ where
$\varphi_j'(p)=(p,-h_j)$ is an isometry.  Any minimizing curve in $Z_j'$ between
points $(p,-h_j)$ and $(q, -h_j)$ can either be retracted down to the $w=-h_j$
level, or it must travel up at least to the $w=0$ level.  So the curve has length
$\sqrt{l_1^2+h_j^2}$ before reaching $w=0$ and then travels some distance, $l_2$, 
within the half thin cylinder and then come back down with length $\sqrt{l_3^2+h_j^2}$.
However a curve lying in the $w=-h_j$ level set would travel only $l_1$ then
$l_2$ in the thin cylinder, then $\pi r$ around the thin cylinder, and then $l_3$
to its endpoint.  However
\be
\sqrt{l_1^2+h_j^2}\,\, + \,\,l_2\,\,+\,\,\sqrt{l_3^2+h_j^2} \,\,\,\ge\,\,\, l_1+l_2+l_3+\pi r
\ee
by our choice of $h_j $.  Thus $\varphi'_j$ is an isometric embedding.

Now $Z'_j$ has a naturally defined current structure $B_j$
such that $\mass(B_j)=\vol(Z_j')$
and such that $\partial B_j= \varphi_{j\#}T_j -\varphi_{\infty\#}T_\infty$.  
So we have $M(B_j)$ equal to
\be
 2 \vol\left(S^2\times [-h_j,1/j]\right) + 
\frac{1}{2} \vol\left([-1,1]\times S^2_{1/j}\right) 
+\vol\left([-1,1]\times S^1_{1/j}\right)h_j 
\ee
and thus
\be
d_F^{Z_j}\left(\varphi_{j\#}T_j , \varphi_{\infty\#}T_\infty\right)\le 
\mass\left(B_j\right)=\vol\left(Z_j\right)\to 0.
\ee
Furthermore, it is easy to see that
\be
d_{GH}\left(M_j , M_\infty\right) \le 
d_{H}^{Z_j}\left(\varphi_{j}\left(M_j\right) ,\varphi_{0}\left(M_\infty\right)\right)\le \frac{\pi}{2j}.
+ h_j \to 0.
\ee
So $M_j$ converge in the intrinsic flat sense to $N_\infty$
but in the Gromov-Hausdorff sense to $M_\infty$.
\end{proof}

\begin{rmrk} \label{rmrk-pipe-filling}
The process used in Example~\ref{example-not-length}
can be used more generally to show an integral current space $M$
which is collection of $k$ disjoint spheres, $S^m_{R_j}$, of radius $R_j\le R$
for $j=1..k$
connected by $n$ cylinders $S^{m-1}_{r_i}\times [0,L_i]$ of length $L_i\le L$ and 
radius $r$ for $i=1..n$
between them  is close to an integral current space $N$ which 
is defined by integration over
the same collection of spheres with the metric restricted
from the metric space $X$ which is the same
collection of spheres joined by line segments of length
$L_i$ rather than cylinders.  

More precisely, one can construct a $Z$
by gluing together the collection of $S^m_{R_j}\times[0,\pi r/2]$ together
with thin half cylinders of radius $r$ and length $L_i$, and then 
take $h=\sqrt{\pi r R +(\pi r/2)^2}$, and define $Z'$ by attaching
$M\times [-h,0]$ to $Z$.  Thus the Gromov-Hausdorff distance
\be
d_{GH}\left(M,X\right) \le \pi r+h
\ee
and the intrinsic flat distance can be estimated using the volume
of $Z'$.  In particular,
\be\label{eq-pipe-filling}
d_{\Fm}\left(M,N\right) \le  V (r+h) + 
\vol_{m-1}\left(S^{m}_r\right)L/2 +\vol_m\left(S^{m-1}_r\right) L h ,
\ee
where $L=\sum_{i=1}^k L_i$ and $V=\sum_{j=1}^n \vol_m\left(S^m_R\right)$.  
Note that if one has $r\to 0$, the product
$r^{m-1/2}L \to 0$ and $R$ and $V$ are uniformly bounded above,
then the right hand side of (\ref{eq-pipe-filling})  goes to $0$.
We will call this {\em pipe filling}.  
\end{rmrk}

\begin{ex}\label{example-gym2}  
In Figure~\ref{figure-gym2} we have an example of a sequence of
Riemannian manifolds, $M'_j$, which are collections of spheres of various
sizes joined by cylinders, which converge in the intrinsic flat sense
to a compact integral current space $N_\infty$ consisting of
countably many spheres oriented outward whose
metric is restricted from the Gromov Hausdorff limit, $X_\infty$,
formed by joining the spheres in $N_\infty$ with line segments.
The explicit inductive construction is given in the proof.
\end{ex}

\begin{proof}
We begin the inductive construction of the collection of spheres $N_j$
used to build the Riemannian manifolds, $M'_j$.
Let $N_0$ be four disjoint spheres of radius $R_0$ lying in Euclidean space
whose centers form a square of side length $L_0+2R_0$.

To build $N_j$,  we first rescale $N_{j-1}$ 
by a factor of 3 and make 5 copies, then place them symmetrically around $N_0$,
thus creating $N_{j}$ where $R_j=R_0/3^j$ is the radius of the smallest sphere
and 
\be
Vol\left(N_j\right) =\frac{5}{3^2} Vol\left(N_{j-1}\right) + Vol\left(N_0\right)= \sum_{i=0}^j \left(\frac{5}{9}\right)^j Vol\left(N_0\right)\le\frac{9}{4} Vol\left(N_0\right).  
\ee
Now $M_j$ is built by joining the spheres in $N_j$ with cylinders
of radius $\epsilon_j<<R_j$ chosen so that the total length $L_j$ of all
the cylinders satisfies $\epsilon_jL_j < 1/j$ and  $\lim_{j\to\infty} \vol\left(M_j\right) = 9\vol\left(N_0\right)/4$.
We give $M_j$ the outward orientation and note that there are Riemannian
manifolds $M_j'$ arbitrarily close to $M_j$ in the Lipschitz sense
who will have the same intrinsic flat and Gromov-Hausdorff 
limits as $M_j$ by Theorem~\ref{thm-lip-to-flat}.

Let $X_j$ be created by joining the $N_j$ with line segments and
give $X_j$ the induced length metric so that it is a geodesic metric space.  
Let $X_\infty$ be the union of all these metric spaces, which is also a
compact geodesic metric space with the induced length metric.
The integral current space $N_\infty$ is defined
as the union of all the $N_j$ with the metric $d_\infty$ restricted
from the length metric on $X_\infty$.

Note that for any $\varepsilon>0$, we can find $j$ sufficiently large
that $d_{GH}(M_j,X_j)<\epsilon$ and $d_{\mathcal{F}}(M_j, N_j)<\epsilon$.
This can be seen by creating a pipe filling from $M_j$ to $X_j$ as in Remark~\ref{rmrk-pipe-filling} with $r=\epsilon_j$ $L=L_j$, $R=1$ and
$V=\frac{9}{4} Vol\left(N_0\right)$.

Next we observe that the  maps $\psi_j: X_j \to X_\infty$ are isometric embeddings
because paths between points in $\psi_j\left(X_j\right)$ are shorter if they
stay in $\psi_j\left(X_j\right)$.  Thus
\begin{eqnarray*}
d_{\Fm}\left(N_j, N_\infty\right) &\le& d^{X_\infty}_F\left(\psi_{j\#} \Lbrack N_j  \Rbrack,  N_\infty\right)  \\
&\le & \mass\left(\psi_{j\#} \Lbrack N_j\Rbrack- N_\infty \right) \\
&\le &\sum_{i=j+1}^\infty \left(\frac{5}{9}\right)^j Vol\left(N_0\right)\,\,\to \,\,0.
\end{eqnarray*}
Since $X_\infty \subset T_{R_j}(\psi_j(X_j))$
where $R_j \to 0$ 
we see that $d_H\left(\psi_j\left(X_j\right), X_\infty\right) \to 0$.  Combining this
with our pipe filling estimates above, we see that the integral
current spaces $M_j$ converge to $N_\infty$ in the
intrinsic flat sense and to $X_\infty$ in the Gromov-Hausdorff sense.
\end{proof}

\begin{rmrk} \label{rmrk-pipe-filling-1}\label{rmrk-pipe-filling-adapt}
Note that in the pipe filling construction described in Remark~\ref{rmrk-pipe-filling},
one might have a single sphere with many thin cylinders looping around
and back to it.  One does not need to view the space as a subset of Euclidean
space.  

One can apply the pipe filling approach to any collection of Riemannian manifolds
joined by collections of thin cylinders.  A very small sphere in a Riemannian manifold
is arbitrarily close to a small Euclidean sphere.  As long as the cylinders are standard
isometric products of spheres with line segments, then technique works.  The
metric space $Z$ can be created with thin half cylinders between products of
the manifolds with small intervals, and $Z'$ can be built
using the diameter of the manifolds in the place of $\pi R$
when defining $h$.
\end{rmrk}

\subsection{Collapse in the limit} \label{subsect-collapse}

A sequence of Riemannian manifolds, $M_j$, is said to {\em collapse}
if $\vol(M_j) \to 0$.  Such sequences do not converge in the 
Lipschitz or smooth sense because the limit spaces have the
same dimension and volume converges in that setting.  They
have been studied using Gromov-Hausdorff and metric measure 
convergence.  As mentioned in Remark~\ref{vol-to-zero}, collapsing sequences
of Riemannian manifolds converge in the intrinsic flat sense
to the $\bf{0}$ current space.
In fact if $M_j$ converges in the Gromov-Hausdorff sense
to a lower dimensional limit space then they converge in
the intrinsic flat sense to $\bf{0}$ as well [Corollary~\ref{lower-dim}].  

\begin{example}\label{example-tori-GH}
The sequence of tori, $M_j= S^1_{\pi/j}\times S^1_\pi$,
depicted in Figure~\ref{figure-tori-GH} has
volume $\vol(M_j)=\pi/j \to 0$, so $M_j$ converges in the
intrinsic flat sense to $\bf{0}$.
Note that $M_j$ converges in the Gromov-Hausdorff sense to $S^1$ because 
\be
d_{GH}(S^1, M_j) \le d_H(\{p\}\times S^1_\pi, S^1_{\pi/j}\times S^1_1) = \pi/(2j)\to 0.
\ee
\end{example}

In the next example, is the well known "jungle-gym" example where the
 Gromov-Hausdorff limit is higher dimensional than the sequence.  Here we see that 
the intrinsic flat limit is $\bf{0}$:

\begin{ex} \label{ex-jungle-gym} \label{ex-gym-1}
The Riemannian surface, $M_j$, defined as a submanifold of
Euclidean space by attaching adjacent disjoint spheres of radius $R_j$
centered on lattice points 
of the form $(\frac{n_1}{2^j}, \frac{n_2}{2^j}, \frac{n_3}{2^j})$
where $n_i\in \N$ with cylinders of radius $r_j<<R_j$
with
\be \label{ex-gym-1-1}
\sum_{i=1}^{2^{3j}} \frac{4}{3} \pi R_j^2  \le A_0,
\ee
and total area of the cylinders approaches $0$.  

As $j \to \infty$ this sequence converges to the cube $[0,1]^3$
with the taxicab norm: 
\be
d_{taxi}((x_1,x_2,x_3), (y_1,y_2,y_3) ) = \sum_{i=1}^3 |x_i-y_i|
\ee 
and in the intrinsic flat sense to $\bf{0}$
\end{ex}

We skip the proof of the Gromov-Hausdorff convergence since
this is best done using Gromov's $\epsilon$ nets \cite{Gromov-metric}.

\begin{proof}
By Theorem~\ref{GH-to-flat}, a subsequence of $M_j$ converges
in the intrinsic flat sense
to some integral current space $M_0\subset [0,1]^3$
since $\textrm{area}(M_j)$ is uniformly bounded by
(\ref{ex-gym-1-1}) and the diminishing areas of the cylinders.
By the pipe filling technique [Remark~\ref{rmrk-pipe-filling}], 
we know the collections of spheres, $N_j$, converge in the intrinsic
flat sense to $M_0$ as well.
However each sphere isometrically embeds into a hemisphere
of higher dimension, so we can embed $N_j$ into a collection
of hemispheres and see that
\be
d_\Fm(M_j, {\bf 0})\,\, \le\,\, \sum_{i=1}^{2^{3j}}
\,\, \frac{5}{8}\, \pi \, R_j^3 \,\, \le \,\,A_0 R_j \,\,\to\,\, 0,
\ee
so $M_0$ is the zero space.
\end{proof}


\subsection{Cancellation in the limit}  \label{subsect-cancellation}

Sometimes sequences of integral current spaces converge to the
$\bf{0}$ current space even when their total mass is uniformly bounded
below.  We begin with a classical example of integral currents in
Euclidean space and then give a sequence of Riemannian manifolds
which cancel in the limit [Example~\ref{example-cancels}].

\begin{ex}\label{ex-cancels-in-R3}
Let $T_j \in \intcurr_2(\R^3)$ be defined as integration over
$\{(x,y, 1/j): x^2+y^2\le 1\}$ oriented upward plus integration
over $\{(x,y,-1/j): x^2+y^2\le 1\}$ oriented downward.  As $j\to \infty$,
$T_j$ converges in the flat sense to the $\bf{0}$ current.  Thus
the integral current spaces, $(\set(T_j), d_{\R^3}, T_j)$, 
converge to the $0$ current space.
\end{ex}  

\begin{proof}
This example is easily proven taking $B_j$ equal to integration
over the solid cylinder between the disks in $T_j$
and $A_j$ equal to integration over the cylinder.
\end{proof}

To create a sequence of Riemannian manifolds which cancel in the
limit like this is more tricky.  If one tries to fold a surface onto itself
so that it is close enough to cancel it is not isometrically embedded
into the space.  To create an isometric embedding in a folded position
we need to provide shortcuts between the two sheets.  See
Figure~\ref{figure-cancels}.  

\begin{ex} \label{example-cancels}
Given any compact oriented Riemannian
manifold, $M_0^m$, one can find a sequence of oriented Riemannian
manifolds, $M^m_j$, which converge in the Gromov-Hausdorff sense to
$M_0^m$ and yet in the intrinsic flat sense to ${\bf 0}$.  The sequence, $M_j^m$,
have volumes converging to twice the volume of $M_0^m$.  
\end{ex}

This example is also described in \cite{SorWen1} but the proof 
there is not constructive.

\begin{proof}  
First, let $M_0$ be an arbitrary closed oriented Riemannian manifold and fix $j\in\N$
before defining $M_j$. 
Choose a collection of points, 
\begin{equation}
\{p_1,p_2,...p_{N_j} \} \subset M_0
\end{equation}
such that $d\left(p_i,p_k\right)> 3/j$ and $M_0\subset \bigcup_{i}B\left(p_i, 10/j\right)$.
We choose any $r_n$ such that $r_n \le \min\{1/j, \injrad\left(M_0\right)/2\}$, 
where $\injrad\left(M_0\right)$ denotes the injectivity radius of $M_0$.

Define an integral current space $W_j$ as
a Riemannian manifold with corners via the isometric product 
\be \label{ex-cancels-0}
W_j=\left(M_0\setminus U_j\right)\times [0,\delta_j]
\ee
where
\be
U_j= \bigcup_{i=1}^{N_n}B\left(p_i, r_n\right)\,\,\,
\textrm{ and } \,\,\,
\delta_j < \min\left\{\left(\vol_{m-1}\left(\partial U_j\right) \right)^{-1}, 1/j\right\}.
\ee
Let $M_j = \partial W_j$ 
so that $M_j$ is two copies of $M_0\setminus U_j$
with opposite orientations glued together by cylinders of the form
$\partial B\left(p_i,r_n\right) \times [0,\delta_j]$
as in Figure~\ref{figure-cancels}.  There are smooth
Riemannian manifolds arbitrarily close to the $M_j$ in the Lipschitz sense.

Note that  $d_W\left(\left(x,\delta_j\right),\left(x,0\right)\right)=\delta_j$ while
$d_{M_j}\left(\left(x,\delta_j\right), \left(x,0\right)\right)$ is achieved
by a curve traveling to a cylinder, then a distance $\delta_j$  and back again, 
so $M_j$ does not isometrically embed into $W_j$.  One might try
constructing a bridge $Z_j$ from $M_j$ to $W_j$ using 
Lemma~\ref{lem-bridge-Z} but since $\diam(\partial M_j)$ does not
converge to $0$, we cannot apply this lemma directly.  Instead we will use
a similar technique taking advantage of the increasing
density of $\partial M_j$.

First we set
$\bar{\epsilon}_j=10/j + \delta_j +10/j$ then, by the density of the balls,
\be \label{ex-cancels-1}
d_{M_j}\left(\left(x,0\right), \left(x,s\right)\right) \le\bar{ \epsilon}_j,
\ee
for all choices of $\left(x,s\right)\in M_j=\partial W_j$.

We now construct another Lipschitz manifold $Z_j$
into which $M_j$ does isometrically embed and such that $M_j=\partial Z_j$
where $\mass\left( Z_j\right)=\vol\left(Z_j\right)\to 0$, proving that $M_j$ flat converges
to $0$.  Taking 
\be \label{ex-cancels-1.5}
\epsilon_j:=2 \sqrt{\bar{\epsilon}_j^2 + \bar{\epsilon}_j \diam\left(M_j\right) },
\ee
we define our metric space:
\be \label{ex-cancels-2}
Z_j= \partial W_j \times [0,\epsilon_j]  \cup W_j\times \{\epsilon_j\} \subset W_j\times [0,\epsilon_j],
\ee
where the product is an isometric product and $Z_j$ is endowed with
the induced length metric.   Clearly $M$, $\partial W_j$ and $\partial Z_j$ are
all isometric and
\begin{eqnarray*}
\vol\left(Z_j\right) &= & \vol_m\left(M_j\right)\epsilon_j + \vol_{m+1}\left(W_j\right) \\
& = & \left(2\vol_m\left(M_0\setminus U_j\right) + \vol_{m-1}\left(\partial U_j\right) 2\delta_j\right)\epsilon_j + 
\vol_m\left(M_0\setminus U_j\right) \delta_j \\
& \le & \left(2 \vol_m\left(M_0\right) + 2\right)\epsilon_j  + \vol_m\left(M_0\right) \delta_j,
\end{eqnarray*}
by the choice of $\delta_j$.   Thus to prove $d_{\Fm}\left(M_j,0\right)\to 0$, 
we need only show that the map
$\phi_j: M_j=\partial W_j \to M_j\times \{0\}\subset Z_j$
is an isometric embedding.  

Recall that all points in $M_j$ may be denoted $\left(x,s\right)$ where $x\in M_0\setminus U_j$,
$s\in [0,\delta_j]$.   Note that when $s\in \left(0,\delta_j\right)$, then we
are on a tube and $x\in \partial U_j$.  Thus
all points in $Z_j$ may be denoted $\left(x,s,r\right)$ where $x\in M_0\setminus U_j$,
$s\in [0,\delta_j]$ and $r\in [0,\epsilon_j]$.   Note that when $s\in \left(0,\delta_j\right)$
then either we are in a tube, in which case $x\in \partial U_j$, or we
are in the interior of $W$, in which case $r=\epsilon_j$.     Then
$\phi_j\left(x,s\right):=\left(x,s,0\right)$.  

Let $\gamma\left(t\right)=\left(x\left(t\right),s\left(t\right),r\left(t\right)\right)$ run minimally in $Z_j$ from $\phi_j\left(x_0,s_0\right)$
to $\phi_j\left(x_1,s_1\right)$.  So $r\left(0\right)=r\left(1\right)=0$.  If $r\left(t\right)<\epsilon_j$ for all t, then
$\gamma$ may be deformed decreasing its length to 
\be \label{ex-cancels-3}
\eta\left(t\right)=\left(x\left(t\right),s\left(t\right)\right)\subset \phi_j\left(M_j\right),
\ee
where $\eta$ runs minimally between the endpoints,
in which case the length is $L\left(\gamma\right)=d_{M_j}\left(\left(x_1,s_1\right), \left(x_2,s_2\right)\right)$.  

So we may assume there exists $t$ where $r\left(t\right)=\epsilon_j$.  
Let $t_0, t_1$ be the first and last  times where $r\left(t\right)=\epsilon_j$ respectively.
For $t<t_0$ and $t>t_1$ we can again use the fact that
$\eta\left(t\right)=\left(x\left(t\right),s\left(t\right)\right)$ lies in $M_j$, but this times we make a more careful estimate on the
length.  Since $\gamma$ runs minimally from $\gamma\left(0\right)=\left(x\left(0\right),s\left(0\right),0\right)$ to
$\gamma\left(t_0\right)=\left(x\left(t_0\right),s\left(t_0\right),\epsilon_j\right)$  and our space has an isometric
product metric $M_j \times [0,\epsilon_j]$, 
\be \label{ex-cancels-4}
L\left(\gamma\left([0,t_0]\right) \right) = \sqrt{ L\left(\eta\left([0,t_0]\right)\right)^2 + \epsilon_j^2 }
=\sqrt{ d_0^2 + \epsilon_j^2 }
\ee
where $d_0= d_{M_j}\left( \left(x_0,s_0\right) ,\left(x(t_0),s(t_0)\right)\right)$.
Similarly
\be \label{ex-cancels-5}
L\left(\gamma\left([t_1,1]\right) \right) = \sqrt{ L\left(\eta\left([t_1,1]\right)\right)^2 + \epsilon_j^2 }
=\sqrt{ d_1^2 + \epsilon_j^2 }
\ee   
where $d_1= d_{M_j}\left(\left(x\left(t_1\right),s\left(t_1\right)\right),\left(x_1,s_1\right)\right)$.
We can project the middle segment to $M_0\setminus U_j$ to see that
\be
L\left(\gamma\left([t_0,t_1]\right) \right) \ge L\left( x\left([t_0,t_1]\right)\right) = d_{M_j}\left(\left(x\left(t_0\right),0\right), \left(x\left(t_1\right),0\right) \right).
\ee
By (\ref{ex-cancels-1}) we can estimate the distance in $M_j$ from $\left(x\left(t_i\right),0\right)$
to $\left(x\left(t_i\right), s\left(t_i\right)\right)$ and apply the triangle inequality to see that
\begin{eqnarray} \label{ex-cancels-6}
L\left(\gamma\left([t_0,t_1]\right) \right) & \ge &  d_{M_j}\left(\left(x\left(t_0\right),s\left(t_0\right)\right), \left(x\left(t_1\right),s\left(t_1\right)\right) \right) - 4\bar{\epsilon}_j \\
&= &d_{M_j}\left(\left(x_0,s_0\right),\left(x_1,s_1\right)\right) -d_0-d_1 - 4\bar{\epsilon}_j.
\end{eqnarray}
Combining (\ref{ex-cancels-4}), (\ref{ex-cancels-5}) and (\ref{ex-cancels-6}), and
applying the definition of $\epsilon_j$ in (\ref{ex-cancels-1.5}) using the
fact that $d_j \le \diam \left(M_j\right)$ we have:
\be
L\left(\gamma\right) -d_{M_j}\left(\left(x_0,s_0\right),\left(x_1,s_1\right)\right)\ge \sqrt{ d_0^2 + \epsilon_j^2} + \sqrt{ d_1^2 + \epsilon_j^2 } -d_0-d_1 - 4\bar{\epsilon}_j \ge 0.
\ee
Thus we have an isometric embedding.
 \end{proof}

\subsection{Doubling in the limit}

In this subsection we provide an example of a sequence of Riemannian manifolds
which converge to an integral current space whose integral current structure
is twice the standard structure and whose mass is twice its volume.  The construction
is the same as the one in the last subsection of a canceling sequence except
that all tubes are now twisted so that the orientations line up instead of canceling
with each other.  

\begin{ex} \label{example-doubles}
Given any compact oriented Riemannian manifold $M^m_0=(M_0,d_0,T_0)$ 
we can find a sequence of
a sequence of oriented Riemannian
manifolds $N^m_j$ which converge in the Gromov-Hausdorff sense to
$M_0^m$ and yet in the intrinsic flat sense to $M^m_0$ with weight $2$:
$\left(M_0, d_0, 2T_0\right)$.  
The sequence $N_j^m$
have volumes converging to twice the volume of $M_0^m$ and large
regions converging smoothly to $M_0^m$.  
\end{ex}

\begin{proof}   
We begin the construction exactly as in the beginning of
the construction of Example~\ref{example-cancels}
creating a sequence of $M_j=\partial W_j$ which flat converge to $0$.   We cut
$M_j$ along the level $s=\delta_j/2$ which is a disjoint union of spheres.
These spheres may be made isometric to a standard sphere of appropriate
radius with a bi-Lipscitz map whose constant is very close to 1.
These spheres are glued back together with the reverse orientation to
create an oriented Riemannian manifold $N_j$.  Note that there
are two copies of $M_0\setminus U_j$ in $N_j$, both with the same
orientation defined by $T_0$
and that there is an orientation preserving isometry between
these two copies.  

Let $\left(X_j,d_j\right)$ be the metric space formed by taking
two copies of $M_0$ with line segments of length
$\delta_j$ joining the corresponding points $p_{j,1},...p_{j,N_j}$ 
endowed with the length metric.     
Applying an adaption of the pipe filling technique [Remark~\ref{rmrk-pipe-filling-adapt}]
to $N_j$ and $M_j$ respectively, 
we see that that both are Gromov Hausdorff close to $X_j$.  Furthermore
\be \label{ex-doubles-1}
\lim_{j\to\infty} d_{\Fm}\left(N_j, \left(X_j,d_j, T_j\right)\right) \to 0 \textrm{ and }
\ee
\be \label{ex-doubles-2}
\lim_{j\to\infty} d_{\Fm}\left(M_j, \left(X_j,d_j, S_j\right)\right) \to 0,
\ee
where the
distinction is that $T_j$ has the same orientation on both 
copies of $M_0$ in $X_j$
while $S_j$ has opposite orientations on each slice.

Thus the canonical set, $\set\left(T_j+S_j\right)$, 
is a copy of $M_0$ lying in $X_j$ and
there is a current preserving isometry 
\be \label{ex-doubles-3}
\varphi:\left(M_0, d_0, 2T_0\right) \to \left(\set\left(T_j+S_j\right), d_j, T_j+S_j\right).
\ee  

By Example~\ref{example-cancels}, we know that $d_{\Fm}\left(M_j, 0\right) \to 0$.
Combining this fact with (\ref{ex-doubles-2}), 
we see that $d_{\Fm}\left(\left(X_j,d_j,S_j\right), 0\right) \to 0$ as well.  
So there
exists a metric space $Z_j$ and an isometric embedding 
$\psi: X_j \to Z_j$ such that 
\be \label{ex-doubles-4}
d^{Z_j}_F\left(\psi_\# S_j, 0\right) \to 0.
\ee

By (\ref{ex-doubles-3}), we see that $\psi\circ\varphi$ isometrically embeds $M_0$
in $Z_j$ as well.  Thus,
\begin{eqnarray*}
d_{\Fm}\left(\left(X_j,d_j,T_j\right), \left(M_0, d_0, 2T_0\right) \right) & \le & d^{Z_j}_F \left(\psi_\# T_j, \psi\circ \varphi_\# 2T_0\right) \\
& = & d^{Z_j}_F \left(\psi_\# T_j, \psi_\# \left(T_j+S_j\right)\right) \\
& = & d^{Z_j}_F \left(0, \psi_\# \left(S_j\right)\right)  \to 0.
\end{eqnarray*}
By (\ref{ex-doubles-1}), we then have $M_j$ converging to $\left(M_0, d_0, 2T_0\right)$.
\end{proof}
\vspace{.1cm}

\subsection{Taxi Cab Limit Space}


In this subsection we give an example of a sequence of Riemannian
manifolds which converge in both the Gromov-Hausdorff and Intrinsic
Flat sense to the square torus with the taxicab metric, 
$M_{taxi}=\left(T^2,d_{taxi}\right)$
where 
\be
d\left(\left(x_1,x_2\right),\left(y_1,y_2\right)\right)= |x_1-y_1| +|x_2-y_2|.
\ee
Although the sequence converges without cancellation, the mass
does not converge.

This sequence was described to the first coauthor by Dimitri Burago
as a sequence which converges in the Gromov-Hausdorff sense.    Here
we describe Burago's proof and then prove that
the flat and Gromov-Hausdorff limits agree in this setting.   
We show an
integral current structure exists on the taxicab torus but we do not 
explicitly construct this structure.  It would be of interest to investigate this in
more detail.

\begin{ex}\label{ex-taxi}
There exists a sequence of Riemannian manifolds, $M_j^2$ which converge in the
intrinsic flat and Gromov-Hausdorff sense to the flat $1\times 1$
torus with the taxi metric $M_{taxi}=\left(T^2, d_{taxi}\right)$.   In this example the
mass drops in the limit.
\end{ex}

\begin{proof} 
The manifolds can
be described as submanifolds of $T^2 \times R$ with the standard flat metric
by the following graph:
\be
M_{n,j}^2= \{\left(x,y,f_{n,j}\left(x,y\right)\right): \,\, f_{n,j}\left(x,y\right)=  \left(1-sin^{n}\left(2^j\pi t\right)\right)/2^j \}.
\ee 
The metric on $M_{n,j}^2$ is defined as the length metric induced by the
metric tensor defined by this embedding (which is not an isometric embedding).

Let $G_j$ be the grid of $1/2^j$ squares defined by
\be
G_j= M_{n,j}^2 \cap T^2\times \{0\}.
\ee
As $n \to \infty$ for fixed $j$, $f_{n,j}$ converge pointwise to $h_j:T^2 \to R$ where
$h_j\left(x,y\right)=0$ for  $\left(x,y\right)\in G_j$ and is $1$ elsewhere.
 
Note also that $M_{n,j}^2$ converges in the Gromov-Hausdorff and Lipschitz sense 
as $n \to \infty$ to a metric space $X_j$ defined by
 created by attaching disjoint five-sided 
$1/2^j$ cubes to each square in the $1/2^j$ grid, $G_j$, so that $G_j$ with
the induced length metric isometrically embeds into $X_j$ with its natural length
metric.  We see it is an isometric embedding because a minimizing geodesic
between points in the grid would never be shorter going over the top of a cube
rather than going around the base square.

This space $X_j$ converges in the Gromov-Hausdorff sense to $T^2_{taxi}$.  This can be 
seen because grid $G_j$ isometrically embeds into
both spaces so
\begin{eqnarray*}
d_{GH}\left(X_j, T^2_{taxi}\right) & \le & d_{GH}\left(X_j, G_j\right) + d_{GH}\left(G_j, T^2_{taxi}\right) \\
& \le & d^{X_j}_{H}\left(X_j, G_j\right) + d^{T^2_{taxi}}_{H}\left(G_j, T^2_{taxi}\right) \\
& \le &  2/2^j  + 1/2^j  \to 0.
\end{eqnarray*}
  Here we will see that
the flat limit is also the torus with the taxicab metric.

By the Lipschitz convergence
we have a natural current structure $T_j$ on $X_j$ and
we can choose $n_j$ large enough that $d_{\Fm}\left(\left(X_j,d_j,T_j\right), M_{n_j,j}\right) < 1/j$
and $d_{GH}\left(X_j, M_{n_j,j}\right) <1/j$.    So if we set $M_j=M_{n_j,j}$
and prove $M_j$ converges in the intrinsic flat sense to $T^2_{taxi}$
we are done.

By Theorem~\ref{GH-to-flat}, we know a subsequence $\left(X_{j_i}, d_{j_i}, T_{j_i}\right)$
converges to an integral current space $\left(X, d_{taxi}, T_\infty\right)$ where $X\subset T^2_{taxi}$.
Since $X_{j_i}$ are locally contractible, we may apply Theorem 1.3 from
\cite{SorWen1}, to see that $X=T^2_{taxi}$ 
(c.f. Theorem~\ref{contractibility-no-cancellation}).  
It is not immediately clear what the limit current structure on $T^2_{taxi}$ looks like
so we just call it $T_\infty$.

We can also explicitly check that $\left(X_j, d_j, T_j\right)$ is a Cauchy sequence with
respect to the intrinsic flat distance..   This can be seen because $G_j$
isometrically embeds into $G_{j+1}$ and so we may glue $X_j$ to $X_{j+1}$
along this embedding to create a geodesic metric space $W_{j}$.   The metric
space $W_j$ consists of $\left(2^j\right)^2$ copies of a $\left(1/2^j\right)\times \left(1/2^j\right)$ five-sided 
cube attached to  four $\left(1/2^{j+1}\right)\times \left(1/2^{j+1}\right)$ five sided cubes.   The
restriction of $T_j-T_{j+1}$ to this collection of five cubes has no boundary
(as can be seen because the collection of five cubes is bi-Lipscitz to
a sphere).    By isometrically embedding $W_j$ into a Banach space,
we may apply the second author's filling theorem
 \cite{Wenger-flat} to fill in each collection of five cubes
with a 3 dimensional integral current of mass $M_0 \left(1/2^j\right)^3$.   Thus
\be
d_{\Fm}\left(\left(X_j,d_j,T_j\right), \left(X_{j+1}, d_{j+1}, T_{j+1}\right) \right) \le \left(2^j\right)^2 M_0 \left(1/2^j\right)^3= M_0/2^j.
\ee
and our sequence is Cauchy.  

Thus $(X_j,d_j, T_j)$ converges to the limit of the subsequence 
$(T^2_{taxi}, d_{taxi}, T_\infty)$.

Note $\mass(T_j) \to 5$ due to the five faces
on each cube.  Thus $\mass(T_\infty)\le 5$ by the lower semicontinuity of mass.  

Now we slightly alter the top face of each cube to have
a central peak, creating a new sequence of manifolds which also converge
to the taxicab space in both the Gromov-Hausdorff and intrinsic flat sense
with the exat same arguments as above.  
These new manifolds have mass
converging to a limit strictly greater than $5$.  Thus we
have found a sequence of integral current spaces whose Gromov-Hausdorff and
intrinsic flat limits agree but whose masses do not converge.  
\end{proof}


\subsection{Limit whose Completion has Higher Dimension}

There were many reasons that we defined integral current spaces using
the set of positive lower density of the current rather than the support 
[Definition~\ref{defn-current-space}].
The key reason is that the set of a current has the correct dimension
so that our integral current spaces are always countable $H^k$
rectifiable of the correct dimension even though they need not be
compact or complete.  If one takes the completion on an integral
current space, it may have higher dimension as we see here:

\begin{example}\label{example-dense-support}
There is a sequence of Riemannian surfaces $M'_j$
that converge to a nonzero 2 dimensional
integral current space $N_\infty$ 
such that the closure of $N^3$ is the solid 3 dimensional
cube with the standard Euclidean metric.  
\end{example}

\begin{proof}
As in Example~\ref{example-gym2} , our sequence of $M_j'$
will be constructed using spheres joined by cylinders. 
In that example, we never used anything special about the
arrangement of the spheres used to defined $M_j$
except that  the total volumes of the spheres were uniformly
bounded and the radius, $\epsilon_j$ of the connecting cylinders
were chosen small enough that the total length of the cylinders
$L_j$ satisfied $L_j\epsilon_j \to 0$.   Note that it was not necessary
that the spheres and cylinders isometrically embed into Euclidean space
as this embedding was only used to describe the locations
of the spheres.  Here we will again start with a sequence of
inductively defined spheres embedded into Euclidean 3 space,
but we will connect them with abstract cylinders so that we need 
not concern ourselves with intersections.

We begin by constructing a sequence of outward oriented spheres
which are disjoint and dense in the solid unit cube, $[0,1]^3$.  
The first $n_1=8$ spheres are centered on points of the
form $\left(n/4, m/4\right)$ where $\left(n,m\right)\in \{1,2,3\}\times\{1,2,3\}$
and have radius $r_1>0$ and sufficiently small that
they are disjoint, they
have total area $n_1 4\pi r_1^2< 1$ and the total of their diameters
is $n_1\pi r_1 <1/2$
.  The next collection of  $n_2$ spheres
are centered on points of the
form $\left(n/8, m/8\right)$ where $\left(n,m\right)\in \{1,2,3...7\}\times\{1,2,3...7\}$
but excluding any such points which already lie on the
first $n_1$ spheres.  Then the radius $r_2$ of these
$n_2$ spheres is chosen small enough
that all the $n_1+n_2$ spheres are disjoint,
the total area of the spheres, $n_1 4\pi r_1^2 +n_2 4\pi r_2^2< 1$
and the total of the diameters, $n_2\pi r_2 <1/4$
.  We continue
in this matter creating a dense collection of
disjoint spheres lying in $[0,1]^3$ whose closure is $[0,1]^3$
and whose total area is $\le 1$ and total of the last
$n_j$ spheres' diameters is  $<1/2^j$
We will let $V_j$ denote the first $n_1+...+n_j$ disjoint
spheres.

We next create geodesic metric spaces $X_j$ by
connecting the spheres in $V_j$ with line segments and
prove $X_j$ converges in the Gromov-Hausdorff sense
to $[0,1]^3$ with the standard Euclidean metric.   
The $X_j$ will have induced length metrics
and will not isometrically embed into $[0,1]^3$.  
The line segments connecting the spheres may appear
to intersect in $[0,1]^3$ but, by definition, do not intersect.  More
precisely, we will say we have connected a sphere,
$S_1$, to a sphere, $S_2$, if we find points
$x_1\in S_1$ and $x_2\in S_2$ such that $d_{[0,1]^3}\left(x_1,x_2\right)$
achieves the distance, $d$, between $S_1$ and $S_2$ as measured
in $[0,1]^3$ and then we attach an abstract line segment
of length $d$ between these two points.  

\begin{figure}[h] 
   \centering
   \includegraphics[width=4.7in]{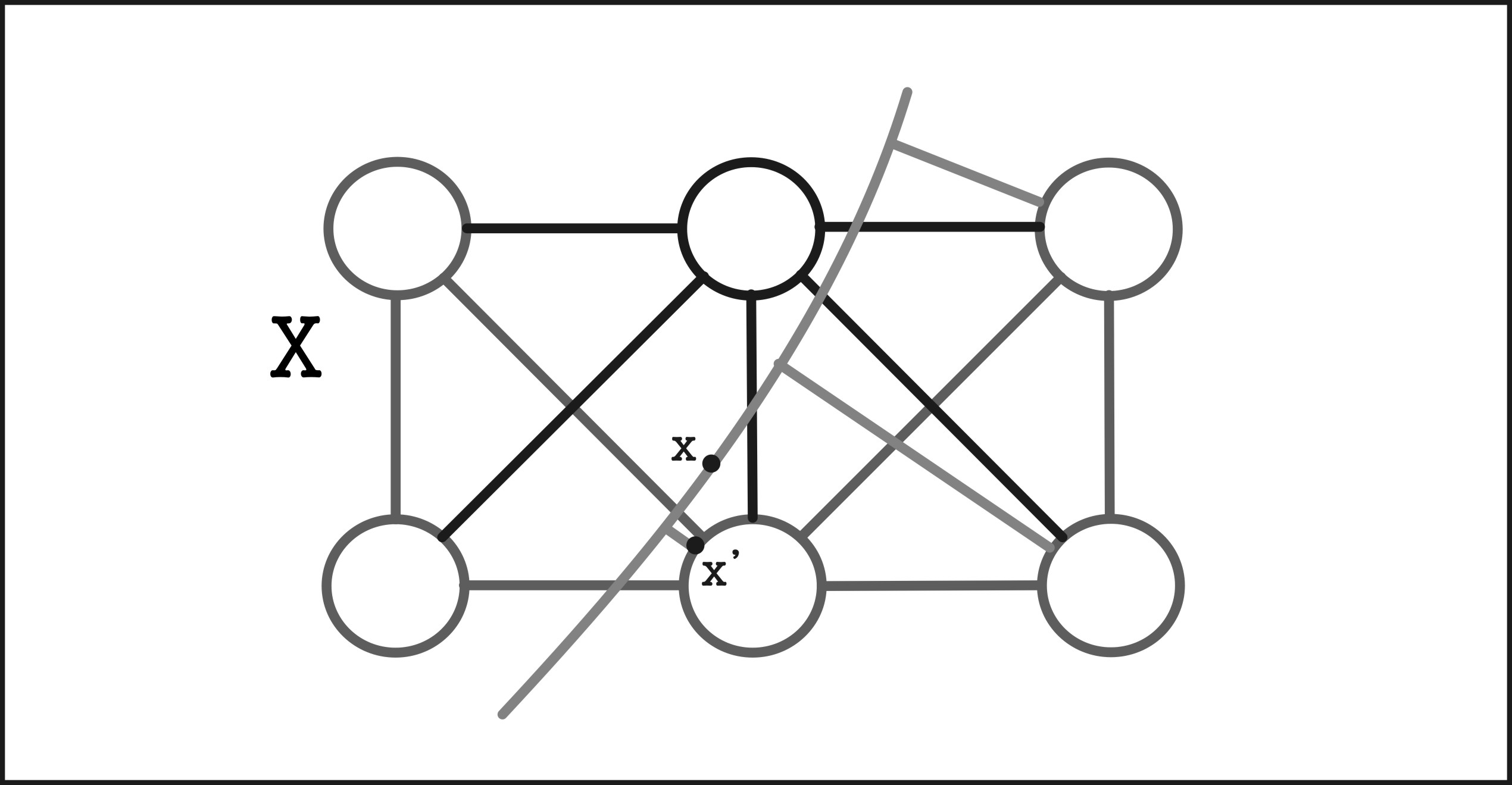} 
   \caption{Here the spheres are drawn as circles.}
   \label{fig-dense-support-1}
\end{figure}

Each space $X_j$ is a connected collection of the
first $n_1+...+n_j$ spheres.  Not all
spheres will be connected to each other.    See Figure~\ref{fig-dense-support-1}
for a view of a tiny region in the cube where the
spheres are depicted as circles and the endpoints of segments
connecting spheres are solid points.  
To build 
$X_j$ we take each sphere $\partial B_1$ of radius $r_j$ and
connect it to any other neighboring sphere $\partial B_2$ of radius $r\le r_j$
(whose line segment is of length at most $1/j$) and
such that $B_1 \cap B_2 =\emptyset$.   This second condition
will help with orientation later.    Note that none of the larger spheres
are connected directly to each other, only via connections among the
smaller spheres.  This $X_j$ is a connected
geodesic metric space and let $L_j$ be the total lengths of all segments
in $X_j$.   We can create an integral current space
$N_j=\left(\set\left(T_j\right), d_{X_j}, T_j\right)$ where $T_j$ is integration over the
spheres in $X_j$ with outward orientation.

We define Lipschitz Riemannian manifolds, $M_j =\partial T_{\epsilon_j}\left(X_j\right)$, 
as the boundary
of an abstract tubular neighborhood around $X_j$,
where $\epsilon_j$ is taken so small that any pair of spheres
in $X_j$ is still disjoint when the radii are $\epsilon_j$ larger and
such that $\epsilon_j L_j < 1/j$ and such that the area of $M_j$
is less than $1+1/j$.   This abstractly defined space does not lie in 
$[0,1]^3$ but each geodesic segment has been replaced by a cylinder
of the appropriate width so that $M_j$ immerses into $X_j$ with a local
isometry.  Note that by our careful connection of the spheres in the
previous paragraph, $M_j$ is orientable and we orient it so that all the
spheres are outward oriented.  See Figure~\ref{fig-dense-support-2}.

\begin{figure}[h] 
   \centering
   \includegraphics[width=4.7in]{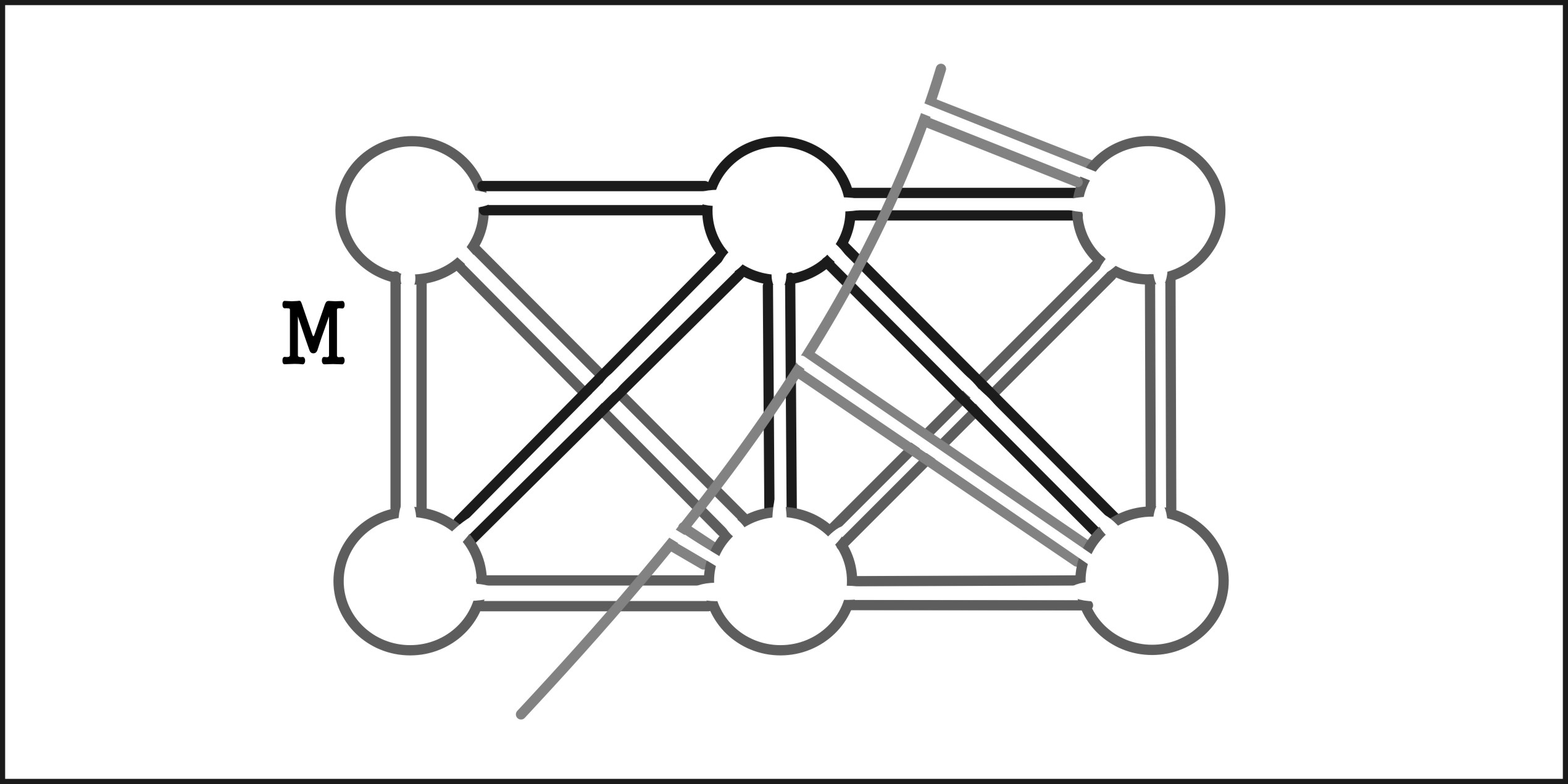} 
   \caption{Note the outward orientation.}
   \label{fig-dense-support-2}
\end{figure}

By the pipe filling technique [Remark~\ref{rmrk-pipe-filling}] and the bounds on $\epsilon_j$,
$L_j$ and the total area, $d_{\Fm}\left(M_j, N_j \right) \to 0$ and $d_{GH}\left(M_j, X_j\right) \to 0$.
To complete our example we need only prove that $X_j$ converges
in the Gromov-Hausdorff sense to $[0,1]^3$ and $N_j$ converges
in the intrinsic flat sense to $N_\infty$, where 
$N_\infty=\left(\set (T_\infty), d_{[0,1]^3}, T_\infty\right)$ and $T_\infty$ is
defined by integration over all the spheres in our collection
with outward orientation.  Note that by the density of the spheres
in $[0,1]^3$ the completion of $N_\infty$ is $[0,1]^3$.

Notice that $d_{GH}\left(N_j, X_j\right) \le d_H^{X_j}\left(N_j, X_j\right) \le 1/j$
by the shortness of the joining line segments in the creation of $X_j$.
So we need only prove $N_j$ converges in the Gromov-Hausdorff
sense to $[0,1]^3$ and in the flat sense to $N_\infty$.

There is a natural map $f_j: N_j \to [0,1]^3$ which is
 not an isometry.
However we claim there is a uniform distortion, $D_j$, such that
if $x,y \in N_j$, then 
\be \label{spheres-in-cube-1}
d_{[0,1]^3}\left(f_j\left(x\right), f_j\left(y\right)\right)- d_{X_j}\left(x,y\right) | \le D_j \to 0
\ee
as $j\to \infty$.  After proving this claim we will use it to prove
our convergence claims.

Given $x \in N_j$, there exists $x'$ in a sphere of radius $r_j$ in $N_j$ outside
the sphere containing $x$ such that $d_{[0,1]^3}\left(x',x\right) < 6/2^j$ by the density of
the smallest spheres in $[0,1]^3$.  See Figure~\ref{fig-dense-support-1} again.
By the connecting
of the spheres by line segments we know 
$d_{X_j} \left(x',x\right) \le  \pi 6/2^j +1/j $ since an arclength can always
be bounded by $\pi$ times a secant length and we need travel down at most one
line segment to reach the smaller sphere.  Similarly for $y\in N_j$,
there exists $y'$  with $d_{X_j} \left(y',y\right) \le  2 6/2^j +1/j $.   So we need only prove
(\ref{spheres-in-cube-1}) for $x', y'$ lying in smallest spheres in $X_j$.

Between $x'$ and $y'$, one can draw a straight line segment in $[0,1]^3$ and
then select the smallest spheres in $X_j$ with radius $r_j$ which
are closest to this line segment.  By the density of the smallest
spheres we know there are many spheres very close to this segment but we
need to avoid zigzagging between them.   We apply the fact that
the connecting segments in $X_j$ get as long as $1/j$ while the
density of the spheres is $1/2^j$, so that we may actually select
smallest spheres between $x'$ and $y'$ which are joined
by segments whose total length approximate $d_{[0,1]^3}\left(x',y'\right)$.
Between the segments a path between $x'$ and $y'$
lying in $X_j$ must go around the small spheres, however, their
total diameter has been bounded above by $1/2^j$ so this 
does not add to the error significantly and we have (\ref{spheres-in-cube-1}).

We now create spaces 
$Z_j = X_j \times [0,h_j] \disjointunion [0,1]^3$ 
where
 \be
 h_j= \sqrt{ (D_j/2)(2 \diam(N_j) + D_j/2}
 \ee
so that $\left(x, h_j\right)$ is identified with $f_j\left(x\right)$ with the induced
length metric.   Note that there is a distance nonincreasing
retraction to $[0,1]^3$, so there is an isometry
$\varphi: [0,1]^3 \to Z_j$.   
We claim there is also an isometric embedding
$\psi: N_j \to N_j\times \{0\}\subset Z_j$ since a shortest curve between
points in $N_j\times \{0\}$ either stays in the $X_j \times\{0\}$ level or enters
the $[0,1]^3$ region where we can apply (\ref{spheres-in-cube-1})
to control the short cut in that region.   
To enter the $[0,1]^3$ region, it first travels a distance
$\sqrt{L_1^2+h_j^2}$ to the region, then a distance greater than $L_2-D_j$ in the 
region and then a distance $\sqrt{L_3^2+h_j^2}$ back from the region where
$L_1+L_2+L_3$ equals the distance in $N_j$ between the endpoints of the curve. 
However, by the choice of $h_j$ this causes a contradiction.

Thus $d_GH\left(N_j, [0,1]^3\right) \le d_H^{Z_j}\left(\psi\left(N_j\right), \varphi\left([0,1]^3\right) \right) \to 0$.  
Furthermore 
\be
d_{\Fm}\left(N_j, N_\infty\right) \le d_F^{Z_j}\left(\psi_\# N_j, \varphi_\# N_\infty \right)
\le \mass\left(A_j\right) +\mass\left(B_j\right)
\ee
where $A_j\in \intcurr_2\left(Z_j\right)$ is integration over the spheres of radius $r_j$ in $[0,1]^3$
and  $B_j \in \intcurr_3\left(Z_j\right)$ 
is integration over the collection of cylinders $N_j \times [0,h_j]$.
By our bound on the total area of the spheres, $\mass A_j \to 0$
and $\mass \left(B_j\right) \le h_j \to 0$.  So we are done.
\end{proof}

    
\subsection{Gabriel's Horn and the Cauchy Sequence with no Limit}

In this section we present an example of a sequence of compact Riemannian manifolds which are Cauchy with respect to the intrinsic flat distance but have no limit.  
This example demonstrates the necessity of the uniform bound
on volume in Theorem~\ref{Cauchy-to-Converge}.  See also Remark~\ref{rmrk-cauchy}.
It is based on the classical example of
Gabriel's Horn:
\be
M_0=\{(x,y,z): \,\,x^2+y^2= 1/(1-z)^2, \, z\ge 0\} \subset E^3
\ee
which is a rotationally symmetric surface of infinite area enclosing a finite volume.
Note that $M_0$ is not an integral current space because it has infinite mass.
The fact that it is unbounded is not a problem as seen in Example~\ref{ex-unbounded}.

\begin{example} \label{ex-Gabriel's-horn}
Define the sequence of Riemannian manifolds diffeomorphic to the sphere 
\be
M_j=\{(x,y,z): \,\,x^2+y^2= f_j(z)/(1-z)^2,\, \} \subset E^3
\ee
such that $f_j(z)$ is $sin(z)$ for $z\in [0,1]$, is $1$ for $z\in [1,j]$ 
and then decreases to $0$ at $z=j+1/j$
so that  each $M_j$ is smooth.
This is a sequence of integral current spaces without a uniform upper
bound on their total mass that is Cauchy with respect to the intrinsic flat 
distance but has no limit in the intrinsic flat sense.
\end{example}

\begin{proof}
First we prove that $M_j$ is a Cauchy sequence by explicitly building a metric
space $Z$ between an arbitrary pair $M_i$ and $M_j$ with fixed $i\ge j$.
Let $T_1$ be the current structure on $M_j$ and $T_2$ the current structure on $M_i$.
Let $U_1=M_j \cap \{z\in [0, j]\}$ and $U_2=M_i \cap \{z\in [0,j]\}$ so
$U_1$ and $U_2$ with the induced length metrics are isometric.  
We now apply Proposition~\ref{prop-bridge-filling} to estimate the
flat distance between them.  In applying this proposition we take
$X_1=V_1=M_j\setminus U_1$ and $B_1=0$ and
$A_1$ to be integration over $X_1$.
Then one can find a constant $C_1$ not depending on $i$ or $j$
such that
\be
\mass(A_1)\le \frac{C_1}{j^2} \qquad \textrm{   and   } \qquad \mass(B_1)=0
\ee
Unlike $V_1$, $V_2$ may be very long and have large area.  So let
\be
X_2=\{(x,y,z,w): \,\,x^2+y^2+w^2= f_i(z)/(1-z)^2,\, z \ge j \, w\ge 0\} \subset E^3
\ee
so that $V_2$ isometrically embeds into $X_2$ and let $B_2$ be
integration over $X_2$ and $A_2$ be integration over 
the disk, $X_2 \cap \{z=j\}$,
with the appropriate orientation.  Then   there exists constants $C_2, C_3$ such that
\be
\mass(A_2)\le {C_2}/{j^2}  \qquad \textrm{   and   }\qquad 
\mass(B_2)=\vol(V_2) \le {C_3}/{j}
\ee
So by Proposition~\ref{prop-bridge-filling}, we have
\be \label{prop-here}
d_{\Fm}\left(M_i, M_j\right) \le \vol\left(U_1\right) \left(h_1+h_2\right) + \mass\left(B_1\right) 
+ \mass\left(B_2\right) +  \mass\left(A_1\right) +\mass\left(A_2\right)
\ee
where 
\be
\begin{split}
h_i  \qquad &\le \qquad \diam(\partial U_i)\, ( 2\diam (U_i) +\diam (\partial U_i))\\
 & \le \qquad {\pi}/{(1-j)^2}\,\,\left(\,2(2j)\,\,\,+\,\,\, {\pi}/{(1-j)^2}\right) 
 \,\,\,\le\,\,\, \frac{C_4}{j}.
\end{split}
\ee
By integrating one sees that  $\vol(U_1)\le C_5 \textrm{Ln}(j)$.
Substituting this into (\ref{prop-here}), we see that the sequence is Cauchy. 

To prove there is no limit for this sequence, we assume on the contrary
that $M_j$ converge in the intrinsic flat sense to an integral current space
$M_\infty$.  We will prove that
there are large balls in $M_\infty$ isometric to large balls in 
\be
N_\infty=\{(x,y,z): \,\,x^2+y^2= f_\infty(z)/(1-z)^2,\, \} \subset E^3
\ee
where $f_\infty(z)$ is $sin(z)$ for $z\in [0,1]$, is $1$ for $z\in [1,\infty)$.
Then apply this to force $\mass(M_\infty)=\infty$ which is a contradiction.

Suppose $M_\infty$ is not the $\bf{0}$ integral current space.  Then there exists
$x\in M_\infty$ and 
there exists $y_j\in M_j$ converging to $x$ and
for almost every $R>0$, there exists $R_j$ increasing to $R$, such that
\be
\liminf_{j\to\infty}\vol(B(y_j,R_j)) \ge \mass(B(x,R)) >0.
\ee
However we need a lower bound $\mass(B(x,R))$.

By our particular choice of $M_j$,
there thus exists $D>0$ such that $y_j\subset M_j \cap \{z\in[0,D]\}$
otherwise the volumes would go to zero.  
For $j$ sufficiently large, there also exist isometries
\be 
\varphi_j: M_j\cap \{z\in[0,D]\} \to N_\infty\cap \{z\in[0,D]\}.
\ee
Since $N_\infty\cap \{z\in[0,D]\}$ is compact,
 a subsequence of the $\varphi_j(y_j)$ converges to some $y_\infty \in N_\infty$.
By the fact that $R_j$ increases to $R$,
$B(\varphi_j(y_j),R_j)$ converges in the Lipschitz sense to 
the open ball $B(y,R)\subset M_\infty$.
Thus by Theorem~\ref{thm-lip-to-flat},
$S(y_j, R_j)=T_j\rstr B(y_j,R_j)$ converge in the intrinsic flat sense
to the integral current space $T_R$ defined by integration over $B(y,R)$
in $N_\infty$.   Note that  $\mass(T_R) \to \infty$.

The Lipschitz convergence also implies that the total mass of $S(y_j,R_j)$
are uniformly bounded above.   We see that $S(y_j,R_j)$
converge in the intrinsic flat sense
flat sense to $S(x,R)= T_\infty\rstr B(y,R) \in\intcurr_2 (M_\infty)$.  Thus there is a current 
preserving isometry from $B(x,R)\subset M_\infty$ to $B(y,R)\subset N_\infty$
for almost every $R>0$.    In particular, we see that
\be
\mass(M_\infty) \ge \lim_{R\to\infty} \mass(B(x,R))=\lim_{R\to\infty}(T_R) =\infty,
\ee
which contradicts the fact that $M_\infty$ is an integral current space.

The only other possibility is that the $M_j$ converge to the $\bf{0}$ current
space.    Then by Theorem~\ref{convergeto0}, we can choose points
$p_j\in M_j$ and find isometric embeddings
$\varphi_j:M_j \to Z$ such that 
$\varphi_j(p_j)=z\in Z$ and $\varphi_{j\#}(T_j) \Fto 0$ in $Z$.

We can choose the $p_j=(0,0,0)\in M_j$ so that all the $B(p_j, R)$
are isometric for $j$ sufficiently large.  Note that
$\varphi_j$ maps $B(p_j,R)$ isometrically onto $B(z,R)\cap \varphi_j(M_j)$.
So for almost every $R>0$ fixed, we have
$
\varphi_{j\#}S(p_j,R)=\varphi_{j\#}T_j \rstr B(z,R) \Fto 0.
$
However this is a constant sequence of nonzero integral current spaces,
so we have a contradiction.
\end{proof}

\bibliographystyle{amsalpha}
\bibliography{SorWen2}

\end{document}